\documentclass[11pt,reqno]{article}
\usepackage[utf8]{inputenc}
\usepackage{geometry}
\geometry{verbose,tmargin=2cm,bmargin=2cm,lmargin=2.5cm,rmargin=2.5cm}
\setcounter{tocdepth}{2}
\usepackage{color}
\usepackage{amsmath}
\usepackage{amsthm}
\usepackage{amssymb}
\usepackage{graphicx}
\usepackage[all]{xy}
\PassOptionsToPackage{normalem}{ulem}
\usepackage{ulem}
\usepackage[unicode=true,pdfusetitle,
 bookmarks=true,bookmarksnumbered=false,bookmarksopen=false,
 breaklinks=false,pdfborder={0 0 1},backref=page,colorlinks=true]
 {hyperref}
\hypersetup{
 linkcolor=blue,citecolor=blue}

\makeatletter
\numberwithin{equation}{section}
\numberwithin{figure}{section}
\theoremstyle{plain}
\newtheorem{thm}{\protect\theoremname}[section]
\theoremstyle{plain}
\newtheorem{cor}[thm]{\protect\corollaryname}
\theoremstyle{definition}
\newtheorem{defn}[thm]{\protect\definitionname}
\theoremstyle{plain}
\newtheorem{lem}[thm]{\protect\lemmaname}
\theoremstyle{plain}
\newtheorem{prop}[thm]{\protect\propositionname}
\theoremstyle{remark}
\newtheorem{rem}[thm]{\protect\remarkname}

\usepackage{graphicx}
\usepackage{appendix}

\usepackage{enumitem}
\usepackage[backref=page]{hyperref}

\date{\today}

\usepackage{ytableau}

\makeatother

\providecommand{\corollaryname}{Corollary}
\providecommand{\definitionname}{Definition}
\providecommand{\lemmaname}{Lemma}
\providecommand{\propositionname}{Proposition}
\providecommand{\remarkname}{Remark}
\providecommand{\theoremname}{Theorem}

\begin{document}
\global\long\def\F{\mathrm{\mathbf{F}} }%
\global\long\def\Aut{\mathrm{Aut}}%
\global\long\def\C{\mathbf{C}}%
\global\long\def\H{\mathcal{H}}%
 
\global\long\def\U{\mathcal{U}}%

\global\long\def\ext{\mathrm{ext}}%
 
\global\long\def\hull{\mathrm{hull}}%
 
\global\long\def\triv{\mathrm{triv}}%
 
\global\long\def\Hom{\mathrm{Hom}}%

\global\long\def\trace{\mathrm{tr}}%
\global\long\def\End{\mathrm{End}}%
 
\global\long\def\tsg{\widetilde{\Sigma_{g}}}%

\global\long\def\L{\mathcal{L}}%
\global\long\def\W{\mathcal{W}}%
\global\long\def\E{\mathbb{E}}%
\global\long\def\SL{\mathrm{SL}}%
\global\long\def\R{\mathbf{R}}%
\global\long\def\Pairs{\mathrm{PowerPairs}}%
\global\long\def\Z{\mathbf{Z}}%
\global\long\def\rs{\to}%
\global\long\def\A{\mathcal{A}}%
\global\long\def\a{\mathbf{a}}%
\global\long\def\rsa{\rightsquigarrow}%

\global\long\def\D{\mathcal{D}}%
\global\long\def\b{\mathbf{b}}%
\global\long\def\df{\mathrm{def}}%
\global\long\def\eqdf{\stackrel{\df}{=}}%
\global\long\def\ZZ{\overline{Z}}%
\global\long\def\Tr{\mathrm{Tr}}%
\global\long\def\N{\mathbf{N}}%
\global\long\def\std{\mathrm{std}}%
 
\global\long\def\HS{\mathrm{H.S.}}%
\global\long\def\e{\mathbf{e}}%
\global\long\def\c{\mathbf{c}}%
\global\long\def\d{\mathbf{d}}%
\global\long\def\AA{\mathbf{A}}%
\global\long\def\BB{\mathbf{B}}%
 
\global\long\def\v{\mathbf{v}}%
\global\long\def\spec{\mathrm{spec}}%
\global\long\def\Ind{\mathrm{Ind}}%
\global\long\def\half{\frac{1}{2}}%
\global\long\def\Re{\mathrm{Re}}%
 
\global\long\def\Im{\mathrm{Im}}%
\global\long\def\Rect{\mathrm{Rect}}%
\global\long\def\Crit{\mathrm{Crit}}%
\global\long\def\Stab{\mathrm{Stab}}%
\global\long\def\SL{\mathrm{SL}}%
\global\long\def\Tab{\mathrm{Tab}}%
\global\long\def\Cont{\mathrm{Cont}}%
\global\long\def\I{\mathcal{I}}%
\global\long\def\J{\mathcal{J}}%
\global\long\def\short{\mathrm{short}}%
\global\long\def\Id{\mathrm{Id}}%
\global\long\def\B{\mathcal{B}}%
\global\long\def\ax{\mathrm{ax}}%
\global\long\def\cox{\mathrm{cox}}%
\global\long\def\row{\mathrm{row}}%
\global\long\def\col{\mathrm{col}}%
\global\long\def\X{\mathbb{X}}%
\global\long\def\Fat{\mathsf{Fat}}%

\global\long\def\V{\mathcal{V}}%
\global\long\def\P{\mathbb{P}}%
\global\long\def\Fill{\mathsf{Fill}}%
\global\long\def\fix{\mathsf{fix}}%
 
\global\long\def\reg{\mathrm{reg}}%
\global\long\def\edge{E}%
\global\long\def\id{\mathrm{id}}%
\global\long\def\emb{\mathrm{emb}}%

\global\long\def\Hom{\mathrm{Hom}}%
 
\global\long\def\F{\mathrm{\mathbf{F}} }%
  
\global\long\def\pr{\mathrm{Prob} }%
 
\global\long\def\tr{{\cal T}r }%
\global\long\def\core{\mathrm{Core}}%
\global\long\def\pcore{\mathrm{PCore}}%
\global\long\def\im{\vartheta}%
\global\long\def\br{\mathsf{BR}}%
 
\global\long\def\sbr{\mathsf{SBR}}%
 
\global\long\def\ebs{\mathsf{EBS}}%
 
\global\long\def\ev{\mathrm{ev}}%
 
\global\long\def\CC{\mathcal{C}}%
 
\global\long\def\sides{\mathrm{Sides}}%
\global\long\def\tp{\mathrm{top}}%
\global\long\def\lf{\mathrm{left}}%
\global\long\def\MCG{\mathrm{MCG}}%
\global\long\def\EE{\mathcal{E}}%
 
\global\long\def\mog{\mathfrak{MOG}}%
 
\global\long\def\fg{\le_{\mathrm{f.g.}}}%
 
\global\long\def\v{\mathfrak{v}}%
\global\long\def\e{\mathfrak{e}}%
\global\long\def\f{\mathfrak{f}}%
 
\global\long\def\d{\mathfrak{d}}%
\global\long\def\he{\mathfrak{he}}%

\global\long\def\defect{\mathrm{Defect}}%
\global\long\def\M{\mathcal{M}}%
\global\long\def\sdefect{\max\defect}%

\title{The Asymptotic Statistics of Random Covering Surfaces}
\author{Michael Magee and Doron Puder}
\maketitle
\begin{abstract}
Let $\Gamma_{g}$ be the fundamental group of a closed connected orientable
surface of genus $g\geq2$. We develop a new method for integrating
over the representation space $\X_{g,n}=\Hom(\Gamma_{g},S_{n})$ where
$S_{n}$ is the symmetric group of permutations of $\{1,\ldots,n\}$.
Equivalently, this is the space of all vertex-labeled, $n$-sheeted
covering spaces of the the closed surface of genus $g$.

Given $\phi\in\X_{g,n}$ and $\gamma\in\Gamma_{g}$, we let $\fix_{\gamma}(\phi)$
be the number of fixed points of the permutation $\phi(\gamma)$.
The function $\fix_{\gamma}$ is a special case of a natural family
of functions on $\X_{g,n}$ called Wilson loops. Our new methodology
leads to an asymptotic formula, as $n\to\infty$, for the expectation
of $\fix_{\gamma}$ with respect to the uniform probability measure
on $\X_{g,n}$, which is denoted by $\E_{g,n}[\fix_{\gamma}]$. We
prove that if $\gamma\in\Gamma_{g}$ is not the identity, and $q$
is maximal such that $\gamma$ is a $q$\textsuperscript{th} power
in $\Gamma_{g}$, then
\[
\E_{g,n}\left[\fix_{\gamma}\right]=d(q)+O(n^{-1})
\]
as $n\to\infty$, where $d\left(q\right)$ is the number of divisors
of $q$. Even the weaker corollary that $\E_{g,n}[\fix_{\gamma}]=o(n)$
as $n\to\infty$ is a new result of this paper. We also prove that
$\E_{g,n}[\fix_{\gamma}]$ can be approximated to any order $O(n^{-M})$
by a polynomial in $n^{-1}$.
\end{abstract}
\tableofcontents{}

\section{Introduction\label{sec:Introduction}}

Let $g\geq2$ and let $\Sigma_{g}$ be a closed orientable topological
surface of genus $g$. We fix a base point $o\in\Sigma_{g}$ and let
\begin{equation}
\Gamma_{g}\eqdf\pi_{1}\left(\Sigma_{g},o\right)\cong\left\langle a_{1},b_{1},\ldots,a_{g},b_{g}\,\middle|\,\left[a_{1},b_{1}\right]\cdots\left[a_{g},b_{g}\right]\right\rangle \label{eq:presentation of Gamma}
\end{equation}
be the fundamental group of $\Sigma_{g}$. Denote by 
\[
\X_{g,n}\eqdf\Hom\left(\Gamma_{g},S_{n}\right)
\]
the \emph{representation space} of all homomorphisms from $\Gamma_{g}$
to $S_{n}$, where $S_{n}$ is the symmetric group of permutations
of $\{1,\ldots,n\}$. From another point of view, the space $\X_{g,n}$
can be viewed as the space of degree-$n$ covering maps of $\Sigma_{g}$.
Indeed, for every not-necessarily-connected degree-$n$ covering map
\[
p\colon X\twoheadrightarrow\Sigma_{g},
\]
we may identify the fiber $p^{-1}\left(o\right)$ with $\left\{ 1,\ldots,n\right\} $,
and the monodromy action of $\pi_{1}\left(\Sigma_{g},o\right)$ on
the fiber then gives rise to a homomorphism $\phi\in\Hom(\Gamma_{g},S_{n})$.
This gives a one-to-one correspondence between $\mathbb{X}_{g,n}$
and degree-$n$ covering maps with $p^{-1}\left(o\right)=\left\{ 1,\ldots,n\right\} $.
This correspondence is discussed in more detail in $\S\S$\ref{subsec:Expectations-and-probabilities}.

The space $\X_{g,n}$ was studied by Liebeck and Shalev \cite{LiebeckShalev},
who showed that a uniformly random homomorphism $\phi\colon\Gamma_{g}\to S_{n}$
satisfies $\phi\left(\Gamma_{g}\right)\supseteq A_{n}$ a.a.s.~(asymptotically
almost surely, namely, with probability tending to $1$ as $n\to\infty$)
\cite[Thm 1.12]{LiebeckShalev}\footnote{The paper \cite{LiebeckShalev} considers, more generally, random
homomorphisms from any Fuchsian group to $S_{n}$.}. In particular the image is a.a.s.~transitive, or, equivalently,
the corresponding random degree-n covering space is a.a.s.~connected.
When $\Gamma_{g}$ is replaced by a non-abelian free group, the analogous
result holds by the famous theorem of Dixon \cite{dixon1969probability}
that two random permutations in $S_{n}$ a.a.s.~generate $S_{n}$
or $A_{n}$.

In the current work we address the problem of integration over the
space $\X_{g,n}$. Namely, our goal is to analyze the expected value
$\mathbb{E}_{g,n}\left[f\right]$ of functions $f$ on $\X_{g,n}$
with respect to the uniform measure on $\X_{g,n}$. The functions
on $\X_{g,n}$ that we consider are natural functions that arise from
loops in $\Sigma_{g}$. Given an element $\gamma\in\Gamma_{g}$ and
a character $\chi$ of $S_{n}$, we let 
\[
\chi_{\gamma}(\phi)\eqdf\chi(\phi(\gamma)),\quad\chi_{\gamma}:\X_{g,n}\to\R.
\]
These functions are called \emph{Wilson loops }in the physics literature
\cite[Def. 6.4.1]{Labourie}. Our focus is on the character $\fix$
of $S_{n}$ which assigns to every permutation its number of fixed
points.

The main motivation behind this work is its relevance to the study
of random covers of the closed surface $\Sigma_{g}$. Given some $1\ne\gamma\in\Gamma_{g}$,
consider the geodesic $C_{\gamma}$ in $\Sigma_{g}$ corresponding
to the conjugacy class of $\gamma$. For every homomorphism $\phi\in\X_{g,n}$,
the number of fixed points $\fix_{\gamma}\left(\phi\right)$ is precisely
the number of lifts of $C_{\gamma}$ to a closed geodesic in the degree-$n$
covering corresponding to $\phi$. Indeed, the results of this paper
are crucial ingredients in a subsequent work \cite{magee2020random}
which gives new results on spectral gaps of random covers of a closed
surface. 

Another source of motivation is the rich theory that has been discovered
around similar questions when surface groups are replaced by free
groups (e.g.~\cite{nica1994number,PP15,MPunitary,hanany2020word}
and see $\S\S$\ref{subsec:Related-works-II:free groups} below).
Expanding this theory to other groups is challenging, as the presence
of a relation between the generators presents a fundamental difficulty
that is not present for free groups. Surface groups, among the best
understood and best behaved one-relator groups, are a natural starting
point for this quest. To overcome the difficulty brought up by the
existence of a relation, we develop in this work new machinery, both
in representation theory of $S_{n}$ and in combinatorial group theory. 

\subsubsection*{Expected number of fixed points}

Recall that the expectation $\mathbb{E}_{g,n}\left[\fix_{\gamma}\right]$
is the average number of fixed points in $\phi\left(\gamma\right)$
where $\phi\colon\Gamma_{g}\to S_{n}$ is uniformly random. Our main
results are the following two theorems.
\begin{thm}
\label{thm:rational-approx}Fix $g\geq2$ and $\gamma\in\Gamma_{g}$.
Then there is an infinite sequence of rational numbers 
\[
a_{1}\left(\gamma\right),a_{0}\left(\gamma\right),a_{-1}\left(\gamma\right),a_{-2}\left(\gamma\right),\ldots
\]
such that for any $M\in\N$, as $n\to\infty$,
\begin{equation}
\mathbb{E}_{g,n}\left[\fix_{\gamma}\right]=a_{1}\left(\gamma\right)n+a_{0}\left(\gamma\right)+\frac{a_{-1}\left(\gamma\right)}{n}+\ldots\frac{a_{-\left(M-1\right)}\left(\gamma\right)}{n^{M-1}}+O\left(\frac{1}{n^{M}}\right).\label{eq:Laurent-polynomial}
\end{equation}
\end{thm}

\begin{thm}
\noindent \label{thm:expected-fixed-points-bounded}If $\gamma\in\Gamma_{g}$
is not the identity then, as $n\to\infty$,
\[
\E_{g,n}[\fix_{\gamma}]=O(1).
\]
In fact, if $q\in\N$ is maximal such that $\gamma=\gamma_{0}^{~q}$
for some $\gamma_{0}\in\Gamma$, then, as $n\to\infty$,
\begin{align*}
\E_{g,n}[\fix_{\gamma}] & =d(q)+O\left(\frac{1}{n}\right),
\end{align*}
where $d(q)$ is the number of divisors function. In other words,
$a_{1}(\gamma)=0$ and $a_{0}(\gamma)=d(q)$.
\end{thm}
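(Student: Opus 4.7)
The plan is to reformulate $\E_{g,n}[\fix_\gamma]$ as a count of combinatorial sub-structures sitting inside a random covering surface, and then evaluate that count using an Euler-characteristic-driven integration formula. First, since $\fix_\gamma(\phi)=\#\bigl\{ i:\phi(\gamma)(i)=i\bigr\}$, the expectation equals the average number of pairs $(\phi,\tilde o)$ in which $\tilde o$ is a point of the fiber above the basepoint of $\Sigma_g$ at which a representative loop for $\gamma$ lifts to a closed loop. Each such pointed closed lift lives inside a small combinatorial neighbourhood of the $\gamma$-cycle in the associated cover, and it is this neighbourhood that one wants to isolate.

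Using the tiled-surface machinery of the paper, I would model these neighbourhoods by a family of tiled surfaces $Y$, each equipped with a marked closed lift of $\gamma$; a $\core/\pcore$ minimization together with an inclusion--exclusion should produce a decomposition
\[
\E_{g,n}[\fix_\gamma]=\sum_{Y} c_Y\,\E_{g,n}[\emb_Y]
\]
with integer coefficients $c_Y$, where $\emb_Y$ is the natural count of embeddings of $Y$ into the random $\phi$. The main integration formula of the paper then provides, for each $Y$, an asymptotic expansion
\[
\E_{g,n}[\emb_Y]=n^{\chi(Y)}\bigl(b_0(Y)+b_1(Y)\,n^{-1}+b_2(Y)\,n^{-2}+\cdots\bigr),
\]
and it is this expansion (via the same framework) that simultaneously establishes Theorem~\ref{thm:analytic-function} and controls each summand above.

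With the decomposition in hand, I would carry out a topological classification of the tiled surfaces $Y$ contributing to the sum. Because $\gamma\neq1$, no $Y$ can realize $\chi(Y)\geq1$: a disk-witness would force $\gamma$ to be null-homotopic, so the linear-in-$n$ term vanishes and $a_{-1}(\gamma)=0$. The case $\chi(Y)=0$ is realized precisely by annular tiled surfaces whose core circle represents a root of $\gamma$: writing $\gamma=\gamma_0^{\,q}$ with $\gamma_0$ not a proper power, for each divisor $d\mid q$ the annular cover whose core represents $\gamma_0^{\,d}$ carries a closed lift of $\gamma=(\gamma_0^{\,d})^{q/d}$. Since centralizers of non-identity elements in $\Gamma_g$ (for $g\geq2$) are infinite cyclic, these annular models for distinct $d\mid q$ are pairwise non-isomorphic and exhaust the $\chi=0$ contributions, yielding $d(q)$ summands each of leading coefficient $1$. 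All remaining $Y$ have $\chi(Y)\leq-1$ and contribute $O(n^{-1})$, so summing gives $\E_{g,n}[\fix_\gamma]=d(q)+O(n^{-1})$.

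The main obstacle I expect is the topological classification at the critical level $\chi(Y)=0$. One has to show that beyond the annular divisor-models no other tiled surface survives the $\core$/resolution procedure with $\chi=0$, and simultaneously verify that the combinatorial coefficients $c_Y$ assign multiplicity exactly $1$ to each such annulus, so that their total is genuinely $d(q)$ rather than a weighted variant. This is the step where the maximality of $q$, the rigidity of cyclic centralizers in surface groups of genus $\geq2$, and the fine structure of the resolution theory of tiled surfaces must be combined to rule out spurious contributions and any overcounting arising when a would-be root is itself a proper power.
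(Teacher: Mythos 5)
Your overall strategy — reduce $\E_{g,n}[\fix_\gamma]$ to an embedding count over a finite family of tiled surfaces, sort those surfaces by Euler characteristic, and read off $d(q)$ from the $\chi=0$ stratum — is the paper's strategy. But there is a genuine gap at the very step you treat as a black box, namely the claimed asymptotic expansion
\[
\E_{g,n}[\emb_Y]=n^{\chi(Y)}\bigl(b_{0}(Y)+b_{1}(Y)\,n^{-1}+\cdots\bigr).
\]
This expansion is simply \emph{false} for an arbitrary compact tiled surface $Y$. The paper is explicit about this (see its \S\ref{subsec:Overview-of-the-paper}): if one can glue octagons onto $Y$ to form $Y'$ with larger Euler characteristic, then $\E_{n}^{\emb}(Y)\ge\E_{n}^{\emb}(Y')$ forces $\E_{n}^{\emb}(Y)$ to grow faster than $n^{\chi(Y)}$. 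The expansion you want holds only when $Y$ is \emph{boundary reduced} (giving $\E_{n}^{\emb}(Y)=O(n^{\chi(Y)})$, Propositions \ref{prop:xi-bound-for-boundary reduced} and Theorem \ref{thm:E_n-emb-exact-expression}), and holds with leading coefficient $1$ only when $Y$ is \emph{strongly boundary reduced} (Proposition \ref{prop:xi-bound-for-strongly-boundary reduced}). Your decomposition must therefore be engineered so that every term is boundary reduced and every term of maximal $\chi$ is strongly boundary reduced. This is exactly the role of the resolution $\mathcal{R}(\core(\langle\gamma\rangle),\chi_{\max})$ built via the growing process (Definition \ref{def:growing process}, Theorem \ref{thm:existence-of-resolution}, Proposition \ref{prop:resolutions of core surfaces}); without that machinery your sum over $Y$ has no reason to consist of terms you can actually estimate, and the $O(n^{-1})$ error at the end cannot be justified.

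Two further, smaller points. First, no inclusion–exclusion with integer signs is needed: the paper's notion of resolution (Definition \ref{def:resolutions}) gives a \emph{unique} factorization of each morphism $Y\to Z$, so the coefficients $c_Y$ are all $1$ (Lemma \ref{lem:resolution-sum-of-probabilities}); if you really introduce signed coefficients you would have to verify cancellations rather than just bound magnitudes, which is a different and harder bookkeeping problem. Second, the classification of the $\chi=0$ stratum you flag as the ``main obstacle'' is in fact the easy part once the right framework is in place: Proposition \ref{prop:resolutions of core surfaces} shows the $\chi=\chi_{\max}$ elements of the resolution are exactly the core surfaces $\core(K)$ for $K\in\mog(\langle\gamma\rangle)$, and Corollary \ref{cor:resolution of a cyclic subgroup} identifies these as $\core(\langle\gamma_0^{\,m}\rangle)$ for $m\mid q$, using that infinite-index finitely generated subgroups of $\Gamma_g$ are free — not a statement about centralizers per se. The hard part of the theorem lies upstream, in producing a resolution whose terms are controllable; your proposal does not supply that ingredient.
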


For example, consider the element $a$ in $\Gamma_{2}=\left\langle a,b,c,d\,\middle|\,\left[a,b\right]\left[c,d\right]\right\rangle $.
This element is not a proper power and so $\mathbb{E}_{2,n}\left[\fix_{a}\right]=1+O\left(n^{-1}\right)$
by Theorem \ref{thm:expected-fixed-points-bounded}. By Theorem \ref{thm:rational-approx},
this average can be approximated to any order $n^{-M}$ by a rational
function in $n$. In this particular case this rational function can
be computed to get for, e.g., $M=5$, 
\[
\mathbb{E}_{2,n}\left[\fix_{a}\right]=1+\frac{1}{n^{2}}+\frac{2}{n^{3}}+\frac{10}{n^{4}}+O\left(\frac{1}{n^{5}}\right).
\]

Given a finite group $G$, the number of homomorphisms $\Gamma_{g}\to G$
is related to the Witten zeta function of $G$, 
\[
\zeta^{G}\left(s\right)\eqdf\sum_{\chi\in\mathrm{Irr}G}\chi\left(1\right)^{-s},
\]
the summation being over the isomorphism classes of irreducible complex
representations of $G$. These functions were introduced by Zagier
\cite{Zagier} after Witten's work in \cite{Witten1991}. The connection
is given by 
\begin{equation}
\left|\Hom\left(\Gamma_{g},G\right)\right|=\left|G\right|^{2g-1}\zeta^{G}\left(2g-2\right).\label{eq:Hurwitz}
\end{equation}
This result goes back to Hurwitz \cite{hurwitz1902ueber}, who gave
a more general formula for arbitrary Fuchsian groups (a proof in English
is given in \cite[Prop. 3.2]{LiebeckShalev}). It is also sometimes
called `Mednykh’s formula’ in the literature after \cite{Mednyhk}.
For the case $G=S_{n}$, the zeta function $\zeta^{S_{n}}$ was studied
in \cite{Lulov96,muller2002character,LiebeckShalev,gamburd2006poisson}.
Inter alia, these works show that for every $s>0$,
\[
\zeta^{S_{n}}\left(s\right)\underset{n\to\infty}{\to}2.
\]
Moreover, their results yield an asymptotic expansion in $n$ which
approximates $\zeta^{S_{n}}\left(s\right)$ as $n\to\infty$, in a
similar manner to the one in Theorem \ref{thm:rational-approx}. As
such, their results can be thought of as the special case of $\gamma=1$
of a version of Theorem \ref{thm:rational-approx}. We elaborate more
in $\S\S$\ref{subsec:Overview-of-this}.

\subsubsection*{Common fixed points of subgroups}

Our proof also yields the following more general result that concerns
not only elements of $\Gamma_{g}$ but also f.g.~(finitely generated)
subgroups. We write $J\fg\Gamma_{g}$ to denote a f.g.~subgroup $J$
of $\Gamma_{g}$. Given $J\fg\Gamma_{g}$ and $\phi\in\mathbb{X}_{g,n}$,
we let $\fix_{J}\left(\phi\right)$ denote the number of elements
in $1,\ldots,n$ that are fixed by all permutations in $\phi\left(J\right)$:
\[
\fix_{J}\left(\phi\right)\eqdf\left|\left\{ i\in\left\{ 1,\ldots,n\right\} \,\middle|\,\sigma\left(i\right)=i~\mathrm{for~all}~\sigma\in\phi\left(J\right)\right\} \right|.
\]
In particular, $\fix_{\left\langle \gamma\right\rangle }=\fix_{\gamma}$
for all $\gamma\in\Gamma_{g}$. For $J\fg\Gamma_{g}$ we let 
\begin{equation}
\chi_{\max}\left(J\right)\eqdf\max\left\{ \chi\left(K\right)\,\middle|\,J\le K\fg\Gamma_{g}\right\} \label{eq:x_max}
\end{equation}
denote the largest Euler characteristic\footnote{Every f.g.~subgroup $K\le\Gamma_{g}$ is either a free group, in
which case $\chi\left(K\right)=1-\mathrm{rank}\left(K\right)$, or
a surface group of genus $h\ge g$, in which case $\chi\left(K\right)=2-2h$.} of a f.g.~subgroup $K\fg\Gamma_{g}$ which contains $J$. Note that
$\chi\left(\Gamma_{g}\right)=2-2g\le\chi_{\max}\left(J\right)\le1$
and that $\chi_{\max}\left(J\right)\ge\chi\left(J\right)$. It is
also true that $\chi_{\max}\left(J\right)=1$ if and only if $J=\left\{ 1\right\} $,
and $\chi_{\max}\left(J\right)\ge0$ if and only if $J$ is cyclic.
In addition, we let
\[
\mog\left(J\right)\eqdf\left\{ K\fg\Gamma_{g}\,\middle|\,J\le K~\mathrm{and}~\chi\left(K\right)=\chi_{\max}\left(J\right)\right\} 
\]
denote the set of ``maximal overgroups'' -- f.g.~subgroups achieving
the maximum from (\ref{eq:x_max}). This set is always finite --
see Corollary \ref{cor:MOG is finite}.
\begin{thm}
\label{thm:subgroups}Let $J\fg\Gamma_{g}$ be a finitely generated
subgroup. Then 
\[
\mathbb{E}_{g,n}\left[\fix_{J}\right]=\left|\mog\left(J\right)\right|\cdot n^{\chi_{\max}\left(J\right)}+O\left(n^{\chi_{\max}\left(J\right)-1}\right).
\]
\end{thm}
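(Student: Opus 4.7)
The plan is to reduce the computation to counting pointed $n$-sheeted covers of $\Sigma_g$ in which $J$ lifts to loops at the basepoint, and then to asymptotically expand this count as a sum indexed by the f.g.\ overgroups of $J$, with the main term coming from $\mog(J)$.

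First, I would rewrite $\fix_J(\phi)=|\{i\in\{1,\ldots,n\}\,:\,J\le\Stab_{\phi}(i)\}|$. Exchanging sum and expectation and invoking the $S_n$-equivariance of $\mu_{g,n}$ under relabeling the fiber, one obtains
\[
\mathbb{E}_{g,n}[\fix_J] \;=\; n\cdot\Pr\nolimits_{\phi\sim\mu_{g,n}}[J\le\Stab_{\phi}(1)].
\]
Via the correspondence of \S\S\ref{subsec:tiled-surfaces}, the event $J\le\Stab_{\phi}(1)$ is the event that every element of $J$ lifts to a based loop in the random pointed cover $(X_{\phi},\tilde{o})\to(\Sigma_{g},o)$. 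I would then stratify this event by the image subgroup $K\fg\Gamma_{g}$ that arises when a Stallings-style core surface for $J$ is pushed into $X_\phi$, writing
\[
\Pr[J\le\Stab_{\phi}(1)] \;=\; \sum_{K\fg\Gamma_{g},\,J\le K} \Pr[\text{image subgroup}=K].
\]

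Using the new integration methodology developed in the paper, each summand should be of the form $n^{\chi(K)-1}$ times a leading coefficient equal to $1$, plus lower-order corrections; after multiplying by the factor of $n$ out front, the $K$-contribution becomes $n^{\chi(K)}+O(n^{\chi(K)-1})$. The subgroups $K$ with $\chi(K)=\chi_{\max}(J)$ are by definition the elements of $\mog(J)$, which is finite by Corollary \ref{cor:MOG is finite}; summing over them produces the main term $|\mog(J)|\cdot n^{\chi_{\max}(J)}$. All remaining overgroups satisfy $\chi(K)\le\chi_{\max}(J)-1$ and must be gathered into the error $O(n^{\chi_{\max}(J)-1})$.

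The main obstacle I anticipate is two-fold. First, I must verify that the integration formula produces \emph{exactly} the coefficient $1$ for each maximal $K$, so that the cardinality $|\mog(J)|$ emerges with no extra weighting; this will likely require a M\"obius-type inversion on the poset of f.g.\ overgroups of $J$, exploiting the fact that maximal overgroups are incomparable. Second, I need a sufficiently uniform bound on the growth of the number of f.g.\ overgroups of a given Euler characteristic to ensure that the (potentially infinite) tail of non-maximal contributions really is absorbed into the stated error term. These two technical points are where the bulk of the work should lie, and both should be accessible through the tiled surface machinery together with the finiteness properties recorded in the paper.
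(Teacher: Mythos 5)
Your opening reduction is correct: by $S_n$-invariance, $\mathbb{E}_{g,n}[\fix_J]=n\cdot\Pr[J\le\Stab_\phi(1)]$, and by the covering-space correspondence this equals the expected number of lifts of $\core(J)$, so $\mathbb{E}_{g,n}[\fix_J]=\mathbb{E}_n(\core(J))$ (this is Lemma \ref{lem:E(fix) as expected number of lifts} together with Corollary \ref{cor:using core surfaces for fix points}). The gap appears in the proposed stratification and in the asymptotic you hope to attach to each stratum.

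First, the decomposition $\Pr[J\le\Stab_\phi(1)]=\sum_{K\ge J}\Pr[\text{image subgroup}=K]$ is not the right object. The set of f.g.\ overgroups of $J$ is infinite (all finite-index $K\supseteq J$ appear), and there is no uniform tail bound available: you would need to control how many overgroups occur at each Euler characteristic and how large each $\Pr[\cdot=K]$ is, uniformly, and this is precisely the hard analytic point the paper goes to great lengths to avoid. The mechanism the paper uses instead is a \emph{finite} resolution $\mathcal{R}(\core(J),\chi_{\max}(J))$ of $\core(J)$ (Definition \ref{def:finite resolution for compact surfaces}, Theorem \ref{thm:existence-of-resolution}): every morphism $\core(J)\to X_\phi$ factors \emph{uniquely} through some $W_f$ in this finite list followed by an embedding, so $\mathbb{E}_n(\core(J))=\sum_{f\in\mathcal{R}}\mathbb{E}_n^{\emb}(W_f)$ exactly (Lemma \ref{lem:resolution-sum-of-probabilities}). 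Finiteness comes from a bounded ``growing process,'' not from enumerating overgroups, so no M\"obius inversion on the overgroup poset is ever performed; the incomparability of maximal overgroups plays no role.

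Second, the elements $W_f$ of the resolution are boundary-reduced tiled surfaces, and in general they are \emph{not} core surfaces of any subgroup $K$. Only the $W_f$ realizing the maximal Euler characteristic turn out to be core surfaces, and they are precisely $\{\core(K):K\in\mog(J)\}$; this is the content of Proposition \ref{prop:resolutions of core surfaces} (properties \textbf{R3}, \textbf{R4}) and is where $|\mog(J)|$ actually comes from. For these maximal $W_f$ the surface is \emph{strongly} boundary reduced, and Proposition \ref{prop:xi-bound-for-strongly-boundary reduced} gives $\mathbb{E}_n^{\emb}(W_f)=n^{\chi(W_f)}(1+O(n^{-1}))$, which is the ``leading coefficient $1$'' you want. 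For the non-maximal $W_f$, which have no interpretation as an ``image subgroup,'' one only has boundary-reducedness, and the correct input is Proposition \ref{prop:xi-bound-for-boundary reduced} yielding the weaker bound $\mathbb{E}_n^{\emb}(W_f)=O(n^{\chi(W_f)})$. This dichotomy between boundary reduced and strongly boundary reduced tiled surfaces is the crux of the argument, and your proposal does not engage with it: without it one cannot even show $\mathbb{E}_n^{\emb}$ is as small as $n^{\chi}$, because for non-boundary-reduced $Y$ one has $\mathbb{E}_n^{\emb}(Y)\ge\mathbb{E}_n^{\emb}(Y')$ whenever $Y'$ is an extension of higher Euler characteristic, so $n^{\chi(Y)}$ is not the correct order.

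So the overall shape (main term from $\mog(J)$, error from lower-characteristic pieces) matches the paper, but the actual stratification object (a finite resolution into boundary-reduced tiled surfaces, rather than the poset of f.g.\ overgroups) and the actual analytic input (Propositions \ref{prop:xi-bound-for-boundary reduced} and \ref{prop:xi-bound-for-strongly-boundary reduced}, built on the estimate for skew-module matrix coefficients in Proposition \ref{prop:bound-for-matrix-coef-in-terms-of-D}) are different, and the two issues you flagged as technical obstacles are in fact symptoms of the approach being set up against the wrong indexing set.
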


Theorem \ref{thm:subgroups} generalizes Theorem \ref{thm:expected-fixed-points-bounded},
as for $\gamma\ne1$, $\chi_{\max}\left(\left\langle \gamma\right\rangle \right)=0$
and 
\[
\mog\left(\left\langle \gamma\right\rangle \right)=\left\{ \,\left\langle \gamma_{0}^{~m}\right\rangle \big|~\,m|q\,\right\} .
\]
The analog of Theorem \ref{thm:rational-approx} holds too for f.g.~subgroups:
there is an infinite sequence of rational numbers
\[
a_{1}\left(J\right),a_{0}\left(J\right),a_{-1}\left(J\right),\ldots
\]
such that for any $M\in\N$, as $n\to\infty$, 
\[
\mathbb{E}_{g,n}\left[\fix_{J}\right]=\sum_{i=-(M-1)}^{1}a_{i}\left(J\right)n^{i}+O\left(n^{-M}\right),
\]
and such that $a_{1}=a_{0}=\ldots=a_{\chi_{\max}\left(J\right)+1}=0$
and $a_{\chi_{\max}\left(J\right)}=\left|\mog\left(J\right)\right|$.\\

\subsection{Related works I: Mirzakhani's integral formulas}

In \cite{Mirzakhani}, Mirzakhani considered a similar problem to
the one in this paper. Instead of integrating over the finite space
$\Hom(\Gamma_{g},S_{n})$, Mirzakhani obtained formulas for the integral
of geometric functions over the the moduli space $\mathcal{M}_{g}$
of complete hyperbolic surfaces of genus $g$, with respect to the
Weil-Petersson volume form $d\mathrm{Vol}_{\mathrm{wp}}$.

The geometric functions that Mirzakhani considers are very much like
our Wilson loops. Given any closed curve $\gamma\in\Sigma_{g}$, for
any complete hyperbolic metric $J$ on $\Sigma_{g}$ there is a unique
curve isotopic to $\gamma$ that is shortest with respect to $J$,
and the length of this curve is called the length of $\gamma$, denoted
by $\ell_{J}([\gamma])$. Here $[\gamma]$ is the isotopy class of
$\gamma$. 

Mirzakhani requires that $\gamma$ be simple, meaning that it does
not intersect itself. This condition is not present in the current
paper and can be viewed as an advantage of our work. To obtain a function
on $\mathcal{M}_{g}$, given a continuous function $f:\R_{+}\to\R_{+}$,
Mirzakhani considers the averaged function
\[
f_{\gamma}(J)\eqdf\sum_{[\gamma']\in\MCG(\Sigma_{g}).[\gamma]}f(\ell_{J}([\gamma']))
\]
where $\MCG(\Sigma_{g})$ is the mapping class group of $\Sigma_{g}$.
Because of the averaging over the mapping class group, $f_{\gamma}$
descends to a function on $\mathcal{M}_{g}$. This type of averaging
is not necessary in the current paper because $\X_{g,n}=\Hom(\Gamma_{g},S_{n})$
is already finite; here $\X_{g,n}$ is playing the role of the Teichmüller
space and not the moduli space. In \cite[Thm. 8.1]{Mirzakhani}, Mirzakhani
gives a formula for 
\[
\int_{\mathcal{M}_{g}}f_{\gamma}\,d\mathrm{Vol}_{\mathrm{wp}}
\]
in terms of integrating $f$ against Weil-Petersson volumes of moduli
spaces. The power of this formula is that in the same paper \cite{Mirzakhani},
Mirzakhani gives explicit recursive formulas for the calculations
of Weil-Petersson volumes. For a more detailed discussion of these
formulas, the reader should consult Wright's survey of Mirzakhani's
work \cite[\S 4]{wright2020tour}.

\subsection{Related works II: Free groups\label{subsec:Related-works-II:free groups}}

Let $\F_{r}$ denote a free group on $r$ generators. For $\gamma\in\F_{r}$,
the problem of integrating the Wilson loop 
\[
\fix_{\gamma}(\phi)\eqdf\fix(\phi(\gamma)),\quad\fix_{\gamma}:\Hom(\F_{r},S_{n})\to\R
\]
over $\Hom(\F_{r},S_{n})$ with respect to the uniform probability
measure is a basic problem that serves as a precursor to that of the
current paper. As mentioned above, many of the considerations used
with free groups no longer apply in the present paper. Indeed, $\Hom(\F_{r},S_{n})$
can be identified with $S_{n}^{r}$ and hence techniques for integrating
over groups are relevant in a much more direct way than in the case
of $\Hom\left(\Gamma_{g},S_{N}\right)$.

Despite being an easier problem, the theory is very rich. It was proved
by Nica in \cite{nica1994number} that the analog of Theorem \ref{thm:expected-fixed-points-bounded}
holds for $\E_{\F_{r},n}[\fix_{\gamma}]$. A significantly sharper
result was given by Puder and Parzanchevski in \cite{PP15} where
they proved that if $\gamma\in\F_{r}$, then as $n\to\infty$
\[
\E_{\F_{r},n}\left[\fix_{\gamma}\right]=1+\frac{c(\gamma)}{n^{\pi(\gamma)-1}}+O\left(\frac{1}{n^{\pi(\gamma)}}\right)
\]
where $\pi(\gamma)\in\{0,\ldots,r\}\cup\{\infty\}$ is an algebraic
invariant of $\gamma$ called the \emph{primitivity rank} and $c(\gamma)\in\N$
is explained in terms of the enumeration of special subgroups of $\F_{r}$
determined by $\gamma$. Obtaining a similarly sharp result in the
context of $\Gamma_{g}$ is an interesting problem that should be
taken up in the future.

Similar Laurent series expansions for the expected value of $\chi_{\gamma}$
on $\Hom(\F_{r},G(n))$ have been proved to exist, and studied, when
$G(n)$ is one of the families of compact Lie groups $\mathrm{U}(n),\mathrm{O}(n),\mathrm{Sp}(n)$
\cite{MPunitary,MPorthsymp}, when $G(n)$ is a generalized symmetric
group \cite{MPsurfacewords}, and when $G(n)=\mathrm{\mathrm{GL}}_{n}(\mathbb{F}_{q})$,
where $\mathbb{F}_{q}$ is a fixed finite field \cite{West}. In all
cases $\chi$ is taken to be a natural character. For example, when
$G(n)=\mathrm{U}(n)$, one such $\chi$ is the trace of the matrix
in the group. Moreover, for $G(n)=\mathrm{U}(n),\mathrm{O}(n),\mathrm{Sp}(n)$
and $\chi$ the trace, all the coefficients of the Laurent series
are understood \cite{MPunitary,MPorthsymp}. 

In works undertaken after the completion of this paper, the first
named author has obtained analogs of Theorem \ref{thm:rational-approx}
and the first part of Theorem \ref{thm:expected-fixed-points-bounded}
for\footnote{In this case, instead of the uniform measure on $\Hom(\Gamma_{g},S_{n})$
that we use here, one should use a natural measure on $\Hom(\Gamma_{g},\mathrm{U(}n))$
that arises from the Atiyah-Bott-Goldman symplectic form \cite{Goldman,AB}
on (a non-singular part of) the character variety $\Hom(\Gamma_{g},\mathrm{U(}n))/\mathrm{U}(n)$.} $\Hom(\Gamma_{g},\mathrm{U(}n))$ and the standard matrix trace \cite{magee2021surface-unitary1,magee2021surface-unitary2}.
The methods used in \emph{(ibid.) }are inspired by those of the current
work. 

\subsection{Related works III: Non-commutative probability}

Theorem \ref{thm:expected-fixed-points-bounded} has a direct consequence
in the setting of Voiculescu's non-commutative probability theory.
Following \cite[Def. 2.2.2]{VDN}, a \emph{$C^{*}$-probability space
}is a pair $(\B,\tau)$ where $\B$ is a unital $C^{*}$-algebra and
$\tau$ is a state\footnote{A state on a unital $C^{*}$ algebra is a positive linear functional
such that $\tau(1)=1.$} on $\B$. We say that a sequence $\{(\B,\tau_{n})\}_{n=1}^{\infty}$
of $C^{*}$-probability spaces converges to $(\B,\tau)$ if for all
elements $b\in\B$
\[
\lim_{n\to\infty}\tau_{n}(b)=\tau(b).
\]
The functions $\tau_{n}:\Gamma_{g}\to\R$ defined by $\tau_{n}(\gamma)\eqdf n^{-1}\E_{g,n}[\fix_{\gamma}]$
extend to states on the full group $C^{*}$-algebra $C^{*}(\Gamma{}_{g})$
of $\Gamma_{g}$. There is also a unique state $\tau_{\reg}$ on $C^{*}(\Gamma_{g})$
that satisfies $\tau_{\reg}(g)=0$ for $g\neq1$; we use the subscript
$\reg$ because the GNS representation of $\tau_{\reg}$ is the \emph{left
regular representation}. One has the following corollary of Theorem
\ref{thm:expected-fixed-points-bounded}:
\begin{cor}
\label{cor:The--probability-spaces-converge}The $C^{*}$-probability
spaces $(C^{*}(\Gamma_{g}),\tau_{n})$ converge to $(C^{*}(\Gamma_{g}),\tau_{\reg})$
as $n\to\infty$.
\end{cor}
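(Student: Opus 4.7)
The plan is to reduce the convergence statement from the whole $C^*$-algebra to group elements via linearity, density, and uniform boundedness, and then to apply Theorem \ref{thm:expected-fixed-points-bounded} directly.

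First I would verify the setup. The functions $\tau_n(\gamma)\eqdf n^{-1}\E_{g,n}[\fix_\gamma]$ extend linearly to $\C[\Gamma_g]$, and one checks that each $\tau_n$ is a positive definite tracial functional: positivity because for any $c\in\C[\Gamma_g]$, $\tau_n(c^*c)=n^{-1}\E_{g,n}[\trace(\phi(c)^*\phi(c))]\geq 0$, and traciality because $\trace$ on the permutation representation is a trace. Hence $\tau_n$ extends to a state on $C^*(\Gamma_g)$ with $\|\tau_n\|=1$. Similarly $\|\tau_{\reg}\|=1$.

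Next, by the uniform bound $\|\tau_n\|=\|\tau_{\reg}\|=1$ and the fact that $\C[\Gamma_g]$ is norm-dense in $C^*(\Gamma_g)$, a standard three-term inequality shows that to prove $\tau_n(b)\to\tau_{\reg}(b)$ for every $b\in C^*(\Gamma_g)$, it suffices to prove it on a generating set, namely on group elements $\gamma\in\Gamma_g$. The case $\gamma=1$ is immediate: $\fix_1(\phi)\equiv n$, so $\tau_n(1)=1=\tau_{\reg}(1)$.

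The remaining case $\gamma\neq 1$ is where Theorem \ref{thm:expected-fixed-points-bounded} enters as the main input. That theorem gives $\E_{g,n}[\fix_\gamma]=d(q)+O(n^{-1})=O(1)$ as $n\to\infty$, and therefore
\[
\tau_n(\gamma)=\frac{\E_{g,n}[\fix_\gamma]}{n}=O(n^{-1})\xrightarrow[n\to\infty]{}0=\tau_{\reg}(\gamma).
\]
There is essentially no obstacle once Theorem \ref{thm:expected-fixed-points-bounded} is in hand; the only nontrivial bookkeeping is the verification that $\tau_n$ is a genuine state (so that the density-plus-uniform-bound reduction applies), but this follows from the permutation character being positive definite and tracial.
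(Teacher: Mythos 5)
Your proof is correct and fills in exactly the standard argument the paper leaves implicit when it calls this a corollary of Theorem \ref{thm:expected-fixed-points-bounded}: the paper gives no written proof of this statement, and the expected route is precisely the one you take (verify that each $\tau_n$ is a trace state coming from an average of finite-dimensional permutation characters, reduce by linearity and density of $\C[\Gamma_g]$ with the uniform bound $\|\tau_n\|=1$, handle $\gamma=1$ trivially, and for $\gamma\neq1$ use $\E_{g,n}[\fix_\gamma]=O(1)$ to get $\tau_n(\gamma)\to0$). One small remark: you only need the weaker estimate $\E_{g,n}[\fix_\gamma]=o(n)$ for the conclusion, which the paper singles out as already being new.
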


It is reasonable to hope that similar results can be obtained when
$\Gamma_{g}$ is replaced by any residually finite one-relator group
(cf. $\S\S$\ref{subsec:residual-finiteness}). We view Corollary
\ref{cor:The--probability-spaces-converge} as an important first
step in this program.

\subsection{\label{subsec:residual-finiteness}Related works IV: Residual finiteness}

A f.g.~discrete group $\Lambda$ is \emph{residually finite} if for
any non-identity $\lambda\in\Lambda$ there is a finite index subgroup
$H\le\Lambda$ such that $\lambda\notin H$. The residual finiteness
of $\Gamma_{g}$ has been known for a long time \cite{baumslag1962generalised,Hempel}.
More recently, various quantifications of residual finiteness and
of the related property of LERF\footnote{Locally extended residual finiteness.}
have been proposed by various authors \cite{Bou-Rabee,LLM}. Theorem
\ref{thm:expected-fixed-points-bounded} can serve as a strengthening
of the residual finiteness of $\Gamma_{g}$, as we now explain.

Note that residual finiteness of a group $\Lambda$ is equivalent
to, for all $e\neq\lambda\in\Lambda$, the existence of $n\in\N$
and $\phi\in\Hom(\Lambda,S_{n})$ such that $\phi(\lambda)\ne1$.
Theorem \ref{thm:expected-fixed-points-bounded} combined with Markov's
inequality implies the following quantitative version of residual
finiteness:
\begin{cor}
Given a non-identity element $e\ne\gamma\in\Gamma_{g}$, for large
enough $n$,
\begin{equation}
\frac{\left|\left\{ \phi\in\Hom(\Gamma_{g},S_{n})\,:\,\text{\ensuremath{\phi(\gamma)\neq1}}\right\} \right|}{|\Hom(\Gamma_{g},S_{n})|}\ge1-\frac{d\left(q\right)}{n}-O\left(\frac{1}{n^{2}}\right),\label{eq:quantitative-RF}
\end{equation}
where $q$ and $d\left(q\right)$ are as in Theorem \ref{thm:expected-fixed-points-bounded},
and the implied constant in the big-$O$ term depends on $\gamma$.
\end{cor}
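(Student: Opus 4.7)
The plan is to deduce this corollary directly from Theorem \ref{thm:expected-fixed-points-bounded} by applying Markov's inequality to the non-negative function $\fix_\gamma$ on $\X_{g,n}$. The crucial elementary observation is that for any $\sigma \in S_n$, one has $\sigma = 1$ if and only if $\fix(\sigma) = n$; since $\fix_\gamma$ is bounded above by $n$, the event $\{\phi : \phi(\gamma) = 1\}$ coincides exactly with $\{\phi : \fix_\gamma(\phi) \ge n\}$.

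Given this identification, Markov's inequality applied to $\fix_\gamma$ with respect to the uniform probability measure $\mu_{g,n}$ yields
$$\mu_{g,n}\bigl(\{\phi : \phi(\gamma) = 1\}\bigr) \;=\; \mu_{g,n}\bigl(\{\phi : \fix_\gamma(\phi) \ge n\}\bigr) \;\le\; \frac{\E_{g,n}[\fix_\gamma]}{n}.$$
By Theorem \ref{thm:expected-fixed-points-bounded}, for large enough $n$ the numerator equals $d(q) + O(n^{-1})$, with the implied constant depending on $\gamma$. Substituting gives an upper bound of $d(q)/n + O(n^{-2})$ on the probability of the bad event, and taking complements produces the desired inequality (\ref{eq:quantitative-RF}).

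There is essentially no obstacle to overcome, since the whole argument is a two-step calculation: apply Markov, then substitute the main theorem. The only bookkeeping subtlety is that the implied constant in (\ref{eq:quantitative-RF}) inherits its $\gamma$-dependence from the corresponding constant in Theorem \ref{thm:expected-fixed-points-bounded}, and the quantifier ``for large enough $n$'' absorbs the threshold past which that theorem's asymptotic expansion is valid.
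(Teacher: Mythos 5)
Your proof is correct and is exactly the argument the paper indicates (the paper states the corollary follows from Theorem \ref{thm:expected-fixed-points-bounded} combined with Markov's inequality, without spelling out the details). The key observation that $\phi(\gamma)=1$ if and only if $\fix_\gamma(\phi)=n$, together with Markov applied to the non-negative random variable $\fix_\gamma$, is precisely the intended route.
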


In fact, the techniques of this paper can be used to show that, for
example, for every $m\in\mathbb{N}$, the expected value of $\fix_{\gamma}^{~m}$
is of the form $c\left(q\right)+O\left(n^{-1}\right)$, where $q$
is as in Theorem \ref{thm:expected-fixed-points-bounded} and $c\left(q\right)$
is a positive integer. This would yield a probability bound similar
to (\ref{eq:quantitative-RF}) but of the form $1-\frac{c\left(q\right)}{n^{m}}+O\left(n^{-m-1}\right)$.

\subsection{Related works V: Benjamini-Schramm convergence.}

In \cite{BenjaminiSchramm} Benjamini and Schramm introduced a notion
of convergence of a sequence of finite graphs to a limiting graph,
known now as \emph{Benjamini-Schramm} convergence. This concept was
extended to convergence of sequences of Riemannian manifolds in \cite{ABBGNRS1,ABBGNRS2}.
Theorem \ref{thm:expected-fixed-points-bounded} has consequences
for the Benjamini-Schramm convergence of random covers of Riemannian
surfaces; there are various of these consequences but we present just
one representative one here\footnote{This consequence of Theorem \ref{thm:expected-fixed-points-bounded}
was first pointed out by Baker and Petri in \cite{BakerPetri}.}.
\begin{cor}
Let $X$ be a closed hyperbolic surface of genus $\geq2$. With respect
to Benjamini-Schramm distance, uniformly random degree-$n$ covering
spaces of $X$ converge in probability as $n\to\infty$ to the hyperbolic
upper half plane $\mathbb{H}$.
\end{cor}

Concretely this means that for any $L>0$ and $\varepsilon>0$, if
$X_{n}$ denotes a random degree-$n$ cover of $X$ (as above), then
a.a.s.~as $n\to\infty$, 

\[
\frac{\mathrm{area}\left(X_{n}^{<L}\right)}{\mathrm{area}\left(X_{n}\right)}<\varepsilon,
\]
where $X_{n}^{<L}$ is the points of $X_{n}$ with local injectivity
radius $<L$. To see how this follows from Theorem \ref{thm:expected-fixed-points-bounded},
viewing $L$ as a constant, any point in $X_{n}^{<L}$ is in a neighborhood,
with bounded area depending on $L$, of some simple closed geodesic
of $X_{n}$ with length $<2L$ \cite[proof of Thm. 4.1.6]{Buser}.
Any such geodesic covers a closed (possibly non-primitive) geodesic
in $X$ of length $<2L$ and these in turn correspond to a finite
list of conjugacy classes in $\Gamma_{g}$. Starting with a conjugacy
class $[\gamma]$, the number of corresponding closed lifted geodesics
in $X_{n}$ is at most $\fix_{\gamma}$. Using Markov's inequality
with Theorem \ref{thm:expected-fixed-points-bounded} gives therefore
a.a.s. that the number of simple closed geodesics of $X_{n}$ with
length $<2L$ is bounded (depending on $L$). This means $\mathrm{area}\left(X_{n}^{<L}\right)$
is bounded a.a.s. and as $\mathrm{area}\left(X_{n}\right)$ is linear
in $n$, this completes the proof.

\subsection{Structure of the proofs and the issues that arise\label{subsec:Overview-of-the-paper}}

The reader of the paper is advised to first read this $\S$\ref{subsec:Overview-of-the-paper},
and then $\S$\ref{sec:Proofs-of-main-theorems}, where all the ideas
of the paper are brought together to give concise proofs of Theorems
\ref{thm:rational-approx}, \ref{thm:expected-fixed-points-bounded},
and \ref{thm:subgroups}, before reading the other sections.

There are two main ideas of the paper, that we will discuss momentarily.
Here we give a `high-level' account of the strategy of proving our
main theorems. At times we oversimplify definitions to be more instructive.
Let us fix $g=2$, and discuss only Theorems \ref{thm:rational-approx}
and \ref{thm:expected-fixed-points-bounded}. The extension of these
results from cyclic groups to more general finitely generated subgroups
is along the same lines. So fix $\gamma\in\Gamma_{2}$.

Firstly, we view $\X_{n}=\X_{2,n}$ as a space of random coverings
of a fixed genus 2 surface $\Sigma_{2}$. By fixing an octagonal fundamental
domain of $\Sigma_{2}$, each covering of $\Sigma_{2}$ is tiled by
octagons. This leads us to the notion of a \emph{tiled surface,} defined
precisely in Definition \ref{def:tiled-surface}. A tiled surface
involves not just a tiling, but a labeling of the edges of the tiling
by generators of the fundamental group of $\Sigma_{2}$. Hence all
the main theorems can be reinterpreted in terms of random tiled surfaces
that are called $X_{\phi}$ for $\phi\in\X_{n}$.

The first observation is that $\mathbb{E}_{n}\left[\fix_{\gamma}\right]=\mathbb{E}_{2,n}\left[\fix_{\gamma}\right]$,
the expected number of fixed points of $\gamma$ under $\phi\in\X_{n}$,
is the expected number of times that we see a fixed annulus $A$,
specified by $\gamma$, immersed in the random tiled surface $X_{\phi}$.
However, this annulus needs not be embedded. On the other hand, it
is possible to produce a finite collection $\mathcal{R}$ of tiled
surfaces, each of which has an immersed copy of $A$, such that 
\begin{equation}
\E_{n}\left[\fix_{\gamma}\right]=\sum_{Y\in\mathcal{R}}\E_{n}^{\emb}(Y).\label{eq:resolution-consequence}
\end{equation}
where $\E_{n}^{\emb}(Y)$ is the expected number of times that $Y$
is \uline{embedded} in the random $X_{\phi}$.

We formalize types of collections $\mathcal{R}$ that have the above
property in Definition \ref{def:resolutions}; we call them \emph{resolutions}
(of $A$). Of course, there is a great deal of flexibility in how
$\mathcal{R}$ is chosen; we will come back to this point shortly.
The benefit to having (\ref{eq:resolution-consequence}) brings us
to the first main idea of the paper:

\emph{We have a new method of calculating $\E_{n}^{\emb}(Y)$, using
the representation theory of symmetric groups $S_{n}$ and more specifically,
the approach to the representation theory of $S_{n}$ developed by
Vershik and Okounkov in \cite{VershikOkounkov}.}

This methodology is developed in $\S$\ref{sec:The-probability-of-an-embedded-tiled-surface}.
The necessary background on representation theory is given in $\S$\ref{sec:Representation-theory-of-symmetric-group},
and in $\S$\ref{sec:Preliminary-representation-theor} we prove some
preliminary representation theoretic results needed for $\S$\ref{sec:The-probability-of-an-embedded-tiled-surface}.
The reader may be interested to see that Theorem \ref{thm:rational-approx}
has, at its source, Proposition \ref{prop:holomorphic-matrix-coefficients}.
See also the overview of $\S$\ref{sec:The-probability-of-an-embedded-tiled-surface}
in $\S\S$\ref{subsec:Overview-of-this}.

This new methodology to calculate $\E_{n}^{\emb}(Y)$ is sufficient
to prove Theorem \ref{thm:rational-approx}. However, in the proof
of Theorem \ref{thm:expected-fixed-points-bounded}, a critical issue
now intervenes. We expect, based on experience with similar projects
(e.g. \cite{PP15,MPunitary}) that 
\begin{equation}
\E_{n}^{\emb}(Y)\approx n^{\chi(Y)}\label{eq:guess}
\end{equation}
as $n\to\infty$. However, this cannot always be the case. For example,
if, roughly speaking, it is possible to glue some octagons to $Y$
to increase the Euler characteristic, forming $Y'$, then the observation
that $\E_{n}^{\emb}(Y)\geq\E_{n}^{\emb}(Y')$ breaks (\ref{eq:guess}).
Then it is not unsurprising that the bounds we obtain for $\E_{n}^{\emb}(Y)$
do not always agree with (\ref{eq:guess}). 

On the other hand, if $Y$ has special properties that we call \emph{boundary
reduced} and \emph{strongly boundary} \emph{reduced, }then we can
get appropriate bounds on\emph{ $\E_{n}^{\emb}(Y)$. }We give the
precise definitions of these properties in Definitions \ref{def:br}
and \ref{def:sbr}. They involve forbidding certain constellations
from appearing in the boundary of $Y$. Even though these constellations
are dictated by representation theory, forbidding them remarkably
relates to natural geometric properties of $Y$. For example, if $Y$
is not boundary reduced, then it is possible to add octagons to $Y$
to decrease the number of edges in its boundary. To give some more
intuition, being boundary reduced can be viewed as a discrete analog
of a hyperbolic surface having geodesic boundary. This means that
these properties are closely related with the problem of finding shortest
representatives (with respect to word length) of elements of $\Gamma_{g}$,
that is addressed by Dehn's algorithm \cite{Dehn}.

If $Y$ is boundary reduced, then we can prove (Theorem \ref{thm:E_n-emb-exact-expression}
and Proposition \ref{prop:xi-bound-for-boundary reduced})
\[
\E_{n}^{\emb}(Y)=O\left(n^{\chi(Y)}\right),
\]
and if $Y$ is strongly boundary reduced, we can prove (Theorem \ref{thm:E_n-emb-exact-expression}
and Proposition \ref{prop:xi-bound-for-strongly-boundary reduced})
\[
\E_{n}^{\emb}(Y)=n^{\chi\left(Y\right)}\left(1+O\left(n^{-1}\right)\right)
\]
(see, again, Section \ref{subsec:Overview-of-this} for a more detailed
overview). Therefore, to prove Theorem \ref{thm:expected-fixed-points-bounded},
it suffices to produce resolutions of the annulus $A$ where we can
control which elements are (strongly) boundary reduced, control their
Euler characteristics, and count the number of elements with maximal
Euler characteristic. The design of these resolutions is the second
main theme of the paper.

For any tiled surface $Z$, we describe an algorithm to produce finite
resolutions of $Z$ with careful control on their properties as above.
This is the main topic of $\S$\ref{sec:resolutions}.\emph{ }Precisely
defining the annulus $A$ that should be used as input, as well as
its generalization for non-cyclic subgroups $J\leq\Gamma$, and counting
the outputs of our algorithm, requires introducing the concept of
a \emph{core surface of a subgroup $J\leq\Gamma$. }For example, above,
$A$ should be taken to be the core surface of $\langle\gamma\rangle$.
The theory of core surfaces that we develop in a companion paper \cite{MPcore}
is analogous to that of Stallings' core graphs\emph{ }for subgroups
of free groups due to Stallings \cite{stallings1983topology}, and
we hope that the results therein may be of independent interest.

\subsection{Notation\label{subsec:Notation}}

Write $\N$ for the natural numbers $1,2,\ldots$. For $n\in\N$ we
use the notation $[n]$ for the set $\{1,\ldots,n\}$. For $m\leq n$,
$m,n\in\N$ we write $[m,n]$ for the set $\{m,m+1,\ldots,n\}$. If
$A$ and $B$ are sets we write $A\backslash B$ for the elements
of $A$ that are not in $B$. We write $(n)_{\ell}$ for the Pochhammer
symbol
\[
(n)_{\ell}\eqdf n(n-1)\ldots(n-\ell+1).
\]

If $V$ is a vector space we write $\End(V)$ for the linear endomorphisms
of $V$. If $V$ is a unitary representation of some group we write
$\check{V}$ for the dual representation. If $P_{1},\ldots,P_{k}$
are a series of expressions we write $\mathbf{1}_{\left\{ P_{1},\ldots,P_{k}\right\} }$
for a value which is 1 if all the statements $P_{i}$ are true and
$0$ else. If $V$ is a vector space we write $\mathrm{Id}_{V}$ for
the identity operator on that space. All integrals over finite sets
are with respect to the uniform probability measure\emph{ }on the
set\emph{. }If $X$ is a CW-complex we write $X^{(i)}$ for its $i$-skeleton.
If we use the symbol $\pm$ more than once in the same expression
or equation, we mean that the same sign is chosen each time. If implied
constants in big-O notation depend on other constants, we indicate
this by adding the constants as a subscript to the $O$, for example,
$O_{\varepsilon}(f(n))$ means the implied constant depends on $\varepsilon$.
We use Vinogradov notation $f(n)\ll g(n)$ to mean that there are
constants $n_{0}\ge0$ and $C_{0}>0$ such that for $n>n_{0}$, $|f(n)|\leq C_{0}g(n)$.
We add subscripts to indicate dependence of the implied constants
on other quantities or objects. If $a,b$ are elements of the same
group, we write $[a,b]\eqdf aba^{-1}b^{-1}$ for their commutator.

\subsection*{Acknowledgments }

We thank Nir Avni, Frédéric Naud, Mark Powell, and Henry Wilton for
helpful discussions related to this work. The research was supported
by the Israel Science Foundation: ISF grant 1071/16 of D.P. This project
has received funding from the European Research Council (ERC) under
the European Union’s Horizon 2020 research and innovation programme
(grant agreement No 850956 and grant agreement No 949143).

\section{Resolutions of core surfaces \label{sec:resolutions}}

\subsection{Tiled surfaces and core surfaces\label{subsec:tiled-surfaces-and-core-surfaces}}

In this $\S\S$\ref{subsec:tiled-surfaces-and-core-surfaces} we summarize
some definitions and results from \cite{MPcore}\footnote{A significant chunk of \cite{MPcore} was part of the first version
of the current paper. As we believe the theory of core surfaces is
of independent interest, and in order to keep the current paper to
a manageable size, we decided to develop an expanded version of this
theory in a separate paper.}. 

\subsubsection{Tiled surfaces}

Consider the construction of the surface $\Sigma_{g}$ from a $4g$-gon
by identifying its edges in pairs according to the pattern $a_{1}b_{1}a_{1}^{-1}b_{1}^{-1}\ldots a_{g}b_{g}a_{g}^{-1}b_{g}^{-1}$.
This gives rise to a CW-structure on $\Sigma_{g}$ consisting of one
vertex (denoted $o$), $2g$ oriented $1-$cells (denoted $a_{1},b_{1},\ldots,a_{g},b_{g}$)
and one $2$-cell which is the $4g$-gon glued along $4g$ $1$-cells\footnote{We use the terms vertices and edges interchangeably with $0$-cells
and $1$-cells, respectively.}. See Figure \ref{fig:Sigma_2} (in our running examples with $g=2$,
we denote the generators of $\Gamma_{2}$ by $a,b,c,d$ instead of
$a_{1},b_{1},a_{2},b_{2}$). We identify $\Gamma_{g}$ with $\pi_{1}\left(\Sigma_{g},o\right)$,
so that in the presentation (\ref{eq:presentation of Gamma}), words
in the generators $a_{1},\ldots,b_{g}$ correspond to the homotopy
class of the corresponding closed paths based at $o$ along the $1$-skeleton
of $\Sigma_{g}$. 

\begin{figure}
\begin{centering}
\includegraphics[scale=0.5]{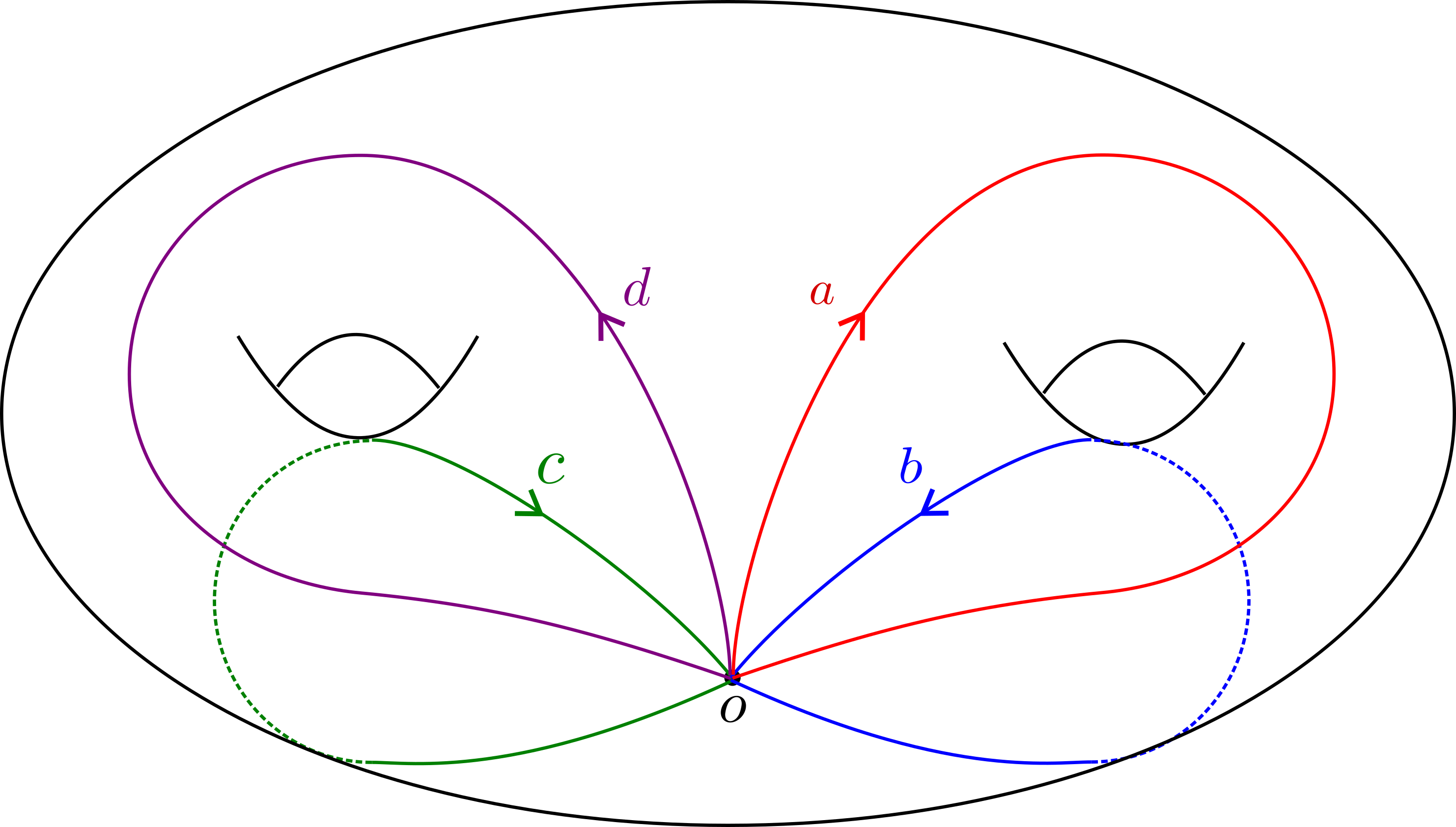}
\par\end{centering}
\caption{\label{fig:Sigma_2}The fixed CW-structure on $\Sigma_{2}$}
\end{figure}

Note that every covering space $p\colon\Upsilon\to\Sigma_{g}$ inherits
a CW-structure from $\Sigma_{g}$: the vertices are the pre-images
of $o$, and the open $1$-cells (2-cells) are the connected components
of the pre-images of the open 1-cells (2-cells, respectively) in $\Sigma_{g}$.
In particular, this is true for the universal covering space $\tsg$\marginpar{$\protect\tsg$}
of $\Sigma_{g}$, which we can now think of as a CW-complex. A sub-complex
of a CW-complex is a subspace consisting of cells such that if some
cell belongs to the subcomplex, then so are the cells of smaller dimension
at its boundary. 
\begin{defn}[Tiled surface]
\label{def:tiled-surface}\cite[Definition 3.1]{MPcore} A \emph{tiled
surface} $Y$ is a sub-complex of a (not-necessarily-connected) covering
space of $\Sigma_{g}$. In particular, a tiled surface is equipped
with the restricted covering map $p\colon Y\to\Sigma_{g}$ which is
an immersion. We write $\v\left(Y\right)$ for the number of vertices
of $Y$, $\e\left(Y\right)$ for the number of edges and $\f\left(Y\right)$\marginpar{$\text{\ensuremath{{\scriptstyle \protect\v\left(Y\right),\protect\e\left(Y\right),\protect\f\left(Y\right)}}}$}
for the number of $4g$-gons. 
\end{defn}

Alternatively, instead of considering a tiled surface $Y$ to be a
complex equipped with a restricted covering map, one may consider
$Y$ to be a complex as above with directed and labeled edges: the
directions and labels ($a_{1},b_{1},\ldots,a_{g},b_{g}$) are pulled
back from $\Sigma_{g}$ via $p$. These labels uniquely determine
$p$ as a combinatorial map between complexes. Figures \ref{fig:Sigma_2}
and \ref{fig:core surfaces - examples} feature examples of tiled
surfaces.

Note that a tiled surface is not always a surface: it may also contain
vertices or edges with no $2$-cells incident to them. However, as
$Y$ is a sub-complex of a covering space of $\Sigma_{g}$, namely,
of a surface, any neighborhood of $Y$ inside the covering is a surface,
and it is sometimes beneficial to think of $Y$ as such. 
\begin{defn}[Thick version of a tiled surface]
\label{def:thick-version}\cite[Definition 3.2]{MPcore} Given a
tiled surface $Y$ which is a subcomplex of the covering space $\Upsilon$
of $\Sigma_{g}$, adjoin to $Y$ a small, closed, tubular neighborhood
in $\Upsilon$ around every edge and a small closed disc in $\Upsilon$
around every vertex. The resulting closed surface, possibly with boundary,
is referred to as the \emph{thick version of $Y$}.

We let $\partial Y$ denote the boundary of the thick version of $Y$
and $\d\left(Y\right)$\marginpar{$\partial Y,\protect\d\left(Y\right)$}
denote the number of edges along $\partial Y$ (so if an edge of $Y$
does not border any $4g$-gon, it is counted twice). 
\end{defn}

In particular, $\d\left(Y\right)=2\e\left(Y\right)-4g\f\left(Y\right)$.
We stress that we do not think of $Y$ as a sub-complex, but rather
as a complex for its own sake, which happens to have the capacity
to be realized as a subcomplex of a covering space of $\Sigma_{g}$.
See \cite[Section 3]{MPcore} for a more detailed discussion. 

It is occasionally useful, for example in Section \ref{sec:The-probability-of-an-embedded-tiled-surface},
to augment the tiled surface $Y$ by adding some new half-edges. Here,
formally, a half-edge is a copy of the interval $[0,\frac{1}{2})$
which is an (open) half of an edge of a covering space of $\Sigma_{g}$.
\begin{defn}[Tiled surface with hanging half-edges]
\label{def:tiled surfaces with hanging half edges}\cite[Section 3.2]{MPcore}
Let $Y$ be a tiled surface which is a subcomplex of the covering
space $p\colon\Upsilon\to\Sigma_{g}$. We denote by $Y_{+}$\marginpar{$Y_{+}$}
the tiled surface $Y$ together with half edges of $\Upsilon$ which
do not belong to $Y$ but are incident to vertices of $Y$. Every
half-edge of $Y_{+}$ added to $Y$ in this manner is called a \emph{hanging
half-edge. }The thick version of $Y_{+}$ is, as above, $Y_{+}$ together
with a small, closed, tubular neighborhood in $\Upsilon$ around every
edge or hanging half-edge, and a small closed disc in $\Upsilon$
around every vertex. We denote by $\partial Y_{+}$ the boundary of
the think version of $Y_{+}$.
\end{defn}

Note that there are exactly $4g$ half-edges incident to every vertex
in $Y_{+}$: some of them originate from edges in $Y$ and some are
hanging half-edges. 

\paragraph*{Morphisms of tiled surfaces}

If $Y_{1}$ and $Y_{2}$ are tiled surfaces, a \emph{morphism} from
$Y_{1}$ to $Y_{2}$ is a map of $CW$-complexes which maps $i$-cells
to $i$-cells for $i=0,1,2$ and respects the directions and labels
of edges. Equivalently, this is a morphism of CW-complexes which commutes
with the restricted covering maps $p_{j}\colon Y_{j}\to\Sigma_{g}$
($j=1,2$). In particular, the restricted covering map from a tiled
surface to $\Sigma_{g}$ is itself a morphism of tiled surfaces. It
is an easy observation that every morphism of tiled surfaces is an
immersion (locally injective). 

\subsubsection{Blocks and Chains}

Some of the notions we use below are taken from \cite{BirmanSeries}.
See \cite[Section 3.2]{MPcore} for a more detailed account.

Given a covering space $\Upsilon$ of $\Sigma_{g}$, every path in
the $1$-skeleton $\Upsilon^{\left(1\right)}$ corresponds to a word
in $\left\{ a_{1}^{\pm1},\ldots,b_{g}^{\pm1}\right\} $. A path that
follows a (part of the) boundary of a $4g$-gon is called a \textbf{\emph{block}}\emph{.
}If it has length at least $2g+1$ it is called a \textbf{\emph{long
block}}, and if it has length exactly $2g$, it is called a \textbf{\emph{half-block}}.
If a (non-cyclic) block of length $b$ sits along the boundary of
a $4g$-gon $O$, the \textbf{\emph{complement}}\textbf{ of the block
}is the block of length $4g-b$ consisting of the complement set of
edges along $O$, so the block and its complement share the same starting
point and the same terminal point.

A \textbf{\emph{chain}}\emph{ }is a path in $\Upsilon^{\left(1\right)}$
that consists of a sequence of blocks $b_{1},\ldots,b_{r}$, such
that if the last vertex of $b_{i}$ and the first vertex of $b_{i+1}$
is $v$, there is exactly one edge incident to $v$ between the last
edge of $b_{i}$ and the first edges of $b_{i+1}$. In other words,
if the $4g$-gons corresponding to the blocks $b_{1},\ldots,b_{r}$
are $O_{1},\ldots,O_{r}$, then $O_{i}$ and $O_{i+1}$ share an edge
$e$ with an endpoint $v$, and $b_{i}$ ends at $v$ and $b_{i+1}$
starts at $v$. See Figure \ref{fig:a complement of a long chain}.
A \textbf{\emph{long chain}} is a chain with corresponding blocks
of lengths 
\[
2g,2g-1,2g-1,\ldots,2g-1,2g.
\]
A \textbf{\emph{half-chain }}is a \emph{cyclic }chain (so the corresponding
path is closed) consisting of blocks each of which is of length $2g-1$.
The \textbf{complement of a long chain }is the chain with blocks of
lengths $2g-1,2g-1,\ldots,2g-1$ which sits along the other side of
the $4g$-gons bordering the long chain and with the same starting
point and endpoint. Note that the complement of a long chain is shorter
by two edges from the long chain (see Figure \ref{fig:a complement of a long chain}).
The \textbf{complement of a half-chain} is defined as follows. If
the half-chain sits along the boundary of the $4g$-gons $O_{1},\ldots,O_{r}$,
its complement is the half-chain sitting along the other sides of
these $4g$-gons: a block (of length $2g-1$) of the half-chain along
$O_{i}$ is replaced by the path of length $2g-1$ along $O_{i}$,
with starting and terminal points one edge away from the starting
and terminal points, respectively, of the block. The complement of
a half-chain has the same length as the original half-chain. The left
part of Figure \ref{fig:core surfaces - examples} illustrates two
complementing half-chains of length $6$ each (with two octagons in
between).

\begin{figure}
\begin{centering}
\includegraphics{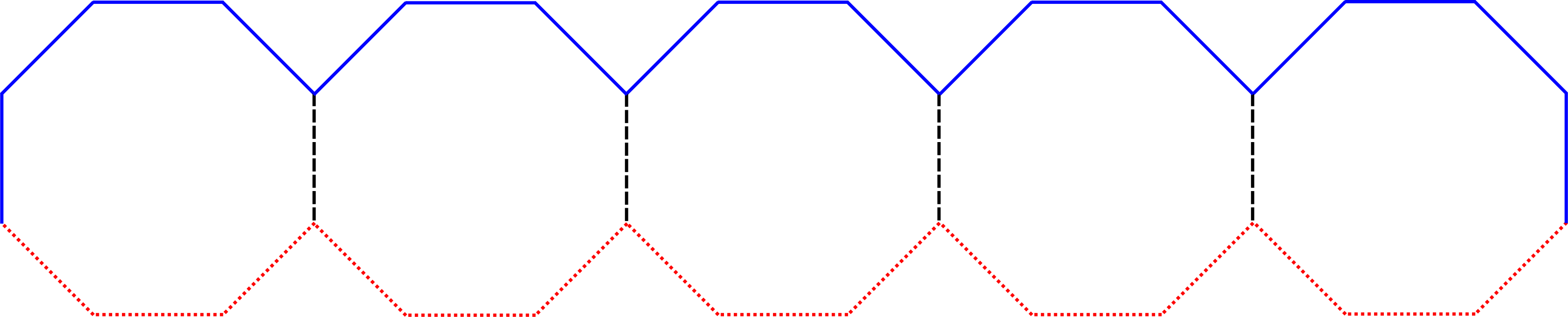}
\par\end{centering}
\caption{\label{fig:a complement of a long chain}A long chain of total length
$17$ (blocks of sizes $4,3,3,3,4$, in blue) and its complement of
length $15$ (in red)}
\end{figure}

A \textbf{\emph{boundary cycle}} of $Y$ is a cycle in $Y^{\left(1\right)}$
corresponding to an oriented boundary component of the thick version
of $Y$ (see Definition \ref{def:thick-version}). We always choose
the orientation so that there are no $4g$-gons to the immediate \textbf{left
}of the boundary component as it is traversed. Therefore boundary
components of $Y$ correspond to unique cycles. Note that $\d\left(Y\right)$
is equal to the sum over boundary cycles of $Y$ of the number of
edges in each such cycle.

\subsubsection{Boundary reduced and strongly boundary reduced tiled surfaces\label{subsec:BR and SBR}}

The following definitions came up from our results in representation
theory in $\S$\ref{sec:The-probability-of-an-embedded-tiled-surface},
but they fit perfectly with classical results in combinatorial group
theory \cite{Dehn} and in particular with \cite{BirmanSeries}. 
\begin{defn}[Boundary reduced]
\label{def:br} A tiled surface $Y$ is \emph{boundary reduced} if
no boundary cycle of $Y$ contains a long block or a long chain.
\end{defn}

In particular, if $Y$ is boundary reduced, then every path that reads
$\left[a_{1},b_{1}\right]\ldots\left[a_{g},b_{g}\right]$ is not only
closed, but there is also a $4g$-gon attached to it. We also need
a stronger version of this property.
\begin{defn}[Strongly boundary reduced]
\label{def:sbr}A tiled surface $Y$ is \emph{strongly boundary reduced}
if no boundary cycle of $Y$ contains a half-block or is a half-chain.
\end{defn}

Because a long block contains (at least two) half-blocks and a long
chain contains (two) half-blocks, a strongly boundary reduced tiled
surface is in particular boundary reduced. The relevance of the notions
of being (strongly) boundary reduced is that our techniques for estimating
$\E_{n}^{\emb}\left(Y\right)$ for a tiled surface $Y$ only give
the right type of estimates when $Y$ is boundary reduced -- see
Proposition \ref{prop:xi-bound-for-boundary reduced}. If $Y$ is
strongly boundary reduced we get even better estimates -- see Proposition
\ref{prop:xi-bound-for-strongly-boundary reduced}. 

Let $Y$ be a compact tiled surface which is a subcomplex of the covering
space $\Upsilon$ of $\Sigma_{g}$. In \cite[Section 4]{MPcore} we
describe the ``boundary reduced closure'' $\br\left(Y\hookrightarrow\Upsilon\right)$
of $Y$ in $\Upsilon$ which is the smallest intermediate tiled surface
which is boundary reduced. By (ibid, Proposition 4.6), $\br\left(Y\hookrightarrow\Upsilon\right)$
is compact too. Likewise, $\sbr\left(Y\hookrightarrow\Upsilon\right)$,
the strongly boundary reduced closure, is the smallest intermediate
tiled surface which is strongly boundary reduced, but this one in
not always compact. Our resolutions in Section \ref{subsec:Resolutions}
are based on a ``compromise'' between these two types of closures.

\subsubsection{Core surfaces\label{subsec:Core-surfaces}}

Finally, let us define the main object which was introduced and analyzed
in \cite{MPcore}, with motivation coming from the current paper.
In analogy to Stallings core graphs and their role in the study of
free groups and their subgroups, we introduced the notion of \emph{core
surfaces} which relates to subgroups of $\Gamma_{g}$:
\begin{defn}[Core surfaces]
\label{def:core surface}\cite[Definition 1.1]{MPcore} Given a subgroup
$1\ne J\le\Gamma_{g}=\pi_{1}\left(\Sigma_{g},o\right)$, consider
the covering space $p\colon\Upsilon\to\Sigma_{g}$ corresponding to
$J$, so $\Upsilon=J\backslash\tsg$. Define the \textbf{core surface
of $J$}, denoted $\core\left(J\right)$\marginpar{$\protect\core\left(J\right)$},
as the tiled surface which is a sub-complex of $\Upsilon$ as follows:
$\left(i\right)$ take the union of all shortest-representative cycles
in the 1-skeleton $\Upsilon^{\left(1\right)}$ of every free-homotopy
class of essential closed curve in $\Upsilon$, and $\left(ii\right)$
add every connected component of the complement which contains finitely
many $4g$-gons.

For completeness define the core surface of the trivial subgroup to
be the $0$-dimensional tiled surface consisting of a single vertex.
\end{defn}

Note that the quotient $\Upsilon=J\backslash\tsg$ is invariant under
conjugation of $J$, so $\core\left(J\right)$ depends only on the
conjugacy class of $J$ in $\Gamma_{g}$. Figure \ref{fig:core surfaces - examples}
gives two examples of core surfaces. As another example, if $J$ is
of finite index in $\Gamma$, $\core\left(J\right)$ is identical
to $J\backslash\tsg$ and is a compact closed surface.

\begin{figure}
\begin{centering}
\includegraphics{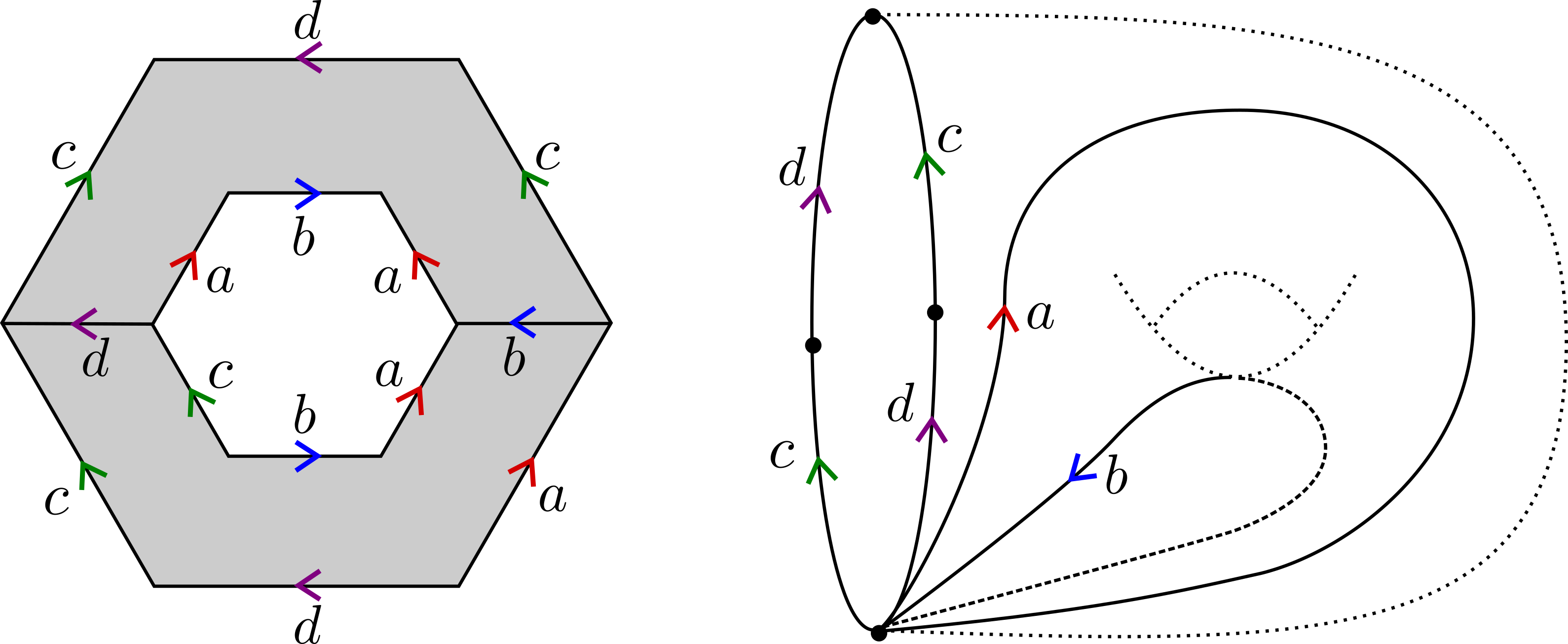}
\par\end{centering}
\caption{\label{fig:core surfaces - examples}Fix $g=2$ and let $\Gamma_{2}=\left\langle a,b,c,d\,\middle|\,\left[a,b\right]\left[c,d\right]\right\rangle $.
On the left is the core surface $\protect\core\left(\left\langle aba^{-2}b^{-1}c\right\rangle \right)$.
It consists of $12$ vertices, $14$ edges and two octagons, and topologically
it is an annulus. On the right is the core surface $\protect\core\left(\left\langle a,b\right\rangle \right)$.
It consists of four vertices, six edges and one octagon, and topologically
it is a genus-1 torus with one boundary component.}
\end{figure}

In \cite{MPcore} we give an intrinsic definition of a core surface
and show there is one-to-one correspondence between core surfaces
(labeled by $a_{1},\ldots,b_{g}$) and conjugacy classes of subgroups
of $\Gamma_{g}$, we provide a ``folding process'' to construct
$\core\left(J\right)$ from a finite generating set of $J$ (provided,
of course, that $J$ is f.g.), and prove basic properties of core
surfaces. In particular, $\core\left(J\right)$ is connected and strongly
boundary reduced (ibid, Proposition 5.3), and whenever $J$ is f.g.,
$\core\left(J\right)$ is compact (ibid, Proposition 5.8). We also
show that whenever $H\le J\le\Gamma_{g}$, the natural morphism between
the corresponding covering spaces $H\backslash\tsg\to J\backslash\tsg$,
restricts to a morphism $\core\left(H\right)\to\core\left(J\right)$.

\subsection{Expectations and probabilities of tiled surfaces\label{subsec:Expectations-and-probabilities}}

\subsubsection*{Correspondence between $\protect\Hom\left(\Gamma_{g},S_{n}\right)$
and $n$-sheeted covering spaces of $\Sigma_{g}$}

Let $M$ be a connected topological space with basepoint $m$. Consider
$n$-sheeted covering spaces of $M$ with the fiber above $m$ labeled
by $\left[n\right]$, so that every point in the fiber has a different
label. If $M$ is ``nice enough'', in particular if $M$ is a surface,
there is a one-to-one correspondence between these labeled $n$-sheeted
covering spaces and the set of homomorphisms $\mathrm{Hom}\left(\pi_{1}\left(M,m\right),S_{n}\right)$
(see, for instance, \cite[pages 68-70]{hatcher2005algebraic}). If
$\hat{M}$ is a labeled $n$-sheeted covering space and $p\colon\hat{M}\to M$
the covering map, the corresponding homomorphism $\theta\colon\pi_{1}\left(M,m\right)\to S_{n}$
is given as follows: for $h\in\pi_{1}\left(M,m\right)$, consider
$\gamma$, a closed path in $M$, based at $m$, which represents
$h$. Then $\theta\left(h\right)\left(i\right)=j$ if and only if
the lift of $\gamma$ at the point $i$ ends at the point $j$.\footnote{There is a subtle issue here with the direction in which permutations
are multiplied in $S_{n}$. The map $\theta\colon\pi_{1}\left(M,m\right)\to S_{n}$
as defined here is a homomorphism only if permutations in $S_{n}$
are composed from right to left. We refer to this issue in the case
of $M=\Sigma_{2}$ in the beginning of Section \ref{sec:The-probability-of-an-embedded-tiled-surface}.}

In our case, this translates to a one-to-one correspondence between
the representation space $\X_{g,n}=\mathrm{Hom}\left(\Gamma_{g},S_{n}\right)$
and labeled $n$-sheeted covering spaces of $\Sigma_{g}$ (pointed
at $o$). For $\phi\colon\Gamma_{g}\to S_{n}$, denote the corresponding
covering space by $p_{\phi}\colon X_{\phi}\to\Sigma_{g}$. As explained
above, $X_{\phi}$ inherits a CW-structure from $\Sigma_{g}$ and
is, therefore, a tiled surface. The fiber above $o$ is precisely
the vertices of $X_{\phi}$, and they are labeled by $\left[n\right]$
in this construction.

\subsubsection*{Expected number of fixed points as expected number of lifts}

Given a compact tiled surface $Y$, we are interested in the expected
number of morphisms from $Y$ to a random $n$-covering of $\Sigma_{g}$,
namely, in
\[
\mathbb{E}_{n}\left(Y\right)\eqdf\mathbb{E}_{\phi\in\mathbb{X}_{g,n}}\left[\#\left\{ \mathrm{morphisms}~Y\to X_{\phi}\right\} \right],
\]
where $\phi$ is sampled uniformly at random from $\mathbb{X}_{g,n}$.
Equivalently, this is the expected number of lifts of the restricted
covering map $p:Y\to\Sigma_{g}$ to the random $n$-covering $X_{\phi}$:
\[
\xymatrix{ & X_{\phi}\ar@{->>}[d]^{p_{\phi}}\\
Y\ar[r]_{p}\ar@{-->}[ur] & \Sigma_{g}
}
\]
Note that if $Y$ is connected and $\phi\in\X_{g,n}$, the number
of morphisms $Y\to X_{\phi}$ is at most $n$, as any vertex of $Y$
can be lifted to one of the $n$ vertices of $X_{\phi}$, and each
such lift can be extended in at most one way to a lift of the whole
of $Y$. For suitable choices of $Y$, $\E_{n}\left(Y\right)$ is
equal to the quantities $\E_{g,n}\left[\fix_{\gamma}\right]$ and
$\E_{g,n}\left[\fix_{J}\right]$ that feature in our main theorems
(Theorems \ref{thm:rational-approx}, \ref{thm:expected-fixed-points-bounded}
and \ref{thm:subgroups}):
\begin{lem}
\label{lem:E(fix) as expected number of lifts}Let $Y$ be a connected
tiled surface and $p\colon Y\to\Sigma_{g}$ the restricted covering
map. For arbitrary vertex $y\in Y$, assume that $p_{*}\left(\pi_{1}\left(Y,y\right)\right)$
is conjugate to $J\le_{\mathrm{f.g.}}\Gamma_{g}$ (as a subgroup of
$\pi_{1}\left(\Sigma_{g},o\right)=\Gamma_{g}$). Then for all $n\in\mathbb{N}$,
\[
\mathbb{E}_{n}\left(Y\right)=\mathbb{E}_{g,n}\left[\fix_{J}\right].
\]
In particular, for $J\le_{\mathrm{f.g.}}\Gamma_{g}$, 
\[
\mathbb{E}_{n}\left(\core\left(J\right)\right)=\mathbb{E}_{g,n}\left[\fix_{J}\right].
\]
\end{lem}

\begin{proof}
In fact, the equality holds at the level of the individual representation
$\phi\in\mathbb{X}_{n}=\Hom\left(\Gamma_{g},S_{n}\right)$: the number
of morphisms $Y\to X_{\phi}$ is equal to the number of common fixed
points $\fix_{J}\left(\phi\right)$. Indeed, because the number of
common fixed points of $\phi\left(J\right)$ is the same as the number
of fixed points of any conjugate, we may assume without loss of generality
that $p_{*}\left(\pi_{1}\left(Y,y\right)\right)=J$. Now, $i\in\left[n\right]$
is a common fixed point of $\phi\left(J\right)$ if and only if $J\le\pi_{1}\left(X_{\phi},v_{i}\right)$,
where $v_{i}$ is the vertex of $X_{\phi}$ labeled $i$, and $\pi_{1}\left(X_{\phi},v_{i}\right)$
is identified with the subgroup 
\[
\left(p_{\phi}\right)_{*}\left(\pi_{1}\left(X_{\phi},v_{i}\right)\right)\le\Gamma_{g}.
\]
By standard facts from the theory of covering spaces \cite[Propositions 1.33 and 1.34]{hatcher2005algebraic},
there is a lift of $p$ to $X_{\phi}$ mapping the vertex $y$ to
$v_{i}$ if and only if (the image in $\Gamma_{g}$ of) $\pi_{1}\left(Y,y\right)$
is contained in (the image in $\Gamma_{g}$ of) $\pi_{1}\left(X_{\phi},v_{i}\right)$,
and this lift, if exists, is unique.

The statement about core surfaces follows from the fact that (the
image in $\Gamma_{g}$ of) $\pi_{1}\left(\core\left(J\right)\right)$
is conjugate to $J$ \cite[Proposition 5.3]{MPcore}. 
\end{proof}
Another type of expectation will also feature in this work. Given
a compact tiled surface $Y$, denote 
\[
\mathbb{E}_{n}^{\emb}\left(Y\right)\eqdf\mathbb{E}_{\phi\in\mathbb{X}_{g,n}}\left[\#\left\{ \mathrm{injective~morphisms}~Y\ensuremath{\to X_{\phi}}\right\} \right],
\]
where the expectation is over a uniformly random $\phi\in\mathbb{X}_{g,n}$. 

\subsection{Resolutions\label{subsec:Resolutions}}
\begin{defn}[Resolutions]
\label{def:resolutions}A resolution $\mathcal{\mathcal{R}}$ of
a tiled surface $Y$ is a collection of morphisms of tiled surfaces
\[
\mathcal{R}=\left\{ f\colon Y\to W_{f}\right\} ,
\]
such that every morphism $h\colon Y\to Z$ of $Y$ into a tiled surface
$Z$ with no boundary decomposes uniquely as $Y\stackrel{f}{\to}W_{f}\stackrel{\overline{h}}{\hookrightarrow}Z$,
where $f\in\mathcal{R}$ and $\overline{h}$ is an \uline{embedding}.
\end{defn}

The purpose of introducing resolutions is the following obvious lemma.
Recall the notation $\mathbb{E}_{n}\left(Y\right)$ and $\mathbb{E}_{n}^{\mathrm{emb}}\left(Y\right)$
from Section \ref{subsec:Expectations-and-probabilities}.
\begin{lem}
\label{lem:resolution-sum-of-probabilities}If $Y$ is a compact tiled
surface and $\mathcal{R}$ is a finite resolution of $Y$, then
\begin{equation}
\mathbb{E}_{n}\left(Y\right)=\sum_{f\in\mathcal{R}}\mathbb{E}_{n}^{\emb}\left(W_{f}\right).\label{eq:resolution}
\end{equation}
\end{lem}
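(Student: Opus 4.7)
The proof is essentially a bijective decomposition followed by linearity of expectation. The plan is as follows.

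First I would note that each $X_\phi$ is a closed tiled surface, so $\partial X_\phi = \emptyset$, which means the defining property of a resolution applies to every morphism $h : Y \to X_\phi$. By Definition~\ref{def:resolutions}, such an $h$ factors uniquely as $Y \xrightarrow{f} W_f \xrightarrow{\bar{h}} X_\phi$ with $f \in \mathcal{R}$ and $\bar{h}$ an embedding. Hence for each fixed $\phi \in \mathbb{X}_n$, the assignment $h \mapsto (f, \bar{h})$ is a bijection between
\[
\{\text{morphisms } Y \to X_\phi\} \quad \text{and} \quad \bigsqcup_{f \in \mathcal{R}} \{\text{embeddings } W_f \hookrightarrow X_\phi\}.
\]
In particular, counting both sides,
\[
\#\{\text{morphisms } Y \to X_\phi\} = \sum_{f \in \mathcal{R}} \#\{\text{embeddings } W_f \hookrightarrow X_\phi\}.
\]
Since the resolution is finite, this is a finite sum, so we may take expectations over $\phi \in \mathbb{X}_n$ term by term.

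Applying linearity of expectation to the equality above yields
\[
\mathbb{E}_n(Y) = \mathbb{E}_{\phi \in \mathbb{X}_n}\bigl[\#\{\text{morphisms } Y \to X_\phi\}\bigr] = \sum_{f \in \mathcal{R}} \mathbb{E}_{\phi \in \mathbb{X}_n}\bigl[\#\{\text{embeddings } W_f \hookrightarrow X_\phi\}\bigr] = \sum_{f \in \mathcal{R}} \mathbb{E}_n^{\emb}(W_f),
\]
which is the desired identity.

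There is no real obstacle here; the only point to verify carefully is that the factorization property of a resolution is being applied to the right class of target tiled surfaces, namely the $X_\phi$, and these are closed (without boundary), which is exactly the hypothesis required by Definition~\ref{def:resolutions}. Uniqueness of the factorization is what makes the pointwise count a true equality (as opposed to merely an inequality), and finiteness of $\mathcal{R}$ is what lets us interchange sum and expectation without any convergence issue.
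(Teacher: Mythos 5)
Your proof is correct and is exactly the argument the paper has in mind; the paper labels this lemma ``obvious'' and omits the proof, but the intended reasoning is precisely the pointwise bijective decomposition of morphisms $Y \to X_\phi$ via the defining property of a resolution (applicable since $\partial X_\phi = \emptyset$), followed by linearity of expectation over the finite sum.
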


Our main goal in the rest of this subsection is to prove the existence
of a finite resolution whenever we are given a compact tiled surface
$Y$ -- this is the content of Theorem \ref{thm:existence-of-resolution}
below. This resolution will consist strictly of boundary reduced tiled
surfaces $W_{f}$, and some of these will even be strongly boundary
reduced. We shall make use of Theorem \ref{thm:existence-of-resolution}
mainly for $Y$ a core surface of a finitely generated subgroup of
$\Gamma$. In this case the resolution we construct has even nicer
properties -- see Proposition \ref{prop:resolutions of core surfaces}.

Ideally, we would have liked to get a resolution where all the elements
are strongly boundary reduced. Unfortunately, such a resolution does
not always exist. For example, when $g=2$ and $\Gamma_{2}=\left\langle a,b,c,d\,\middle|\,\left[a,b\right]\left[c,d\right]\right\rangle $,
the core surface $Y=\core\left(\left\langle \left[a,b\right]\right\rangle \right)$
does not admit such a resolution as can be inferred from \cite[Figure 4.2]{MPcore}.\\

To prove the existence of a resolution with nice properties, we first
define a process which outputs a ``compromise'' between the $\br$-closure
of a tiled surface and the $\sbr$-closure, introduced in \cite[Section 4]{MPcore}. 
\begin{defn}
\label{def:growing process}Fix $\chi_{0}\in\mathbb{Z}$. Assume that
$h\colon Y\to Z$ is a morphism between tiled surfaces where $Y$
is compact and $Z$ has no boundary. Let $W_{0}$ denote the $h$-image
of $Y$ in $Z$. Set $i=0$. Perform the following algorithm we call
the \emph{growing process}:
\begin{enumerate}
\item If one of the following conditions holds:
\begin{enumerate}
\item $W_{i}$ is strongly boundary reduced, or
\item $W_{i}$ is boundary reduced and $\chi\left(W_{i}\right)<\chi_{0}$,
\end{enumerate}
terminate and return $h\colon Y\to W_{i}$.
\item Obtain $W_{i+1}$ from $W_{i}$ by adding to $W_{i}$ (the closure
of) every $4g$-gon in $Z\setminus W_{i}$ which touches along its
boundary an edge of $\partial W_{i}$ which is part of a half-block
(this includes the case of a long block), a long chain or a half-chain.
Set $i:=i+1$ and return to item $1$.
\end{enumerate}
\end{defn}

It is clear that every step of this process is deterministic. Note
that if the process ends when $W_{i}$ is strongly boundary reduced,
then $W_{i}$ is the unique smallest strongly boundary reduced tiled
surface inside $Z$ containing $W_{0}$, denoted $\sbr(Y\hookrightarrow Z)$
\cite[Section 4]{MPcore}. (In general, $\sbr(Y\hookrightarrow Z)$
is not always compact, but in this case it is.) The growing process
always terminates after finitely many steps:
\begin{lem}
\label{lem:the growing process terminates}The process described in
Definition \ref{def:growing process} always terminates.
\end{lem}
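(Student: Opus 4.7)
The plan is to assume for contradiction that the growing process produces an infinite strictly increasing sequence $W_0\subsetneq W_1\subsetneq W_2\subsetneq\cdots$ of sub-surfaces of $Z$, so that $\f(W_i)\to\infty$, and to derive a contradiction by showing that both $\d(W_i)$ and $\chi(W_i)$ must eventually stabilize. The key bookkeeping identity comes from combining $\chi(W)=\v(W)-\e(W)+\f(W)$ with $\d(W)=2\e(W)-8\f(W)$ and the observation that each vertex of $W_+$ carries exactly $8$ half-edge slots (those not contained in a full edge of $W$ being the hanging half-edges): the total number of hanging half-edges satisfies
\[
s(W)\;=\;8\v(W)-2\e(W)\;=\;8\chi(W)+3\d(W)+16\f(W).
\]
Since hanging half-edges only occur at boundary vertices and each such vertex contributes at most $8$, and the number of such vertices is at most $\d(W)$, we get $s(W)\leq 8\d(W)$ and hence
\[
16\f(W)\;\leq\;5\d(W)-8\chi(W).
\]
If both $\d(W_i)$ and $\chi(W_i)$ are eventually constant, $\f(W_i)$ is then bounded, contradicting $\f(W_i)\to\infty$.

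I would show $\d$-stabilization first. Writing $\Delta\d_{\mathrm{step}}=2\Delta\e-8\Delta\f$ and doing a per-feature count (as in the proof of Lemma \ref{lem:edges-growth-in-algorithm}) gives: a generic long block addition satisfies $\Delta\d\leq -2$, a generic long chain of $\ell$ blocks gives $\Delta\d=-2$, while generic half-block and half-chain additions give $\Delta\d=0$; any edge identifications among new edges, or with existing edges of $W_i$, can only make $\Delta\d$ more negative. Consequently $\Delta\d\leq 0$ at every step, with $\Delta\d\leq -2$ whenever $W_i$ is not boundary reduced. Since $\d\geq 0$, the sequence $\d(W_i)$ stabilizes at some value $d_\infty$ after some index $i_0$, and for $i\geq i_0$ the surface $W_i$ must be boundary reduced; the non-termination assumption then forces $\chi(W_i)\geq \chi_0$ for all $i\geq i_0$.

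For $\chi$-stabilization on $i\geq i_0$ I use the disc-attachment formula: when an octagon $O$ is added to the current complex $C_{\mathrm{curr}}$, its contribution is $\Delta\chi(O)=1-\chi(O\cap C_{\mathrm{curr}}^{(1)})$. Any subcomplex of the boundary $8$-cycle of $O$ has Euler characteristic $\geq 0$, with equality only when it is the full cycle; but that ``hole-filling'' case would force $\Delta\d(O)=-8$, forbidden by the constancy of $\d$ for $i\geq i_0$. Hence $\Delta\chi(O)\leq 0$ for every added octagon, and summing gives $\Delta\chi_{\mathrm{step}}\leq 0$. If strict inequality occurred infinitely often, $\chi(W_i)$ would eventually drop below $\chi_0$ while $W_i$ remains boundary reduced, triggering termination---a contradiction. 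So $\chi(W_i)$ stabilizes at some $\chi_\infty$, and the inequality $16\f(W_i)\leq 5 d_\infty-8\chi_\infty$ then contradicts $\f(W_i)\to\infty$.

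The main obstacle is tracking the per-step changes $\Delta\d$ and $\Delta\chi$ correctly when a single step adds several octagons that share edges and vertices (as inside a half-chain, where individual per-octagon contributions are non-uniform even though their sum is controlled). The safest route is to compute everything globally in terms of $\Delta\v,\Delta\e,\Delta\f$ for each feature (half-block, half-chain, long block, long chain) and only then to use the disc-attachment formula in a fixed admissible order when needed. Once this careful bookkeeping is in hand, the three monotonicity statements about $\d$, $\chi$, and the hanging half-edge count combine cleanly to force termination.
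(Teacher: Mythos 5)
Your approach is essentially the paper's lexicographic argument on the triple $(\d,\chi,-\mathfrak{he})$, repackaged: you first stabilize $\d$, then $\chi$, then derive a contradiction from the identity $\mathfrak{he}=8\chi+3\d+16\f$ and the bound $\mathfrak{he}\leq 8\d$. (Indeed, once $\d$ and $\chi$ are frozen, the paper's ``each octagon adds a net of $16$ hanging half-edges'' is literally the statement $\Delta\mathfrak{he}=16\Delta\f$, so the two endings are the same observation.) Stating the linear relation $\mathfrak{he}=8\chi+3\d+16\f$ explicitly is a pleasant conceptual clarification; the rest of the skeleton matches the paper's.

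There is, however, one genuine technical misstep in the middle. Your justification of $\Delta\chi\leq 0$ invokes the formula $\Delta\chi(O)=1-\chi(O\cap C_{\mathrm{curr}}^{(1)})$ together with the claim that ``any subcomplex of the boundary $8$-cycle of $O$ has Euler characteristic $\geq 0$, with equality only for the full cycle.'' Both of these implicitly assume that the closure of $O$ in $Z$ is an embedded disc with a simple $8$-cycle boundary. That fails in general: in a tiled surface an octagon's boundary word can revisit vertices (and even edges), so $\chi(\overline{O})$ can be $<1$, $O^{(1)}$ can contain cycles, and a proper subcomplex of $O^{(1)}$ can have $\chi\leq 0$. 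For a concrete scare, a half-block $aba^{-1}b^{-1}$ can return to its starting vertex, making $O^{(1)}\cap C$ a $4$-cycle with $\chi=0$ even though $\Delta\d(O)=0$; your formula would then erroneously give $\Delta\chi(O)=1$. The correct inclusion--exclusion is $\Delta\chi(O)=\chi(\overline{O})-\chi(O^{(1)}\cap C)$, and in that example both terms shift down by one and $\Delta\chi$ is still $0$. The robust way through (which is what the paper does via the complements $p_1,\dots,p_m$, and what you gesture at as ``the safest route'') is to argue directly: when $\Delta\d(O)=0$ you must have $\Delta\e(O)=4$ and those four new edges are the complement of the touching half-block, a path of length four whose two endpoints already lie in $W_i$; hence $\Delta\v(O)\leq 3$ and $\Delta\chi(O)=\Delta\v(O)-3\leq 0$, with the half-chain case handled by the analogous global count ($\Delta\e=4r$, $\Delta\v\leq 3r$, $\Delta\f=r$). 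With the disc-attachment shortcut replaced by this direct count, your proof is correct and coincides with the paper's.
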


\begin{proof}
Let $\mathfrak{he}\left(W_{i}\right)$ denote the number of hanging
half-edges along the boundary of $\left(W_{i}\right)_{+}$ and consider
the triple 
\begin{equation}
\left(\d\left(W_{i}\right),\chi\left(W_{i}\right),-\mathfrak{he}\left(W_{i}\right)\right).\label{eq:triple}
\end{equation}
For every $i$, $W_{i}$ is a compact sub-surface of $Z$, and so
the three quantities are well-defined integers. We claim that at every
step in the growing process, the triple (\ref{eq:triple}) strictly
reduces with respect to the lexicographic order.

Indeed, assume we do not halt after $i$ steps, and let $O_{1},\ldots,O_{k}$
be the list of $4g$-gons in $Z\setminus W_{i}$ which are added to
$W_{i}$ in order to obtain $W_{i+1}$. By the choice of $4g$-gons,
it is clear that $\d\left(W_{i+1}\right)\le\d\left(W_{i}\right)$.
If the inequality is strict, we are done. So assume $\d\left(W_{i+1}\right)=\d\left(W_{i}\right)$.
This means that $\partial\left(W_{i}\right)$ contains no long blocks
nor long chains, so it is boundary reduced, and that the edges in
the complements in $Z$ of the half-blocks and half-chains at $\partial\left(W_{i}\right)$
all belong to $\partial\left(W_{i+1}\right)$. In other words, let
$p_{1},\ldots,p_{m}$ be these complements in $Z$: so $p_{j}$ is
either a half-block or a half-chain. The equality $\d\left(W_{i+1}\right)=\d\left(W_{i}\right)$
means that all the edges in $p_{1},\ldots,p_{m}$ belong to $\partial W_{i+1}$.

It is easy to see that in this case $\chi\left(W_{i+1}\right)\le\chi\left(W_{i}\right)$:
the number of new $4g$-gons and vertices in $W_{i+1}$ at most balances
the number of new edges. Let $V$ denote the set of internal vertices
in $p_{1},\ldots,p_{m}$ (so not at their endpoints). We have strict
inequality $\chi\left(W_{i+1}\right)<\chi\left(W_{i}\right)$ if and
only if some $v\in V$ belongs to $W_{i}$ or to two different complements
from $p_{1},\ldots,p_{m}$.

Now assume that $\d\left(W_{i+1}\right)=\d\left(W_{i}\right)$ and
$\chi\left(W_{i+1}\right)=\chi\left(W_{i}\right)$. Then $W_{i}$
is boundary reduced and each of the complements $p_{1},\ldots,p_{m}$
is a connected piece of $\partial W_{i+1}$. If $O_{j}$ touches a
half-block of $\partial W_{i}$, its annexation adds a net of $\left(2g-1\right)\left(4g-2\right)-2=8g\left(g-1\right)$
hanging half-edges. Every $4g$-gon along a half-chain of $\partial W_{i}$
also adds on average a net of $8g\left(g-1\right)$ hanging half-edges.
So if we add at least one $4g$-gon at the $\left(i+1\right)$st step,
$-\mathfrak{he}$ strictly decreases. So indeed the triple (\ref{eq:triple})
strictly decreases lexicographically in every step.

Finally, there are at most finitely many steps in which $\d\left(W_{i}\right)$
decreases, because this is a non-negative integer. So it is enough
to show there cannot be infinitely many steps in which $\d\left(W_{i}\right)$
is constant. If $\d\left(W_{i+1}\right)=\d\left(W_{i}\right)$, then
$W_{i}$ is boundary reduced. If $\chi\left(W_{i}\right)$ keeps decreasing,
then eventually we hit the bound $\chi\left(W_{i}\right)<\chi_{0}$
and halt. If $\d\left(W_{i}\right)$ and $\chi\left(W_{i}\right)$
are constant, then $\mathfrak{he}\left(W_{i}\right)$ increases constantly,
but in every tiled surface $W$, $\mathfrak{he}\left(W\right)\le4g\d\left(W\right)$,
so there cannot be infinitely many steps of this type too. This proves
the lemma.
\end{proof}
\begin{lem}
\label{lem:bound on the number of octagons added in the process}There
is a bound $B=B\left(Y\right)$, independent of $h\colon Y\to Z$,
such that in the entire growing process, at most $B=B\left(Y\right)$
$4g$-gons are added to $W_{0}$.
\end{lem}

\begin{proof}
Note that every boundary edge of $W_{0}$ is necessarily an $h$-image
of a boundary edge of $Y$, so that $\d\left(W_{i}\right)\le\d\left(W_{0}\right)\le\d\left(Y\right)$.
In every step, we add at most $\frac{\d\left(W_{i}\right)}{2g-1}\le\frac{\d\left(Y\right)}{2g-1}$
$4g$-gons. So it is enough to bound the number of steps performed
in the growing process until it terminates. There are at most $\frac{\d\left(Y\right)}{2}$
steps in which $\d\left(W_{i}\right)$ strictly decreases ($\d\left(W_{i+1}\right)<\d\left(W_{i}\right)$),
so there are at most $\frac{\d\left(Y\right)}{2}+1$ possible values
of $\d\left(W_{i}\right)$. In steps where $\d\left(W_{i}\right)$
is unchanged, $W_{i}$ is boundary reduced, so by definition $\chi\left(W_{i}\right)\ge\chi_{0}$
(otherwise, the process terminates). Let $\pi_{0}\left(Y\right)$
denote the number of connected components of $Y$. For all $i$, $W_{i}$
is a sub-surface of $Z$ with at most $\pi_{0}\left(Y\right)$ connected
components, and by the classification of surfaces, $\chi\left(W_{i}\right)\le2\pi_{0}\left(Y\right)$.
There are at most $2\pi_{0}\left(Y\right)-\chi_{0}$ steps with $\d\left(W_{i}\right)$
fixed and $\chi\left(W_{i}\right)$ strictly decreasing. Finally,
when $\d\left(W_{i}\right)$ is constant there are at most $2\pi_{0}\left(Y\right)-\chi_{0}+1$
possible values of $\chi\left(W_{i}\right)$, and if $\d\left(W_{i+1}\right)=\d\left(W_{i}\right)$
and $\chi\left(W_{i+1}\right)=\chi\left(W_{i}\right)$ then $\mathfrak{he}\left(W_{i+1}\right)\ge\mathfrak{he}\left(W_{i}\right)+8g\left(g-1\right)$
and $\mathfrak{he}\left(W_{i}\right)\le4g\d\left(W_{i}\right)\le4g\d\left(Y\right)$,
so there are at most $\frac{\d\left(Y\right)}{2\left(g-1\right)}$
steps with the same value of $\d\left(W_{i}\right)$ and $\chi\left(W_{i}\right)$.
Overall there are at most 
\begin{equation}
\frac{\d\left(Y\right)}{2}+\left(\frac{\d\left(Y\right)}{2}+1\right)\left[\left(2\pi_{0}\left(Y\right)-\chi_{0}\right)+\left(2\pi_{0}\left(Y\right)-\chi_{0}+1\right)\cdot\frac{\d\left(Y\right)}{2\left(g-1\right)}\right]\label{eq:number of steps in the growing process}
\end{equation}
steps in the growing process. Define $B\left(Y\right)$ to be $\frac{\d\left(Y\right)}{2g-1}$
times (\ref{eq:number of steps in the growing process}).
\end{proof}
We can now define the sought-after resolution for compact tiled surfaces.
\begin{defn}
\label{def:finite resolution for compact surfaces}Suppose that $Y$
is a compact tiled surface and $\chi_{0}\in\mathbb{Z}$ a fixed integer.
Define the $\chi_{0}$\emph{-resolution }of $Y$ to be the collection
\[
\mathcal{R}=\mathcal{R}\left(Y,\chi_{0}\right)=\left\{ f\colon Y\to W_{f}\right\} 
\]
obtained from all possible morphisms $h\colon Y\to Z$ from $Y$ to
a tiled surface $Z$ with no boundary via the growing process (the
process applied with parameter $\chi_{0}$).
\end{defn}

\begin{thm}
\label{thm:existence-of-resolution}Suppose $Y$ is a compact tiled
surface and $\chi_{0}\in\mathbb{Z}$ a fixed integer. The collection
$\mathcal{R}=\mathcal{R}\left(Y,\chi_{0}\right)$ from Definition
\ref{def:finite resolution for compact surfaces} is a finite resolution
of $Y$ which satisfies further
\begin{description}
\item [{R1}] for every $f\in\mathcal{R}$, the tiled surface $W_{f}$ is
compact and boundary reduced, and
\item [{R2}] for every $f\in\mathcal{R}$ with $\chi\left(W_{f}\right)\ge\chi_{0}$,
the tiled surface $W_{f}$ is strongly boundary reduced.
\end{description}
\end{thm}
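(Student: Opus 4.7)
My plan is to verify the four claims about $\mathcal{R}$ in order: finiteness, R1, R2, and the resolution property itself. The first three follow quickly from what has already been set up. For finiteness, Lemma~\ref{lem:bound on the number of octagons added in the process} bounds the number of octagons added in any run of the growing process by a constant $B=B(Y,\chi_0)$ depending only on $Y$ and $\chi_0$, so every $W_f$ has at most $\f(Y)+B$ octagons and hence uniformly bounded complexity; there are only finitely many isomorphism classes of tiled surfaces of bounded complexity, and only finitely many morphisms from the compact $Y$ into each, so $\mathcal{R}$ is finite. R1 is immediate: compactness comes from the same bound, and the termination condition produces either an SBR (hence BR) or a BR surface. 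R2 is the contrapositive: if $W_f$ is not SBR, termination must have occurred in case (1b) of Definition~\ref{def:growing process}, forcing $\chi(W_f)<\chi_0$.

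For the resolution property, existence is built into the construction: given $h\colon Y\to Z$ with $\partial Z=\emptyset$, applying the growing process to $W_0=h(Y)\subseteq Z$ produces a factorization $h=\bar h\circ f$ with $f\in\mathcal{R}$ and $\bar h$ the inclusion. For uniqueness, suppose $h=\bar g\circ g$ with $g\in\mathcal{R}$ and $\bar g\colon W_g\hookrightarrow Z$ an embedding. Because $g\in\mathcal{R}$, there is some $h'\colon Y\to Z'$ (with $\partial Z'=\emptyset$) whose growing process has stages $W_0'\subseteq\cdots\subseteq W_k'=W_g$. I plan to prove by induction on $j$ that the growing process for $h$ in $Z$ satisfies $W_j=\bar g(W_j')$, so in particular it terminates with $W_f=\bar g(W_g)$, yielding the required equivalence of the two factorizations.

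The base case $W_0=h(Y)=\bar g(g(Y))=\bar g(W_0')$ is trivial. For the inductive step, the key observation is that the data determining one step of the growing process---the block/chain decomposition of each boundary cycle, together with which octagon of the ambient surface sits on the missing side of a bad piece---are entirely local and read off from the intrinsic tiled-surface structure around each vertex, namely the cyclic order of the eight half-edges and which of the eight octagon-sectors at the vertex are filled. Since $\bar g$ preserves edge labels, directions, and cyclic half-edge orderings, and since each sector around a vertex is filled by at most one octagon in any tiled surface, the sectors empty in $W_j'$ correspond via $\bar g$ to sectors empty in $W_j=\bar g(W_j')$. Hence $\partial W_j$ and $\partial W_j'$ have identical block/chain decompositions, $W_j$ is SBR (respectively BR with $\chi<\chi_0$) iff $W_j'$ is, and for each bad piece the unique missing-side octagon $O'$ of $Z'$ corresponds under $\bar g$ to the unique missing-side octagon $\bar g(O')$ of $Z$, giving $W_{j+1}=\bar g(W_{j+1}')$. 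I expect the most delicate step to be this last compatibility of missing-side octagons under $\bar g$: one must rule out the possibility that $Z$ contributes a different octagon than $\bar g(O')$ in the same sector, which should reduce to the fact that a sector at a vertex uniquely determines its octagon within any tiled surface.
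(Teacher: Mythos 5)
Your proposal is correct and follows essentially the same route as the paper's proof: finiteness from Lemma~\ref{lem:bound on the number of octagons added in the process}, R1 and R2 from the termination conditions, existence of the factorization from the construction, and uniqueness from the fact that the growing process is intrinsic to the current stage. Your induction $W_j=\bar g(W_j')$ is a more explicit rendering of the paper's terse assertion that the decision of what to annex at step $i+1$ ``depends only on the structure and boundary of $W_i$.'' The delicate point you flag is indeed the crux, and it resolves exactly as you suggest: since $\partial Z=\emptyset$, each boundary edge $e$ of $W_j$ has a unique octagon of $Z$ on its outer side, and because $\bar g$ is a morphism of tiled surfaces (preserving labels, orientations, and hence left/right), the image $\bar g(O')$ is an octagon of $Z$ on the outer side of $e$ and so must coincide with $O$ by that uniqueness; since all the $O'$ lie in $W_g$, the cellular closure of $W_j'\cup\{O'\}$ is taken inside $W_g$, where $\bar g$ is an embedding of subcomplexes, giving $W_{j+1}=\bar g(W_{j+1}')$.
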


\begin{proof}
By Lemma \ref{lem:the growing process terminates} and the halting
conditions of the growing process, it is clear that every such morphism
in $\mathcal{R}$ satisfies \textbf{R1} and \textbf{R2}. Given a morphism
$h\colon Y\to Z$ as in Definition \ref{def:finite resolution for compact surfaces},
$h\left(Y\right)$ (also named $W_{0}$) is a quotient of $Y$ and
therefore the number of cells in $h\left(Y\right)$ is bounded. From
Lemma \ref{lem:bound on the number of octagons added in the process}
we now conclude that that there is a bound on the number of cells
in any $W_{f}$ with $f\in\mathcal{R}$. This shows that $\mathcal{R}$
is finite as there are finitely many tiled surfaces with given bounds
on the number of cells, and finitely many morphisms between two given
compact tiled surfaces.

It remains to show that $\mathcal{R}$ is a resolution. By the way
it was constructed, it is clear that every morphism $h\colon Y\to Z$
with $\partial Z=\emptyset$, decomposes as 
\begin{equation}
Y\stackrel{f}{\to}W_{f}\hookrightarrow Z\label{eq:Wf}
\end{equation}
and that $f\in\mathcal{R}$. To show uniqueness, assume that $h$
decomposes in an additional way
\begin{equation}
Y\stackrel{\varphi}{\to}W_{\varphi}\hookrightarrow Z\label{eq:W-phi}
\end{equation}
where $W_{\varphi}$ is the result of the growing process for some
$h'\colon Y\to Z'$ with $\partial Z'=\emptyset$. We claim that (\ref{eq:Wf})
and (\ref{eq:W-phi}) are precisely the same decompositions of $h$.
Indeed, the growing process defined by $h'\colon Y\to Z'$ takes place
entirely inside $W_{\varphi}$, and does not depend on the structure
of $Z'\backslash W_{\varphi}$: in the $\left(i+1\right)$st step
of the growing process, the decision whether or not to annex more
$4g$-gons and where, depends only on the structure and boundary of
$W_{i}$. Consequently, the growing process defined by the morphism
$h'\colon Y\to Z'$ has the exact same output, in terms of the resulting
element we add to $\mathcal{R}$, as the growing process defined by
the composition $Y\stackrel{\varphi}{\to}W_{\varphi}\hookrightarrow Z$.
But because the growing process is deterministic, the latter is identical
to the growing process defined by $h\colon Y\to Z$.
\end{proof}
As mentioned above, we will use Theorem \ref{thm:existence-of-resolution}
mainly with $Y$ being a core surface. In this case, the theorem can
be strengthened as follows. Recall from Section \ref{sec:Introduction}
that given $J\fg\Gamma_{g}$, we denote by $\mog\left(J\right)$ the
set of f.g.~overgroups of $J$ with maximal Euler characteristic,
and by $\chi_{\max}\left(J\right)$ this maximal Euler characteristic.
\begin{prop}[Addendum to Theorem \ref{thm:existence-of-resolution}]
\label{prop:resolutions of core surfaces}Let $J\le_{\mathrm{f.g.}}\Gamma_{g}$
and let $\chi_{0}\in\mathbb{Z}$. Let $\mathcal{R}_{J,\chi_{0}}=\left\{ f\colon\core\left(J\right)\to W_{f}\right\} $
be the resolution $\mathcal{R}\left(\core\left(J\right),\chi_{0}\right)$
from Definition \ref{def:finite resolution for compact surfaces}.
Then $\mathcal{R}_{J,\chi_{0}}$ satisfies further the following two
properties.
\begin{description}
\item [{R3}] For every $f\in\mathcal{R}_{J,\chi_{0}}$ with $\chi\left(W_{f}\right)\ge\chi_{0}$,
the tiled surface $W_{f}$ is the core surface of some $K\le_{\mathrm{f.g.}}\Gamma_{g}$
with $J\le K$ and $f$ is the natural morphism between the two core
surfaces (the restriction of $J\backslash\tsg\to K\backslash\tsg$).
\item [{R4}] Assume that $\chi_{0}\le\chi_{\max}\left(J\right)$. Then
for every $K\in\mog\left(J\right)$, the natural morphism $\core\left(J\right)\to\core\left(K\right)$
belongs to $\mathcal{R}_{J,\chi_{0}}$.
\end{description}
\end{prop}

For $K\fg\Gamma_{g}$ we have $\chi\left(K\right)=\chi\left(\core\left(K\right)\right)$
\cite[Proposition 5.3]{MPcore}. Proposition \ref{prop:resolutions of core surfaces}
thus shows that as long as $\chi_{0}\le\chi_{\max}\left(J\right)$,
there is a bijection between the elements of $\mog\left(J\right)$
and the elements in the resolution with maximal Euler characteristic.
\begin{cor}
\label{cor:MOG is finite}For every $J\fg\Gamma_{g}$, the set $\mog\left(J\right)$
of f.g.~overgroups of maximal Euler characteristic is finite.
\end{cor}

\begin{proof}[Proof of Proposition \ref{prop:resolutions of core surfaces}]
Suppose that $f\colon\core\left(J\right)\to W_{f}$ satisfies $\chi\left(W_{f}\right)\ge\chi_{0}$.
In particular, $W_{f}$ is strongly boundary reduced by \textbf{R2}.
Let $j\in\core\left(J\right)$ be a vertex and assume without loss
of generality that $J=\pi_{1}\left(\core\left(J\right),j\right)$
(recall that if $\left(Y,y_{0}\right)$ is a based tiled surface,
we identify $\pi_{1}\left(Y,y_{0}\right)$ with the subgroup $p_{*}\left(\pi_{1}\left(Y,y_{0}\right)\right)$
of $\pi_{1}\left(\Sigma_{g},o\right)=\Gamma_{g}$). Define $K\eqdf\pi_{1}\left(W_{f},f\left(j\right)\right)$.
As $W_{f}$ is compact, $K$ is finitely generated. Let $p_{K}\colon\left(K\backslash\tsg,k\right)\to\Sigma_{g}$
be the pointed coverings space with $\pi_{1}\left(K\backslash\tsg,k\right)=K$.
By the unique lifting property from the theory of covering spaces
\cite[Propositions 1.33 and 1.34]{hatcher2005algebraic}, as $W_{f}$
is connected, there is a unique lift $\alpha$ of the restricted covering
map $p_{W_{f}}\colon W_{f}\to\Sigma_{g}$ to $K\backslash\tsg$, which
maps $f\left(j\right)$ to $k$. We will show that $\alpha$ gives
an isomorphism between $W_{f}$ and $\core\left(K\right)$.
\[
\xymatrix{ & \left(\core\left(K\right),k\right)\ar@{^{(}->}[r]^{\iota} & \left(K\backslash\tsg,k\right)\ar@{->>}[d]^{p_{K}} & \left(\core\left(K\right),k\right)\ar@{^{(}->}[r]^{\iota} & \left(K\backslash\tsg,k\right)\ar@{->}[d]^{m}\\
\left(\core\left(J\right),j\right)\ar[r]_{f}\ar[ru] & \left(W_{f},f\left(j\right)\right)\ar[r]_{p_{W_{f}}}\ar@{-->}[ur]^{\exists!\alpha} & \left(\Sigma_{g},o\right) & \left(W_{f},f\left(j\right)\right)\ar@{^{(}->}[r] & \left(Z,f\left(j\right)\right)
}
\]

First we show that $\alpha\left(W_{f}\right)\subseteq\core\left(K\right)$.
Recall that $W_{f}$ is the result of the growing process for some
$h\colon\core\left(J\right)\to Z$ with $\partial Z=\emptyset$. Consider
$W_{0}\eqdf h\left(\core\left(J\right)\right)\subseteq Z$. Recall
that $W_{f}=\sbr\left(W_{0}\hookrightarrow Z\right)$. By the unique
lifting property, $\alpha\circ f$ is the natural morphism $\core\left(J\right)\to K\backslash\tsg$,
which, by \cite[Lemma 5.4]{MPcore}, has image contained in $\core\left(K\right)$.
Hence $\alpha\left(W_{0}\right)\subseteq\core\left(K\right)$. As
$\core\left(K\right)$ is strongly boundary reduced \cite[Propositions 5.3]{MPcore},
we have that $\sbr\left(\alpha\left(W_{0}\right)\hookrightarrow K\backslash\tsg\right)$
is contained in $\core\left(K\right)$. By \cite[Lemma 4.7]{MPcore},
\[
\alpha\left(W_{f}\right)=\alpha\left(\sbr\left(W_{0}\hookrightarrow Z\right)\right)\subseteq\sbr\left(\alpha\left(W_{0}\right)\hookrightarrow K\backslash\tsg\right)\subseteq\core\left(K\right).
\]

Now $Z$ is a covering space of $\Sigma_{g}$ and we may assume it
is connected (because $\core\left(J\right)$ is). Thus $Z$ is identical
to $L\backslash\tsg$ for some $L=\pi_{1}\left(Z,h\left(j\right)\right)$.
By property \textbf{R2},\textbf{ }$W_{f}$ is strongly boundary reduced,
and so by \cite[Corollary 4.11]{MPcore} its embedding in $Z$ is
$\pi_{1}$-injective. In other words, $K\le L$, and, therefore, there
is a morphism $m\colon\left(K\backslash\tsg,k\right)\to\left(Z,f\left(j\right)\right)$.
By the unique lifting property, the composition $m\circ\alpha\colon\left(W_{f},f\left(j\right)\right)\to\left(Z,f\left(j\right)\right)$
must be identical to the embedding $\left(W_{f},f\left(j\right)\right)\hookrightarrow\left(Z,f\left(j\right)\right)$
and therefore $\alpha$ is injective. So $\alpha\left(W_{f}\right)$
is a strongly boundary reduced sub-surface of $\core\left(K\right)$
with fundamental group $K$. By \cite[Lemma 5.7]{MPcore} if follows
that $\alpha\left(W_{f}\right)\supseteq\core\left(K\right)$. We conclude
that $\alpha\colon W_{f}\to\core\left(K\right)$ is an isomorphism,
and \textbf{R3} is proven.

To prove \textbf{R4}, suppose that $K\in\mog\left(J\right)$. Let
$h\colon\core\left(J\right)\to K\backslash\tsg$ be the natural morphism.
By the definition of the resolution $\mathcal{R}_{J}$, the morphism
$h$ factors as $\core\left(J\right)\stackrel{f}{\to}W_{f}\stackrel{}{\hookrightarrow}K\backslash\tsg$
for some $f\in\mathcal{R}_{J}$. Because $h\left(\core\left(J\right)\right)\subseteq\core\left(K\right)$
(by \cite[Lemma 5.4]{MPcore}), and because $\core\left(K\right)$
is strongly boundary reduced, we have $W_{f}\subseteq\core\left(K\right)$.

Let $C$ be a connected component of the difference between the thick
version of $\core\left(K\right)$ and the thick version of $W_{f}$.
As $\core\left(K\right)$ is compact, $\overline{C}$ is compact.
As $\core\left(K\right)$ is connected, $\overline{C}$ must intersect
$\partial W_{f}$ and in particular has at least one boundary component.
As $W_{f}$ is boundary reduced, $\overline{C}$ is not homeomorphic
to a disc, and so $\chi\left(\overline{C}\right)\le0$. Now 
\[
\chi\left(K\right)=\chi\left(\core\left(K\right)\right)=\chi\left(W_{f}\right)+\sum_{C}\chi\left(\overline{C}\right),
\]
the sum being over all connected components as above. We conclude
that $\chi\left(W_{f}\right)\ge\chi\left(K\right)=\chi_{\max}\left(J\right)\ge\chi_{0}$.
By \textbf{R2}, $W_{f}$ is strongly boundary reduced and by \textbf{R3},
$W_{f}=\core\left(M\right)$ for some subgroup $M$. But then $M\in\mog\left(J\right)$,
$\chi\left(\core\left(M\right)\right)=\chi\left(K\right)$, and every
connected component $C$ as above satisfies $\chi\left(\overline{C}\right)=0$.
As $\overline{C}$ has at least one boundary component, it must be
an annulus. But then $\core\left(M\right)$ is a deformation retract
of $\core\left(K\right)$, so $M=K$ up to conjugation and so $W_{f}=\core\left(M\right)=\core\left(K\right)$. 
\end{proof}
\begin{cor}
\label{cor:resolution of a cyclic subgroup}Suppose $1\ne\gamma\in\Gamma_{g}$
is a non-trivial element. Let $q$ be the maximal natural number such
that $\gamma=\gamma_{0}^{~q}$ for some $\gamma_{0}\in\Gamma_{g}$,
and $d(q)$ the number of positive divisors of $q$. Then $\core\left(\left\langle \gamma\right\rangle \right)$
has a finite resolution $\mathcal{R}_{\gamma}=\left\{ f\colon\core\left(\left\langle \gamma\right\rangle \right)\to W_{f}\right\} $
with $W_{f}$ boundary reduced for every $f\in R_{\gamma}$, and with
exactly $d\left(q\right)$ elements $f\in\mathcal{R}_{\gamma}$ with
$\chi\left(W_{f}\right)\ge0$. Moreover, these $d\left(q\right)$
elements are precisely 
\begin{equation}
f_{m}\colon\core\left(\left\langle \gamma\right\rangle \right)\to\core\left(\left\langle \gamma_{0}^{~m}\right\rangle \right)\label{eq:elements of chi=00003D0}
\end{equation}
for $m\mid q$, where $f_{m}$ is the natural morphism between the
core surfaces.
\end{cor}

\begin{proof}
Construct $\mathcal{R}_{\gamma}=\left\{ f\colon\core\left(\left\langle \gamma\right\rangle \right)\to W_{f}\right\} $
as $\mathcal{R}\left(\core\left(\left\langle \gamma\right\rangle \right),0\right)$
from Definition \ref{def:finite resolution for compact surfaces}.
By Theorem \ref{thm:existence-of-resolution} and Proposition \ref{prop:resolutions of core surfaces},
the elements in $\mathcal{R}_{\gamma}$ with $\chi\left(W_{f}\right)=0$
are precisely the core surfaces of the subgroups in $\mog\left(\left\langle \gamma\right\rangle \right)$.
So it remains to show only that $\mog\left(\left\langle \gamma\right\rangle \right)$
are precisely $\left\langle \gamma_{0}^{~m}\right\rangle $ with $m\mid q$.

But f.g.~subgroups $K\le\Gamma_{g}$ with $\chi\left(K\right)=0$
are necessarily cyclic. Assume $K=\left\langle \delta\right\rangle \in\mog\left(\left\langle \gamma\right\rangle \right)$,
so $\gamma\in\left\langle \delta\right\rangle $, and we may assume
that $\gamma$ is a positive power of $\delta$ (otherwise switch
to $\delta^{-1}$). As every finitely generated subgroup of $\Gamma$
of infinite index is free (e.g., \cite{scott1978subgroups}), and
a subgroup of index $t$ of $\Gamma_{g}$ cannot be generated by less
then $t\left(2g-1\right)+1$ elements, we conclude that the subgroup
$\left\langle \delta,\gamma_{0}\right\rangle \le\Gamma$ is free.
Because there is a relation $\gamma_{0}^{~q}=\delta^{k}$ for some
$k\in\mathbb{N}$, it must be a cyclic subgroup. By definition, $\gamma_{0}$
is not a proper power, and so $\delta$ must be a positive power of
$\gamma_{0}$, and hence $\delta=\gamma_{0}^{~m}$ for some $m\mid q$.
\end{proof}

\section{Background: representation theory of the symmetric group\label{sec:Representation-theory-of-symmetric-group}}

In this section we give background on the complex representation theory
of $S_{n}$ that will be used in the sequel. We follow the Vershik-Okounkov
approach to the representation theory of $S_{n}$ developed in \cite{VershikOkounkov}.

\subsection{Young diagrams\label{subsec:Young-diagrams}}

A \emph{partition} is a sequence $\lambda=(\lambda_{1},\lambda_{2},\ldots,\lambda_{\ell})$
with each $\lambda_{i}\in\N$ and $\lambda_{1}\geq\lambda_{2}\geq\cdots\geq\lambda_{\ell}$.
If $\sum\lambda_{i}=n$ we write this as $\lambda\vdash n$. Such
partitions are in one-to-one correspondence with \emph{Young diagrams.}
A Young diagram (YD) consists of a collection of left-aligned rows
of identical square boxes, where the number of boxes in each row is
non-increasing from top to bottom. Given a partition $\lambda=(\lambda_{1},\lambda_{2},\ldots,\lambda_{\ell})$,
the corresponding Young diagram has $\ell$ rows and $\lambda_{i}$
boxes in the $i$\textsuperscript{th} row, where $i$ increases from
top to bottom. We think of partitions as Young diagrams, and vice
versa, freely throughout the sequel. If $\lambda$ and $\mu$ are
two Young diagrams we say $\mu\subset\lambda$ if every box of $\mu$
is a box of $\lambda$. We say $\mu\subset_{k}\lambda$ if $\mu\subset\lambda$
and $\mu$ and $\lambda$ differ by $k$ boxes.

A \emph{skew Young diagram} (SYD) is formally a pair of Young diagrams
$\mu$ and $\lambda$ with $\mu\subset\lambda$, and is denoted by
$\lambda/\mu$. We also think of $\lambda/\mu$ as a diagram consisting
of the boxes of $\lambda$ that are not in $\mu$. We can think of
a Young diagram $\lambda$ also as a skew diagram $\lambda=\lambda/\emptyset$
where $\emptyset$ is the empty diagram with no boxes. Therefore statements
that we make about skew Young diagrams apply in this way also to Young
diagrams.

The \emph{size $|\lambda/\mu|$ }of a SYD $\lambda/\mu$ is the number
of boxes that it contains, or $\sum\lambda_{i}-\sum\mu_{i}$. If $\square$
is a particular box appearing in a SYD we let $i(\square)$ be the
row number (starting at 1, counting from top to bottom) of the box
and $j(\square)$ the column number (starting at 1, counting from
left to right) of the box. The \emph{content }of a box $\square$
in a SYD is 
\[
c(\square)\eqdf j(\square)-i(\square).
\]
If $\square_{1}$ and $\square_{2}$ are two boxes in a SYD we let
\[
\ax(\square_{1},\square_{2})\eqdf c(\square_{1})-c(\square_{2}),
\]
this is called the \emph{axial distance }between $\square_{1}$ and
$\square_{2}$.

If $\lambda$ is a YD, we write $\check{\lambda}$ for the YD obtained
from $\lambda$ by swapping rows and columns, namely, by transposing.
This $\check{\lambda}$ is called the \emph{conjugate }of $\lambda$.

\subsection{Young tableaux}

Let $\lambda/\mu$ be a SYD with $\lambda\vdash n$ and $\mu\vdash m$.
A \emph{standard Young tableau} of shape $\lambda/\mu$ is a filling
of the boxes of $\lambda/\mu$ with the numbers $m+1,\ldots,n$ such
that
\begin{itemize}
\item each number appears in exactly one box of $\lambda/\mu$, and
\item the numbers in the boxes are strictly increasing from left to right
and from top to bottom.
\end{itemize}
In the sequel, we will refer to standard Young tableaux simply as
\emph{tableaux. }For $\lambda/\mu$ a SYD, we write $\Tab(\lambda/\mu)$
for the collection of tableaux of shape $\lambda/\mu$. If $\lambda\vdash n$,
$T\in\Tab(\lambda)$ and $m\in\left[n\right]$, we write $\mu_{m}\left(T\right)$
for the Young diagram obtained by deleting the boxes containing $m+1,\ldots,n$
from $T$, so $\mu_{m}\left(T\right)\vdash m$. We also write $T\lvert_{\le m}\in\Tab\left(\mu_{m}\left(T\right)\right)$
for the tableau formed by the numbers-in-boxes of $T$ that are $\leq m$,
and $T\lvert_{>m}$ for the tableau formed by the numbers-in-boxes
of $T$ that are $>m$. In general the shape of $T\lvert_{>m}$ will
be a SYD. If $T$ is a tableau of shape $\lambda/\mu$ where $\lambda\vdash n$
and $\mu\vdash m$, and $m<i\leq n$, we write $\text{\fbox{\ensuremath{i}}}_{T}$
for the box containing $i$ in $T$.

If $\lambda\vdash n$ and $\mu\subset\lambda$ then we have a concatenation
between $\Tab(\mu)$ and $\Tab(\lambda/\mu)$: if $T\in\Tab(\mu)$
and $R\in\Tab(\lambda/\mu)$, let $T\sqcup R$ be the tableau obtained
by adjoining $R$ to $T$.

\subsection{Representations of symmetric groups\label{subsec:Representations-of-symmetric}}

The irreducible unitary representations of $S_{n}$ are parameterized,
up to unitary equivalence, by Young diagrams of size $n$. This correspondence
between Young diagrams and representations is denoted by 
\[
\lambda\mapsto V^{\lambda}.
\]
Each $V^{\lambda}$ is a finite dimensional complex vector space with
a unitary action of $S_{n}$ and is also a module for the group algebra
$\C[S_{n}]$. Let $d_{\lambda}\eqdf\dim V^{\lambda}$. It is known
that $d_{\lambda}=\left|\Tab\left(\lambda\right)\right|$.

We now follow Vershik-Okounkov \cite{VershikOkounkov}. The natural
ordering of $[n]$ induces a filtration
\[
S_{1}\subset S_{2}\subset\cdots\subset S_{n-1}\subset S_{n}
\]
of $S_{n}$, where $S_{m}$ is the subgroup of $S_{n}$ fixing each
of the numbers in $\left[m+1,n\right]$. If $W$ is any unitary representation
of $S_{n}$, for $m\in[n]$ and $\mu$ a YD of size $m$, we write
$W_{\mu}$ for the linear span in $W$ of all elements in the image
of $\Hom_{S_{m}}(V^{\mu},W)$. In other words, $W_{\mu}$ is the span
of copies of $V^{\mu}$ in the restriction of $W$ to $S_{m}$. This
$W_{\mu}$ is called the\emph{ $\mu$-isotypic subspace} \emph{of}
$W$.

Vershik and Okounkov describe a specific orthonormal basis of $V^{\lambda}$,
called a \emph{Gelfand-Tsetlin basis}, that will be useful to us here.
The basis is indexed by $T\in\Tab(\lambda)$; each such $T$ gives
a basis vector $v_{T}$. The vectors $v_{T}$ can be characterized
up to multiplication by complex scalars of modulus $1$ in the following
way. The intersection of subspaces 
\[
\left(V^{\lambda}\right)_{\mu_{1}(T)}\cap\left(V^{\lambda}\right)_{\mu_{2}(T)}\cap\cdots\cap\left(V^{\lambda}\right)_{\mu_{n-1}(T)}
\]
is one-dimensional and contains the unit vector $v_{T}$ \cite[$\S$1]{VershikOkounkov}.
One important corollary of this is that if $\mu\vdash m\in[n]$, then
$\left(V^{\lambda}\right)_{\mu}\neq\{0\}$ if and only if $\mu\subset\lambda$.
Also note that if $\mu_{1},\mu_{2}\subset\lambda,$ $\mu_{1},\mu_{2}\vdash m\in[n]$,
and $\mu_{1}\neq\mu_{2}$ then $\left(V^{\lambda}\right)_{\mu_{1}}$
is orthogonal to $\left(V^{\lambda}\right)_{\mu_{2}}$.

More generally, if $\lambda/\mu$ is a SYD with $\lambda\vdash n$
and $\mu\vdash m$ then there is a \emph{skew module }$V^{\lambda/\mu}$
that is a unitary representation of $S'_{n-m}$ where we write $S'_{n-m}$
for the copy of $S_{n-m}$ in $S_{n}$ that fixes the elements $[m]$.
Formally,
\[
V^{\lambda/\mu}\eqdf\Hom_{S_{m}}\left(V^{\mu},V^{\lambda}\right)
\]
where the action of $S'_{n-m}$ is by left multiplication: for $\varphi\in\Hom_{S_{m}}\left(V^{\mu},V^{\lambda}\right)$,
$\tau\in S'_{n-m}$ and $v\in V^{\mu}$, $\left(\tau.\varphi\right)\left(v\right)\eqdf\tau.\left(\varphi\left(v\right)\right)$.
This action preserves $V^{\lambda/\mu}$ as $S'_{n-m}$ is in the
centralizer of $S_{m}$ in $\C[S_{n}]$. We write $d_{\lambda/\mu}$
for the dimension of $V^{\lambda/\mu}$. Since $d_{\lambda/\mu}$
is the multiplicity of $V^{\mu}$ in the restriction of $V^{\lambda}$
to $S_{m}$, by Frobenius reciprocity, it is also the multiplicity
of $V^{\lambda}$ in the induced representation $\mathrm{Ind}_{S_{m}}^{S_{n}}V^{\mu}$.
By calculating the dimension of $\mathrm{Ind}_{S_{m}}^{S_{n}}V^{\mu}$
in two ways, we obtain the following result that will be useful later.
\begin{lem}
\label{lem:induced-rep-dimension}Let $n\in\N$, $m\in\left[n\right]$
and $\mu\vdash m$. Then,
\[
\sum_{\lambda\vdash n\colon\mu\subset\lambda}d_{\lambda/\mu}d_{\lambda}=\frac{n!}{m!}d_{\mu}.
\]
\end{lem}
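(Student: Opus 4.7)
The proof is essentially indicated in the sentence preceding the lemma: compute $\dim \mathrm{Ind}_{S_m}^{S_n} V^\mu$ in two different ways and equate.

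The plan is to first compute this dimension directly. Since induction multiplies dimension by the index of the subgroup, we have
\[
\dim \mathrm{Ind}_{S_m}^{S_n} V^\mu = [S_n : S_m] \cdot \dim V^\mu = \frac{n!}{m!}\, d_\mu.
\]

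Next I would decompose $\mathrm{Ind}_{S_m}^{S_n} V^\mu$ into $S_n$-irreducibles. By Frobenius reciprocity, the multiplicity of $V^\lambda$ in $\mathrm{Ind}_{S_m}^{S_n} V^\mu$ equals the multiplicity of $V^\mu$ in the restriction of $V^\lambda$ to $S_m$, which is precisely $\dim V^{\lambda/\mu} = d_{\lambda/\mu}$ by the definition of the skew module given in \S\ref{subsec:Representations-of-symmetric}. Moreover, as recorded in the same subsection (via the Gelfand--Tsetlin basis characterization), the isotypic component $(V^\lambda)_\mu$ is nonzero precisely when $\mu \subset \lambda$, so the sum needs only to range over $\lambda \vdash n$ with $\mu \subset \lambda$. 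Computing the dimension from this decomposition gives
\[
\dim \mathrm{Ind}_{S_m}^{S_n} V^\mu = \sum_{\lambda \vdash n : \mu \subset \lambda} d_{\lambda/\mu}\, d_\lambda.
\]

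Equating the two expressions for $\dim \mathrm{Ind}_{S_m}^{S_n} V^\mu$ yields the claimed identity. There is essentially no obstacle here: the proof is a two-line application of Frobenius reciprocity together with the dimension formula for induced representations, both standard facts, supplemented by the branching observation already noted in the text. The only point one must be careful about is that all $V^\lambda$ with $\mu \not\subset \lambda$ contribute zero, so restricting the index set of summation to $\{\lambda \vdash n : \mu \subset \lambda\}$ is justified.
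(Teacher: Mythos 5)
Your proof is correct and is exactly the argument the paper outlines in the sentence before the lemma: compute $\dim \mathrm{Ind}_{S_m}^{S_n} V^\mu$ once via $[S_n:S_m]\dim V^\mu = \frac{n!}{m!}d_\mu$ and once via the irreducible decomposition whose multiplicities $d_{\lambda/\mu}$ come from Frobenius reciprocity and the branching rule. Nothing further to add.
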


The module $V^{\lambda/\mu}$ has an orthonormal basis $w_{T}$ indexed
by $T\in\Tab(\lambda/\mu)$ \cite[Section 7]{VershikOkounkov}. One
also has the following property that we will use later \cite[eq. (3.65)]{Ceccherini-SilbersteinScarabottiTolli}.
\begin{lem}
\label{lem:tensor-product-isomorphism}Let $n\in\N$, $m\in\left[n\right]$,
$\lambda\vdash n$ and $\mu\vdash m$ and assume that $\mu\subset\lambda$.
Then the map 
\[
v_{T}\otimes w_{R}\mapsto v_{T\sqcup R},\quad T\in\Tab(\mu),\,R\in\Tab(\lambda/\mu)
\]
linearly extends to an isomorphism of unitary $\left(S_{m}\times S'_{n-m}\right)$-representations
$V^{\mu}\otimes V^{\lambda/\mu}\cong\left(V^{\lambda}\right)_{\mu}$. 
\end{lem}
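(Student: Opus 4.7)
The plan is to realize the claimed isomorphism as the standard evaluation map coming from the definition $V^{\lambda/\mu}\eqdf\Hom_{S_{m}}(V^{\mu},V^{\lambda})$, and then check on Gelfand--Tsetlin bases using the Vershik--Okounkov characterization of these bases as joint eigenvectors for the chain of subgroups.

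First I would consider the evaluation map
\[
\ev\colon V^{\mu}\otimes V^{\lambda/\mu}\rs V^{\lambda},\qquad v\otimes\varphi\mapsto\varphi(v).
\]
By definition this is $S_{m}$-equivariant, and since $S'_{n-m}$ acts on $V^{\lambda/\mu}$ by post-composition (and commutes with $S_{m}$ in $\C[S_{n}]$) while acting trivially on $V^{\mu}$, $\ev$ is also $S'_{n-m}$-equivariant, hence $(S_{m}\times S'_{n-m})$-equivariant. Every vector in the image of $\ev$ lies in the $S_{m}$-submodule generated by a copy of $V^{\mu}$, so $\ev$ factors through $(V^{\lambda})_{\mu}$. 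By the general isotypic decomposition of semisimple representations (or Schur's lemma together with $\dim(V^{\mu}\otimes V^{\lambda/\mu})=d_{\mu}d_{\lambda/\mu}=\dim(V^{\lambda})_{\mu}$), the resulting map $V^{\mu}\otimes V^{\lambda/\mu}\rs(V^{\lambda})_{\mu}$ is a unitary $(S_{m}\times S'_{n-m})$-isomorphism (up to a positive scalar that is absorbed by normalization).

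Next I would identify the image of each basis vector $v_{T}\otimes w_{R}$. Recall that $v_{T\sqcup R}$ is characterized, up to a unit scalar, as the unit vector spanning the one-dimensional intersection
\[
\bigcap_{k=1}^{n-1}(V^{\lambda})_{\mu_{k}(T\sqcup R)}.
\]
For $k\le m-1$, $\mu_{k}(T\sqcup R)=\mu_{k}(T)$, so the first $m-1$ constraints cut out precisely the subspace $\ev(v_{T}\otimes V^{\lambda/\mu})\subseteq(V^{\lambda})_{\mu}$, which is $(S_{m}\times S'_{n-m})$-equivariantly isomorphic to $V^{\lambda/\mu}$ by the previous step. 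Under this identification, the remaining $n-m$ constraints (for $m\le k\le n-1$) translate into the analogous intersection of isotypic subspaces in $V^{\lambda/\mu}$ that characterizes $w_{R}$ \cite[Section 7]{VershikOkounkov}. Consequently $\ev(v_{T}\otimes w_{R})=c_{T,R}v_{T\sqcup R}$ for some $c_{T,R}\in\C$ with $|c_{T,R}|=1$.

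The main obstacle is pinning down the phases $c_{T,R}$, since the orthonormal GT vectors are only canonical up to unit scalars. I would handle this by taking the identity $c_{T,R}=1$ as the \emph{definition} of the skew basis $w_{R}$: fix any one tableau $T_{0}\in\Tab(\mu)$ and declare $w_{R}$ to be the unique vector in $V^{\lambda/\mu}$ with $\ev(v_{T_{0}}\otimes w_{R})=v_{T_{0}\sqcup R}$. One then checks that this basis satisfies the Vershik--Okounkov characterization in $V^{\lambda/\mu}$ (so it is a legitimate GT basis) and is independent of the choice of $T_{0}$. The latter reduces to showing that for any transposition $s_{k}=(k,k+1)\in S_{m}$, its action on $v_{T}\otimes w_{R}$ via the Young orthogonal matrix coefficients (depending only on axial distances within $T$, which are the same in $T$ and in $T\sqcup R$) matches its action on $v_{T\sqcup R}$ in $V^{\lambda}$; this is exactly the content of \cite[Theorem 6.7]{VershikOkounkov}, so phases are consistent and $c_{T,R}=1$ throughout. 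This completes the identification $v_{T}\otimes w_{R}\mapsto v_{T\sqcup R}$.
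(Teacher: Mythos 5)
The paper gives no proof of this lemma: it is cited directly from Ceccherini-Silberstein, Scarabotti and Tolli \cite[eq. (3.65)]{Ceccherini-SilbersteinScarabottiTolli}, so there is no in-paper argument to compare against. Your reconstruction via the canonical evaluation map $\ev\colon V^{\mu}\otimes\Hom_{S_m}(V^{\mu},V^{\lambda})\to V^{\lambda}$ is the natural one given the paper's definition $V^{\lambda/\mu}\eqdf\Hom_{S_m}(V^{\mu},V^{\lambda})$ with $S'_{n-m}$ acting by post-composition, and all the main steps are sound: equivariance of $\ev$ in both factors, the image landing in $(V^{\lambda})_{\mu}$ and equality by a dimension count, the identification of $\ev(v_T\otimes V^{\lambda/\mu})$ with the intersection $\bigcap_{k\le m}(V^{\lambda})_{\mu_k(T)}$, and the reduction of the phase ambiguity to Young's orthogonal form.

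Two small points worth tightening. First, the unitarity claim appears too early in your write-up; as stated, ``up to a positive scalar absorbed by normalization'' is circular because the inner product on $V^{\lambda/\mu}$ has not yet been pinned down. The cleaner order is to establish only the equivariant linear isomorphism first, then derive $\ev(v_T\otimes w_R)=c_{T,R}\,v_{T\sqcup R}$ from equivariance and the GT characterizations, then choose phases, and finally read off unitarity from the fact that the map sends the orthonormal basis $\{v_T\otimes w_R\}$ (orthonormality of $\{w_R\}$ being part of the data in $\S\S$\ref{subsec:Representations-of-symmetric}) to the orthonormal set $\{v_{T\sqcup R}\}\subset V^{\lambda}$. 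Second, the ``independent of $T_0$'' step implicitly uses that any two tableaux in $\Tab(\mu)$ are connected by a sequence of admissible Coxeter moves; this is standard and follows from irreducibility of $V^{\mu}$ together with Young's orthogonal form (\ref{eq:cox-action}), but it should be stated, since it is exactly what allows you to propagate the normalization $c_{T_0,R}=1$ to $c_{T,R}=1$ for all $T$, using that the axial distances occurring for $s_i$ with $i<m$ are the same in $T$ as in $T\sqcup R$. With those two adjustments the argument is complete.
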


There is also an explicit formula for the action of $S'_{n-m}$ on
$V^{\lambda/\mu}$. A full exposition of this formula can be found
in \cite[\S 6]{VershikOkounkov}. Recall that $S'_{n-m}$ is generated
by the \emph{Coxeter generators}

\[
s_{i}\eqdf(i~\ i+1)
\]
for $m<i<n$, where $\left(i~~i+1\right)$ is our notation for a transposition
switching $i$ and $i+1$. Therefore it is sufficient to describe
how the $s_{i}$ act on $V^{\lambda/\mu}$. Say that $T$ is admissible
for $s_{i}$ if the boxes containing $i$ and $i+1$ in $T$ are neither
in the same row nor the same column.

For $T\in\Tab(\lambda/\mu)$ let 
\begin{align*}
s_{i}T & =\begin{cases}
T & \text{if \ensuremath{T} is not admissible for \ensuremath{s_{i}}}\\
T' & \text{if \ensuremath{T} is admissible for \ensuremath{s_{i}}},
\end{cases}
\end{align*}
where $T'$ is the tableaux obtained from $T$ by swapping $i$ and
$i+1$. The admissibility condition ensures $T'$ is a valid standard
Young tableau. Then one has \emph{Young's orthogonal form}

\begin{equation}
s_{i}w_{T}=\frac{1}{\ax(\text{\fbox{\ensuremath{i+1}}}_{T},\text{\fbox{\ensuremath{i}}}_{T})}w_{T}+\sqrt{1-\frac{1}{\ax(\text{\fbox{\ensuremath{i+1}}}_{T},\text{\fbox{\ensuremath{i}}}_{T})^{2}}}w_{s_{i}T}.\label{eq:cox-action}
\end{equation}
Note that as a special case of this formula, if $T$ is not admissible
for $s_{i}$, then $\ax(\text{\fbox{\ensuremath{i+1}}}_{T},\text{\fbox{\ensuremath{i}}}_{T})=\pm1$
and 
\begin{equation}
s_{i}w_{T}=\frac{1}{\ax(\text{\fbox{\ensuremath{i+1}}}_{T},\text{\fbox{\ensuremath{i}}}_{T})}w_{T}=\begin{cases}
w_{T} & \mathrm{if}~i~\mathrm{and}~i+1~\mathrm{are~in~the~same~row},\\
-w_{T} & \mathrm{if}~i~\mathrm{and}~i+1~\mathrm{are~in~the~same~column.}
\end{cases}\label{eq:cox-gen-degen}
\end{equation}

\begin{rem}
\label{rem:S_0}For completeness of some of our statements, we need
to define the notions above also for $S_{0}$, the symmetric group
of the empty set. This is the trivial group. Whenever $\mu=\lambda$,
we have $\Tab\left(\lambda/\mu\right)=\left\{ \emptyset\right\} $,
and the representation $V^{\lambda/\mu}$ is one-dimensional with
basis $w_{T}$, for $T$ the empty tableau.
\end{rem}

\section{Preliminary representation theoretic results\label{sec:Preliminary-representation-theor}}

In this section we give some preliminary results on representation
theory that will be used in the rest of the paper. Although some results
here seem to be novel (in particular Proposition \ref{prop:bound-for-matrix-coef-in-terms-of-D}),
this section plays only a supporting role in the paper.

\subsection{Commutants}

Recall that if $V$ is a finite-dimensional vector space, and $\A$
is a subalgebra of $\End(V)$, then the \emph{commutant} of $\A$
in $\End(V)$ is the algebra of elements $b\in\End(V)$ such that
\[
ba=ab
\]
for all $a\in\A$. For $m\in[n]$ and $\lambda\vdash n$ let $Z(\lambda,m,n)$
denote the commutant of the image of $\C[S_{m}]$ in $\End(V^{\lambda}).$
We identify 
\begin{align}
\End(V^{\lambda}) & \cong V^{\lambda}\otimes\check{V^{\lambda}}\label{eq:End-isomorphism.}
\end{align}
and give $\End(V^{\lambda})$ the Hermitian inner product induced
from $V^{\lambda}.$
\begin{lem}
\label{lem:commutant}Let $m\in[n]$ and $\lambda\vdash n$. The algebra
$Z(\lambda,m,n)$ has an orthonormal basis given by
\begin{equation}
\left\{ \EE_{\mu,R_{1},R_{2}}^{\lambda}\eqdf\frac{1}{\sqrt{d_{\mu}}}\sum_{T\in\Tab(\mu)}v_{T\sqcup R_{1}}\otimes\check{v}_{T\sqcup R_{2}}\,:\,\mu\vdash m,\,\mu\subset\lambda,\,R_{1},R_{2}\in\Tab(\lambda/\mu)\right\} .\label{eq:E-def}
\end{equation}
\end{lem}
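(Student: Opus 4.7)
The plan is to combine the Gelfand--Tsetlin/skew-module description of $V^\lambda|_{S_m}$ (Lemma \ref{lem:tensor-product-isomorphism}) with Schur's lemma, and then verify that the proposed list is orthonormal and of the correct size. First I would unpack the $S_m$-isotypic decomposition: Lemma \ref{lem:tensor-product-isomorphism} together with the remark preceding it shows
\[
V^\lambda\big|_{S_m}\ \cong\ \bigoplus_{\substack{\mu\vdash m\\ \mu\subset\lambda}} V^\mu\otimes V^{\lambda/\mu},
\]
where $S_m$ acts on the left tensor factor and trivially on the right. By Schur's lemma, the commutant of the $S_m$-action on the $\mu$-isotypic piece is $\mathrm{Id}_{V^\mu}\otimes\End(V^{\lambda/\mu})$, so
\[
Z(\lambda,m,n)\ \cong\ \bigoplus_{\substack{\mu\vdash m\\ \mu\subset\lambda}}\End(V^{\lambda/\mu}),\qquad \dim Z(\lambda,m,n)=\sum_{\mu\subset\lambda,\,\mu\vdash m}d_{\lambda/\mu}^{\,2}.
\]

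Next I would show the $\EE^\lambda_{\mu,R_1,R_2}$ lie in $Z(\lambda,m,n)$. Under the isomorphism of Lemma \ref{lem:tensor-product-isomorphism}, the basis vector $v_{T\sqcup R}$ corresponds to $v_T\otimes w_R$, so
\[
\EE^\lambda_{\mu,R_1,R_2}\ =\ \frac{1}{\sqrt{d_\mu}}\Bigl(\sum_{T\in\Tab(\mu)} v_T\otimes \check v_T\Bigr)\otimes\bigl(w_{R_1}\otimes\check w_{R_2}\bigr)\ =\ \frac{1}{\sqrt{d_\mu}}\,\mathrm{Id}_{V^\mu}\otimes\bigl(w_{R_1}\otimes\check w_{R_2}\bigr)
\]
on the $\mu$-isotypic summand, and zero on the other summands. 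Since $S_m$ acts only on $V^\mu$, this commutes with $\C[S_m]$; hence $\EE^\lambda_{\mu,R_1,R_2}\in Z(\lambda,m,n)$.

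For orthonormality, use that $\{v_{T\sqcup R}\}$ is an orthonormal basis of $V^\lambda$ (so that $\{v_{T_1\sqcup R_1}\otimes\check v_{T_2\sqcup R_2}\}$ is an orthonormal basis of $\End(V^\lambda)$ under the induced inner product from \eqref{eq:End-isomorphism.}). A direct computation gives
\[
\bigl\langle \EE^\lambda_{\mu,R_1,R_2},\,\EE^\lambda_{\mu',R_1',R_2'}\bigr\rangle\ =\ \frac{1}{\sqrt{d_\mu d_{\mu'}}}\sum_{T,T'}\delta_{T\sqcup R_1,\,T'\sqcup R_1'}\,\delta_{T\sqcup R_2,\,T'\sqcup R_2'},
\]
and the only way both Kronecker deltas can be nonzero is $\mu=\mu'$, $R_1=R_1'$, $R_2=R_2'$, $T=T'$, giving $\frac{1}{d_\mu}\cdot d_\mu=1$.

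Finally, the number of elements in the proposed set is exactly $\sum_{\mu\subset\lambda,\,\mu\vdash m}d_{\lambda/\mu}^{\,2}$, which by the first step equals $\dim Z(\lambda,m,n)$. Thus the orthonormal set spans and is a basis. The only step requiring any real care is the translation from the $v_{T\sqcup R}$ notation to the tensor-product picture of Lemma \ref{lem:tensor-product-isomorphism}, i.e.\ making sure that the coefficient $\frac{1}{\sqrt{d_\mu}}$ correctly produces the unit-norm identity $\frac{1}{\sqrt{d_\mu}}\sum_T v_T\otimes\check v_T$ on $V^\mu$; once this normalization is matched, both the commutant property and the orthonormality are immediate from the Gelfand--Tsetlin orthonormality.
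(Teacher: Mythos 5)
Your proof is correct. It uses the same essential ingredients as the paper — the tensor-product isomorphism $V^\lambda_\mu\cong V^\mu\otimes V^{\lambda/\mu}$ from Lemma \ref{lem:tensor-product-isomorphism} and the fact that the commutant of $\End(V^\mu)\otimes\C\,\Id$ is $\C\,\Id\otimes\End(V^{\lambda/\mu})$ (you invoke Schur's lemma, the paper invokes Tomita's theorem; these are the same elementary fact here). The difference is organizational: the paper explicitly computes the action formula for $\EE^\lambda_{\mu,R_1,R_2}$ on basis vectors, derives the multiplication rule for these operators, and thereby identifies the algebra $\A$ they generate with $\bigoplus_\mu\B'_\mu = Z(\lambda,m,n)$; you instead verify that the proposed operators lie in the commutant, check orthonormality directly, and close the argument with a dimension count, $\sum_\mu d_{\lambda/\mu}^2 = \dim Z(\lambda,m,n)$. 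The dimension-count route is slightly shorter and avoids the explicit product formula, at the price of not exhibiting the algebra structure of $Z$ (which the paper does not need for this lemma, but the action formula (\ref{eq:E-action-formula-1}) is in the spirit of the paper's later computations). One small point you gloss over but which is standard and harmless: you implicitly use that any element of the commutant respects the isotypic decomposition (the paper handles this explicitly via the central projections $P_\mu^\lambda$).
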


\begin{proof}
Let $\A\subseteq\End(V^{\lambda})$ be the algebra generated by the
$\EE_{\mu,R_{1},R_{2}}^{\lambda}$ (over all $\mu)$ and $\A_{\mu}\subseteq\A$
be the algebra generated by the \emph{$\EE_{\mu,R_{1},R_{2}}^{\lambda}$}
with a fixed value of $\mu$. Suppose that $Q\in\Tab(\lambda)$. The
formula for the action of $\EE_{\mu,R_{1},R_{2}}^{\lambda}$ on $v_{Q}$
is 
\begin{equation}
\EE_{\mu,R_{1},R_{2}}^{\lambda}\left(V_{Q}\right)=\mathbf{1}\left\{ \mu_{m}(Q)=\mu,\,Q\lvert_{>m}=R_{2}\right\} \frac{1}{\sqrt{d_{\mu}}}v_{Q\lvert_{\le m}\sqcup R_{1}}.\label{eq:E-action-formula-1}
\end{equation}
It is clear that the $\EE_{\mu,R_{1},R_{2}}^{\lambda}$ are an orthonormal
set of elements in $\End(V^{\lambda})$. It follows from (\ref{eq:E-action-formula-1})
that 
\[
\EE_{\mu_{1},R_{1},R_{2}}^{\lambda}\EE_{\mu_{2},R_{3},R_{4}}^{\lambda}=\mathbf{1}\left\{ \mu_{1}=\mu_{2},R_{3}=R_{2}\right\} \frac{1}{\sqrt{d_{\mu_{1}}}}\EE_{\mu_{1},R_{1},R_{4}}^{\lambda},
\]
so the $\EE_{\mu,R_{1},R_{2}}^{\lambda}$ are an orthonormal basis
for the algebra $\A$, and those with a fixed $\mu$ are an orthonormal
basis for $A_{\mu}$. Furthermore, we have 
\[
\A=\bigoplus_{\mu\subset\lambda,\mu\vdash m}\A_{\mu}.
\]

For $\mu\vdash m$, let $p_{\mu}\in\C[S_{m}]$ be the central idempotent
projection onto the $\mu$-isotypic component of $\C[S_{m}]$ and
let $P_{\mu}^{~\lambda}$ be the image of $p_{\mu}$ in $\End(V^{\lambda})$.
The element $P_{\mu}^{~\lambda}$ is the orthogonal projection onto
$\left(V^{\lambda}\right)_{\mu}$, and $P_{\mu}^{~\lambda}$ is in
the center of $Z(\lambda,m,n)$\@. Hence for every $z\in Z(\lambda,m,n)$,
we can write
\[
z=\bigoplus_{\mu\vdash m,\,\mu\subset\lambda}z^{(\mu)}
\]
where $z^{(\mu)}\eqdf P_{\mu}^{\lambda}zP_{\mu}^{\lambda}.$ Moreover,
if we let $\B_{\mu}$ be the algebra generated by the image of $\C[S_{m}]$
in $\End(V_{\mu}^{\lambda})$, each $z^{(\mu)}$ must be in the commutant
$\B'_{\mu}$ of $\B_{\mu}$ in $\End(V_{\mu}^{\lambda}$). On the
other hand, if $z=\bigoplus_{\mu\vdash m,\,\mu\subset\lambda}z^{(\mu)}$
and each $z^{(\mu)}\in\B'_{\mu}$ then $z\in Z(\lambda,m,n)$. This
shows that 
\begin{align}
Z(\lambda,m,n) & =\bigoplus_{\mu\vdash m,\,\mu\subset\lambda}\B'_{\mu}.\label{eq:split-to-sum}
\end{align}

Since $V^{\mu}$ is an irreducible module for $\C[S_{m}]$, the algebra
generated by $\C[S_{m}]$ in $\End(V^{\mu})$ is the whole of $\End(V^{\mu})$.
Hence, under the isomorphism of Lemma \ref{lem:tensor-product-isomorphism},
the algebra $\B_{\mu}$ is identified with $\End(V^{\mu})\otimes\C\mathrm{Id}_{V_{\lambda/\mu}}$.
By a classical theorem, due to Tomita \cite{Tomita} in the generality
of von Neumann algebras\footnote{This is however easy to prove in the special case here that we use
it.}, the commutant of a tensor product is the tensor product of the two
commutants. Therefore, still using the isomorphism of Lemma \ref{lem:tensor-product-isomorphism},
we have 
\[
\B'_{\mu}\cong\C\Id_{V^{\mu}}\otimes\End(V_{\lambda/\mu}).
\]
 This space is the algebra $\A_{\mu}$, so $\B'_{\mu}=\A_{\mu}$ and
\[
Z(\lambda,m,n)=\bigoplus_{\mu\vdash m,\,\mu\subset\lambda}\B'_{\mu}=\bigoplus_{\mu\vdash m,\,\mu\subset\lambda}\A_{\mu}=\A
\]
 as required. 
\end{proof}

\subsection{Bounds for the dimensions of irreducible representations\label{subsec:Bounds-for-the-dimensions-of-irreps}}

In this section we give bounds related to the dimensions of irreducible
representations that we use later. We first note a very simple bound
for the dimensions of irreducible representations of $S_{n}$. For
a YD, denote by $b_{\lambda}\eqdf\left|\lambda\right|-\lambda_{1}$
the number of boxes outside the first row.
\begin{lem}
\label{lem:simple-lemma-on-dimensions}Let $\lambda\vdash n$. Suppose
that $\lambda_{1}=n-b_{\lambda}\ge\frac{n}{2}$. Then 
\[
\binom{\lambda_{1}}{b_{\lambda}}\leq d_{\lambda}\leq n^{b_{\lambda}}.
\]
\end{lem}

\begin{proof}
The first inequality is given by Liebeck and Shalev in \cite[Lemma 2.1]{LiebeckShalev}.
To bound $d_{\lambda}$ from above, note that the standard tableaux
of shape $\lambda$ can be obtained by choosing which $\lambda_{1}$
elements of $[n]$ are in the first row (of which there are at most
$\binom{n}{b_{\lambda}}$ choices), and choosing the remaining $b_{\lambda}$
numbers' locations outside the first row, of which there are at most
$b_{\lambda}!$ choices. Hence
\[
d_{\lambda}\leq b_{\lambda}!\binom{n}{b_{\lambda}}\leq n^{b_{\lambda}}.
\]
\end{proof}
\begin{lem}
\label{lem:dim-ratio-bound}Let $\lambda\vdash n$, $\nu\subset_{k}\lambda$.
If $n\ge k+2b_{\lambda}$ then

\begin{equation}
\frac{\left(n-b_{\lambda}\right)^{b_{\lambda}}}{b_{\lambda}^{~b_{\lambda}}\left(n-k\right)^{b_{\nu}}}\leq\frac{d_{\lambda}}{d_{\nu}}\leq\frac{b_{\nu}^{~b_{\nu}}n^{b_{\lambda}}}{\left(n-k-b_{\nu}\right)^{b_{\nu}}}.\label{eq:dim-ratio-bound}
\end{equation}
\end{lem}

\begin{proof}
By assumption, $n\ge k+2b_{\lambda}\ge k+2b_{\nu}$, so $n-b_{\lambda}\ge\frac{n}{2}$
and $n-k-b_{\nu}\ge\frac{n-k}{2}$ and Lemma \ref{lem:simple-lemma-on-dimensions}
applies. The statement now follows from Lemma \ref{lem:simple-lemma-on-dimensions}
together with the inequality $\binom{p}{q}\geq(\frac{p}{q})^{q}$,
holding for $p,q\in\N$ with $p\ge q\ge1$.
\end{proof}

\subsection{An estimate for matrix coefficients in skew modules\label{subsec:An-estimate-for-mat-coefs-in-skew-modules}}

Recall that the Coxeter generators of $S_{n}$ are $s_{i}=\left(i~~i+1\right)$
for $i\in\left[n-1\right]$. If $\tau\in S_{n}$, we write $\ell_{\cox}(\tau)$
for the minimal length of a word of Coxeter generators that equals
$\tau$. Assume that $\lambda\vdash n$, $m\in\left[n\right]$ and
$\nu\vdash m$. For $T\in\Tab\left(\lambda/\nu\right)$, we write
$\tp(T)\subseteq\left[m+1,n\right]$ for the set of elements in the
top row of $T$ (which may be empty: it is of size $\lambda_{1}-\nu_{1}$).

For any two subsets $A,B$ of $[n]$, we define $d(A,B)=\left|A\setminus B\right|$.
When restricted to subsets of $\left[n\right]$ with exactly $p$
elements, for some $p\in\left[0,n\right]$, this function is a metric.
Moreover, the function $d$ is clearly invariant under $S_{n}$, that
is, if $\sigma\in S_{n}$ and $A,B\subseteq\left[n\right]$, then
$d\left(\sigma(A),\sigma(B)\right)=d\left(A,B\right).$
\begin{prop}
\label{prop:bound-for-matrix-coef-in-terms-of-D}Suppose $m\leq n$,
$\lambda\vdash n$, $\nu\vdash m$ and $\nu\subset\lambda$, and write
$k=n-m$. If $\lambda_{1}+\nu_{1}>n+k^{2}$, then for any $T,T'\in\Tab(\lambda/\nu)$
and $\sigma\in S'_{k}$ we have 
\begin{equation}
\left|\langle\sigma w_{T},w_{T'}\rangle\right|\leq\left(\frac{k^{2}}{\lambda_{1}+\nu_{1}-n}\right)^{d\left(\sigma\tp(T),\tp(T')\right)}.\label{eq:matrix-coeff-bound}
\end{equation}
\end{prop}

Note that if the top row of $\lambda/\nu$ is empty, namely, if $\nu_{1}=\lambda_{1}$,
then $\tp\left(T\right)=\emptyset$ for every $T\in\Tab\left(\lambda/\nu\right)$
and the upper bound in (\ref{eq:matrix-coeff-bound}) is trivial:
$\left(k^{2}/\left(\lambda_{1}+\nu_{1}-n\right)\right)^{0}=1$. In
particular, this is the case if $m=n$, in which case $k=0$, the
bound is $0^{0}=1$, and we have an action of the trivial group on
a one-dimensional space spanned by $w_{T}$ for $T$ the empty tableau
(see Remark \ref{rem:S_0}).
\begin{proof}
If $k=0$ the statement is trivial, so we may assume $k\ge1$. We
prove (\ref{eq:matrix-coeff-bound}) as a consequence of the following
slightly stronger statement:

\textbf{(S)} If $\lambda_{1}+\nu_{1}\ge n+\ell_{\cox}(\sigma)$, then
for any $T\in\Tab(\lambda/\nu)$, $A_{0}\subseteq\left[m+1,n\right]$
of size $\lambda_{1}-\nu_{1}$, and any unit vector $u$ in 
\[
W_{A_{0}}\eqdf\mathrm{span}\left(\left\{ w_{T'}\,\middle|\,T'\in\Tab(\lambda/\nu),\,\tp(T')=A_{0}\right\} \right)
\]
we have 
\begin{equation}
|\langle\sigma w_{T},u\rangle|\leq\left(\frac{\ell_{\cox}(\sigma)}{\nu_{1}+\lambda_{1}-n}\right)^{d(\sigma\tp(T),A_{0})}.\label{eq:matrix-coeff-bound-stronger}
\end{equation}
The proposition follows from \textbf{(S)} by using the bound $\ell_{\cox}(\sigma)\leq k^{2}$
and setting $A_{0}=\tp(T')$, $u=w_{T'}$.

Let $D\eqdf d\left(\sigma\tp(T),A_{0}\right)$. We prove \textbf{(S)
}by induction on $\ell\eqdf\ell_{\cox}(\sigma)$. The base case of
the induction is $\ell=0$. Then $\sigma=\mathrm{id}$ and 
\[
|\langle\sigma w_{T},u\rangle|=|\langle w_{T},u\rangle|=0
\]
unless $\tp(T)=A_{0}$, meaning $D=0$. On the other hand, if $D=0$
then 
\[
|\langle\sigma w_{T},u\rangle|\leq1=0^{0}=\left(\frac{\ell}{\nu_{1}+\lambda_{1}-n}\right)^{D}
\]
as required.

For the inductive step, for $\ell\geq1$ we write $\sigma=s_{j}\sigma'$
where $\ell_{\cox}(\sigma')=\ell-1$ and $j\in\left[m+1,n-1\right]$.
Two scenarios can occur.

\textbf{(i)} Suppose $s_{j}A_{0}=A_{0}$. In this case, by the definition
of the action of the Coxeter generators in (\ref{eq:cox-action}),
$s_{j}u$ is a unit vector in $W_{A_{0}}$. Also, by the invariance
of the distance function under $s_{j}$,
\[
d(\sigma'\tp(T),A_{0})=d(\sigma\tp(T),s_{j}A_{0})=d(\sigma\tp(T),A_{0})=D.
\]
The inductive hypothesis then yields
\[
|\langle\sigma w_{T},u\rangle|=|\langle\sigma'w_{T},s_{j}u\rangle|\leq\left(\frac{\ell-1}{\nu_{1}+\lambda_{1}-n}\right)^{D}\leq\left(\frac{\ell}{\nu_{1}+\lambda_{1}-n}\right)^{D},
\]
 as required.

\textbf{(ii) }Suppose otherwise that $s_{j}A_{0}\neq A_{0}$. This
means that exactly one of $j$ and $j+1$ are in $A_{0}$. We write
\begin{equation}
u=\sum_{T'\in\Tab(\lambda/\nu)\colon\tp(T')=A_{0}}\beta_{T'}w_{T'}.\label{eq:expansion-of-u}
\end{equation}
For each $T'$ with $\tp(T')=A_{0}$, we have
\[
|\ax(\text{\fbox{\ensuremath{j+1}}}_{T'},\text{\fbox{\ensuremath{j}}}_{T'})|\geq\nu_{1}+\lambda_{1}-n.
\]

From (\ref{eq:expansion-of-u}) and the formula for the action of
Coxeter generators (\ref{eq:cox-action}) we can therefore write $s_{j}u=w_{1}+w_{2}$
where $w_{1}\in W_{s_{j}A_{0}}$ and $w_{2}\in W_{A_{0}}$ are orthogonal
vectors with $\|w_{1}\|\leq1$ and $\|w_{2}\|\leq(\nu_{1}+\lambda_{1}-n)^{-1}$.
Hence
\[
\left|\langle\sigma w_{T},u\rangle\right|=\left|\langle\sigma'w_{T},s_{j}u\rangle\right|\leq\left|\langle\sigma'w_{T},w_{1}\rangle\right|+\left|\langle\sigma'w_{T},w_{2}\rangle\right|.
\]
Note that $d(\sigma'\tp(T),s_{j}A_{0})=d\left(\sigma\tp(T),A_{0}\right)=D$,
so by the inductive hypothesis
\begin{equation}
|\langle\sigma w_{T},u\rangle|\leq\left(\frac{\ell-1}{\nu_{1}+\lambda_{1}-n}\right)^{D}+\frac{1}{\nu_{1}+\lambda_{1}-n}\left(\frac{\ell-1}{\nu_{1}+\lambda_{1}-n}\right)^{d(\sigma'\tp(T),A_{0})}.\label{eq:matrix-bound-temp}
\end{equation}
By the triangle inequality, 
\begin{align*}
D-1=d(\sigma'\tp(T),s_{j}A_{0})-d(s_{j}A_{0},A_{0})\leq d(\sigma'\tp(T),A_{0}),
\end{align*}
so using $\nu_{1}+\lambda_{1}\geq n+\ell$ we obtain from (\ref{eq:matrix-bound-temp})
\begin{align*}
|\langle\sigma w_{T},u\rangle| & \leq\left(\frac{\ell-1}{\nu_{1}+\lambda_{1}-n}\right)^{D}+\frac{1}{\nu_{1}+\lambda_{1}-n}\left(\frac{\ell-1}{\nu_{1}+\lambda_{1}-n}\right)^{D-1}\\
 & =\frac{(\ell-1)^{D-1}\ell}{(\nu_{1}+\lambda_{1}-n)^{D}}\leq\left(\frac{\ell}{\nu_{1}+\lambda_{1}-n}\right)^{D},
\end{align*}
as required.
\end{proof}

\subsection{Families of Young diagrams and zeta functions\label{subsec:Families-of-YD-and-zeta}}

Recall from $\S$\ref{sec:Introduction} that the zeta function of
$S_{n}$ is defined by
\[
\zeta^{S_{n}}\left(s\right)\eqdf\sum_{\lambda\vdash n}\frac{1}{d_{\lambda}^{~s}},
\]
and that for $g\ge2$
\[
\left|\mathbb{X}_{g,n}\right|=\left|\Hom\left(\Gamma_{g},S_{n}\right)\right|=\left(n!\right)^{2g-1}\zeta^{S_{n}}\left(2g-2\right).
\]

Let $\Lambda(n,b)$ denote the collection of $\lambda\vdash n$ such
that $b_{\lambda}\ge b$ and $b_{\check{\lambda}}\ge b$. In other
words, $\Lambda(n,b)$ is the collection of Young diagrams of size
$n$ with at least $b$ boxes outside the first row and at least $b$
boxes outside the first column. One has the following useful result
of Liebeck and Shalev \cite[Prop. 2.5]{LiebeckShalev} and, independently,
Gamburd \cite[Prop. 4.2]{gamburd2006poisson}:
\begin{prop}
\label{prop:Liebeck-Shalev-non-effective}For fixed $b\geq0$ and
real $s>0$, as $n\to\infty$
\[
\sum_{\lambda\in\Lambda(n,b)}\frac{1}{d_{\lambda}^{~s}}=O_{b}\left(n^{-sb}\right).
\]
\medskip{}
\end{prop}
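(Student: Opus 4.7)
The plan is to partition $\Lambda(n,b)$ by the length of the first row (or column) and apply Lemma \ref{lem:simple-lemma-on-dimensions} in each piece. Using the conjugation symmetry $d_{\lambda}=d_{\check{\lambda}}$ together with the fact that $\Lambda(n,b)$ is closed under $\lambda\mapsto\check{\lambda}$, I would first, at the cost of a factor of $2$, restrict to those $\lambda\in\Lambda(n,b)$ with $\lambda_{1}\geq\check{\lambda}_{1}$. For such $\lambda$ one has $n\leq\lambda_{1}\check{\lambda}_{1}\leq\lambda_{1}^{~2}$, so $\lambda_{1}\geq\sqrt{n}$. I would then split the sum into two regimes.

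\textbf{Regime 1 (first row very long): $\lambda_{1}\geq n-\sqrt{n}$.} Writing $k\eqdf n-\lambda_{1}\in\left[b,\lfloor\sqrt{n}\rfloor\right]$, Lemma \ref{lem:simple-lemma-on-dimensions} applies and gives $d_{\lambda}\geq\binom{n-k}{k}\geq(n/(2k))^{k}$. The number of $\lambda\vdash n$ with $\lambda_{1}=n-k$ equals the partition count $p(k)$ (since $k\leq n-k$), so the contribution from this regime is at most
\[
\sum_{k=b}^{\lfloor\sqrt{n}\rfloor}p(k)\left(\frac{2k}{n}\right)^{sk}.
\]
The $k=b$ term is $p(b)(2b)^{sb}n^{-sb}=O_{b}(n^{-sb})$, while for $k\geq b+1$ the ratio of consecutive terms is $O((k/n)^{s})$ up to bounded combinatorial factors, so the tail is geometrically summable and contributes $O_{b}(n^{-s(b+1)})=o(n^{-sb})$.

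\textbf{Regime 2 (balanced): $\sqrt{n}\leq\lambda_{1}<n-\sqrt{n}$.} Here $\lambda$ has at least $\sqrt{n}$ boxes outside its first row and, since $\check{\lambda}_{1}\leq\lambda_{1}<n-\sqrt{n}$, at least $\sqrt{n}$ boxes outside its first column. I would derive a super-polynomial lower bound $d_{\lambda}\geq n^{c\sqrt{n}}$ for some absolute $c>0$ — for example, when $\lambda_{1}\geq n/2$ directly from Lemma \ref{lem:simple-lemma-on-dimensions} (using $n-\lambda_{1}\geq\sqrt{n}$), and in the remaining fully-balanced case $\lambda_{1},\check{\lambda}_{1}<n/2$ from the hook length formula combined with Stirling's approximation (the Catalan-type computation for $\lambda=(m,m)$ already exhibits the phenomenon: $d_{\lambda}\sim2^{n}/n^{3/2}$). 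Combined with the crude partition bound $|\{\lambda\vdash n\}|\leq p(n)=e^{O(\sqrt{n})}$, the contribution of this regime is at most $p(n)\cdot n^{-cs\sqrt{n}}=O(n^{-M})$ for every $M>0$, and in particular $O_{b}(n^{-sb})$.

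Combining the two regimes and doubling for the conjugation symmetry used at the start yields the claimed bound. The main obstacle I anticipate is the uniform dimension lower bound in Regime 2 for the balanced partitions with $\lambda_{1},\check{\lambda}_{1}$ both far from $n$: Lemma \ref{lem:simple-lemma-on-dimensions} does not directly address this case and some additional combinatorial input (from the hook length formula or known estimates in the spirit of Vershik--Kerov) is required to extract a super-polynomial lower bound that easily absorbs the $e^{O(\sqrt{n})}$ number of such partitions.
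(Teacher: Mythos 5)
The paper does not prove this proposition itself; it is quoted from Liebeck--Shalev \cite[Prop.~2.5]{LiebeckShalev} and Gamburd \cite[Prop.~4.2]{gamburd2006poisson}, so there is no in-paper argument to compare against. Judged on its own terms, your two-regime plan is the natural one, and Regime~1 is correct: for $k=n-\lambda_1\in[b,\lfloor\sqrt n\rfloor]$ the bound $d_\lambda\ge\binom{n-k}{k}\ge(n/2k)^k$ from Lemma~\ref{lem:simple-lemma-on-dimensions}, together with the count $p(k)$ of such $\lambda$, gives a dominant $k=b$ term of size $O_b(n^{-sb})$ and a geometrically decaying tail of lower order.

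The gap you anticipate in Regime~2 is genuine, but it is wider than you describe. You assert that when $\lambda_1\ge n/2$ the super-polynomial bound $d_\lambda\ge n^{c\sqrt n}$ follows \emph{directly} from Lemma~\ref{lem:simple-lemma-on-dimensions} because $n-\lambda_1\ge\sqrt n$, reserving the hook-length/Stirling input for $\lambda_1,\check\lambda_1<n/2$. That first claim fails near the boundary $\lambda_1=n/2$: the lemma gives $d_\lambda\ge\binom{\lambda_1}{\,n-\lambda_1\,}$, and writing $\lambda_1=n/2+j$ this is $\binom{n/2+j}{2j}$, which is only polynomial in $n$ when $j=O(1)$ and equals $1$ at $j=0$. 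For each such $j$ there are roughly $p(n/2-j)=e^{\Theta(\sqrt n)}$ diagrams, so a polynomial dimension bound cannot absorb the partition count. Hence the super-polynomial lower bound on $d_\lambda$ for balanced shapes is needed throughout $\lambda_1\in[n/2,\,n-\sqrt n)$, not just for $\lambda_1<n/2$. The fact itself is true --- already the two-row shapes near $(n/2,n/2)$ have $d_\lambda\sim 2^{n}/n^{3/2}$, and generic balanced shapes have dimension on the order of $\sqrt{n!}$ --- and supplying such an estimate is exactly the nontrivial input in the Liebeck--Shalev and Gamburd proofs; but it does not follow from Lemma~\ref{lem:simple-lemma-on-dimensions}, which is the only dimension estimate available in the paper, so you would need to import or prove a separate ``balanced shapes have huge dimension'' bound to close the argument.
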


The proof of Theorem \ref{thm:rational-approx} will crucially depend
on certain families of Young diagrams that interact nicely with the
skew modules $V^{\lambda/\nu}$. Given a YD $\lambda$, we will write
$\lambda(n)$ for the unique YD $\lambda(n)\vdash n$ which is obtained
from $\lambda$ by either deleting boxes from or adding boxes to the
first row of $\lambda$, if it exists. To be precise, $\lambda(n)$
exists if and only if $n\geq|\lambda|-\left(\lambda_{1}-\lambda_{2}\right)$,
interpreting $\lambda_{2}=0$ if $\lambda$ only has one row.

Now given $k\in\N$, and YDs $\nu\subset_{k}\lambda$, assume that
$n_{1}$ and $n_{2}$ are large enough so that $\lambda(n_{i})$ and
$\nu(n_{i}-k)$ both exist and so that the first row (of length $\lambda_{1}-\nu_{1}$,
which could be zero) of the SYD $\lambda\left(n_{i}\right)/\nu\left(n_{i}-k\right)$
does not border the second row, namely, $\nu\left(n_{i}-k\right)_{1}\ge\lambda_{2}$.
Then there is a natural way to identify $\Tab\left(\lambda(n_{1})/\nu(n_{1}-k)\right)$
with $\Tab\left(\lambda(n_{2})/\nu(n_{2}-k)\right)$ by simply adding
$n_{2}-n_{1}$ to all numbers in boxes of a tableau in $\Tab\left(\lambda(n_{1})/\nu(n_{1}-k)\right)$
and shifting the first row right or left as needed. If $\nu_{1}\ge\lambda_{2}$
and $T\in\Tab\left(\lambda/\nu\right)$, we write $T(n)$ for the
resulting tableau in $\Tab\left(\lambda(n)/\nu(n-k)\right)$. 

\begin{figure}
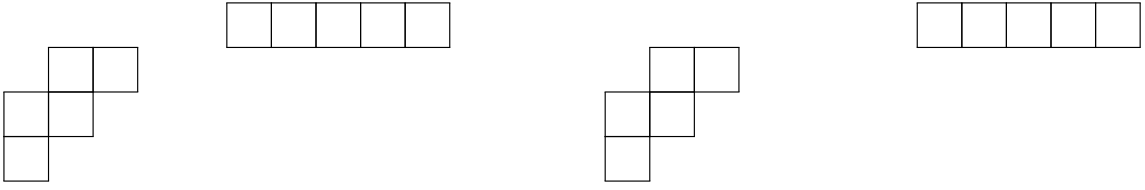

\begin{centering}
\ydiagram{5+5,1+2,2,1} ~~~~~~~~~~~~~~~~\ydiagram{7+5,1+2,2,1}
\par\end{centering}
\caption{This figure depicts two elements of a family of SYDs $\lambda(n)/\nu(n-10)$
for $n=16$ and $n=18$. Here we can take $\lambda=(6,3,2,1)$ and
$\nu=(1,1)$.}
\end{figure}

\medskip{}
Given $n\in\N$ and $k\in\left[n\right]$, recall that we write $S'_{k}$
for the subgroup of $S_{n}$ that acts as the identity on $[n-k]$.
Throughout the paper we fix isomorphisms
\begin{equation}
S_{k}\xrightarrow{\approx}S'_{k},\quad\sigma\mapsto\rho_{k}^{-1}\circ\sigma\circ\rho_{k}\label{eq:Sk-Skdash-isomorphism}
\end{equation}
where we view $S_{k}\leq S_{n}$ in the usual way and
\[
\rho_{k}(i)=\begin{cases}
i+k & \text{if \ensuremath{i\in[n-k]}}\\
i-n+k & \text{if \ensuremath{i\in[n-k+1,n]}.}
\end{cases}
\]
Using these isomorphisms allows us to identify the different subgroups
$S'_{k}$ as $n$ varies: this will recur at several points of the
sequel. It also allows us to note in the following proposition that
matrix coefficients of skew modules $V^{\lambda(n)/\nu(n-k)}$ are
holomorphic functions of $n^{-1}$, for sufficiently large $n$. Recall
that $w_{T_{i}(n)}$ are elements of the Gelfand-Tsetlin basis for
$V^{\lambda(n)/\nu(n-k)}$.
\begin{prop}
\label{prop:holomorphic-matrix-coefficients}Let $k\in\N$, $\sigma\in S_{k}$,
and $\nu\subset_{k}\lambda$ be two Young diagrams that differ by
$k$ boxes. Suppose that $\nu_{1}\ge\lambda_{2}$. Given $T_{1},T_{2}\in\Tab(\lambda/\nu)$
there is a function $F=F_{\sigma,\lambda,\nu,T_{1},T_{2}}$ that is
holomorphic in the ball of radius $\left|\lambda\right|^{-1}$ around
zero, has Taylor expansion around $0$ with rational coefficients,
and such that for all $n\geq|\lambda|$, viewing $\sigma$ as an element
of $S'_{k}\leq S_{n}$ via the isomorphism (\ref{eq:Sk-Skdash-isomorphism}),
\[
\left\langle \sigma w_{T_{1}(n)},w_{T_{2}(n)}\right\rangle =F\left(n^{-1}\right).
\]
\end{prop}

\begin{proof}
Since $V^{\lambda(n)/\nu(n-k)}$ is finite dimensional with dimension
independent of $n$ (as long as $n\ge\left|\lambda\right|$), it suffices
to prove the result in the case that $\sigma\in S_{k}$ is a Coxeter
generator $s_{i}$ with $i\in[k-1]$. Interpreted as an element of
$S'_{k}\leq S_{n}$ via (\ref{eq:Sk-Skdash-isomorphism}), $\sigma$
corresponds to the Coxeter generator $s_{i+n-k}\in S_{n}$.

Let $a$ be the axial distance between $j\eqdf i+\left|\lambda\right|-k$
and $j+1=i+1+\left|\lambda\right|-k$ in $T_{1}$. Note that
\[
\ax\left(\text{\fbox{\ensuremath{i+n-k}}}_{T_{1}\left(n\right)},\text{\fbox{\ensuremath{i+1+n-k}}}_{T_{1}\left(n\right)}\right)=\begin{cases}
a+n-\left|\lambda\right| & \mathrm{if}~j~\mathrm{in~the~first~row~of}~T_{1}~\mathrm{and}~j+1~\mathrm{not},\\
a-\left(n-\left|\lambda\right|\right) & \mathrm{if}~j+1~\mathrm{in~the~first~row~of}~T_{1}~\mathrm{and}~j~\mathrm{not},\\
a & \mathrm{otherwise}.
\end{cases}
\]
In the first case $a>0$ and in the second case $a<0$. By the description
in (\ref{eq:cox-action}) of how the Coxeter generators act, $\langle s_{i+n-k}w_{T_{1}(n)},w_{T_{2}(n)}\rangle$
is therefore one of the following functions of $n$:
\[
0,\,\frac{1}{a-n+|\lambda|},\,\frac{1}{a+n-|\lambda|},\,\sqrt{1-\frac{1}{(a-n+|\lambda|)^{2}}},\,\sqrt{1-\frac{1}{(a+n-|\lambda|)^{2}}}.
\]
If one replaces $n$ by $z^{-1}$, each of these yields a holomorphic
function of $z$ when $|z|$ is sufficiently small. 
\end{proof}
The dimensions of representations in a family $\lambda(n)$ are polynomials
in $n$:
\begin{lem}
\label{lem:dim-polynomial}Given a Young diagram $\lambda$, consider
the family of Young diagrams $\lambda(n)$. There is a polynomial
$G=G_{\lambda}\in\mathbb{Q}\left[t\right]$ of degree $b_{\lambda}$
with rational coefficients such that for every $n$ such that $\lambda\left(n\right)$
exists, 
\[
d_{\lambda(n)}=G(n).
\]
Furthermore, the complex zeros of $G$ are integers $n$ with $n\in\left[0,\left|\lambda\right|\right]$,
and the leading coefficient is $\frac{1}{m}$ for some integer $m$.
\end{lem}

For example, if $\lambda\left(n\right)=\left(n-4,3,1\right)$, then
$d_{\lambda\left(n\right)}=\frac{n\left(n-1\right)\left(n-3\right)\left(n-6\right)}{8}$
for every $n\ge7$.
\begin{proof}
This easily follows from the hook-length formula for the dimension
$d_{\lambda}$ \cite{FRT}.
\end{proof}
Lemma \ref{lem:dim-polynomial} together with Proposition \ref{prop:Liebeck-Shalev-non-effective}
have the following nice consequence for the zeta function $\zeta^{S_{n}}$
that will be crucial in proving Theorem \ref{thm:rational-approx}.
\begin{prop}
\label{prop:zeta-polynomial}For any $s\in\N$ and $M\in\N$, there
is a polynomial $P_{s,M}\in\Z\left[t\right]$ with integer coefficients
of degree $<M$ such that 
\[
\zeta^{S_{n}}\left(s\right)=2\cdot P_{s,M}\left(n^{-1}\right)+O\left(n^{-M}\right)
\]
as $n\to\infty$. The constant coefficient of $P_{s,M}$ is equal
to $1$.
\end{prop}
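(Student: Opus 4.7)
Fix $s,M\in\N$. The strategy is to isolate the ``first row long'' and ``first column long'' Young diagrams by inclusion-exclusion, absorb the remainder into the error via Proposition~\ref{prop:Liebeck-Shalev-non-effective}, and then expand the main term using Lemma~\ref{lem:dim-polynomial}.

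First, set $B = M$ and write
\[
\zeta^{S_{n}}(s) = \Sigma_{1}(n) + \Sigma_{2}(n) - \Sigma_{12}(n) + \Sigma_{\Lambda}(n),
\]
where $\Sigma_{1}$ is the sum of $d_{\lambda}^{-s}$ over $\lambda\vdash n$ with $\lambda_{1}\geq n-B+1$, $\Sigma_{2}$ is the analogous sum with $\check\lambda_{1}\geq n-B+1$, $\Sigma_{12}$ is the sum over partitions satisfying both conditions, and $\Sigma_{\Lambda}$ is the sum over $\lambda\in\Lambda(n,B)$. The hook at $(1,1)$ gives $\lambda_{1}+\check\lambda_{1}\leq n+1$, so the overlap sum $\Sigma_{12}$ is empty once $2(n-B+1)>n+1$, i.e.~for $n>2M-1$. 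Since $\lambda\mapsto\check\lambda$ is an involution preserving $d_{\lambda}$, one has $\Sigma_{1}=\Sigma_{2}$. Finally, Proposition~\ref{prop:Liebeck-Shalev-non-effective} with $b=M$ and $s\geq1$ gives $\Sigma_{\Lambda}(n)=O_{M}(n^{-sM})=O(n^{-M})$. Thus, for $n>2M$,
\[
\zeta^{S_{n}}(s) = 2\Sigma_{1}(n) + O_{M}(n^{-M}).
\]

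Next, I parametrize $\Sigma_{1}(n)$ by the ``shape below the first row''. Every partition $\lambda\vdash n$ with $\lambda_{1}\geq n-B+1$ is uniquely determined by the Young diagram $\mu^{\ast}$ obtained by removing its first row; one has $|\mu^{\ast}|=n-\lambda_{1}\in\{0,1,\dots,M-1\}$. Conversely, for any such $\mu^{\ast}$ and all $n$ sufficiently large, the diagram $\lambda(\mu^{\ast},n)$ with first row of length $n-|\mu^{\ast}|$ above $\mu^{\ast}$ is a valid YD, and its dimension is the value $G_{\mu^{\ast}}(n)$ of the polynomial from Lemma~\ref{lem:dim-polynomial}. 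Hence, for $n$ large,
\[
\Sigma_{1}(n) = \sum_{\mu^{\ast}\,:\,|\mu^{\ast}|\leq M-1} G_{\mu^{\ast}}(n)^{-s}.
\]

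Now I expand each summand as a Laurent series in $n^{-1}$. By Lemma~\ref{lem:dim-polynomial}, $G_{\mu^{\ast}}(n)=\tfrac{1}{m_{\mu^{\ast}}}\prod_{j=1}^{c}(n-r_{j})$ where $c=|\mu^{\ast}|$, $m_{\mu^{\ast}}\in\Z_{>0}$ and $r_{j}\in\Z\cap[0,|\mu^{\ast}|]$. For $n$ larger than all $r_{j}$,
\[
G_{\mu^{\ast}}(n)^{-s} = m_{\mu^{\ast}}^{s}\,n^{-sc}\prod_{j=1}^{c}\left(1-\frac{r_{j}}{n}\right)^{-s} = m_{\mu^{\ast}}^{s}\,n^{-sc}\prod_{j=1}^{c}\sum_{k\geq0}\binom{s+k-1}{k}r_{j}^{\,k}\,n^{-k}.
\]
The coefficients $\binom{s+k-1}{k}$ are positive integers, so this yields a convergent power series in $n^{-1}$ with \emph{integer} coefficients, starting at order $n^{-sc}$. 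The $\mu^{\ast}=\emptyset$ term contributes $1$ (the constant term), and every other $\mu^{\ast}$ contributes only to orders $n^{-sc}$ with $c\geq 1$.

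Finally, truncate each expansion at order $n^{-(M-1)}$. Since there are only finitely many $\mu^{\ast}$ with $|\mu^{\ast}|\leq M-1$, summing the truncations gives a polynomial $P_{s,M}\in\Z[t]$ of degree $<M$ with constant coefficient $1$, and the total truncation error is $O(n^{-M})$. Combining with the earlier display yields
\[
\zeta^{S_{n}}(s) = 2\,P_{s,M}(n^{-1}) + O(n^{-M}),
\]
as required.

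The main obstacle is verifying that the coefficients of $P_{s,M}$ are \emph{integers} and not merely rationals. This relies crucially on the two fine features of Lemma~\ref{lem:dim-polynomial}: that all roots of $G_{\mu^{\ast}}$ are non-negative integers, and that the leading coefficient is of the form $1/m$ with $m\in\Z_{>0}$. These together with the generalized binomial theorem make the series $(1-r_{j}/n)^{-s}$ integer-coefficient, and the product structure preserves this integrality. Every other step is a routine decomposition.
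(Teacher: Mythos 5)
Your proof is correct and follows essentially the same route as the paper's: truncate the Witten zeta sum to partitions close to $(n)$ or its transpose using Proposition~\ref{prop:Liebeck-Shalev-non-effective}, exploit the involution $\lambda\mapsto\check\lambda$ to produce the factor $2$, parametrize the surviving partitions as a fixed finite family with first row extending, and expand each $G_{\mu^\ast}(n)^{-s}$ as an integer power series in $n^{-1}$ via Lemma~\ref{lem:dim-polynomial}. The only cosmetic differences from the paper's argument are that you take $B=M$ rather than the more economical $b=\lceil M/s\rceil$ (harmless, since the extra families each start at order $n^{-sc}$ with $sc\geq M$ and fall into the error), and you make the inclusion--exclusion and the binomial-series integrality explicit rather than asserting them.
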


For example, for $s=2$ and $M=5$ we have 
\[
\zeta^{S_{n}}\left(2\right)=2\left(1+\frac{1}{n^{2}}+\frac{2}{n^{3}}+\frac{11}{n^{4}}\right)+O\left(\frac{1}{n^{5}}\right).
\]

\begin{proof}
Fix $s\in\N$ and $M\in\N$ as in the statement of the proposition.
Let $b=\lceil\frac{M}{s}\rceil$. Proposition \ref{prop:Liebeck-Shalev-non-effective}
implies that 
\[
\zeta^{S_{n}}\left(s\right)=\sum_{\substack{\lambda\vdash n\\
\lambda\notin\Lambda(n,b)
}
}\frac{1}{d_{\lambda}^{~s}}+O(n^{-M})
\]
as $n\to\infty$. The $\lambda$ in the sum above have either $<b$
boxes outside their first row, or $<b$ boxes outside their first
column. For $n>2b$, these options are mutually exclusive. Moreover,
the map $\lambda\mapsto\check{\lambda}$ maps YDs of the first kind
to YDs of the second kind bijectively, and vice versa. Hence if we
let $\Lambda_{b_{\lambda}<b}(n)$ be the collection of $\lambda\vdash n$
with $<b$ boxes outside their first row, then as $n\to\infty$,
\begin{align*}
\zeta^{S_{n}}\left(s\right) & =\sum_{\substack{\lambda\in\Lambda_{b_{\lambda}<b}(n)}
}\left(\frac{1}{d_{\lambda}^{~s}}+\frac{1}{d_{\check{\lambda}}^{~s}}\right)+O\left(n^{-M}\right)\\
 & =2\sum_{\substack{\lambda\in\Lambda_{b_{\lambda}<b}(n)}
}\frac{1}{d_{\lambda}^{~s}}+O\left(n^{-M}\right).
\end{align*}
Now, if $n>2b$, there is a finite collection of YDs $\{\mu_{1},\ldots,\mu_{\ell}\}$,
depending on $b$, with $|\mu_{i}|<2b$ for all $i$, such that for
each $n>2b$
\[
\Lambda_{b_{\lambda}<b}(n)=\left\{ \mu_{1}(n),\mu_{2}(n),\ldots,\mu_{\ell}(n)\right\} .
\]
For each of these $\mu_{i}$, let $G_{\mu_{i}}$ be the polynomial
provided by Lemma \ref{lem:dim-polynomial}. No $G_{\mu_{i}}$ has
any zero $z$ with $|z|>2b$. Hence 
\[
\zeta^{S_{n}}\left(s\right)=2\sum_{\substack{i=1}
}^{\ell}\frac{1}{\left(G_{\mu_{i}}\left(n\right)\right)^{s}}+O\left(n^{-M}\right)
\]
as $n\to\infty$. Because of the special structure of $G_{\mu_{i}}$,
as elaborated in Lemma \ref{lem:dim-polynomial}, $\left(G_{\mu_{i}}\left(n\right)\right)^{-1}$
is equal to a power series in $n^{-1}$ with integer coefficients.
Since $s\in\N$, $\left(G_{\mu_{i}}\left(n\right)\right)^{-s}$ is
too equal to a power series in $n^{-1}$ with integer coefficients.
This proves the first statement. Because the degree of $G_{\mu_{i}}\left(n\right)$
is positive unless $\mu_{i}\left(n\right)=\left(n\right)$ in which
case $G_{\mu_{i}}\left(n\right)=1$, the constant coefficient of $P_{s,M}$
must be $1$.
\end{proof}
In fact, it is the following direct corollary of Proposition \ref{prop:zeta-polynomial}
that we will need.
\begin{cor}
\label{cor:zeta-inv-poly}For any $s\in\N$ and $M\in\N$, there is
a polynomial $Q_{s,M}\in\Z\left[t\right]$ of degree $<M$ and constant
coefficient $1$ such that as $n\to\infty$,
\[
\frac{1}{\zeta^{S_{n}}\left(s\right)}=\frac{1}{2}Q_{s,M}\left(n^{-1}\right)+O\left(n^{-M}\right).
\]
\end{cor}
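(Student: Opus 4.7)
The plan is to invert the asymptotic expansion of $\zeta^{S_n}(s)$ provided by Proposition \ref{prop:zeta-polynomial} as a formal power series in $n^{-1}$, then truncate. First I would write $P_{s,M}(t) = 1 + R(t)$ with $R(t) \in t\Z[t]$ (no constant term, integer coefficients) as guaranteed by Proposition \ref{prop:zeta-polynomial}. Because $R$ has vanishing constant term, the formal series
\[
\sum_{k \geq 0} (-R(t))^{k} \in \Z[[t]]
\]
is well-defined (each fixed degree receives contributions from only finitely many $k$). Let $Q_{s,M}(t)$ be the truncation of this series at degrees $<M$; since each $(-R)^{k}$ has integer coefficients and $(-R)^{0}=1$, we get $Q_{s,M} \in \Z[t]$ with $\deg Q_{s,M} < M$ and constant coefficient $1$.

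Next I would verify the polynomial identity
\[
P_{s,M}(t) \, Q_{s,M}(t) \;=\; 1 + t^{M} S(t)
\]
for some $S(t) \in \Z[t]$: this is immediate from the fact that $Q_{s,M}$ is the truncation at degree $M-1$ of the formal inverse of $P_{s,M}$ in $\Z[[t]]$, so the product agrees with $1$ modulo $t^{M}$.

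Finally I would combine these ingredients. Substituting $t = n^{-1}$ and multiplying the asymptotic expansion of Proposition \ref{prop:zeta-polynomial} by $\tfrac{1}{2}Q_{s,M}(n^{-1})$, one obtains
\[
\tfrac{1}{2} \zeta^{S_n}(s) \cdot Q_{s,M}(n^{-1}) \;=\; P_{s,M}(n^{-1}) \, Q_{s,M}(n^{-1}) + O(n^{-M}) \;=\; 1 + O(n^{-M}),
\]
using that $Q_{s,M}(n^{-1}) = O(1)$ as $n \to \infty$ (since $Q_{s,M}$ is a polynomial with constant term $1$). Since $\zeta^{S_n}(s) \to 2$ (e.g., by Proposition \ref{prop:zeta-polynomial} applied with $M=1$), the value $\zeta^{S_n}(s)$ is bounded away from $0$ for large $n$, so dividing through by $\zeta^{S_n}(s)$ preserves the $O(n^{-M})$ error and yields
\[
\frac{1}{\zeta^{S_n}(s)} \;=\; \tfrac{1}{2} Q_{s,M}(n^{-1}) + O(n^{-M}).
\]
The main thing to be careful about is that the error term transports correctly through the inversion; this is controlled by the positivity of $\lim_{n\to\infty} \zeta^{S_n}(s) = 2$ together with the trivial bound on $Q_{s,M}(n^{-1})$. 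There is no deeper obstacle, so this should be essentially a one-paragraph argument once Proposition \ref{prop:zeta-polynomial} is in hand.
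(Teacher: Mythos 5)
Your proof is correct and is essentially the argument the paper has in mind — they state the corollary as "direct" from Proposition \ref{prop:zeta-polynomial} and leave the formal power-series inversion implicit, which is exactly what you carry out. The key points you verify (that truncating the inverse of $1+R$ in $\Z[[t]]$ yields an integer polynomial with constant coefficient $1$ and degree $<M$, and that the $O(n^{-M})$ error passes through division because $\zeta^{S_n}(s)\to 2$) are precisely the content that needs checking.
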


\section{The probability of an embedded tiled surface\label{sec:The-probability-of-an-embedded-tiled-surface}}

\subsection{Overview of this section\label{subsec:Overview-of-this}}

This short overview is meant to make the results of this section more
transparent and to stress an analogy with known results about the
zeta function of $S_{n}$. For simplicity, we assume $g=2$ throughout
this Section $\S$\ref{sec:The-probability-of-an-embedded-tiled-surface}
and denote $\Gamma=\Gamma_{2}=\left\langle a,b,c,d\,\middle|\,\left[a,b\right]\left[c,d\right]\right\rangle $.

As explained in Section $\S$\ref{sec:Introduction}, 
\begin{equation}
\left|\X_{n}\right|=\left|\X_{2,n}\right|=\left(n!\right)^{3}\cdot\sum_{\lambda\vdash n}\frac{1}{d_{\lambda}^{~2}}.\label{eq:zeta, again}
\end{equation}
If $\left\{ \lambda\left(n\right)\right\} _{n\ge n_{0}}$ is a family
of Young diagrams obtained by extending the first row, as in Section
\ref{subsec:Families-of-YD-and-zeta}, then $d_{\lambda\left(n\right)}$
is a polynomial in $n$ of degree $b_{\lambda}$ (Lemma \ref{lem:dim-polynomial}),
and so the contribution of $\lambda\left(n\right)$ and of $\check{\lambda\left(n\right)}$
to (\ref{eq:zeta, again}) is a rational function in $n$ for every
$n\ge n_{0}$. Proposition \ref{prop:Liebeck-Shalev-non-effective}
(due to \cite{LiebeckShalev,gamburd2006poisson}) states that up to
order $O\left(n^{-2b}\right)$, the zeta function in (\ref{eq:zeta, again})
is determined by those families of Young diagrams with $b_{\lambda}<b$
and their transpose. 

In this Section \ref{sec:The-probability-of-an-embedded-tiled-surface}
we prove something analogous for $\E_{n}^{\emb}(Y)$ where $Y$ is
a compact tiled surface. We write $\v=\v(Y)$, $\e=\e(Y)$, $\f=\f(Y)$
for the number of vertices, edges, and octagons of $Y$, respectively.
We will use the letter $f$ for an element of $\{a,b,c,d\}$ and write
$\e_{f}=\e_{f}(Y)$ for the number of $f$-labeled edges of $Y$.
The first major result is that, depending on a choice of four constant
permutations $\sigma_{f}^{\pm},\tau_{f}^{\pm}\in S'_{\v}$ per letter
(defined in Section \ref{subsec:Construction-of-auxiliary-numberings}),
we have 
\begin{equation}
\mathbb{E}_{n}^{\mathrm{emb}}\left(Y\right)=\frac{\left(n!\right)^{3}}{\left|\X_{n}\right|}\cdot\frac{\left(n\right)_{\v}\left(n\right)_{\f}}{\prod_{f}\left(n\right)_{\e_{f}}}\cdot\sum_{\nu\vdash n-\v}H_{Y}\left(\nu\right),\label{eq:overview of section 4, 1}
\end{equation}
where $H_{Y}\left(\nu\right)$ is some explicit function. This follows
from Theorem \ref{thm:E_n-emb-exact-expression}. Notice that as $n\to\infty$,
the first fraction in (\ref{eq:overview of section 4, 1}) is $\frac{\left(n!\right)^{3}}{\left|\X_{n}\right|}=\frac{1}{2}+O\left(n^{-2}\right)$
by Proposition \ref{prop:Liebeck-Shalev-non-effective}, and the second
one is $\frac{\left(n\right)_{\v}\left(n\right)_{\f}}{\prod_{f}\left(n\right)_{\e_{f}}}=n^{\chi\left(Y\right)}\left(1+O\left(n^{-1}\right)\right)$.
So (\ref{eq:overview of section 4, 1}) gives that 
\begin{equation}
\mathbb{E}_{n}^{\mathrm{emb}}\left(Y\right)=\left(\frac{1}{2}+O\left(n^{-1}\right)\right)n^{\chi\left(Y\right)}\cdot\sum_{\nu\vdash n-\v}H_{Y}\left(\nu\right).\label{eq:overview of section 4, 2}
\end{equation}
Next, our analysis shows that by considering, as above, \emph{families}
of Young diagrams $\left\{ \nu\left(n\right)\right\} _{n\ge n_{0}}$,
then for large enough $n$, $H_{Y}\left(\nu\left(n\right)\right)$
is equal to a converging series $\sum_{j=-\infty}^{K}\beta_{j}n^{j}$,
with $K=K\left(Y,\nu\right)$ some integer. Section \ref{subsec:Approximating Xi by Laurent}
then shows that for any given $M$, there is finite set of families
$\nu\left(n\right)$, with $b_{\nu}$ and $b_{\check{\nu}}$ bounded,
such that all remaining summands in $\sum_{\nu\vdash n-\v}H_{Y}\left(\nu\right)$
outside these families contribute jointly $O\left(n^{-M}\right)$
-- this is analogous to Proposition \ref{prop:Liebeck-Shalev-non-effective}.
Because every tiled surface admits finite resolutions as in Section
\ref{subsec:Resolutions}, this quickly leads to the proof of Theorem
\ref{thm:rational-approx} in Section \ref{sec:Proofs-of-main-theorems}.

\medskip{}

In fact, the analysis so far could have been carried out with graphs
(core graphs à la Stallings) rather than with tiled surfaces. The
importance of tiled surfaces and, moreover, of (strongly) boundary
reduced tiled surfaces, is in our ability to determine the order of
magnitude of $H_{Y}\left(\nu\right)$. Our analysis here culminates
in Proposition \ref{prop:Upsilon-non-trivial-bound} and Section \ref{subsec:Estimating-Xi},
from which it follows that when $Y$ is boundary reduced, 
\[
H_{Y}\left(\nu\left(n\right)\right)=\frac{1}{d_{\nu}^{~2}}\cdot O\left(1\right)
\]
as $n\to\infty,$ and when $Y$ is strongly boundary reduced, 
\begin{equation}
H_{Y}\left(\nu\left(n\right)\right)=\frac{1}{d_{\nu}^{~2}}\left(1+O\left(\frac{1}{n}\right)\right).\label{eq:overview of section 4, SBR}
\end{equation}
This shows that the analysis of the zeta function in (\ref{eq:zeta, again})
can be viewed as a special case of our results. Indeed, when $Y=Y_{\emptyset}$
is the empty tiled surface (which is, in particular, strongly boundary
reduced), (\ref{eq:overview of section 4, 1}) together with (\ref{eq:overview of section 4, SBR})
yield that 
\[
\left|\X_{n}\right|=\left|\X_{n}\right|\cdot\mathbb{E}_{n}^{\mathrm{emb}}\left(Y_{\emptyset}\right)=\left(n!\right)^{3}\cdot\sum_{\nu\vdash n}H_{Y_{\emptyset}}\left(\nu\right)=\left(n!\right)^{3}\cdot\sum_{\nu\vdash n}\frac{1}{d_{\nu}^{~2}}\left(1+O\left(\frac{1}{n}\right)\right).
\]
What we achieve here is the extension of this result to general strongly
boundary reduced tiled surfaces, with an extra factor of $n^{\chi\left(Y\right)}$
appearing. If $Y$ is merely boundary reduced, we obtain the same
result up to multiplicative constants.

\subsubsection*{A remark about composing permutations }

A technical but important remark is due. The bijection
\begin{align*}
\phi & \mapsto X_{\phi}\\
\Hom(\Gamma,S_{n}) & \to\{\text{degree-\ensuremath{n} covers of \ensuremath{\Sigma_{2}\}} }
\end{align*}
described previously involves the version of $S_{n}$ where permutations
are composed with the left-most permutation acting first. On the other
hand, since in the rest of the paper, we work with permutations in
detail, and specifically with the representation theory of $S_{n}$,
it is more standard to view permutations as functions from $[n]$
to $[n]$ and hence to multiply according to composition of functions
(functions acting from the left). \emph{So in the rest of the paper,
permutations will be composed with the right-most permutation acting
first. }These two versions of $S_{n}$ are isomorphic, of course,
and one isomorphism is given by $\mathrm{inv}\colon S_{n}\to S_{n}$
defined by $\sigma\mapsto\sigma^{-1}$. 

Accordingly, by post-multiplication with $\mathrm{inv}$, we turn
a homomorphism 
\[
\phi\in\mathbb{X}_{n}=\Hom\left(\Gamma_{2},S_{n}~{\scriptstyle \mathrm{(left-to-right~version)}}\right)
\]
into a homomorphism 
\[
\overline{\phi}\eqdf\mathrm{inv\circ\phi}\in\Hom\left(\Gamma_{2},S_{n}~{\scriptstyle \mathrm{(right-to-left~version)}}\right).
\]
The homomorphism $\overline{\phi}$ satisfies $\overline{\phi}\left(\gamma\right)=\phi\left(\gamma\right)^{-1}$
for every $\gamma\in\Gamma_{2}$. In particular, with composition
of permutations from right to left, the four permutations $\phi\left(a\right),\phi\left(b\right),\phi\left(c\right),\phi\left(d\right)\in S_{n}$
satisfy
\[
\left[\phi\left(a\right)^{-1},\phi\left(b\right)^{-1}\right]\left[\phi\left(c\right)^{-1},\phi\left(d\right)^{-1}\right]=\left[\overline{\phi\left(a\right)},\overline{\phi\left(b\right)}\right]\left[\overline{\phi\left(c\right)},\overline{\phi\left(d\right)}\right]=1,
\]
or, equivalently (taking the inverse of the resulting permutation),
\[
\left[\phi(d)^{-1},\phi(c)^{-1}\right]\left[\phi(b)^{-1},\phi(a)^{-1}\right]=1.
\]
This means that the word $\left[d^{-1},c^{-1}\right]\left[b^{-1},a^{-1}\right]$
will appear below at several points. Note that the image of $\gamma\in\Gamma$
under $\overline{\phi}$ is the inverse of $\phi\left(\gamma\right)$.
But since a permutation and its inverse have the same cycle-structure
in $S_{n}$, this does not affect the statistics we study in this
paper.

\subsection{Tiled surfaces and cosets\label{subsec:Tiled-surfaces-and-double-cosets}}

We assume that $Y$ is a compact tiled surface. In this section we
assume $n\in\N$ with $n\geq\v$. We fix an arbitrary bijection $\J:Y^{(0)}\to[\v]$,
and view $(Y,\J)$ as fixed in this $\S$\ref{sec:The-probability-of-an-embedded-tiled-surface}.
We modify $\J$ slightly for technical reasons\footnote{The reason for using this modification comes from a convention in
the representation theoretic methods we use below. } by letting
\begin{equation}
\J_{n}:Y^{(0)}\to[n-\v+1,n],\quad\J_{n}(v)\eqdf\J(v)+n-\v.\label{eq:shifted-framing}
\end{equation}
Then $(Y,\J_{n})$ is a vertex-labeled tiled surface for each $n$.
We are interested in the quantity $\E_{n}^{\emb}(Y)$, but because
the uniform measure on $\X_{n}$ is invariant under conjugation by
$S_{n}$, and $S_{n}$ acts transitively on ordered tuples of size
$\v$ in $[n]$, we have
\begin{equation}
\E_{n}^{\emb}(Y)=\frac{n!}{\left(n-\v\right)!}\frac{|\X_{n}(Y,\J_{n})|}{|\X_{n}|}\label{eq:Enemb-to-Xn(Y,J)}
\end{equation}
where 
\[
\X_{n}(Y,\J_{n})\eqdf\left\{ \phi\in\X_{n}\,:\,\text{there is an embedding }Y\ensuremath{\hookrightarrow X_{\phi}}~\text{inducing~\ensuremath{\J_{n}}}\right\} .
\]
(Recall from $\S\S$\ref{subsec:Expectations-and-probabilities} that
the vertices of $X_{\phi}$ are labeled by $[n]$. Also, recall that
an embedding $Y\ensuremath{\hookrightarrow X_{\phi}}$ inducing $\J_{n}$,
if it exists, is unique.) Hence we are interested in the size of the
set $\X_{n}(Y,\J_{n})$. \emph{Henceforth, we use the map $\J_{n}$
and the previous labelings of the vertices of $X_{\phi}$ to identify
the vertex sets of $Y$ and $X_{\phi}$ with subsets of $\N$.} 

For each letter $f\in\{a,b,c,d\}$, let $\V_{f}^{-}=\V_{f}^{-}(Y)\subset[n-\v+1,n]$
be the subset of vertices of $Y$ with outgoing $f$-labeled edges,
and $\V_{f}^{+}\subset[n-\v+1,n]$ those vertices of $Y$ with incoming
$f$-labeled edges. Note that $\e_{f}=|\V_{f}^{\pm}|$. We let $G_{f}$
denote the subgroup of $S_{n}$ that fixes $\V_{f}^{-}$ and write
\begin{align*}
G & \eqdf G_{a}\times G_{b}\times G_{c}\times G_{d}\le S_{n}^{~4}.
\end{align*}
For each $f\in\{a,b,c,d\}$ we let $g_{f}^{0}\in S_{n}$ be a fixed
element with the property that for every pair of vertices $i,j$ of
$Y$ (so $i,j\in[n-\v+1,n]$) with a directed $f$-labeled edge from
$i$ to $j$, we have $g_{f}^{0}(i)=j$. Recall the notation $S'_{\v}$
for the subgroup of $S_{n}$ fixing $[n-\v]$ pointwise. We choose
the $g_{f}^{0}$ consistently for each $n$ in the sense that $g_{f}^{0}$
is chosen when $n=\v$ and then defined for arbitrary $n$ by the
isomorphisms $S_{\v}\cong S'_{\v}$ given in (\ref{eq:Sk-Skdash-isomorphism}).
We write $g^{0}\eqdf(g_{a}^{0},g_{b}^{0},g_{c}^{0},g_{d}^{0})$. Notice
that $g_{f}^{0}(\V_{f}^{-})=\V_{f}^{+}$.

\emph{In the rest of the paper, whenever we write an integral over
a group, it is performed with respect to the uniform probability measure.
}Let
\begin{equation}
\Theta_{\lambda}\left(Y,\J_{n}\right)\eqdf\int_{h_{f}\in G_{f}}\chi_{\lambda}\left(\left[\left(g_{d}^{0}h_{d}\right)^{-1},\left(g_{c}^{0}h_{c}\right)^{-1}\right]\left[\left(g_{b}^{0}h_{b}\right)^{-1},\left(g_{a}^{0}h_{a}\right)^{-1}\right]\right)\label{eq:Phi_lambda}
\end{equation}
where $\chi_{\lambda}$ is the character of $S_{n}$ corresponding
to the irreducible representation $V^{\lambda}$. We will calculate
the size of $\X_{n}(Y,\J_{n})$ using the following result.

\begin{prop}
\label{prop:probabilty-as-sum-over-irreps}We have 
\[
|\X_{n}(Y,\J_{n})|=\frac{\prod_{f\in a,b,c,d}(n-\e_{f})!}{n!}\sum_{\substack{\lambda\vdash n}
}d_{\lambda}\Theta_{\lambda}\left(Y,\J_{n}\right).
\]
\end{prop}

\begin{proof}
We begin by observing that with $g_{a}^{0},g_{b}^{0},g_{c}^{0},g_{d}^{0}$
as above, the map 
\[
\X_{n}\to S_{n}^{4},\quad\phi\mapsto\left(\phi(a),\phi(b),\phi(c),\phi(d)\right)
\]
restricts to a bijection between $\X_{n}(Y,\J_{n})$ and the tuples
$(g_{a},g_{b},g_{c},g_{d})\in S_{n}^{4}$ such that both $(g_{a},g_{b},g_{c},g_{d})$
is in the coset $(g_{a}^{0},g_{b}^{0},g_{c}^{0},g_{d}^{0})G$, and$\left[g_{d}^{-1},g_{c}^{-1}\right]\left[g_{b}^{-1},g_{a}^{-1}\right]=1$. 

Now let 
\[
I=\int_{h_{f}\in G_{f}}{\bf 1}\left\{ \left[\left(g_{d}^{0}h_{d}\right)^{-1},\left(g_{c}^{0}h_{c}\right)^{-1}\right]\left[\left(g_{b}^{0}h_{b}\right)^{-1},\left(g_{a}^{0}h_{a}\right)^{-1}\right]=1\right\} .
\]
Then it is immediate that 
\[
|\X_{n}(Y,\J_{n})|=|G|\cdot I=\prod_{f\in a,b,c,d}(n-\e_{f})!\cdot I.
\]
Finally, use Schur orthogonality to write, as functions on $S_{n}$,
\[
\mathbf{1}\{g=1\}=\frac{1}{n!}\sum_{\lambda\vdash n}d_{\lambda}\chi_{\lambda}(g),
\]
insert this into the definition of $I$, and interchange summation
and integration to complete the proof.
\end{proof}
In the next sections we will focus our attention on the quantities
$\Theta_{\lambda}(Y,\J_{n})$. 

\subsection{Construction of auxiliary permutations\label{subsec:Construction-of-auxiliary-numberings}}

In order to obtain an expression for $\Theta_{\lambda}(Y,\J_{n})$
that leads to good analytic estimates, we introduce further maps
\[
\sigma_{f}^{+},\sigma_{f}^{-},\tau_{f}^{+},\tau_{f}^{-}\in S'_{\v}\subset S_{n}
\]
for each $f\in\{a,b,c,d\}$. These should be thought of as orderings
of the vertices with indices from $[n-\v+1,n]$, other than the one
fixed by $\J_{n}$. We will first describe the construction of these
maps, and then note their properties.

Recall from $\S\S$\ref{subsec:tiled-surfaces-and-core-surfaces}
that $Y^{(1)}$ carries the structure of a ribbon graph and this gives
us a way to thicken it up to an oriented surface with boundary with
an embedded copy of the graph $Y^{(1)}$. Also recall, from $\S\S$\ref{subsec:tiled-surfaces-and-core-surfaces},
that we constructed a larger object $Y_{+}$ by adding extra hanging
half-edges to the vertices. The one-skeleton $Y_{+}^{(1)}$ also has
a cyclic ordering of the half-edges (hanging or otherwise) at each
vertex and so $Y_{+}^{(1)}$ can be thickened up to a `cut' ribbon
graph with some half-ribbon edges. In this picture, every edge is
thickened to a thin rectangle, and every hanging half-edge is thickened
up to a thin half-rectangle.

So every vertex of $Y$ has eight incident half-edges (hanging or
otherwise), and each of these half-edges has two sides. The $16$
maps $\left\{ \sigma_{f}^{\pm},\tau_{f}^{\pm}\right\} _{f\in\left\{ a,b,c,d\right\} }$
correspond to these $16$ sides of half-edges at each vertex: $\sigma_{f}^{-}$
and $\tau_{f}^{-}$ correspond to the sides of the outgoing $f$-half-edge,
while $\sigma_{f}^{+}$ and $\tau_{f}^{+}$ correspond to the sides
of the incoming $f$-half-edge. Finally, $\sigma_{f}^{\pm}$ correspond
to the left side of the outgoing and incoming $f$-half-edges, while
$\tau_{f}^{\pm}$ correspond to the right side of these $f$-half-edges,
where `left' and `right' here are with respect to the direction of
the half-edge. (We keep our convention from $\S\S$\ref{subsec:tiled-surfaces-and-core-surfaces}
that boundary cycles are oriented so that the object lies to the right.
In particular, the boundary of an octagon is $\left[a,b\right]\left[c,d\right]$
when followed in counter-clockwise direction.) See the left hand side
of Figure \ref{fig:construction of sigma,tau pm}.

The definition of $\sigma_{f}^{\pm},\tau_{f}^{\pm}$ is based on the
following choices:
\begin{itemize}
\item \textbf{Numbering octagons: }Number the $\f$ octagons of $Y$ by
distinct elements in $[\v-\f+1,\v]$.
\item \textbf{Numbering full edges at $\partial Y$:} For every $f\in\left\{ a,b,c,d\right\} $,
there are $\e_{f}-\f$ left-sides of full $f$-edges that belong to
the boundary $\partial Y$ (as compared with $\f$ left-sides of full
$f$-edges that meet octagons of $Y$). Number them by distinct values
in $\left[\v-\e_{f}+1,\v-\f\right]$. Similarly, number the $\e_{f}-\f$
right-sides of full $f$-edges belonging to $\partial Y$ by distinct
values in the same range $\left[\v-\e_{f}+1,\v-\f\right]$.
\item \textbf{Numbering hanging half-edges: }For each $f\in\{a,b,c,d\}$,
there are precisely $\v-\e_{f}$ outgoing $f$-labeled hanging half-edges,
and we number them by distinct numbers in $[\v-\e_{f}]$. Using the
matching determined by $g_{f}^{0}$ between outgoing and incoming
$f$-labeled hanging half-edges, the numbering we have just chosen
induces a numbering also of the incoming $f$-labeled hanging half-rectangles
by numbers in $[\v-\e_{f}]$.
\end{itemize}
We now define $\sigma_{f}^{\pm}$ and $\tau_{f}^{\pm}$ as follows.
For every vertex $v$ of $Y$ and a side of an incident half-edge
(hanging or otherwise), we need to determine the image of $v$ under
the permutation among $\sigma_{f}^{\pm},\tau_{f}^{\pm}$ corresponding
to this side-of-half-edge. 
\begin{itemize}
\item If the half-edge is part of a full-edge of $Y$, then,
\begin{itemize}
\item if the side in question meets an octagon numbered $i$, we map $v\mapsto n-\v+i$,
and
\item if the side in question belongs to $\partial Y$ and the full-edge
is numbered $j$, map $v\mapsto n-\v+j$.
\end{itemize}
\item If this is a \emph{hanging} half-edge numbered $k$, we map $v\mapsto n-\v+k$.
\end{itemize}
\begin{figure}
\centering{}\includegraphics[scale=0.9]{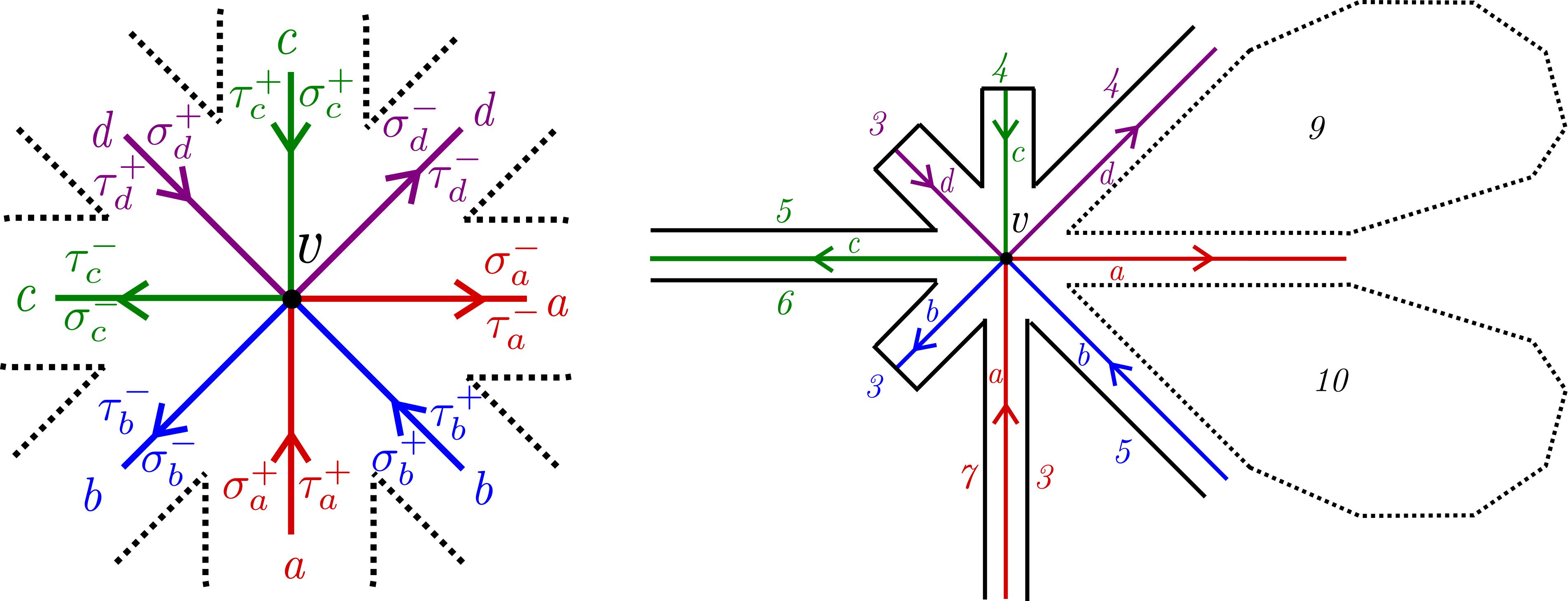}\caption{\label{fig:construction of sigma,tau pm}The figure on the left shows
a local picture of a vertex $v$ in the thick version of some tiled
surface with hanging half-edges $Y_{+}$, and the correspondence between
the $16$ maps $\sigma_{f}^{\pm},\tau_{f}^{\pm}$ and the $16$ sides
of half-edges incident to $v$. The figure on the right illustrates
how numbering of octagons, of exposed sides of full-edges, and of
hanging half-edges, determines the values of $\sigma_{f}^{\pm},\tau_{f}^{\pm}$
at $v$. In this figure, continuous black lines mark pieces of the
boundary of the thick version of $Y_{+}$, whereas dotted black lines
mark boundary pieces of $Y_{+}^{\left(1\right)}$ to which octagons
are glued in $Y_{+}$. The vertex $v$ in the center of the figure
is incident with two octagons, numbered $9$ and $10$; with three
hanging half-edges numbered $3$ (outgoing $b$ and incoming $d$)
and $4$ (incoming $c$); and with five half-edges belonging to full-edges,
with a total of six exposed sides, the numbering of which is described
in the figure. The images of this vertex under $\sigma_{f}^{\pm}$
and $\tau_{f}^{\pm}$ are listed in page \pageref{list of images in example}.}
\end{figure}

This is illustrated in the right hand side of Figure \ref{fig:construction of sigma,tau pm},
which shows some vertex $v$ of some tiled surface $Y$, and the numbering
of incident octagons, of exposed sides of full-edges and of hanging
half-edges. In that case, the images of $v$ under $\sigma_{f}^{\pm}$
and $\tau_{f}^{\pm}$ are the following:\label{list of images in example}
\[
\begin{array}{cccc}
\sigma_{a}^{-}\left(v\right)=n-\v+9 & \sigma_{b}^{-}\left(v\right)=n-\v+3 & \sigma_{c}^{-}\left(v\right)=n-\v+6 & \sigma_{d}^{-}\left(v\right)=n-\v+4\\
\sigma_{a}^{+}\left(v\right)=n-\v+7 & \sigma_{b}^{+}\left(v\right)=n-\v+5 & \sigma_{c}^{+}\left(v\right)=n-\v+4 & \sigma_{d}^{+}\left(v\right)=n-\v+3\\
\tau_{a}^{-}\left(v\right)=n-\v+10 & \tau_{b}^{-}\left(v\right)=n-\v+3 & \tau_{c}^{-}\left(v\right)=n-\v+5 & \tau_{d}^{-}\left(v\right)=n-\v+9\\
\tau_{a}^{+}\left(v\right)=n-\v+3 & \tau_{b}^{+}\left(v\right)=n-\v+10 & \tau_{c}^{+}\left(v\right)=n-\v+4 & \tau_{d}^{+}\left(v\right)=n-\v+3.
\end{array}
\]
The following properties of the maps we defined are all evident from
the construction.
\begin{lem}
When the vertices of $Y$ are identified with $\left[n-\v+1,n\right]$
according to $\J_{n}$, the $16$ maps $\sigma_{f}^{+},\sigma_{f}^{-},\tau_{f}^{+},\tau_{f}^{-}$
we defined indeed belong to $S'_{\v}\subset S_{n}$. Moreover, they
satisfy the following properties:
\begin{description}
\item [{P1}] For all $f\in\{a,b,c,d\}$, $\sigma_{f}^{\pm}(\V_{f}^{\pm})=\tau_{f}^{\pm}(\V_{f}^{\pm})=[n-\e_{f}+1,n]$.
\item [{P2}] For all $f\in\{a,b,c,d\}$, $(\sigma_{f}^{+})^{-1}\sigma_{f}^{-}=(\tau_{f}^{+})^{-1}\tau_{f}^{-}=g_{f}^{0}$. 
\item [{P3}] For all $f\in\{a,b,c,d\}$, $\sigma_{f}^{\pm}\lvert_{[n]\backslash\V_{f}^{\pm}}=\tau_{f}^{\pm}\lvert_{[n]\backslash\V_{f}^{\pm}}.$ 
\item [{P4}] Each of the following permutations fixes every element of
$[n-\f+1,n]$: 
\[
\sigma_{b}^{-}\left(\sigma_{a}^{+}\right)^{-1},\tau_{a}^{+}\left(\sigma_{b}^{+}\right)^{-1},\tau_{b}^{+}\left(\tau_{a}^{-}\right)^{-1},\sigma_{c}^{-}\left(\tau_{b}^{-}\right)^{-1},\sigma_{d}^{-}\left(\sigma_{c}^{+}\right)^{-1},\tau_{c}^{+}\left(\sigma_{d}^{+}\right)^{-1},\tau_{d}^{+}\left(\tau_{c}^{-}\right)^{-1},\sigma_{a}^{-}\left(\tau_{d}^{-}\right)^{-1}.
\]
\item [{P5}] The permutations $\sigma_{f}^{\pm},\tau_{f}^{\pm}$ are the
same for each $n$ in the sense that they change with $n$ via the
fixed isomorphisms between $S_{\v}$ and $S'_{\v}\leq S_{n}$ in (\ref{eq:Sk-Skdash-isomorphism}). 
\end{description}
\end{lem}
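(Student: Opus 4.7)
The plan is to verify properties \textbf{P1}--\textbf{P5} in turn, each being a direct consequence of the construction of $\sigma_{f}^{\pm}$ and $\tau_{f}^{\pm}$. That the sixteen maps lie in $S'_{\v}$ is a bookkeeping check: each of them fixes $[n-\v]$ pointwise and sends every vertex of $Y$, identified with an element of $[n-\v+1,n]$ via $\J_n$, to some $n-\v+k$ with $k\in[\v]$. Bijectivity on $[n-\v+1,n]$ follows because each vertex is incident to exactly one outgoing and one incoming $f$-half-edge (hanging or belonging to a full edge), and the three numberings -- of octagon-adjacent sides, of exposed boundary full-edge-sides, and of hanging half-edges -- use disjoint ranges in $[\v]$ whose union is all of $[\v]$. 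Property \textbf{P1} then sharpens this: $\V_f^{\pm}$ consists of exactly the vertices whose $f$-half-edge belongs to a full edge, so the corresponding side is numbered in $[\v-\e_f+1,\v]$, which shifts to $[n-\e_f+1,n]$.

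Properties \textbf{P2}, \textbf{P3}, and \textbf{P5} are equally routine. For \textbf{P3}: hanging half-edges are numbered once each rather than side-by-side, so both sides of such a half-edge receive the same number, while $[n]\setminus\V_f^{\pm}$ is exactly the set of vertices whose relevant $f$-half-edge is hanging. For \textbf{P2}: along a full $f$-edge from $v$ to $v'=g_f^0(v)$, the left side of the outgoing $f$-half-edge at $v$ and the left side of the incoming $f$-half-edge at $v'$ are the two halves of the left side of one and the same edge in the thick surface, hence face the same octagon or the same exposed boundary piece, and therefore receive the same number; an identical argument using right sides handles the $\tau$ maps, and the hanging case uses the construction's rule that $g_f^0$ matches outgoing with incoming hanging $f$-half-edges sharing a number. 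For \textbf{P5}: the three numberings use integers in $[\v]$ independently of $n$, and the shift by $n-\v$ is precisely the isomorphism \eqref{eq:Sk-Skdash-isomorphism}.

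The step with real content is \textbf{P4}, an eight-fold case check on the corners of a single octagon. Each of the eight corners of an octagon is an adjacent pair of half-edges in the cyclic order $a^+,b^-,a^-,b^+,c^+,d^-,c^-,d^+$ at the corner's vertex, and which pair occurs at which corner is dictated by the octagon's boundary word $aba^{-1}b^{-1}cdc^{-1}d^{-1}$. The orientation of the surface then singles out, at each corner, which sides (left or right) of the two half-edges face into the octagon. A direct computation, placing the eight half-edges at $45^{\circ}$ intervals around the vertex -- outgoing half-edges oriented outward, incoming ones inward -- shows that the side assignments at the eight corners, traversed in the order of the boundary word starting from the $a,b$-transition, match precisely the $\sigma/\tau$-labels in the eight products of \textbf{P4} in the listed order. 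Consequently, for each shifted octagon number $x\in[n-\f+1,n]$ and each product in the list, the unique corner of the octagon corresponding to $x$ is the vertex at which both maps appearing in the product take the value $x$, so the product fixes $x$. The only nuisance is the eight-fold left/right tabulation, but once one corner is computed the remaining seven follow by rotating around the octagon's boundary, with the $[c,d]$-half of the boundary word mirroring the $[a,b]$-half.
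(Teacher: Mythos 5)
The paper states this lemma without proof, declaring the properties "evident from the construction," so there is no paper argument to compare against and your write-up is a fresh verification. The overall structure is right: \textbf{P1}, \textbf{P3}, \textbf{P5} follow directly from the disjointness and ranges of the three numbering schemes, \textbf{P2} from the observation that the two halves of a full $f$-edge inherit the same side-numbers together with the rule that $g_f^0$ transports the numbering of outgoing hanging $f$-half-edges to the incoming ones, and \textbf{P4} reduces to an eight-corner orientation check on a single octagon, which you correctly isolate as the only step with real content. One bookkeeping point on membership in $S'_\v$: saying that each vertex is incident to exactly one outgoing and one incoming $f$-half-edge only gives well-definedness; injectivity of, say, $\sigma_f^-$ on $[n-\v+1,n]$ uses the converse counts (each octagon has exactly one $f$-edge with that octagon on its left, each exposed-side label and each hanging-half-edge label is assigned to exactly one side), which you invoke implicitly via the range partition but would do well to make explicit.

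There is also a concrete slip in \textbf{P4}. You state the clockwise cyclic order at a vertex as $a^+,b^-,a^-,b^+,c^+,d^-,c^-,d^+$. With the paper's fixed convention that $-$ is outgoing and $+$ is incoming (as in $\V_f^\pm$ and in the combinatorial definition of a tiled surface), the correct cyclic order is $a^-,b^+,a^+,b^-,c^-,d^+,c^+,d^-$; you have interchanged $+$ and $-$ throughout the list. Taken at face value, your order fails: the corner pairs forced by the boundary word $aba^{-1}b^{-1}cdc^{-1}d^{-1}$, namely $(a^+,b^-)$, $(b^+,a^+)$, $(a^-,b^+)$, $(b^-,c^-)$, $(c^+,d^-)$, $(d^+,c^+)$, $(c^-,d^+)$, $(d^-,a^-)$, are not all adjacent in it (for instance $(b^+,a^+)$ would sit at positions $4$ and $1$), so the "direct computation" would not come out. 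Once the order is corrected the rest of your \textbf{P4} argument goes through exactly as described: at the $a$-to-$b$ corner the octagon occupies the sector lying clockwise of $a^+$ and counterclockwise of $b^-$, which are the left sides of each, so $\sigma_a^+$ and $\sigma_b^-$ agree there, yielding the first product; the other seven corners follow by the same calculation, with the $[c,d]$ half of the word mirroring the $[a,b]$ half.
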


From now on, assume that we have fixed $\sigma_{f}^{\pm},\tau_{f}^{\pm}$
with properties \textbf{P1-P5. }We do this once and for all for every
tiled surface $Y$ (including the choice of $\J$).

\subsection{Integrating over cosets\label{subsec:Integrating-over-double-cosets}}

We briefly review some linear algebra. Recall that $\check{V}^{\lambda}$
is the vector space of complex linear functionals on $V^{\lambda}$.
If $V^{\lambda}$ has orthonormal basis $\{v_{i}\}$, then $\check{V}^{\lambda}$
has a dual basis $\{\check{v}_{i}\}$ defined by $\check{v}_{i}(v)\eqdf\langle v,v_{i}\rangle.$
Requiring the $\check{v_{i}}$ to be orthonormal defines a Hermitian
inner product on $\check{V}^{\lambda}$. The action of $S_{n}$ on
$\check{V}^{\lambda}$ is by $g[\phi](v)\eqdf\phi(g^{-1}v)$. If $A_{ji}\eqdf\langle gv_{i},v_{j}\rangle$
so that $g$ acts by the matrix $A=(A_{ij})$ on $V^{\lambda}$ in
this basis, then 
\[
g[\check{v}_{i}](v_{k})=\check{v}_{i}\left[\sum_{\ell}(A^{-1})_{\ell k}v_{\ell}\right]=\left(A^{-1}\right)_{ik},
\]
so 
\[
g[\check{v}_{i}]=\sum_{k}\left(A^{-1}\right)_{ik}\check{v}_{k}.
\]

We now give some motivation for what follows. We wish to integrate
the function 
\[
S_{n}^{4}\to\R,\quad\left(g_{a},g_{b},g_{c},g_{d}\right)\mapsto\chi_{\lambda}\left(\left[g_{d}^{-1},g_{c}^{-1}\right]\left[g_{b}^{-1},g_{a}^{-1}\right]\right),
\]
over a coset in $S_{n}^{4}$. This function can clearly be written
as a finite sum of finite products of matrix coefficients of the $g_{f}$
and $g_{f}^{-1}$ in $V^{\lambda}$. However, this is not the route
we wish to take. Instead, following a philosophy similar to that used
in the development of the Weingarten calculus (see for example, \cite{CS}),
we aim to write this function more holistically as (what is essentially)
\uline{one single matrix coefficient} in \uline{one single representation}.
To this end, consider the vector space\-\-
\begin{equation}
W^{\lambda}\eqdf V_{a}^{\lambda}\otimes\check{V_{a}^{\lambda}}\otimes V_{b}^{\lambda}\otimes\check{V_{b}^{\lambda}}\otimes V_{c}^{\lambda}\otimes\check{V_{c}^{\lambda}}\otimes V_{d}^{\lambda}\otimes\check{V_{d}^{\lambda}}\label{eq:def-W-lambda}
\end{equation}
as a unitary representation of $S_{n}^{4}.$ We write an element of
$S_{n}^{4}$ as $(g_{a},g_{b},g_{c},g_{d})$ and the subscripts above
indicate which coordinate acts on which factor. 

Let $B_{\lambda}\in\End(W^{\lambda})$ be defined via matrix coefficients
by the formula
\begin{align}
 & \left\langle B_{\lambda}\left(v_{1}\otimes\check{v}_{2}\otimes v_{3}\otimes\check{v_{4}}\otimes v_{5}\otimes\check{v}_{6}\otimes v_{7}\otimes\check{v_{8}}\right),w_{1}\otimes\check{w}_{2}\otimes w_{3}\otimes\check{w_{4}}\otimes w_{5}\otimes\check{w}_{6}\otimes w_{7}\otimes\check{w_{8}}\right\rangle \eqdf\nonumber \\
 & ~~~~~~~~~\left\langle v_{1},w_{3}\right\rangle \left\langle v_{3},v_{2}\right\rangle \left\langle w_{2},v_{4}\right\rangle \left\langle w_{4},w_{5}\right\rangle \left\langle v_{5},w_{7}\right\rangle \left\langle v_{7},v_{6}\right\rangle \left\langle w_{6},v_{8}\right\rangle \left\langle w_{8},w_{1}\right\rangle .\label{eq:B-lambda-def}
\end{align}

\begin{rem}
One could also order the tensor factors in (\ref{eq:def-W-lambda})
according to the order specified by the word $\left[a,b\right]\left[c,d\right]$,
namely $V_{a}^{\lambda}\otimes V_{b}^{\lambda}\check{\otimes V_{a}^{\lambda}}\otimes\check{V_{b}^{\lambda}}\otimes V_{c}^{\lambda}\otimes V_{d}^{\lambda}\otimes\check{V_{c}^{\lambda}}\otimes\check{V_{d}^{\lambda}}$.
In this case, the definition of $B_{\lambda}$ would be more natural:
$\left\langle v_{1},w_{2}\right\rangle \left\langle v_{2},v_{3}\right\rangle \left\langle w_{3},v_{4}\right\rangle \left\langle w_{4},w_{5}\right\rangle \left\langle v_{5},w_{6}\right\rangle \left\langle v_{6},v_{7}\right\rangle \left\langle w_{7},v_{8}\right\rangle \left\langle w_{8},w_{1}\right\rangle $
and easily generalizable to arbitrary words. We chose to stick with
the order in (\ref{eq:def-W-lambda}) for ease of notation in the
sequel, e.g.~in Lemma \ref{lem:twisted-orthonormal-basis}.
\end{rem}

\begin{lem}
\label{lem:B-lambda-property}For any $(g_{a},g_{b},g_{c},g_{d})\in S_{n}^{4}$,
we have 
\[
\mathrm{tr}_{W^{\lambda}}\left(B_{\lambda}\circ(g_{a},g_{b},g_{c},g_{d})\right)=\chi_{\lambda}\left(\left[g_{d}^{-1},g_{c}^{-1}\right]\left[g_{b}^{-1},g_{a}^{-1}\right]\right).
\]
\end{lem}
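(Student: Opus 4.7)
The plan is to expand the trace over an orthonormal product basis of $W^{\lambda}$ and collapse the resulting sum by iterated completeness relations. I will first unpack the definition of $B_{\lambda}$ structurally: inspection of the eight inner products in \eqref{eq:B-lambda-def} shows that $B_{\lambda}$ is built from four ``contractions/coevaluations'', namely the internal pairings of input slots $(2,3)$ and $(6,7)$ (giving the factors $\langle v_3,v_2\rangle$ and $\langle v_7,v_6\rangle$), combined with the coevaluations $\sum_i e_i\otimes\check e_i$ placed in output slot pairs $(4,5)$ and $(8,1)$ (giving $\langle w_4,w_5\rangle$ and $\langle w_8,w_1\rangle$), while the remaining four input slots $1,4,5,8$ are routed verbatim to output slots $3,2,7,6$. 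Thinking of $B_{\lambda}$ this way makes it manifest that $B_{\lambda}\circ(g_a,g_b,g_c,g_d)$ is essentially the ``tensor network'' realizing the word $[g_d^{-1},g_c^{-1}][g_b^{-1},g_a^{-1}]$.

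Concretely, fix an orthonormal basis $\{v_i\}$ of $V^{\lambda}$ and take $x=v_{i_1}\otimes\check v_{i_2}\otimes\cdots\otimes\check v_{i_8}$. Using that the $S_n$-action on $\check V^{\lambda}$ is the unitary contragredient, so that $g\check v=\widecheck{g v}$ and $\langle g\check v,\check u\rangle_{\check V^{\lambda}}=\langle u,gv\rangle_{V^{\lambda}}$, the two ``coevaluation'' factors in $\langle B_{\lambda}\circ(g_a,\ldots,g_d)x,x\rangle$ evaluate to the Kronecker deltas $\delta_{i_1i_8}$ and $\delta_{i_4i_5}$, while the internal pairings become
\[
\langle g_a^{-1}g_b v_{i_3},v_{i_2}\rangle \quad\text{and}\quad \langle g_c^{-1}g_d v_{i_7},v_{i_6}\rangle.
\]
After imposing the two deltas, the trace reduces to
\[
\sum_{i_1,i_2,i_3,i_4,i_6,i_7}\langle g_a^{-1}g_b v_{i_3},v_{i_2}\rangle\langle v_{i_2},g_b v_{i_4}\rangle\langle g_a v_{i_1},v_{i_3}\rangle\langle g_c^{-1}g_d v_{i_7},v_{i_6}\rangle\langle v_{i_6},g_d v_{i_1}\rangle\langle g_c v_{i_4},v_{i_7}\rangle.
\]

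Next I would apply the completeness relation $\sum_j\langle u,v_j\rangle v_j=u$ (and, where needed, its form $\sum_j\langle u,gv_j\rangle v_j=g^{-1}u$, which uses unitarity) successively in the order $i_2,i_6,i_3,i_7$. After $i_2$ and $i_6$ these produce $\langle g_a^{-1}g_b v_{i_3},g_b v_{i_4}\rangle$ and $\langle g_c^{-1}g_d v_{i_7},g_d v_{i_1}\rangle$; then $i_3$ and $i_7$ absorb the remaining $\langle g_a v_{i_1},v_{i_3}\rangle$ and $\langle g_c v_{i_4},v_{i_7}\rangle$, yielding
\[
\sum_{i_1,i_4}\langle g_a^{-1}g_bg_a v_{i_1},g_b v_{i_4}\rangle\langle g_c^{-1}g_dg_c v_{i_4},g_d v_{i_1}\rangle.
\]
Summing over $i_4$ gives $\langle g_c^{-1}g_dg_cg_b^{-1}g_a^{-1}g_bg_a v_{i_1},g_d v_{i_1}\rangle$, and then over $i_1$ yields $\operatorname{tr}_{V^{\lambda}}(g_d^{-1}g_c^{-1}g_dg_c\cdot g_b^{-1}g_a^{-1}g_bg_a)$, which is $\chi_{\lambda}([g_d^{-1},g_c^{-1}][g_b^{-1},g_a^{-1}])$ as required.

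The only real obstacle is bookkeeping with the sesquilinearity conventions: one has to be careful that $\langle g\check v,\check u\rangle_{\check V^{\lambda}}$ equals $\langle u,gv\rangle_{V^{\lambda}}$ (rather than $\langle gv,u\rangle$), since it is precisely this ``reversal'' that causes $g_a^{-1}$ and $g_c^{-1}$ to appear and be conjugated in the right order so that the commutator word $[g_d^{-1},g_c^{-1}][g_b^{-1},g_a^{-1}]$ is the one that assembles at the end. No other step is delicate — each reduction is a standard application of completeness — so once the conventions are pinned down the computation is essentially forced by the structure of $B_{\lambda}$.
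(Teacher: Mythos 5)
Your proof is correct and takes essentially the same route as the paper's: both expand $\mathrm{tr}_{W^{\lambda}}(B_{\lambda}\circ(g_a,g_b,g_c,g_d))$ over a product orthonormal basis of $W^{\lambda}$ and contract iteratively, the paper phrasing each step in terms of matrix coefficients $a_{ji},b_{ji},c_{ji},d_{ji}$ while you phrase the same contractions as repeated applications of the completeness relation. The ``tensor network'' reading of $B_{\lambda}$ and the explicit identification of which factors produce deltas ($\delta_{i_4i_5}$, $\delta_{i_8i_1}$) and which produce the conjugated inner products are accurate, and the final word $g_d^{-1}g_c^{-1}g_dg_c\,g_b^{-1}g_a^{-1}g_bg_a=[g_d^{-1},g_c^{-1}][g_b^{-1},g_a^{-1}]$ comes out exactly as in the paper.
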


\begin{proof}
Let $v_{i}$ be any orthonormal basis of $V^{\lambda}$. Let $a_{ji}\eqdf\langle g_{a}v_{i},v_{j}\rangle$
be the matrix coefficients of the matrix $a=(a_{ij})$ by which $g$
acts on $V^{\lambda}$ with respect to $\{v_{i}\}$. Similarly define
matrices $b,c,d$ for $g_{b},g_{c},g_{d}$ in $V^{\lambda}$. We have
\begin{eqnarray*}
\mathrm{tr}_{W^{\lambda}}(B_{\lambda}\circ(g_{a},g_{b},g_{c},g_{d})) & = & \sum_{i_{1},\ldots,i_{8}}\langle B_{\lambda}\circ(g_{a},g_{b},g_{c},g_{d})v_{i_{1}}\otimes\check{v}_{i_{2}}\otimes v_{i_{3}}\otimes\check{v_{i_{4}}}\otimes v_{i_{5}}\otimes\check{v}_{i_{6}}\otimes v_{i_{7}}\otimes\check{v}_{i_{8}},\\
 &  & ~~~~~~~v_{i_{1}}\otimes\check{v}_{i_{2}}\otimes v_{i_{3}}\otimes\check{v}_{i_{4}}\otimes v_{i_{5}}\otimes\check{v}_{i_{6}}\otimes v_{i_{7}}\otimes\check{v}_{i_{8}}\rangle\\
 & = & \sum_{\substack{i_{1},\ldots,i_{8}\\
j_{1},\ldots,j_{8}
}
}a_{j_{1}i_{1}}(a^{-1})_{i_{2}j_{2}}b_{j_{3}i_{3}}(b^{-1})_{i_{4}j_{4}}c_{j_{5}i_{5}}(c^{-1})_{i_{6}j_{6}}d_{j_{7}i_{7}}(d^{-1})_{i_{8}j_{8}}\cdot\\
 &  & ~~~~~~~~~\left\langle B_{\lambda}v_{j_{1}}\otimes\check{v}_{j_{2}}\otimes v_{j_{3}}\otimes\check{v}_{j_{4}}\otimes v_{j_{5}}\otimes\check{v}_{j_{6}}\otimes v_{j_{7}}\otimes\check{v}_{j_{8}}\right.,\\
 &  & ~~~~~~~~~~~~~~~~v_{i_{1}}\otimes\check{v}_{i_{2}}\otimes v_{i_{3}}\otimes\check{v}_{i_{4}}\otimes v_{i_{5}}\otimes\check{v}_{i_{6}}\otimes v_{i_{7}}\otimes\check{v}_{i_{8}}\rangle
\end{eqnarray*}
which equals 
\begin{align*}
= & \sum_{j_{1},j_{2},j_{4},j_{5},j_{6},j_{8},i_{1},i_{4}}a_{j_{1}i_{1}}(a^{-1})_{j_{4}j_{2}}b_{j_{2}j_{1}}(b^{-1})_{i_{4}j_{4}}c_{j_{5}i_{4}}(c^{-1})_{j_{8}j_{6}}d_{j_{6}j_{5}}(d^{-1})_{i_{1}j_{8}}\\
= & \sum_{j_{1},j_{2},j_{4},j_{5},j_{6},j_{8},i_{1},i_{4}}(d^{-1})_{i_{1}j_{8}}(c^{-1})_{j_{8}j_{6}}d_{j_{6}j_{5}}c_{j_{5}i_{4}}(b^{-1})_{i_{4}j_{4}}(a^{-1})_{j_{4}j_{2}}b_{j_{2}j_{1}}a_{j_{1}i_{1}}\\
= & \chi_{\lambda}\left(\left[g_{d}^{-1},g_{c}^{-1}\right]\left[g_{b}^{-1},g_{a}^{-1}\right]\right).
\end{align*}
The third equality used (\ref{eq:B-lambda-def}).
\end{proof}
Using Lemma \ref{lem:B-lambda-property} allows us to relate $\Theta_{\lambda}(Y,\J_{n})$
to orthogonal projections in the space $W^{\lambda}$. For each $f\in\{a,b,c,d\}$
let $P_{f}$ be the orthogonal projection in $W^{\lambda}$ onto the
vectors that are invariant by $G_{f}$. We let $Q\eqdf P_{a}P_{b}P_{c}P_{d}.$ 
\begin{lem}
\label{lem:theta-as-trace-of-B}We have $\Theta_{\lambda}(Y,\J_{n})=\mathrm{tr}_{W^{\lambda}}\left(B_{\lambda}g^{0}Q\right)$.
\end{lem}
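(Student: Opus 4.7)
The plan is to apply Lemma \ref{lem:B-lambda-property} pointwise to the integrand defining $\Theta_\lambda(Y,\J_n)$, then exchange the integration and the trace, and finally identify the resulting averaged operators on $W^\lambda$ with the projections $P_f^\pm$.

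First I would rewrite $\Theta_\lambda(Y,\J_n)$ using Lemma \ref{lem:B-lambda-property} with $g_f = h_f^+ g_f^0 h_f^-$ for each $f \in \{a,b,c,d\}$, obtaining
\[
\Theta_\lambda(Y,\J_n) = \int_{h_f^\pm \in G_f^\pm} \mathrm{tr}_{W^\lambda}\Big(B_\lambda \circ (h_a^+ g_a^0 h_a^-,\, h_b^+ g_b^0 h_b^-,\, h_c^+ g_c^0 h_c^-,\, h_d^+ g_d^0 h_d^-)\Big).
\]
The coordinate-wise action of $S_n^4$ on $W^\lambda$ factors through each tensor pair $V_f^\lambda \otimes \check{V}_f^\lambda$, so the action of the tuple above equals the composition, as operators on $W^\lambda$, of the tuples $(h_a^+,h_b^+,h_c^+,h_d^+)$, then $g^0 = (g_a^0,\ldots,g_d^0)$, then $(h_a^-,h_b^-,h_c^-,h_d^-)$.

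Next, by linearity and continuity of the trace (and compactness of the finite groups), I would push the eight integrals inside:
\[
\Theta_\lambda(Y,\J_n) = \mathrm{tr}_{W^\lambda}\!\left(B_\lambda \circ \left[\textstyle\prod_f \int_{h_f^+ \in G_f^+} h_f^+\right] \circ g^0 \circ \left[\textstyle\prod_f \int_{h_f^- \in G_f^-} h_f^-\right]\right),
\]
where each integrated operator $\int_{h_f^\pm} h_f^\pm$ acts on $W^\lambda$ through the $f$-coordinate slot. Because the four slots act on disjoint tensor factors of $W^\lambda$, the operators $\int_{h_f^\pm} h_f^\pm$ for distinct $f$ commute, so the order within each bracket is immaterial.

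Finally I would identify each averaging operator with the desired projection: for any finite group $G$ acting unitarily on a Hilbert space $H$, the operator $\tfrac{1}{|G|}\sum_{h \in G} h$ is the orthogonal projection of $H$ onto its $G$-invariants. Applied with $G = G_f^\pm$ acting on $W^\lambda$ through the $f$-coordinate, this gives $\int_{h_f^\pm \in G_f^\pm} h_f^\pm = P_f^\pm$. Therefore the two bracketed products equal $Q^+ = P_a^+ P_b^+ P_c^+ P_d^+$ and $Q^- = P_a^- P_b^- P_c^- P_d^-$ respectively, and we obtain
\[
\Theta_\lambda(Y,\J_n) = \mathrm{tr}_{W^\lambda}\!\left(B_\lambda\, Q^+ g^0 Q^-\right),
\]
as claimed. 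There is no real obstacle here—the argument is essentially a bookkeeping exercise, and the only point requiring a moment of care is verifying that the four coordinate actions commute on $W^\lambda$ so that the product of projections $Q^\pm$ is itself an orthogonal projection obtained by iterated averaging.
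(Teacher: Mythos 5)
Your proposal is correct and matches the paper's proof, which carries out exactly these steps (apply Lemma \ref{lem:B-lambda-property} to the integrand, exchange integral and trace, and recognize the averages over $G_f^\pm$ as the projections $P_f^\pm$) in a single compressed display. You have simply spelled out the bookkeeping that the paper leaves implicit, including the harmless check that the four coordinate actions commute on the tensor factors of $W^\lambda$.
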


\begin{proof}
Using Lemma \ref{lem:B-lambda-property}, we can write
\begin{align*}
\Theta_{\lambda}(Y,\J_{n}) & =\int_{h_{_{f}}\in G_{f}}\chi_{\lambda}\left(\left[\left(g_{d}^{0}h_{d}\right)^{-1},\left(g_{c}^{0}h_{c}\right)^{-1}\right]\left[\left(g_{b}^{0}h_{b}\right)^{-1},\left(g_{a}^{0}h_{a}\right)^{-1}\right]\right)\\
 & =\mathrm{tr}_{W^{\lambda}}\left(B_{\lambda}g^{0}P_{a}P_{b}P_{c}P_{d}\right)=\mathrm{tr}_{W^{\lambda}}\left(B_{\lambda}g^{0}Q\right).
\end{align*}
\end{proof}
Hence, we now wish to calculate $\mathrm{tr}_{W^{\lambda}}\left(B_{\lambda}g^{0}Q\right)$.
For each $f\in\{a,b,c,d\}$ and $T\in\Tab(\lambda)$ let
\begin{align}
v_{T}^{\sigma_{f}^{\pm}} & \eqdf\left(\sigma_{f}^{\pm}\right)^{-1}\left(v_{T}\right),\quad v_{T}^{\tau_{f}^{\pm}}\eqdf\left(\tau_{f}^{\pm}\right)^{-1}\left(v_{T}\right).\label{eq:base-change}
\end{align}
Similarly, if $\nu\subset_{\v}\lambda$, recalling that if $T\in\Tab(\lambda/\nu)$,
$w_{T}$ denotes the corresponding Gelfand-Tsetlin basis element of
$V^{\lambda/\nu}$, we define
\[
w_{T}^{\sigma_{f}^{\pm}}\eqdf\left(\sigma_{f}^{\pm}\right)^{-1}\left(w_{T}\right),\quad w_{T}^{\tau_{f}^{\pm}}\eqdf\left(\tau_{f}^{\pm}\right)^{-1}\left(w_{T}\right);
\]
this makes sense as $\sigma_{f}^{\pm}$ and $\tau_{f}^{\pm}$ are
in $S_{\v}^{'}$. Recalling the notation $\EE_{\mu,R_{1},R_{2}}^{\lambda}$
from Lemma \ref{lem:commutant} (where $\mu\subset\lambda$), we define
\begin{align}
\EE_{\mu,R_{1},R_{2}}^{\lambda,f,\pm} & \eqdf\left(\sigma_{f}^{\pm}\right)^{-1}\otimes\left(\tau_{f}^{\pm}\right)^{-1}\left(\EE_{\mu,R_{1},R_{2}}^{\lambda}\right).\label{eq:twisted-E-def}
\end{align}

\begin{lem}
\label{lem:twisted-orthonormal-basis}For $\lambda\vdash n$, the
elements
\begin{equation}
\left\{ \,\EE_{\mu_{a},S_{a},T_{a}}^{\lambda,a,-}\otimes\EE_{\mu_{b},S_{b},T_{b}}^{\lambda,b,-}\otimes\EE_{\mu_{c},S_{c},T_{c}}^{\lambda,c,-}\otimes\EE_{\mu_{d},S_{d},T_{d}}^{\lambda,d,-}\,:\,\mu_{f}\subset_{\e_{f}}\lambda,\,S_{f},T_{f}\in\Tab(\lambda/\mu_{f})\,\right\} \label{eq:on-basis}
\end{equation}
are an orthonormal basis for the $G$-invariant vectors in $W^{\lambda}$.
\end{lem}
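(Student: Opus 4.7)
The plan is to reduce the statement, by the tensor-factor structure of $W^\lambda$ and $G^\pm$, to the analogous one-letter claim: for each $f\in\{a,b,c,d\}$ and choice of sign, the set
\[
\left\{\EE^{\lambda,f,\pm}_{\mu,S,T} : \mu \subset_{\e_f} \lambda,\ S,T\in\Tab(\lambda/\mu)\right\}
\]
is an orthonormal basis for the $G_f^\pm$-invariants in $V^\lambda\otimes\check{V^\lambda}$. Since $W^\lambda = \bigotimes_f V_f^\lambda\otimes\check{V_f^\lambda}$ and $G^\pm=\prod_f G_f^\pm$ acts by the $f$-th factor on the $f$-th pair of tensor slots, tensoring the four one-letter bases will give the desired basis of $(W^\lambda)^{G^\pm}$.

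For the one-letter problem I would identify $V^\lambda\otimes\check{V^\lambda}$ with $\End(V^\lambda)$; under this identification, the diagonal action of $g\in S_n$ becomes conjugation $A\mapsto gAg^{-1}$, while an action of the form $\sigma\otimes\tau$ corresponds to $A\mapsto\sigma A\tau^{-1}$. The $G_f^\pm$-invariants in $V^\lambda\otimes\check{V^\lambda}$ are thus the commutant of $G_f^\pm$ in $\End(V^\lambda)$. By property \textbf{P1}, conjugation by $\sigma_f^\pm$ maps $G_f^\pm$ isomorphically onto $S_{n-\e_f}$ (the subgroup fixing $[n-\e_f+1,n]$ pointwise). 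Lemma~\ref{lem:commutant} therefore gives the dimension of this invariant subspace as $\sum_{\mu\subset_{\e_f}\lambda}d_{\lambda/\mu}^{\ 2}$, matching the size of the proposed basis.

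The main subtlety is that $\EE^{\lambda,f,\pm}_{\mu,R_1,R_2}$ is defined by applying the \emph{different} permutations $\sigma_f^\pm$ and $\tau_f^\pm$ to the two tensor factors, so under the $\End$-identification it corresponds to the non-conjugate expression $(\sigma_f^\pm)^{-1}\EE^\lambda_{\mu,R_1,R_2}\tau_f^\pm$. To prove $G_f^\pm$-invariance I would compute, for $h\in G_f^\pm$,
\[
h\cdot\bigl[(\sigma_f^\pm)^{-1}\EE^\lambda_{\mu,R_1,R_2}\tau_f^\pm\bigr]\cdot h^{-1} = (\sigma_f^\pm)^{-1}\,\bar h_\sigma\,\EE^\lambda_{\mu,R_1,R_2}\,\bar h_\tau^{-1}\,\tau_f^\pm,
\]
where $\bar h_\sigma = \sigma_f^\pm h(\sigma_f^\pm)^{-1}$ and $\bar h_\tau = \tau_f^\pm h(\tau_f^\pm)^{-1}$ both lie in $S_{n-\e_f}$. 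The key step, which uses property \textbf{P3} crucially, is that $(\tau_f^\pm)^{-1}\sigma_f^\pm$ fixes $[n]\setminus\V_f^\pm$ pointwise (because $\sigma_f^\pm$ and $\tau_f^\pm$ agree there), hence has support contained in $\V_f^\pm$, and therefore commutes with every element of $G_f^\pm$ (whose support lies in the complementary set). This forces $\bar h_\sigma = \bar h_\tau$, and since $\EE^\lambda_{\mu,R_1,R_2}$ lies in $Z(\lambda,n-\e_f,n)$ it is unchanged by this common conjugation.

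Orthonormality is then immediate: $(\sigma_f^\pm)^{-1}\otimes(\tau_f^\pm)^{-1}$ is unitary on $V^\lambda\otimes\check{V^\lambda}$, and by Lemma~\ref{lem:commutant} the $\EE^\lambda_{\mu,R_1,R_2}$ are already orthonormal. Combined with the dimension count, this identifies the proposed set as an orthonormal basis of $(V^\lambda\otimes\check{V^\lambda})^{G_f^\pm}$, and tensoring over $f$ completes the proof. The only real obstacle is dealing with the asymmetry between $\sigma_f^\pm$ and $\tau_f^\pm$ in the definition of $\EE^{\lambda,f,\pm}_{\mu,R_1,R_2}$; once \textbf{P3} is used to eliminate this asymmetry at the level of the $G_f^\pm$ action, the argument collapses to the standard commutant computation of Lemma~\ref{lem:commutant}.
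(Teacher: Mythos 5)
Your proof is correct and relies on the same essential ingredients as the paper's: Lemma~\ref{lem:commutant}, the unitarity of the twist $(\sigma_f^\pm)^{-1}\otimes(\tau_f^\pm)^{-1}$, property \textbf{P1} to conjugate $G_f^\pm$ onto $S_{n-\e_f}$, and property \textbf{P3} to resolve the $\sigma_f^\pm/\tau_f^\pm$ asymmetry via the disjoint-support observation. The paper phrases this slightly differently — it works with all eight tensor factors of $W^\lambda$ at once and directly identifies $\Delta(G_a^\pm,\ldots,G_d^\pm)$ as a conjugate of $\Delta(S_{n-\e_a},\ldots,S_{n-\e_d})$, rather than reducing to a single letter and closing with a dimension count — but the mathematical content is the same.
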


\begin{proof}
Consider $W^{\lambda}$ as a module for $\mathbf{G}\eqdf S_{a}^{(1)}\times S_{a}^{(2)}\times S_{b}^{(1)}\times S_{b}^{(2)}\times S_{c}^{(1)}\times S_{c}^{(2)}\times S_{d}^{(1)}\times S_{d}^{(2)}$
where all the $S_{f}^{(i)}$are isomorphic copies of $S_{n}$, $S_{f}^{(1)}$
acts on the $V_{f}^{\lambda}$ factor and $S_{f}^{(2)}$ acts on the
$\check{V}_{f}^{\lambda}$ factor of $W^{\lambda}$. Given subgroups
$H_{f}$ of $S_{n}$ for each $f\in\{a,b,c,d\}$, write $\Delta(H_{a},H_{b},H_{c},H_{d})$
for the subgroup consisting of tuples of the form $(g_{a},g_{a},g_{b},g_{b},g_{c},g_{c},g_{d},g_{d})$
with each $g_{f}\in H_{f}$. The statement of Lemma \ref{lem:twisted-orthonormal-basis}
is equivalent to the statement that the set given in (\ref{eq:on-basis})
is an orthonormal basis for the $\Delta(G_{a},G_{b},G_{c},G_{d})$-invariant
elements. 

We have 
\begin{align*}
 & \EE_{\mu_{a},S_{a},T_{a}}^{\lambda,a,-}\otimes\EE_{\mu_{b},S_{b},T_{b}}^{\lambda,b,-}\otimes\EE_{\mu_{c},S_{c},T_{c}}^{\lambda,c,-}\otimes\EE_{\mu_{d},S_{d},T_{d}}^{\lambda,d,-}=\\
 & \left(\sigma_{a}^{-},\tau_{a}^{-},\sigma_{b}^{-},\tau_{b}^{-},\sigma_{c}^{-},\tau_{c}^{-},\sigma_{d}^{-},\tau_{d}^{-}\right)^{-1}\EE_{\mu_{a},S_{a},T_{a}}^{\lambda}\otimes\EE_{\mu_{b},S_{b},T_{b}}^{\lambda}\otimes\EE_{\mu_{c},S_{c},T_{c}}^{\lambda}\otimes\EE_{\mu_{d},S_{d},T_{d}}^{\lambda}.
\end{align*}
We note that $(\sigma_{a}^{-},\tau_{a}^{-},\sigma_{b}^{-},\tau_{b}^{-},\sigma_{c}^{-},\tau_{c}^{-},\sigma_{d}^{-},\tau_{d}^{-})$
acts unitarily on $W^{\lambda}$, and by Lemma \ref{lem:commutant},
the vectors $\EE_{\mu_{a},S_{a},T_{a}}^{\lambda}\otimes\EE_{\mu_{b},S_{b},T_{b}}^{\lambda}\otimes\EE_{\mu_{c},S_{c},T_{c}}^{\lambda}\otimes\EE_{\mu_{d},S_{d},T_{d}}^{\lambda}$
are an orthonormal basis for the\\
 $\Delta(S_{n-\e_{a}},S_{n-\e_{b}},S_{n-\e_{c}},S_{n-\e_{d}})$-invariant
vectors in $W^{\lambda}$. Therefore the set given in (\ref{eq:on-basis})
is an orthonormal basis of invariant vectors for the group
\[
\left(\sigma_{a}^{-},\tau_{a}^{-},\sigma_{b}^{-},\tau_{b}^{-},\sigma_{c}^{-},\tau_{c}^{-},\sigma_{d}^{-},\tau_{d}^{-}\right)^{-1}\Delta\left(S_{n-\e_{a}},S_{n-\e_{b}},S_{n-\e_{c}},S_{n-\e_{d}}\right)\left(\sigma_{a}^{-},\tau_{a}^{-},\sigma_{b}^{-},\tau_{b}^{-},\sigma_{c}^{-},\tau_{c}^{-},\sigma_{d}^{-},\tau_{d}^{-}\right).
\]
It remains to prove that this group is $\Delta(G_{a},G_{b},G_{c},G_{d})$.
By property \textbf{P1}, this group is contained in $G_{a}\times G_{a}\times G_{b}\times G_{b}\times G_{c}\times G_{c}\times G_{d}\times G_{d}$.
Combining this with property \textbf{P3}, the group displayed above
is equal to $\Delta(G_{a},G_{b},G_{c},G_{d})$, as required.
\end{proof}
\begin{lem}
\label{lem:minus-to-plus-mapping}We have
\begin{align*}
g^{0}\left(\EE_{\mu_{a},S_{a},T_{a}}^{\lambda,a,-}\otimes\EE_{\mu_{b},S_{b},T_{b}}^{\lambda,b,-}\otimes\EE_{\mu_{c},S_{c},T_{c}}^{\lambda,c,-}\otimes\EE_{\mu_{d},S_{d},T_{d}}^{\lambda,d,-}\right)=\EE_{\mu_{a},S_{a},T_{a}}^{\lambda,a,+}\otimes\EE_{\mu_{b},S_{b},T_{b}}^{\lambda,b,+}\otimes\EE_{\mu_{c},S_{c},T_{c}}^{\lambda,c,+}\otimes\EE_{\mu_{d},S_{d},T_{d}}^{\lambda,d,+}.
\end{align*}
\end{lem}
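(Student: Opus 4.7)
The plan is a direct computation exploiting property \textbf{P2}, which states that $(\sigma_{f}^{+})^{-1}\sigma_{f}^{-}=(\tau_{f}^{+})^{-1}\tau_{f}^{-}=g_{f}^{0}$ for each $f\in\{a,b,c,d\}$. The tensor structure of $W^{\lambda}$ splits the action of $g^{0}=(g_{a}^{0},g_{b}^{0},g_{c}^{0},g_{d}^{0})\in S_{n}^{4}$ into four independent pieces, one per letter, each acting on the pair of tensor factors $V_{f}^{\lambda}\otimes\check{V}_{f}^{\lambda}$. Consequently it suffices to prove the factor-wise identity
\[
g_{f}^{0}\bigl(\EE_{\mu_{f},S_{f},T_{f}}^{\lambda,f,-}\bigr)=\EE_{\mu_{f},S_{f},T_{f}}^{\lambda,f,+}
\]
for each $f$ (with $g_{f}^{0}$ acting diagonally on $V_{f}^{\lambda}\otimes\check{V}_{f}^{\lambda}$ by its $S_{n}$-action on each factor), and then tensor the four equalities together.

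To verify this factor-wise identity, I would unfold the definition
\[
\EE_{\mu,R_{1},R_{2}}^{\lambda,f,-}=\bigl(\sigma_{f}^{-}\bigr)^{-1}\otimes\bigl(\tau_{f}^{-}\bigr)^{-1}\bigl(\EE_{\mu,R_{1},R_{2}}^{\lambda}\bigr)
\]
from (\ref{eq:twisted-E-def}) and use that the $S_{n}$-action on each tensor factor is a group homomorphism, so composing group elements composes actions. Thus
\[
g_{f}^{0}\otimes g_{f}^{0}\Bigl(\bigl(\sigma_{f}^{-}\bigr)^{-1}\otimes\bigl(\tau_{f}^{-}\bigr)^{-1}\bigl(\EE_{\mu,R_{1},R_{2}}^{\lambda}\bigr)\Bigr)=\bigl(g_{f}^{0}\bigl(\sigma_{f}^{-}\bigr)^{-1}\bigr)\otimes\bigl(g_{f}^{0}\bigl(\tau_{f}^{-}\bigr)^{-1}\bigr)\bigl(\EE_{\mu,R_{1},R_{2}}^{\lambda}\bigr).
\]
By \textbf{P2}, $g_{f}^{0}=(\sigma_{f}^{+})^{-1}\sigma_{f}^{-}$, so $g_{f}^{0}(\sigma_{f}^{-})^{-1}=(\sigma_{f}^{+})^{-1}$ in $S_{n}$, and identically $g_{f}^{0}(\tau_{f}^{-})^{-1}=(\tau_{f}^{+})^{-1}$. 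Substituting these identities gives exactly $\EE_{\mu,R_{1},R_{2}}^{\lambda,f,+}$ by definition (\ref{eq:twisted-E-def}).

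Finally, tensoring the four factor-wise identities together yields the claimed equality in $W^{\lambda}$. There is no significant obstacle here: the result is essentially a formal consequence of \textbf{P2} combined with the definition of the twisted basis vectors $\EE^{\lambda,f,\pm}_{\mu,R_{1},R_{2}}$, and the only slightly delicate point is keeping track that the action on $\check{V}_{f}^{\lambda}$ is the dual action but still satisfies the homomorphism identity $\rho(g_{1})\rho(g_{2})=\rho(g_{1}g_{2})$, which is automatic.
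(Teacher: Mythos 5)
Your proof is correct and is precisely the expansion of the paper's terse argument, which simply cites property \textbf{P2} and the defining formula (\ref{eq:twisted-E-def}). The factor-wise reduction, the composition-of-actions identity, and the cancellation $g_{f}^{0}(\sigma_{f}^{-})^{-1}=(\sigma_{f}^{+})^{-1}$ are exactly the steps the authors had in mind.
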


\begin{proof}
This follows from property \textbf{P2} together with the definitions
of $\EE_{\mu_{f},S_{f},T_{f}}^{\lambda,f,\pm}$ in (\ref{eq:twisted-E-def}).
\end{proof}
\begin{prop}
\label{prop:theta-expression}Recalling the definition of $\Theta_{\lambda}(Y,\J_{n})$
from (\ref{eq:Phi_lambda}), we have
\begin{align}
\Theta_{\lambda}(Y,\J_{n})= & \sum_{\nu\subset_{\v-\f}\lambda'\subset_{\f}\lambda}d_{\lambda/\lambda'}d_{\nu}\sum_{\nu\subset\mu_{f}\subset_{\e_{f}-\f}\lambda'}\frac{1}{d_{\mu_{a}}d_{\mu_{b}}d_{\mu_{c}}d_{\mu_{d}}}\Upsilon_{n}\left(\left\{ \sigma_{f}^{\pm},\tau_{f}^{\pm}\right\} ,\nu,\left\{ \mu_{f}\right\} ,\lambda'\right),\label{eq:theta-expression-1}
\end{align}
where 
\begin{align}
\Upsilon_{n}\left(\left\{ \sigma_{f}^{\pm},\tau_{f}^{\pm}\right\} ,\nu,\left\{ \mu_{f}\right\} ,\lambda'\right)\eqdf\sum_{\begin{gathered}r_{f}^{+},r_{f}^{-}\in\Tab\left(\mu_{f}/\nu\right)\\
s_{f},t_{f}\in\Tab\left(\lambda'/\mu_{f}\right)
\end{gathered}
}\M\left(\left\{ \sigma_{f}^{\pm},\tau_{f}^{\pm},r_{f}^{\pm},s_{f},t_{f}\right\} \right)\label{eq:Upsilon-def}
\end{align}
and $\M\left(\left\{ \sigma_{f}^{\pm},\tau_{f}^{\pm},r_{f}^{\pm},s_{f},t_{f}\right\} \right)$
is the following product of matrix coefficients
\begin{eqnarray}
\M\left(\left\{ \sigma_{f}^{\pm},\tau_{f}^{\pm},r_{f}^{\pm},s_{f},t_{f}\right\} \right) & \eqdf & \left\langle \sigma_{b}^{-}\left(\sigma_{a}^{+}\right)^{-1}w_{r_{a}^{+}\sqcup s_{a}},w_{r_{b}^{-}\sqcup s{}_{b}}\right\rangle \left\langle \tau_{a}^{+}\left(\sigma_{b}^{+}\right)^{-1}w_{r_{b}^{+}\sqcup s_{b}},w_{r_{a}^{+}\sqcup t_{a}}\right\rangle \cdot\nonumber \\
 &  & \left\langle \tau_{b}^{+}\left(\tau_{a}^{-}\right)^{-1}w_{r_{a}^{-}\sqcup t{}_{a}},w_{r_{b}^{+}\sqcup t_{b}}\right\rangle \left\langle \sigma_{c}^{-}\left(\tau_{b}^{-}\right)^{-1}w_{r_{b}^{-}\sqcup t{}_{b}},w_{r_{c}^{-}\sqcup s{}_{c}}\right\rangle \cdot\nonumber \\
 &  & \left\langle \sigma_{d}^{-}\left(\sigma_{c}^{+}\right)^{-1}w_{r_{c}^{+}\sqcup s_{c}},w_{r_{d}^{-}\sqcup s{}_{d}}\right\rangle \left\langle \tau_{c}^{+}\left(\sigma_{d}^{+}\right)^{-1}w_{r_{d}^{+}\sqcup s_{d}},w_{r_{c}^{+}\sqcup t_{c}}\right\rangle \cdot\nonumber \\
 &  & \left\langle \tau_{d}^{+}\left(\tau_{c}^{-}\right)^{-1}w_{r_{c}^{-}\sqcup t{}_{c}},w_{r_{d}^{+}\sqcup t_{d}}\right\rangle \left\langle \sigma_{a}^{-}\left(\tau_{d}^{-}\right)^{-1}w_{r_{d}^{-}\sqcup t{}_{d}},w_{r_{a}^{-}\sqcup s{}_{a}}\right\rangle .\label{eq:M-def}
\end{eqnarray}
\end{prop}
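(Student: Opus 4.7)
\noindent\textbf{Proof plan for Proposition \ref{prop:theta-expression}.}

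The starting point is Lemma \ref{lem:theta-as-trace-of-B}, which expresses $\Theta_{\lambda}(Y,\J_n)$ as $\mathrm{tr}_{W^{\lambda}}(B_{\lambda}Q^{+}g^{0}Q^{-})$. Since $Q^{-}$ is orthogonal projection onto the $G^{-}$-invariants, and Lemma \ref{lem:twisted-orthonormal-basis} furnishes an orthonormal basis for that space, the trace can be computed on this basis alone. Invoking Lemma \ref{lem:minus-to-plus-mapping} to push $g^{0}$ across the basis, and using that $Q^{+}$ fixes the $+$-side basis (Lemma \ref{lem:twisted-orthonormal-basis} again), I obtain
\[
\Theta_{\lambda}(Y,\J_n)=\sum_{\{\mu_{f},S_{f},T_{f}\}}\left\langle B_{\lambda}\bigotimes_{f}\EE_{\mu_{f},S_{f},T_{f}}^{\lambda,f,+},\bigotimes_{f}\EE_{\mu_{f},S_{f},T_{f}}^{\lambda,f,-}\right\rangle.
\]

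The next step is to unfold each $\EE$-vector via the definitions (\ref{eq:twisted-E-def}) and (\ref{eq:E-def}), introducing auxiliary summations over tableaux $P_{f}\in\Tab(\mu_{f})$ for the $+$-side and $P_{f}'\in\Tab(\mu_{f})$ for the $-$-side, with the prefactor $\prod_{f}d_{\mu_{f}}^{-1}$. Plugging these into the defining formula (\ref{eq:B-lambda-def}) of $B_{\lambda}$, the inner product becomes a product of eight matrix coefficients in $V^{\lambda}$ of the shape $\langle v_{Q}^{\alpha},v_{Q'}^{\beta}\rangle$ where $\alpha,\beta\in\{\sigma_{f}^{\pm},\tau_{f}^{\pm}\}$ and $Q,Q'$ are among the concatenations $P_{f}\sqcup S_{f}$, $P_{f}\sqcup T_{f}$, $P_{f}'\sqcup S_{f}$, $P_{f}'\sqcup T_{f}$. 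Using unitarity to move one permutation across the pairing, each such factor rewrites as $\langle\sigma v_{Q},v_{Q'}\rangle$ where $\sigma$ is one of the eight distinguished permutations appearing in property \textbf{P4}; e.g.~$\langle v_{P_{a}\sqcup S_{a}}^{\sigma_{a}^{+}},v_{P_{b}'\sqcup S_{b}}^{\sigma_{b}^{-}}\rangle=\langle\sigma_{b}^{-}(\sigma_{a}^{+})^{-1}v_{P_{a}\sqcup S_{a}},v_{P_{b}'\sqcup S_{b}}\rangle$, matching the first factor of $\M$.

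Now exploit property \textbf{P4}: each such $\sigma$ fixes $[n-\f+1,n]$, and since $\sigma\in S'_{\v}$ it also fixes $[n-\v]$, so it only permutes the middle block $[n-\v+1,n-\f]$. Apply Lemma \ref{lem:tensor-product-isomorphism} iteratively along the chain $\nu\subset\lambda'\subset\lambda$ with $|\nu|=n-\v$, $|\lambda'|=n-\f$: the joint isotypic piece $(V^{\lambda})_{\lambda'}\cap(V^{\lambda})_{\nu}$ splits as $V^{\nu}\otimes V^{\lambda'/\nu}\otimes V^{\lambda/\lambda'}$, with each basis vector $v_{Q}$ (for $Q\in\Tab(\lambda)$ of the appropriate shape chain) decomposing as $v_{Q|_{\le n-\v}}\otimes w_{Q|_{[n-\v+1,n-\f]}}\otimes w_{Q|_{>n-\f}}$, and $\sigma$ acts as $\mathrm{Id}\otimes\sigma\otimes\mathrm{Id}$. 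Consequently $\langle\sigma v_{Q},v_{Q'}\rangle$ is zero unless $Q|_{\le n-\v}=Q'|_{\le n-\v}$, $Q|_{>n-\f}=Q'|_{>n-\f}$, and the shape chains agree ($\mu_{n-\v}=\nu$, $\mu_{n-\f}=\lambda'$ for both), in which case it equals the skew matrix coefficient $\langle\sigma w_{Q|_{[n-\v+1,n-\f]}},w_{Q'|_{[n-\v+1,n-\f]}}\rangle$ in $V^{\lambda'/\nu}$.

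Finally, collate the eight delta-type constraints. Each matrix coefficient links two of the sixteen auxiliary tableaux $\{P_{f},P_{f}',S_{f},T_{f}\}$; the cyclic incidence pattern dictated by the word $[d^{-1},c^{-1}][b^{-1},a^{-1}]$ connects all $P$-sides and all $P'$-sides through a single orbit, forcing every $P_{f}$ and $P_{f}'$ to share a common restriction $V_{\nu}\in\Tab(\nu)$ on $[\le n-\v]$ for a single common $\nu$, and likewise every $S_{f},T_{f}$ to share a common restriction $Z\in\Tab(\lambda/\lambda')$ on $[>n-\f]$ for a single common $\lambda'$; this also forces $\nu\subset\mu_{f}\subset\lambda'$ for every $f$. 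Reparametrise $P_{f}=V_{\nu}\sqcup r_{f}^{+}$, $P_{f}'=V_{\nu}\sqcup r_{f}^{-}$, $S_{f}=s_{f}\sqcup Z$, $T_{f}=t_{f}\sqcup Z$ with $r_{f}^{\pm}\in\Tab(\mu_{f}/\nu)$ and $s_{f},t_{f}\in\Tab(\lambda'/\mu_{f})$; the surviving integrand depends only on $r_{f}^{\pm},s_{f},t_{f}$. The free sums over $V_{\nu}$ and $Z$ produce the factors $d_{\nu}$ and $d_{\lambda/\lambda'}$, the remaining sums reproduce $\Upsilon_{n}$ exactly as in (\ref{eq:Upsilon-def})--(\ref{eq:M-def}), and regrouping the outer sum as $\sum_{\nu\subset_{\v-\f}\lambda'\subset_{\f}\lambda}\sum_{\nu\subset\mu_{f}\subset_{\e_{f}-\f}\lambda'}$ yields (\ref{eq:theta-expression-1}). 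The main obstacle is the bookkeeping of the final paragraph---verifying that the eight individual delta constraints genuinely propagate through the cyclic word to pin down a single $(\nu,\lambda',V_{\nu},Z)$ common to all sixteen auxiliary tableaux---but this reduces to checking connectivity of an 8-edge graph on 16 vertices, which follows transparently from the word structure of $[a,b][c,d]$.
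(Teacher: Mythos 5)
Your proposal follows essentially the same route as the paper's proof: compute the trace of $B_{\lambda}Q^{+}g^{0}Q^{-}$ on the twisted Gelfand-Tsetlin orthonormal basis of $G^{-}$-invariants, push $g^{0}$ across via Lemma \ref{lem:minus-to-plus-mapping}, unfold the $\EE$-vectors into sums over interior tableaux, and then extract the common $\nu$-restriction (giving $d_{\nu}$) and common $\lambda'$-restriction (giving $d_{\lambda/\lambda'}$) via property \textbf{P4} and the observation that the eight matrix-coefficient constraints form a single cycle. Your phrasing of the final connectivity check as "an 8-edge graph on 16 vertices" is a bit off the mark—each matrix coefficient in fact produces two separate equalities (one on the $\le n-\v$ boxes, linking the eight $R_f^{\pm}$ among themselves, and one on the $>n-\f$ boxes, linking the eight $S_f,T_f$ among themselves), so it is really two 8-cycles on 8 vertices each—but this does not affect the correctness of the argument; both cycles are indeed connected, which is exactly what the paper uses.
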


Before proving Proposition \ref{prop:theta-expression}, we say a
word about the interpretation of the formula. Recall that the permutations
$\sigma_{f}^{\pm}$ and $\tau_{f}^{\pm}$ all belong to $S'_{\v}\le S_{n}$.
But by property \textbf{P4},\textbf{ }the eight permutations appearing
in (\ref{eq:M-def}) all restrict to the identity on $\left[n-\f+1,n\right]$,
and so can be seen as permutations on $\left[n-\v+1,n-\f\right]$.
For every $\pi\in\left\{ \sigma_{f}^{\pm},\tau_{f}^{\pm}\right\} $,
$\pi^{-1}\left(\left[n-\v+1,n-\f\right]\right)$ correspond to vertices
where the corresponding side-of-half-$f$-edge belongs either to a
hanging half-edge, or to an exposed side of a full-edge. One should
think of $r_{f}^{-}$ as the skew Young tableau consisting of indices
of outgoing hanging half-edges labeled $f$, of $r_{f}^{+}$ as the
tableau of incoming hanging half-edges labeled $f$, of $s_{f}$ as
the tableau of exposed left-sides of full $f$-edges and of $t_{f}$
as the tableau of exposed right-sides of full $f$-edges. Then indeed,
for example, the indices corresponding to $\sigma_{f}^{-}$ are $r_{f}^{-}\sqcup s_{f}$,
those corresponding to $\sigma_{f}^{+}$ are $r_{f}^{+}\sqcup s_{f}$,
those corresponding to $\tau_{f}^{-}$ are $r_{f}^{-}\sqcup t_{f}$,
and those corresponding to $\tau_{f}^{+}$ are $r_{f}^{+}\sqcup t_{f}$.
\begin{proof}[Proof of Proposition \ref{prop:theta-expression}]
By Lemmas \ref{lem:theta-as-trace-of-B}, \ref{lem:twisted-orthonormal-basis},
and \ref{lem:minus-to-plus-mapping}, we have
\begin{eqnarray}
\Theta_{\lambda}\left(Y,\J_{n}\right) & = & \trace_{W^{\lambda}}\left(B_{\lambda}g^{0}Q\right)\nonumber \\
 & = & \sum_{\begin{gathered}\mu_{f}\subset_{\e_{f}}\lambda,\\
S_{f},T_{f}\in\Tab\left(\lambda/\mu_{f}\right)
\end{gathered}
}\begin{gathered}\big\langle B_{\lambda}g^{0}\left[\EE_{\mu_{a},S_{a},T_{a}}^{\lambda,a,-}\otimes\EE_{\mu_{b},S_{b},T_{b}}^{\lambda,b,-}\otimes\EE_{\mu_{c},S_{c},T_{c}}^{\lambda,c,-}\otimes\EE_{\mu_{d},S_{d},T_{d}}^{\lambda,d,-}\right],\\
\EE_{\mu_{a},S_{a},T_{a}}^{\lambda,a,-}\otimes\EE_{\mu_{b},S_{b},T_{b}}^{\lambda,b,-}\otimes\EE_{\mu_{c},S_{c},T_{c}}^{\lambda,c,-}\otimes\EE_{\mu_{d},S_{d},T_{d}}^{\lambda,d,-}\big\rangle
\end{gathered}
\nonumber \\
 & = & \sum_{\begin{gathered}\mu_{f}\subset_{\e_{f}}\lambda,\\
S_{f},T_{f}\in\Tab\left(\lambda/\mu_{f}\right)
\end{gathered}
}\begin{gathered}\big\langle B_{\lambda}\left[\EE_{\mu_{a},S_{a},T_{a}}^{\lambda,a,+}\otimes\EE_{\mu_{b},S_{b},T_{b}}^{\lambda,b,+}\otimes\EE_{\mu_{c},S_{c},T_{c}}^{\lambda,c,+}\otimes\EE_{\mu_{d},S_{d},T_{d}}^{\lambda,d,+}\right],\\
\EE_{\mu_{a},S_{a},T_{a}}^{\lambda,a,-}\otimes\EE_{\mu_{b},S_{b},T_{b}}^{\lambda,b,-}\otimes\EE_{\mu_{c},S_{c},T_{c}}^{\lambda,c,-}\otimes\EE_{\mu_{d},S_{d},T_{d}}^{\lambda,d,-}\big\rangle.
\end{gathered}
\label{eq:trace-calc-1}
\end{eqnarray}
Using (\ref{eq:E-def}), (\ref{eq:B-lambda-def}) and (\ref{eq:twisted-E-def}),
we obtain
\begin{align*}
 & \left\langle B_{\lambda}\left[\EE_{\mu_{a},S_{a},T_{a}}^{\lambda,a,+}\otimes\EE_{\mu_{b},S_{b},T_{b}}^{\lambda,b,+}\otimes\EE_{\mu_{c},S_{c},T_{c}}^{\lambda,c,+}\otimes\EE_{\mu_{d},S_{d},T_{d}}^{\lambda,d,+}\right],\EE_{\mu{}_{a},S_{a},T_{a}}^{\lambda,a,-}\otimes\EE_{\mu{}_{b},S_{b},T_{b}}^{\lambda,b,-}\otimes\EE_{\mu{}_{c},S_{c},T_{c}}^{\lambda,c,-}\otimes\EE_{\mu{}_{d},S_{d},T_{d}}^{\lambda,d,-}\right\rangle \\
= & \frac{1}{d_{\mu_{a}}d_{\mu_{b}}d_{\mu_{c}}d_{\mu_{d}}}\sum_{R_{f}^{+},R_{f}^{-}\in\Tab\left(\mu_{f}\right)}\\
 & \left\langle B_{\lambda}\left[v_{R_{a}^{+}\sqcup S_{a}}^{\sigma_{a}^{+}}\otimes\check{v}_{R_{a}^{+}\sqcup T_{a}}^{\tau_{a}^{+}}\otimes v_{R_{b}^{+}\sqcup S_{b}}^{\sigma_{b}^{+}}\otimes\check{v}_{R_{b}^{+}\sqcup T_{b}}^{\tau_{b}^{+}}\otimes v_{R_{c}^{+}\sqcup S_{c}}^{\sigma_{c}^{+}}\otimes\check{v}_{R_{c}^{+}\sqcup T_{c}}^{\tau_{c}^{+}}\otimes v_{R_{d}^{+}\sqcup S_{d}}^{\sigma_{d}^{+}}\otimes\check{v}_{R_{d}^{+}\sqcup T_{d}}^{\tau_{d}^{+}}\right]\right.,\\
 & \left.~~v_{R_{a}^{-}\sqcup S_{a}}^{\sigma_{a}^{-}}\otimes\check{v}_{R_{a}^{-}\sqcup T_{a}}^{\tau_{a}^{-}}\otimes v_{R_{b}^{-}\sqcup S_{b}}^{\sigma_{b}^{-}}\otimes\check{v}_{R_{b}^{-}\sqcup T_{b}}^{\tau_{b}^{-}}\otimes v_{R_{c}^{-}\sqcup S_{c}}^{\sigma_{c}^{-}}\otimes\check{v}_{R_{c}^{-}\sqcup T_{c}}^{\tau_{c}^{-}}\otimes v_{R_{d}^{-}\sqcup S_{d}}^{\sigma_{d}^{-}}\otimes\check{v}_{R_{d}^{-}\sqcup T_{d}}^{\tau_{d}^{-}}\right\rangle \\
= & \frac{1}{d_{\mu_{a}}d_{\mu_{b}}d_{\mu_{c}}d_{\mu_{d}}}\sum_{R_{f}^{\pm}\in\Tab\left(\mu_{f}\right)}\left\langle v_{R_{a}^{+}\sqcup S_{a}}^{\sigma_{a}^{+}},v_{R_{b}^{-}\sqcup S_{b}}^{\sigma_{b}^{-}}\right\rangle \left\langle v_{R_{b}^{+}\sqcup S_{b}}^{\sigma_{b}^{+}},v_{R_{a}^{+}\sqcup T_{a}}^{\tau_{a}^{+}}\right\rangle \left\langle v_{R_{a}^{-}\sqcup T_{a}}^{\tau_{a}^{-}},v_{R_{b}^{+}\sqcup T_{b}}^{\tau_{b}^{+}}\right\rangle \cdot\\
 & ~~~~~\left\langle v_{R_{b}^{-}\sqcup T_{b}}^{\tau_{b}^{-}},v_{R_{c}^{-}\sqcup S_{c}}^{\sigma_{c}^{-}}\right\rangle \left\langle v_{R_{c}^{+}\sqcup S_{c}}^{\sigma_{c}^{+}},v_{R_{d}^{-}\sqcup S_{d}}^{\sigma_{d}^{-}}\right\rangle \left\langle v_{R_{d}^{+}\sqcup S_{d}}^{\sigma_{d}^{+}},v_{R_{c}^{+}\sqcup T_{c}}^{\tau_{c}^{+}}\right\rangle \left\langle v_{R_{c}^{-}\sqcup T_{c}}^{\tau_{c}^{-}},v_{R_{d}^{+}\sqcup T_{d}}^{\tau_{d}^{+}}\right\rangle \left\langle v_{R_{d}^{-}\sqcup T_{d}}^{\tau_{d}^{-}},v_{R_{a}^{-}\sqcup S_{a}}^{\sigma_{a}^{-}}\right\rangle .
\end{align*}
Since $\sigma_{f}^{\pm},\tau_{f}^{\pm}\in S'_{\v}$ for all $f\in\{a,b,c,d\}$,
the only way the product of matrix coefficients above can be non-zero
is if there is $\nu\vdash n-\v$ such that $\nu\subset\mu_{f}$ for
all $f\in\{a,b,c,d\}$, and all $R_{f}^{+}\lvert_{\le n-\v}$, $R_{f}^{-}\lvert_{\le n-\v}$
are equal and of shape $\nu$. Also, recall from Section \ref{subsec:Representations-of-symmetric}
that the action of $\sigma\in S'_{\v}$ on a tableau of shape $\lambda\vdash n$
depends only on the boxes with numbers from $\left[n-\v+1,n\right]$.
This gives 
\begin{align}
 & \left\langle B_{\lambda}\left[\EE_{\mu_{a},S_{a},T_{a}}^{\lambda,a,+}\otimes\EE_{\mu_{b},S_{b},T_{b}}^{\lambda,b,+}\otimes\EE_{\mu_{c},S_{c},T_{c}}^{\lambda,c,+}\otimes\EE_{\mu_{d},S_{d},T_{d}}^{\lambda,d,+}\right],\EE_{\mu{}_{a},S_{a},T_{a}}^{\lambda,a,-}\otimes\EE_{\mu{}_{b},S_{b},T_{b}}^{\lambda,b,-}\otimes\EE_{\mu{}_{c},S_{c},T_{c}}^{\lambda,c,-}\otimes\EE_{\mu{}_{d},S_{d},T_{d}}^{\lambda,d,-}\right\rangle \nonumber \\
= & \sum_{\nu\subset_{\v}\lambda}\frac{d_{\nu}}{d_{\mu_{a}}d_{\mu_{b}}d_{\mu_{c}}d_{\mu_{d}}}\sum_{r_{f}^{+},r_{f}^{-}\in\Tab\left(\mu_{f}/\nu\right)}\nonumber \\
 & \left\langle w_{r_{a}^{+}\sqcup S_{a}}^{\sigma_{a}^{+}},w_{r_{b}^{-}\sqcup S_{b}}^{\sigma_{b}^{-}}\right\rangle \cdot\left\langle w_{r_{b}^{+}\sqcup S_{b}}^{\sigma_{b}^{+}},w_{r_{a}^{+}\sqcup T_{a}}^{\tau_{a}^{+}}\right\rangle \cdot\left\langle w_{r_{a}^{-}\sqcup T_{a}}^{\tau_{a}^{-}},w_{r_{b}^{+}\sqcup T_{b}}^{\tau_{b}^{+}}\right\rangle \cdot\left\langle w_{r_{b}^{-}\sqcup T_{b}}^{\tau_{b}^{-}},w_{r_{c}^{-}\sqcup S_{c}}^{\sigma_{c}^{-}}\right\rangle \cdot\nonumber \\
 & \left\langle w_{r_{c}^{+}\sqcup S_{c}}^{\sigma_{c}^{+}},w_{r_{d}^{-}\sqcup S_{d}}^{\sigma_{d}^{-}}\right\rangle \cdot\left\langle w_{r_{d}^{+}\sqcup S_{d}}^{\sigma_{d}^{+}},w_{r_{c}^{+}\sqcup T_{c}}^{\tau_{c}^{+}}\right\rangle \cdot\left\langle w_{r_{c}^{-}\sqcup T_{c}}^{\tau_{c}^{-}},w_{r_{d}^{+}\sqcup T_{d}}^{\tau_{d}^{+}}\right\rangle \cdot\left\langle w_{r_{d}^{-}\sqcup T_{d}}^{\tau_{d}^{-}},w_{r_{a}^{-}\sqcup S_{a}}^{\sigma_{a}^{-}}\right\rangle .\label{eq:trace-calc-2}
\end{align}
Putting (\ref{eq:trace-calc-1}) and (\ref{eq:trace-calc-2}) together
yields
\begin{eqnarray*}
\Theta_{\lambda}\left(Y,\J_{n}\right) & = & \sum_{\nu\subset_{\v}\lambda}d_{\nu}\sum_{\nu\subset\mu_{f}\subset_{\e_{f}}\lambda}\frac{1}{d_{\mu_{a}}d_{\mu_{b}}d_{\mu_{c}}d_{\mu_{d}}}\sum_{r_{f}^{+},r_{f}^{-}\in\Tab\left(\mu_{f}/\nu\right)}\sum_{S_{f},T_{f}\in\Tab\left(\lambda/\mu_{f}\right)}\\
 &  & \left\langle w_{r_{a}^{+}\sqcup S_{a}}^{\sigma_{a}^{+}},w_{r_{b}^{-}\sqcup S_{b}}^{\sigma_{b}^{-}}\right\rangle \left\langle w_{r_{b}^{+}\sqcup S_{b}}^{\sigma_{b}^{+}},w_{r_{a}^{+}\sqcup T_{a}}^{\tau_{a}^{+}}\right\rangle \left\langle w_{r_{a}^{-}\sqcup T_{a}}^{\tau_{a}^{-}},w_{r_{b}^{+}\sqcup T_{b}}^{\tau_{b}^{+}}\right\rangle \left\langle w_{r_{b}^{-}\sqcup T_{b}}^{\tau_{b}^{-}},w_{r_{c}^{-}\sqcup S_{c}}^{\sigma_{c}^{-}}\right\rangle \\
 &  & \left\langle w_{r_{c}^{+}\sqcup S_{c}}^{\sigma_{c}^{+}},w_{r_{d}^{-}\sqcup S_{d}}^{\sigma_{d}^{-}}\right\rangle \left\langle w_{r_{d}^{+}\sqcup S_{d}}^{\sigma_{d}^{+}},w_{r_{c}^{+}\sqcup T_{c}}^{\tau_{c}^{+}}\right\rangle \left\langle w_{r_{c}^{-}\sqcup T_{c}}^{\tau_{c}^{-}},w_{r_{d}^{+}\sqcup T_{d}}^{\tau_{d}^{+}}\right\rangle \left\langle w_{r_{d}^{-}\sqcup T_{d}}^{\tau_{d}^{-}},w_{r_{a}^{-}\sqcup S_{a}}^{\sigma_{a}^{-}}\right\rangle .
\end{eqnarray*}
Now, $\langle w_{r_{a}^{+}\sqcup S_{a}}^{\sigma_{a}^{+}},w_{r_{b}^{-}\sqcup S_{b}}^{\sigma_{b}^{-}}\rangle=\langle\sigma_{b}^{-}\left(\sigma_{a}^{+}\right)^{-1}w_{r_{a}^{+}\sqcup S_{a}},w_{r_{b}^{-}\sqcup S_{b}}\rangle$
and so on, and property \textbf{P4 }implies that each pair $R_{f_{1},i_{1}}$
and $R_{f_{2},i_{1}}$ occurring in the same matrix coefficient above
have the elements $[n-\f+1,n]$ in the same boxes, if the matrix coefficient
is non-zero. This implies that if the product of matrix coefficients
is non-zero then all the $R_{f,i}$ above have the elements $[n-\f+1,n]$
in the same boxes and there is $\lambda'\subset\lambda$ such that
$\mu_{f}\subset_{\e_{f}-\f}\lambda'$ for all $f$. Therefore the
above is equal to 
\begin{eqnarray*}
\Theta_{\lambda}\left(Y,\J_{n}\right) & = & \sum_{\nu\subset_{\v-\f}\lambda'\subset_{\f}\lambda}d_{\lambda/\lambda'}d_{\nu}\sum_{\nu\subset\mu_{f}\subset_{\e_{f}-\f}\lambda'}\frac{1}{d_{\mu_{a}}d_{\mu_{b}}d_{\mu_{c}}d_{\mu_{d}}}\sum_{r_{f}^{+},r_{f}^{-}\in\Tab\left(\mu_{f}/\nu\right)}\sum_{s_{f},t_{f}\in\Tab\left(\lambda'/\mu_{f}\right)}\\
 &  & \left\langle \sigma_{b}^{-}\left(\sigma_{a}^{+}\right)^{-1}w_{r_{a}^{+}\sqcup s_{a}},w_{r_{b}^{-}\sqcup s{}_{b}}\right\rangle \cdot\left\langle \tau_{a}^{+}\left(\sigma_{b}^{+}\right)^{-1}w_{r_{b}^{+}\sqcup s_{b}},w_{r_{a}^{+}\sqcup t_{a}}\right\rangle \cdot\\
 &  & \left\langle \tau_{b}^{+}\left(\tau_{a}^{-}\right)^{-1}w_{r_{a}^{-}\sqcup t{}_{a}},w_{r_{b}^{+}\sqcup t_{b}}\right\rangle \cdot\left\langle \sigma_{c}^{-}\left(\tau_{b}^{-}\right)^{-1}w_{r_{b}^{-}\sqcup t{}_{b}},w_{r_{c}^{-}\sqcup s{}_{c}}\right\rangle \cdot\\
 &  & \left\langle \sigma_{d}^{-}\left(\sigma_{c}^{+}\right)^{-1}w_{r_{c}^{+}\sqcup s_{c}},w_{r_{d}^{-}\sqcup s{}_{d}}\right\rangle \cdot\left\langle \tau_{c}^{+}\left(\sigma_{d}^{+}\right)^{-1}w_{r_{d}^{+}\sqcup s_{d}},w_{r_{c}^{+}\sqcup t_{c}}\right\rangle \cdot\\
 &  & \left\langle \tau_{d}^{+}\left(\tau_{c}^{-}\right)^{-1}w_{r_{c}^{-}\sqcup t{}_{c}},w_{r_{d}^{+}\sqcup t_{d}}\right\rangle \cdot\left\langle \sigma_{a}^{-}\left(\tau_{d}^{-}\right)^{-1}w_{r_{d}^{-}\sqcup t{}_{d}},w_{r_{a}^{-}\sqcup s{}_{a}}\right\rangle .
\end{eqnarray*}
This finishes the proof.
\end{proof}
It is also useful to know the following.
\begin{lem}
\label{lem:upsilon-sym}We have $\Upsilon_{n}\left(\left\{ \sigma_{f}^{\pm},\tau_{f}^{\pm}\right\} ,\nu,\{\mu_{f}\},\lambda'\right)=\Upsilon_{n}\left(\left\{ \sigma_{f}^{\pm},\tau_{f}^{\pm}\right\} ,\check{\nu},\{\check{\mu}_{f}\},\check{\lambda}'\right)$.
\end{lem}

\begin{proof}
This uses that as $S_{n-\f}$ modules, $V^{\lambda'}$ and $V^{\check{\lambda'}}\otimes\mathrm{sign}$
are isomorphic by the map $w_{T}\mapsto w_{\check{T}}$. This gives
\begin{align*}
\M\left(\left\{ \sigma_{f}^{\pm},\tau_{f}^{\pm},r_{f}^{\pm},s_{f},t_{f}\right\} \right)= & \mathrm{sign}\left(\sigma_{d}^{-}\left(\sigma_{c}^{+}\right)^{-1}\right)\mathrm{sign}\left(\tau_{c}^{+}\left(\sigma_{d}^{+}\right)^{-1}\right)\mathrm{sign}\left(\tau_{d}^{+}\left(\tau_{c}^{-}\right)^{-1}\right)\mathrm{sign}\left(\sigma_{a}^{-}\left(\tau_{d}^{-}\right)^{-1}\right)\\
 & \cdot\mathrm{sign}\left(\sigma_{b}^{-}\left(\sigma_{a}^{+}\right)^{-1}\right)\mathrm{sign}\left(\tau_{a}^{+}\left(\sigma_{b}^{+}\right)^{-1}\right)\mathrm{sign}\left(\tau_{b}^{+}\left(\tau_{a}^{-}\right)^{-1}\right)\mathrm{sign}\left(\sigma_{c}^{-}\left(\tau_{b}^{-}\right)^{-1}\right)\cdot\\
 & \cdot\M\left(\left\{ \sigma_{f}^{\pm},\tau_{f}^{\pm},\check{r}_{f}^{\pm},\check{s}_{f},\check{t}_{f}\right\} \right)\\
= & \M\left(\left\{ \sigma_{f}^{\pm},\tau_{f}^{\pm},\check{r}_{f}^{\pm},\check{s}_{f},\check{t}_{f}\right\} \right)
\end{align*}
where the last line used \textbf{P2 }to get $(\tau_{f}^{+})^{-1}\tau_{f}^{-}=(\sigma_{f}^{+})^{-1}\sigma_{f}^{-}=g_{0}^{f}$.
Using this identity gives the result.
\end{proof}
We are now ready to give an exact expression for $\E_{n}^{\emb}(Y)$,
which is the main result of this $\S\S$\ref{subsec:Integrating-over-double-cosets}.
\begin{thm}
\label{thm:E_n-emb-exact-expression}For $n\geq\v$ we have
\begin{equation}
\E_{n}^{\emb}(Y)=\frac{\left(n!\right)^{3}}{\left|\X_{n}\right|}\cdot\frac{\left(n\right)_{\v}\left(n\right)_{\f}}{\prod_{f}\left(n\right)_{\e_{f}}}\Xi_{n}(Y)\label{eq:prob-to-xi-relation}
\end{equation}
where
\begin{equation}
\Xi_{n}(Y)\eqdf\sum_{\substack{\nu\subset_{\v-\f}\lambda'\vdash\,n-\f}
}d_{\lambda'}d_{\nu}\sum_{\nu\subset\mu_{f}\subset_{\e_{f}-\f}\lambda'}\frac{1}{d_{\mu_{a}}d_{\mu_{b}}d_{\mu_{c}}d_{\mu_{d}}}\Upsilon_{n}\left(\left\{ \sigma_{f}^{\pm},\tau_{f}^{\pm}\right\} ,\nu,\{\mu_{f}\},\lambda'\right).\label{eq:Xi-def}
\end{equation}
\end{thm}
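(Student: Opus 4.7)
The plan is to assemble the formula by chaining together the identities already established and then summing over $\lambda$ using the Frobenius reciprocity identity in Lemma \ref{lem:induced-rep-dimension}. All of the representation-theoretic work has been done; what remains is essentially careful bookkeeping.

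First I would rewrite $\E_n^{\emb}(Y)$ using equation~(\ref{eq:Enemb-to-Xn(Y,J)}) as
\[
\E_n^{\emb}(Y) \;=\; \frac{n!}{(n-\v)!}\cdot\frac{|\X_n(Y,\J_n)|}{|\X_n|} \;=\; \frac{(n)_\v}{|\X_n|}\,|\X_n(Y,\J_n)|.
\]
Then I would substitute Corollary~\ref{cor:probabilty-as-sum-over-irreps} together with Lemma~\ref{lem:double-coset-size}, giving
\[
|\X_n(Y,\J_n)| \;=\; \frac{\prod_f (n-\e_f)!}{n!}\sum_{\lambda\vdash n} d_\lambda\,\Theta_\lambda(Y,\J_n).
\]
Since $(n-\e_f)!/n! = 1/(n)_{\e_f}$ and there are four letters, a short rearrangement yields
\[
\frac{\prod_f (n-\e_f)!}{n!} \;=\; \frac{(n!)^3}{\prod_{f}(n)_{\e_f}}\cdot\frac{1}{(n!)^0}\cdot\frac{1}{n!}\cdot n! \;=\; \frac{1}{n!}\cdot\frac{(n!)^4}{\prod_f (n)_{\e_f}\cdot n!},
\]
which after cancelling simplifies to $\frac{(n!)^3}{n!\cdot\prod_f (n)_{\e_f}}$; equivalently the factor we need is $\frac{(n!)^3}{\prod_f (n)_{\e_f}}$ divided by $n!$.

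Next I would feed in Proposition~\ref{prop:theta-expression} for $\Theta_\lambda(Y,\J_n)$ and then exchange the order of summation, pulling $\lambda'\vdash n-\f$ outside so that $\lambda$ ranges over Young diagrams with $\lambda'\subset_\f\lambda$. The inner sum is exactly
\[
\sum_{\substack{\lambda\vdash n\\ \lambda'\subset\lambda}} d_\lambda\,d_{\lambda/\lambda'},
\]
which by Lemma~\ref{lem:induced-rep-dimension} (applied with $m=n-\f$ and $\mu=\lambda'$) equals $\frac{n!}{(n-\f)!}\,d_{\lambda'} = (n)_\f\,d_{\lambda'}$. Since $\nu\subset_{\v-\f}\lambda'$ forces $\nu\vdash n-\v$, the remaining sum is precisely $\Xi_n(Y)$ as defined in~(\ref{eq:Xi-def}). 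Collecting the prefactors
\[
\E_n^{\emb}(Y) \;=\; \frac{(n)_\v}{|\X_n|}\cdot\frac{(n!)^3}{n!\prod_f (n)_{\e_f}}\cdot (n)_\f\cdot n!\cdot\Xi_n(Y) \;=\; \frac{(n!)^3}{|\X_n|}\cdot\frac{(n)_\v (n)_\f}{\prod_f (n)_{\e_f}}\,\Xi_n(Y)
\]
yields the claimed formula.

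There is no real obstacle here: every ingredient is already on the table. The only thing to be careful about is the juggling of factorials — in particular verifying that the combined prefactor from Lemma~\ref{lem:double-coset-size}, the Pochhammer symbol $(n)_\v$ coming from the orbit count, and the factor $\frac{1}{n!}$ from Schur orthogonality in Corollary~\ref{cor:probabilty-as-sum-over-irreps} combine cleanly into $\frac{(n!)^3}{|\X_n|}\cdot\frac{(n)_\v(n)_\f}{\prod_f (n)_{\e_f}}$ after the summation over $\lambda$ contributes its additional $(n)_\f$ factor.
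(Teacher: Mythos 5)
Your proposal follows exactly the paper's route: rewrite $\E_n^{\emb}(Y)$ via equation~(\ref{eq:Enemb-to-Xn(Y,J)}), substitute Corollary~\ref{cor:probabilty-as-sum-over-irreps} together with Lemma~\ref{lem:double-coset-size}, feed in Proposition~\ref{prop:theta-expression}, exchange the order of summation so that $\lambda$ is innermost, collapse that sum with Lemma~\ref{lem:induced-rep-dimension}, and assemble the prefactors. The structure is correct and the final formula is correct.

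However, the factorial bookkeeping in the middle is off. The intended identity is
\[
\frac{\prod_{f}(n-\e_{f})!}{n!}=\frac{(n!)^{4}}{n!\prod_{f}(n)_{\e_{f}}}=\frac{(n!)^{3}}{\prod_{f}(n)_{\e_{f}}},
\]
but you write it as $\frac{(n!)^{3}}{n!\,\prod_{f}(n)_{\e_{f}}}$ (equivalently $\frac{(n!)^{2}}{\prod_f (n)_{\e_f}}$, off by $n!$), and your displayed intermediate expression $\frac{1}{n!}\cdot\frac{(n!)^{4}}{\prod_{f}(n)_{\e_{f}}\cdot n!}$ has a spurious $n!$ in the denominator. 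You then silently reinsert an unexplained factor of $n!$ in the ``Collecting the prefactors'' line; since $\sum_{\lambda}d_{\lambda}\Theta_{\lambda}(Y,\J_{n})=(n)_{\f}\,\Xi_{n}(Y)$ produces only the $(n)_{\f}$, that extra $n!$ has no source. The two off-by-$n!$ slips cancel, so the final line is right, but as written the derivation is internally inconsistent. Redo the arithmetic cleanly: $\E_n^{\emb}(Y)=\frac{(n)_{\v}}{|\X_n|}\cdot\frac{(n!)^{3}}{\prod_f (n)_{\e_f}}\cdot (n)_{\f}\cdot\Xi_n(Y)$ with no extraneous factors.
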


\begin{rem}
\label{rem:Xi depends only on Y}Although the expression (\ref{eq:Xi-def})
seems to depend on the choices of $\sigma_{f}^{\pm}$ etc.~that we
have made already, the relation (\ref{eq:prob-to-xi-relation}) shows
that it only depends on $Y$.
\end{rem}

\begin{proof}[Proof of Theorem \ref{thm:E_n-emb-exact-expression}]
Recall from (\ref{eq:Enemb-to-Xn(Y,J)}) that $\E_{n}^{\emb}(Y)=\frac{n!}{(n-\v)!}\frac{|\X_{n}(Y,\J_{n})|}{|\X_{n}|}$.
Combining this with Propositions \ref{prop:probabilty-as-sum-over-irreps}
and \ref{prop:theta-expression} gives
\begin{align*}
\E_{n}^{\emb}(Y) & =\frac{\prod_{f\in a,b,c,d}(n-\e_{f})!}{(n-\v)!\left|\X_{n}\right|}\sum_{\nu\subset_{\v-\f}\lambda'\subset_{\f}\lambda\vdash n}d_{\lambda}d_{\lambda/\lambda'}d_{\nu}\sum_{\nu\subset\mu_{f}\subset_{\e_{f}-\f}\lambda'}\frac{1}{d_{\mu_{a}}d_{\mu_{b}}d_{\mu_{c}}d_{\mu_{d}}}\text{\ensuremath{\Upsilon_{n}\left(\left\{ \sigma_{f}^{\pm},\tau_{f}^{\pm}\right\} ,\nu,\{\mu_{f}\},\lambda'\right).}}
\end{align*}
Applying Lemma \ref{lem:induced-rep-dimension} to the summation over
$\lambda,$ for fixed $\lambda'$, yields
\[
\E_{n}^{\emb}(Y)=\frac{n!\prod_{f\in a,b,c,d}(n-\e_{f})!}{(n-\v)!(n-\f)!\left|\X_{n}\right|}\sum_{\nu\subset_{\v-\f}\lambda'\vdash\,n-\f}d_{\lambda'}d_{\nu}\sum_{\nu\subset\mu_{f}\subset_{\e_{f}-\f}\lambda'}\frac{1}{d_{\mu_{a}}d_{\mu_{b}}d_{\mu_{c}}d_{\mu_{d}}}\text{\ensuremath{\Upsilon_{n}\left(\left\{ \sigma_{f}^{\pm},\tau_{f}^{\pm}\right\} ,\nu,\{\mu_{f}\},\lambda'\right).}}
\]
\end{proof}
In view of Theorem \ref{thm:E_n-emb-exact-expression}, from now on
we will not need to refer to the partition $\lambda\vdash n$, but
only to the partitions $\nu\subset_{\v-\e_{f}}\mu_{f}\subset_{\e_{f}-\f}\lambda'\vdash n-\f$.
For ease of notation, from now on we shall abuse notation and \textbf{write
$\lambda$ instead of $\lambda'$}.

Before moving on, we prove that the recently defined functions $\Upsilon_{n}(\{\sigma_{f}^{\pm},\tau_{f}^{\pm}\},\nu,\{\mu_{f}\},\lambda)$
are analytic functions of $n^{-1}$ when $\nu,\mu_{f},\lambda$ each
vary in a family of Young diagrams. Recall the notation $\lambda\left(n\right)$
and $T\left(n\right)$ from Section \ref{subsec:Families-of-YD-and-zeta}.
\begin{lem}
\label{lem:Upsilon-analytic}Still assume that $(Y,\J)$, $\sigma_{f}^{\pm}$
and $\tau_{f}^{\pm}$ are all fixed in the sense of Sections \ref{subsec:Integrating-over-double-cosets}
and \ref{subsec:Construction-of-labelings}. Suppose that we are given
YDs $\nu\subset_{\v-\f}\lambda$ and for each $f\in\{a,b,c,d\}$ a
YD $\mu_{f}$ with 
\[
\nu\subset_{\v-\e_{f}}\mu_{f}\subset_{\e_{f}-\f}\lambda.
\]
There is a function $\Upsilon^{*}\left(\nu,\{\mu_{f}\},\lambda,\bullet\right)$
that is holomorphic in some open disc in $\C$ with center $0$ such
that for all $n$ sufficiently large (depending on $Y$), $\lambda(n-\f)$,
$\mu_{f}(n-\e_{f})$, and $\nu(n-\v)$ all exist and
\[
\Upsilon_{n}\left(\left\{ \sigma_{f}^{\pm},\tau_{f}^{\pm}\right\} ,\nu(n-\v),\left\{ \mu_{f}(n-\e_{f})\right\} ,\lambda(n-\f)\right)=\Upsilon^{*}\left(\nu,\{\mu_{f}\},\lambda,n^{-1}\right).
\]
In addition, the coefficients of the Taylor series of $\Upsilon^{*}\left(\nu,\{\mu_{f}\},\lambda,\bullet\right)$
are in $\mathbf{Q}.$
\end{lem}

\begin{proof}
The proof relies crucially on property \textbf{P5}\textbf{\emph{ }}of
the permutations $\sigma_{f}^{\pm},\tau_{f}^{\pm}$ stating that they
are obtained from fixed permutations in $S_{\v}$. This means that
each of the summands 
\[
\M\left(\left\{ \sigma_{f}^{\pm},\tau_{f}^{\pm},s_{f}\left(n-\f\right),t_{f}\left(n-\f\right),r_{f}^{-}\left(n-\e_{f}\right),r_{f}^{+}\left(n-\e_{f}\right)\right\} \right)
\]
of 
\[
\Upsilon_{n}\left(\left\{ \sigma_{f}^{\pm},\tau_{f}^{\pm}\right\} ,\nu(n-\v),\left\{ \mu_{f}(n-\e_{f})\right\} ,\lambda(n-\f)\right)
\]
(cf. (\ref{eq:Upsilon-def})) agrees with a function of $n^{-1}$
that is holomorphic in an open disc with center $0$ and with rational
coefficients of its Taylor series by Proposition \ref{prop:holomorphic-matrix-coefficients}.
Since there are only finitely many summands in (\ref{eq:Upsilon-def}),
this proves the lemma.
\end{proof}
We also give a coarse bound for the quantities $\Upsilon_{n}(\{\sigma_{f}^{\pm},\tau_{f}^{\pm}\},\nu,\{\mu_{f}\},\lambda)$;
this will be improved later in Proposition \ref{prop:Upsilon-non-trivial-bound}.
\begin{lem}
\label{lem:coarse-bound-on-Upsilon}We have 
\[
\left|\Upsilon_{n}\left(\left\{ \sigma_{f}^{\pm},\tau_{f}^{\pm}\right\} ,\nu,\left\{ \mu_{f}\right\} ,\lambda\right)\right|\leq\left(d_{\lambda/\nu}\right)^{8}\leq\left((\v-\f)!\right)^{8}.
\]
\end{lem}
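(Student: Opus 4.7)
The plan is to prove the bound by two elementary steps: first, estimate the absolute value of each summand $\M$, and second, estimate the number of summands.

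For the first step, I observe that each factor in the product $\M(\{\sigma_{f}^{\pm},\tau_{f}^{\pm},r_{f}^{\pm},s_{f},t_{f}\})$ from (\ref{eq:M-def}) has the form $\langle \pi w_{T_1}, w_{T_2}\rangle$ where $\pi$ is a product of permutations in $S_{n}$ acting unitarily on the skew module $V^{\lambda/\nu}$ (or the relevant subspace), and $w_{T_1}, w_{T_2}$ are unit vectors in the Gelfand--Tsetlin basis. By Cauchy--Schwarz each such matrix coefficient has absolute value at most $1$, so
\[
\left|\M\left(\left\{ \sigma_{f}^{\pm},\tau_{f}^{\pm},r_{f}^{\pm},s_{f},t_{f}\right\} \right)\right|\leq 1
\]
for every choice of tableaux.

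For the second step, I count the tableaux appearing in the summation in (\ref{eq:Upsilon-def}). For each letter $f\in\{a,b,c,d\}$ there are four independent tableau summations: $r_f^+$ and $r_f^-$ range over $\Tab(\mu_f/\nu)$, and $s_f$ and $t_f$ range over $\Tab(\lambda/\mu_f)$. The crucial combinatorial inequality is
\[
d_{\mu_f/\nu}\cdot d_{\lambda/\mu_f}\leq d_{\lambda/\nu},
\]
which holds because the concatenation map $(T,R)\mapsto T\sqcup R$ provides an injection $\Tab(\mu_f/\nu)\times\Tab(\lambda/\mu_f)\hookrightarrow\Tab(\lambda/\nu)$ (each output shape is a valid skew standard tableau of shape $\lambda/\nu$, with the numbers in $\mu_f/\nu$ filled by the smallest labels and those in $\lambda/\mu_f$ by the largest). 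Consequently, for each $f$ the number of admissible quadruples $(r_f^+,r_f^-,s_f,t_f)$ is $(d_{\mu_f/\nu}\cdot d_{\lambda/\mu_f})^2\leq d_{\lambda/\nu}^2$, and multiplying over the four letters gives at most $d_{\lambda/\nu}^{8}$ summands in total.

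Combining the two steps yields the first inequality $|\Upsilon_n|\leq d_{\lambda/\nu}^{8}$. The second inequality $d_{\lambda/\nu}\leq (\v-\f)!$ is immediate, since $\lambda/\nu$ is a skew shape with $|\lambda|-|\nu|=(n-\f)-(n-\v)=\v-\f$ boxes, and the number of standard fillings of any shape with $k$ boxes is bounded by $k!$ (the number of orderings of its labels). This completes the proof. Since no step presents a genuine obstacle, I do not expect difficulty here; the only point needing care is verifying the concatenation injection, which is straightforward from the definition of standard Young tableaux of skew shape.
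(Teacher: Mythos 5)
Your proof is correct and takes essentially the same approach as the paper. The paper compresses the counting step by observing that the summation range in~(\ref{eq:Upsilon-def}) is parameterized injectively by the eight concatenated tableaux $r_{f}^{+}\sqcup s_{f},\,r_{f}^{-}\sqcup t_{f}\in\Tab(\lambda/\nu)$, which is the same concatenation injection you spell out via $d_{\mu_f/\nu}\,d_{\lambda/\mu_f}\le d_{\lambda/\nu}$; the bound $|\M|\le 1$ from unitarity and the bound $d_{\lambda/\nu}\le(\v-\f)!$ are identical in both arguments.
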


\begin{proof}
For fixed $\nu,\{\mu_{f}\},\lambda$, the range of summation in (\ref{eq:Upsilon-def})
is parameterized $1:1$ by the 8 tableaux of the form
\[
r_{f}^{+}\sqcup s_{f},r_{f}^{-}\sqcup t_{f}\in\Tab(\lambda/\nu).
\]
Also, since the matrix coefficients in (\ref{eq:M-def}) involve unit
vectors in a unitary representation, each summand in (\ref{eq:M-def})
is $\leq1$ in absolute value. Hence the lemma follows.
\end{proof}

\subsection{A geometric bound for products of matrix coefficients}

We continue to keep all the notations and assumptions of $\S\S$\ref{subsec:Integrating-over-double-cosets}.
We will show that we can give improved bounds for the product of matrix
coefficients $\M(\{\sigma_{f}^{\pm},\tau_{f}^{\pm},r_{f}^{\pm},s_{f},t_{f}\})$
defined in (\ref{eq:M-def}) in terms of geometric properties of $Y$.
Recall the definitions of the functions $\tp,\lf,d$ from $\S\S\ref{subsec:An-estimate-for-mat-coefs-in-skew-modules}$.
We define
\begin{align}
 & D_{\tp}\left(\left\{ \sigma_{f}^{\pm},\tau_{f}^{\pm},r_{f}^{\pm},s_{f},t_{f}\right\} \right)\eqdf\label{eq:Dtopdef}\\
 & ~~~d\left(\sigma_{b}^{-}\left(\sigma_{a}^{+}\right)^{-1}\tp(r_{a}^{+}\sqcup s_{a}),\tp(r_{b}^{-}\sqcup s{}_{b})\right)+d\left(\tau_{a}^{+}\left(\sigma_{b}^{+}\right)^{-1}\tp(r_{b}^{+}\sqcup s_{b}),\tp(r_{a}^{+}\sqcup t_{a})\right)+\nonumber \\
 & ~~~d\left(\tau_{b}^{+}\left(\tau_{a}^{-}\right)^{-1}\tp(r_{a}^{-}\sqcup t{}_{a}),\tp(r_{b}^{+}\sqcup t_{b})\right)+d\left(\sigma_{c}^{-}\left(\tau_{b}^{-}\right)^{-1}\tp(r_{b}^{-}\sqcup t{}_{b}),\tp(r_{c}^{-}\sqcup s{}_{c})\right)+\nonumber \\
 & ~~~d\left(\sigma_{d}^{-}\left(\sigma_{c}^{+}\right)^{-1}\tp(r_{c}^{+}\sqcup s_{c}),\tp(r_{d}^{-}\sqcup s{}_{d})\right)+d\left(\tau_{c}^{+}\left(\sigma_{d}^{+}\right)^{-1}\tp(r_{d}^{+}\sqcup s_{d}),\tp(r_{c}^{+}\sqcup t_{c})\right)+\nonumber \\
 & ~~~d\left(\tau_{d}^{+}\left(\tau_{c}^{-}\right)^{-1}\tp(r_{c}^{-}\sqcup t{}_{c}),\tp(r_{d}^{+}\sqcup t_{d})\right)+d\left(\sigma_{a}^{-}\left(\tau_{d}^{-}\right)^{-1}\tp(r_{d}^{-}\sqcup t{}_{d}),\tp(r_{a}^{-}\sqcup s{}_{a})\right).\nonumber 
\end{align}
Proposition \ref{prop:bound-for-matrix-coef-in-terms-of-D} directly
implies the following result.
\begin{lem}
~\label{lem:product-of-matrix-coefs-bounded-using-D}If $\lambda_{1}+\nu_{1}>n-\f+\left(\v-\f\right)^{2}$,
then 
\[
\left|\M\left(\left\{ \sigma_{f}^{\pm},\tau_{f}^{\pm},r_{f}^{\pm},s_{f},t_{f}\right\} \right)\right|\leq\left(\frac{(\v-\f)^{2}}{\lambda_{1}+\nu_{1}-(n-\f)}\right)^{D_{\tp}\left(\left\{ \sigma_{f}^{\pm},\tau_{f}^{\pm},r_{f}^{\pm},s_{f},t_{f}\right\} \right)}.
\]
\end{lem}

\begin{rem}
If $\nu$ has a fixed bound on the number of boxes outside its first
row, and $Y$ is fixed, then the hypothesis of Lemma \ref{lem:product-of-matrix-coefs-bounded-using-D}
is satisfied for sufficiently large $n$.
\end{rem}

The quantities $D_{\mathrm{top}}(\{\sigma_{f}^{\pm},\tau_{f}^{\pm},r_{f}^{\pm},s_{f},t_{f}\})$
have a useful interpretation in terms of the combinatorics of the
boundary cycles of $Y$. To explain this, we construct from the data
$\{\sigma_{f}^{\pm},\tau_{f}^{\pm},r_{f}^{\pm},s_{f},t_{f}\}$ a labeling
of the sides/half-sides of rectangles/half-rectangles in $Y_{+}^{(1)}$.

\subsection{Construction of labelings of tiled surfaces from collections of tableaux\label{subsec:Construction-of-labelings}}

In this section, we keep all of the notation from the previous sections.
In particular, we have a fixed vertex-labeled compact tiled surface
$(Y,\J_{n})$ with $\v$ vertices, $\e_{f}$ $f$-labeled edges for
each $f\in\{a,b,c,d\}$, and $\f$ octagons. We fix the data
\begin{align}
\nu\vdash n-\v,\lambda\vdash n-\f\nonumber \\
\nu\subset\mu_{f}\subset_{\e_{f}-\f}\lambda & \quad\forall f\in\{a,b,c,d\}\nonumber \\
r_{f}^{+},r_{f}^{-}\in\Tab(\mu_{f}/\nu),\,s_{f},t_{f}\in\Tab(\lambda/\mu_{f}) & \quad\forall f\in\{a,b,c,d\}\label{eq:data}
\end{align}
All this data uniquely determines one summand of $\Upsilon_{n}(\{\sigma_{f}^{\pm},\tau_{f}^{\pm}\},\nu,\{\mu_{f}\},\lambda)$
as in (\ref{eq:M-def}), and hence also of $\Xi_{n}(Y)$ as in (\ref{eq:Xi-def}). 

Recall that in $\S\S$\ref{subsec:Construction-of-auxiliary-numberings}
we constructed the maps $\sigma_{f}^{\pm}$ and $\tau_{f}^{\pm}$
according to numbering in $\left[\v-\f\right]$ of octagons, of exposed
sides of full-edges and of hanging half-edges of $Y_{+}$. By adding
$n-\v,$ these numbers are in $\left[n-\v+1,n-\f\right]$, give rise
to the images of the corresponding vertices of $Y$ through $\sigma_{f}^{\pm},\tau_{f}^{\pm}$,
and are the elements in the tableaux $r_{f}^{\pm},s_{f},t_{f}$. Given
these tableaux, we assign a `top' label to the hanging half-edges
and exposed sides of full-edges which appear in the top row of the
corresponding tableau. Namely,
\begin{itemize}
\item Every exposed left-side (resp.~right-side) of an $f$-full-edge is
labeled `top' if the corresponding element in $s_{f}$ (resp.~$t_{f}$)
lies in the top row\footnote{To be sure, the top row in this case is row number one of $\lambda/\mu_{f}$,
which may be empty: its length is $\lambda_{1}-\left(\mu_{f}\right)_{1}$.}. 
\item Every outgoing (resp.~incoming) hanging $f$-half-edge is labeled
`top' if the corresponding element in $r_{f}^{-}$ (resp.~$r_{f}^{+}$)
lies in the top row.
\end{itemize}
This labeling scheme is illustrated in Figure \ref{fig:labelling-example}. 

\begin{figure}
\begin{centering}
\includegraphics[clip,scale=0.8]{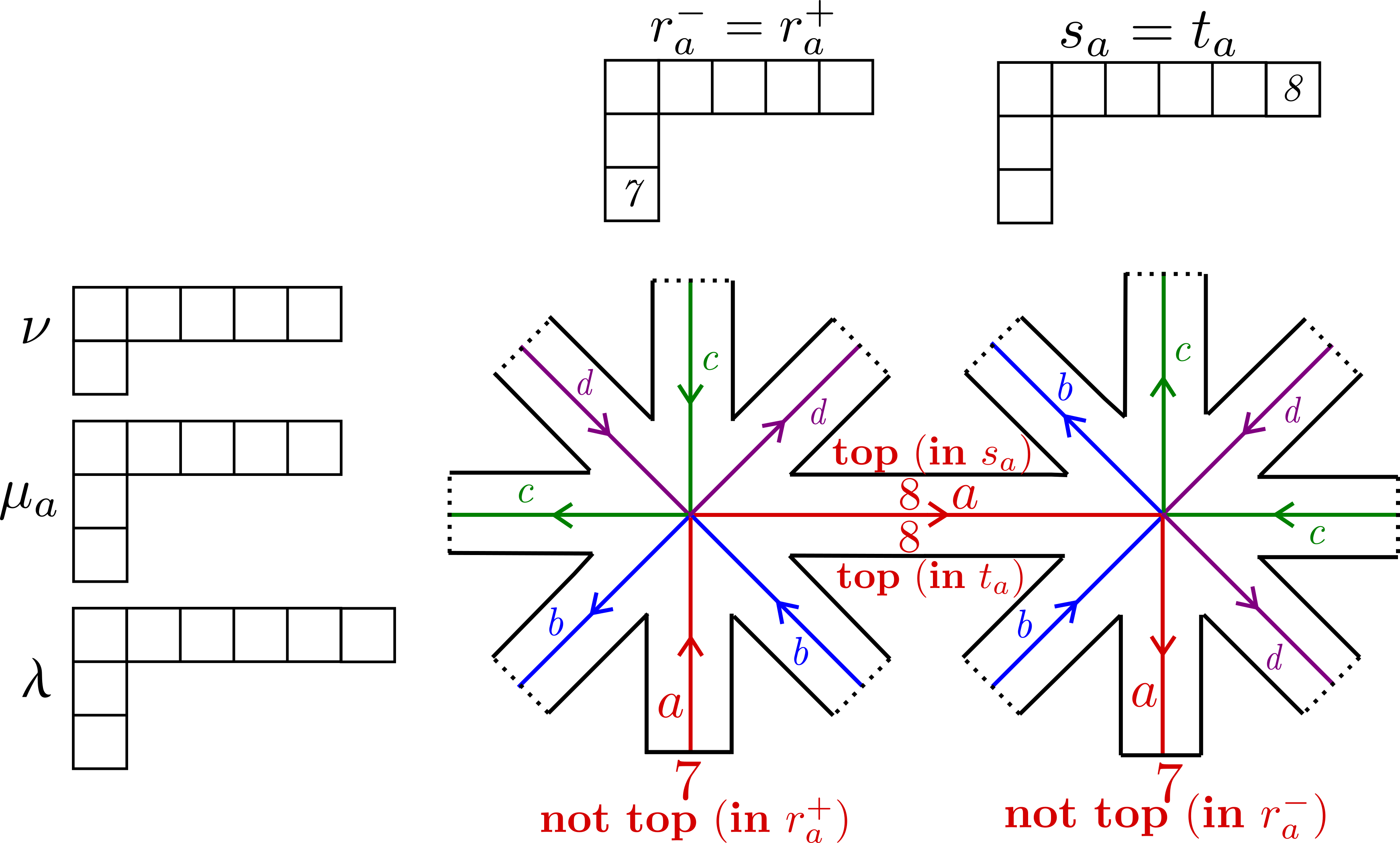}
\par\end{centering}
\caption{Illustration of how the tableaux $r_{f}^{-},r_{f}^{+},s_{f},t_{f}$
induce the `top' labeling.\label{fig:labelling-example}}
\end{figure}

The purpose of introducing these labelings is the following diagrammatic
interpretation of $D_{\tp}$. We view the boundary $\partial Y_{+}$
of (the thick version of) $Y_{+}$ to consist of hanging half-edges
and of exposed sides of full-edges. 
\begin{lem}
\label{lem:D-diagrammatic}The quantity $D_{\tp}(\{\sigma_{f}^{\pm},\tau_{f}^{\pm},r_{f}^{\pm},s_{f},t_{f}\})$
is half the number of incidences between two consecutive parts of
$\partial Y_{+}$ among which one is labeled `top' and other one is
not. 
\end{lem}

\begin{proof}
This follows simply by careful consideration of the definition (\ref{eq:Dtopdef})
of $D_{\tp}$, as the permutations appearing in that definition map
the index of one part of $\partial Y_{+}$ to a neighboring part.
For example, $\sigma_{b}^{-}\left(\sigma_{a}^{+}\right)^{-1}$ maps
the index on an exposed left-side of an $a$-full-edge to the neighboring
index which either belongs to an exposed left-side of a $b$-full-edge
or to a hanging outgoing $b$-half-edge. 

The reason for the factor $\frac{1}{2}$ is that for $A,B$ of the
same size, $d(A,B)$ is half the size of the symmetric difference
of $A$ and $B$. While in the definition of $D_{\tp}$ we count differences
$d\left(A,B\right)$, in counting switches between `top' parts to
`non-top' parts of $\partial Y_{+}$ and vice-versa, we refer to symmetric
differences.
\end{proof}
The benefit of Lemma \ref{lem:D-diagrammatic} is that it allows us
to connect the possible properties of $Y$ being boundary reduced
or strongly boundary reduced to non-trivial bounds for matrix coefficients. 

A \emph{piece }of $\partial Y_{+}$ is a contiguous collection of
exposed sides of full-edges and of hanging half-edges. Given a piece
$P$, we write $\e(P)$ for the number of exposed sides-of-full-edges
in $P$, $\he(P)$ for the number of hanging half-edges in $P$, and
$\chi\left(P\right)$ for the Euler characteristic of $P$, which
may by $0$ (if $P$ is a circle) or $1$ (if $P$ is topologically
a path). A \emph{piece collection }$\mathcal{P}$ of $\partial Y_{+}$
is a collection of \emph{disjoint} pieces of $\partial Y_{+}$ (without
intersection even of endpoints). 
\begin{defn}
\label{def:defect and max defect}Define
\[
\defect(P)\eqdf\e(P)-3\he(P)
\]
and\footnote{For general genus $g\ge2$, the definitions are $\defect\left(P\right)=\e\left(P\right)-\left(2g-1\right)\he\left(P\right)$
and $\sdefect\left(Y\right)=\max_{{\cal P}}\defect\left(P\right)-4g\chi\left(P\right)$.} 
\begin{align*}
\sdefect(Y) & \eqdf\max_{\mathcal{P\neq\emptyset}}\sum_{P\in\mathcal{P}}\defect(P)-8\chi(P)
\end{align*}
where the maximum is over all \uline{nonempty} piece collections
of $\partial Y_{+}$. 
\end{defn}

\begin{lem}
\label{lem:max-defect of BR and SBR} If $Y$ is boundary reduced,
then 
\begin{equation}
\sdefect(Y)\leq0.\label{eq:sup-defect-br}
\end{equation}
If $Y$ is strongly boundary reduced, then
\begin{equation}
\sdefect(Y)\leq-2.\label{eq:sup-defect-sbr}
\end{equation}
\end{lem}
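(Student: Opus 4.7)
The plan is to establish a per-piece bound: for every piece $P$ of $\partial Y_+$, we have $\defect(P) - 8\chi(P) \leq 0$ when $Y$ is boundary reduced, and $\defect(P) - 8\chi(P) \leq -2$ when $Y$ is strongly boundary reduced. Once this is in hand, \eqref{eq:sup-defect-br} follows by summing over the (nonempty) collection, and \eqref{eq:sup-defect-sbr} follows because a nonempty collection then contributes at most $-2|\mathcal{P}| \leq -2$.

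To analyze a single piece $P$, I would decompose it as an alternating sequence $[R_0, B_1, R_1, \ldots, B_k, R_k]$, where each $B_i$ is a maximal run of exposed full-edge sides (a \emph{sub-block}) and each $R_i$ is a run of hanging half-edges, with $|R_i| \geq 1$ for interior $i$. Each $B_i$ sits inside a block of $\partial Y$, so $|B_i| \leq 4$ in the BR case and $|B_i| \leq 3$ in the SBR case. I then group the $B_i$'s into \emph{chain segments}: maximal runs of sub-blocks connected by $|R_i| = 1$. By definition the $R_i$ separating distinct chain segments has length $\geq 2$, so if $P$ has $s$ chain segments,
\[
\he(P) \;\geq\; (k-s) + 2(s-1) \;=\; k + s - 2.
\]

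The combinatorial heart of the argument is a pair of inequalities for a chain of blocks $c_1, \ldots, c_m$ viewed as a sub-chain of a chain of $\partial Y$. Writing $\sum_i (c_i - 3) = p_4 - p_2 - 2p_1$ where $p_j = \#\{i : c_i = j\}$, one checks that in the non-cyclic BR case $\sum_i (c_i - 3) \leq 1$: the forbidden pattern $4, 3, \ldots, 3, 4$ forces at least one block of length $1$ or $2$ between any two consecutive length-$4$ blocks of the chain, giving $p_1 + p_2 \geq p_4 - 1$. In the non-cyclic SBR case $c_i \leq 3$ yields $\sum_i (c_i - 3) \leq 0$ trivially. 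Summing over the chain segments of $P$ gives $\e(P) \leq 3k + s$ (BR) or $\e(P) \leq 3k$ (SBR). Combining with the lower bound on $\he(P)$, a proper sub-path ($\chi(P) = 1$) with $k \geq 1$ satisfies $\defect(P) - 8 \leq -2s - 2 \leq -4$ (BR) or $\leq -3s - 2 \leq -5$ (SBR); the case $k = 0$ of all hanging half-edges gives an even smaller value.

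For a whole boundary component $P$, which is a cycle, $\chi(P) = 0$. I would distinguish the single cyclic chain case (every $|R_i| = 1$) from the multi-chain case with $t \geq 1$ chain-breaks. The single-chain BR case uses a cyclic variant of the inequality above, namely $p_1 + p_2 \geq p_4$ (now each of the $p_4$ cyclic arcs between consecutive $4$-blocks must contain a $1$- or $2$-block), yielding $\sum (c_i - 3) \leq 0$. The single-chain SBR case combines $c_i \leq 3$, the forbidden half-chain ($p_1 + p_2 \geq 1$), and the parity statement of Lemma~\ref{lem:no cyclic long chains} (which forces the number of even-length blocks in a cyclic chain to be even) to conclude $\sum (c_i - 3) \leq -2$. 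The multi-chain case follows by combining the non-cyclic per-chain bound with the $\geq 2$ half-edges in each chain-break: $\defect(P) \leq -2t$ (BR) or $\leq -3t$ (SBR), both $\leq -2$. The main obstacle is precisely this cyclic SBR case, where the improvement from $-1$ to $-2$ relies essentially on Lemma~\ref{lem:no cyclic long chains} to rule out a cyclic chain with a single length-$2$ block and all other blocks of length $3$.
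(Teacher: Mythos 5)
Your proposal is correct and follows the same per-piece strategy as the paper's proof: bound $\defect(P) - 8\chi(P)$ for each piece $P$ by decomposing $P$ into maximal runs of exposed full-edge sides and runs of hanging half-edges, exploit the forbidden long blocks and long chains (BR) or half-blocks and half-chains (SBR), and invoke the parity statement of Lemma~\ref{lem:no cyclic long chains} to rule out the single cyclic chain with exactly one length-$2$ block in the SBR circle case. The only difference is presentational: you make explicit the chain-segment bookkeeping (the inequalities $p_1 + p_2 \geq p_4 - 1$ and $\he(P) \geq k+s-2$, and their cyclic analogues) where the paper states the resulting per-piece bounds more tersely.
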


\begin{proof}
Assume first that $Y$ is boundary reduced. Recall that $Y$ has no
long blocks, so no blocks of size $>4$, and no long chains, so between
every two blocks of size $4$ in the same piece, there must be either
two consecutive hanging edges or one block of size $\le2$. As a result,
for every piece $P$ of $\partial Y_{+}$ that is a circle, we have
$\defect(P)\leq0$ so $\defect(P)-8\chi(P)\leq0$. For every piece
$P$ of $\partial Y_{+}$ that is a path, we have $\defect(P)\leq4$:
this bound is attained for example when $P$ corresponds to a block
of size $4$ or to two consecutive blocks of size $4$ and $3$. Therefore,
$\defect(P)-8\chi(P)\leq-4$. Hence all contributions to $\sdefect(Y)$
are non-positive, and we obtain (\ref{eq:sup-defect-br}).

If $Y$ is strongly boundary reduced, there are no blocks in $\partial Y$
of length $>3$, and so every piece $P$ of $\partial Y_{+}$ which
is a path satisfies $\defect(P)\leq3$ and $\defect(P)-8\chi(P)\leq-5$:
this bound is attained when $P$ corresponds to a chain of consecutive
blocks of size $3$ each. If $P$ is a piece of $\partial Y_{+}$
that is a circle but not a cyclic chain, then there are two consecutive
hanging half-edges and $\defect(P)=\defect(P)-\chi(P)\leq-3$. If
$P$ is a cyclic chain, then by \cite[Lemma 3.6]{MPcore} it cannot
have only one block of size $2$, so $\defect(P)=\defect(P)-\chi(P)\leq-2$.
This proves (\ref{eq:sup-defect-sbr}).
\end{proof}
The following result relates the structure of the pieces of $\partial Y$
to the quantities $D_{\mathrm{top}}$ and $D_{\mathrm{\mathrm{left}}}$
appearing in our previous bound (Lemma \ref{lem:product-of-matrix-coefs-bounded-using-D})
for matrix coefficients. Given a skew YD $\lambda/\nu$, we write
$b_{\lambda/\nu}$ for the number of boxes of $\lambda/\nu$ outside
the first row.
\begin{prop}
\label{prop:b-D-defect-ineq}Suppose we are given $\nu,\mu_{f},\lambda,r_{f}^{\pm},s_{f},t_{f}$
as in (\ref{eq:data}). If $b_{\lambda/\nu}>0$, then\footnote{For general $g\ge2$ the bound is $\frac{1}{4g}\sdefect\left(Y\right)$.}
\[
b_{\lambda/\nu}-b_{\mu_{a}/\nu}-b_{\mu_{b}/\nu}-b_{\mu_{c}/\nu}-b_{\mu_{d}/\nu}-D_{\tp}\left(\left\{ \{\sigma_{f}^{\pm},\tau_{f}^{\pm},r_{f}^{\pm},s_{f},t_{f}\right\} \right)\leq\frac{1}{8}\sdefect(Y).
\]
\end{prop}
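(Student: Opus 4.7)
My plan is to interpret the quantity $8\bigl(b_{\lambda/\nu} - \sum_f b_{\mu_f/\nu} - D_{\tp}\bigr)$ as the $\defect$-sum (minus $8\chi$) over the collection of maximal \emph{non-top} pieces of $\partial Y_+$, and then apply the definition of $\sdefect(Y)$. The argument for the dual statement involving $D_{\lf}$ and $\check{b}_\bullet$ will be identical with `left' replacing `top' and first column replacing first row, so I will describe only the first case.

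First I would carry out the counting identity. Since $r_f^{\pm}\in\Tab(\mu_f/\nu)$ has $(\mu_f)_1-\nu_1$ entries in its top row and $s_f,t_f\in\Tab(\lambda/\mu_f)$ has $\lambda_1-(\mu_f)_1$ entries in its top row, summing the numbers of \emph{non-top} entries over all sixteen tableaux yields
\[
\sum_f 2\bigl(b_{\mu_f/\nu} + b_{\lambda/\mu_f}\bigr) \;=\; 8\,b_{\lambda/\nu},
\]
using $b_{\mu_f/\nu}+b_{\lambda/\mu_f}=b_{\lambda/\nu}$. Let $N_{\he}$ and $N_{\edge}$ denote the total number of non-top labels on hanging half-edges and on exposed sides of full edges of $\partial Y_+$, respectively. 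Then $N_{\he}=2\sum_f b_{\mu_f/\nu}$ and $N_{\he}+N_{\edge}=8\,b_{\lambda/\nu}$, which rearranges to
\[
8\left(b_{\lambda/\nu} - \sum_f b_{\mu_f/\nu}\right) \;=\; N_{\edge} - 3N_{\he}.
\]

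Next, let $\mathcal{P}_{NT}$ be the collection of maximal non-top pieces of $\partial Y_+$. Since consecutive maximal non-top pieces are separated by at least one top-labeled element, $\mathcal{P}_{NT}$ is a valid piece collection in the sense of Definition \ref{def:defect and max defect}. By Lemma \ref{lem:D-diagrammatic}, the total number of top/non-top switches along $\partial Y_+$ equals $2D_{\tp}$, and each such switch is an endpoint of exactly one non-top path; a non-top piece that forms an entire boundary component has no switches and contributes $\chi(P)=0$. Hence $\sum_{P\in\mathcal{P}_{NT}}\chi(P)=D_{\tp}$, while $\sum_P \e(P)=N_{\edge}$ and $\sum_P \he(P)=N_{\he}$. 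Combining these gives
\[
\sum_{P\in\mathcal{P}_{NT}}\bigl[\defect(P)-8\chi(P)\bigr] \;=\; N_{\edge} - 3N_{\he} - 8D_{\tp} \;=\; 8\left(b_{\lambda/\nu}-\sum_f b_{\mu_f/\nu} - D_{\tp}\right).
\]

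Finally, the hypothesis $b_{\lambda/\nu}>0$ forces $N_{\he}+N_{\edge}=8\,b_{\lambda/\nu}>0$, so at least one non-top label exists on $\partial Y_+$ and $\mathcal{P}_{NT}\neq\emptyset$. The desired inequality then follows immediately from the definition of $\sdefect(Y)$ as the maximum of $\sum_P[\defect(P)-8\chi(P)]$ over nonempty piece collections. I do not anticipate a substantive obstacle: the entire argument is essentially a bookkeeping computation plus an application of Lemma \ref{lem:D-diagrammatic}; the only subtle point, easy to verify, is that the hypothesis $b_{\lambda/\nu}>0$ is used precisely to guarantee non-emptiness of $\mathcal{P}_{NT}$ so that the $\sdefect(Y)$ bound applies.
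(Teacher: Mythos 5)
Your proof is correct and follows essentially the same route as the paper's: you take $\mathcal{P}$ to be the maximal non-`top' pieces of $\partial Y_+$, identify $\sum_P\chi(P)=D_{\tp}$ via Lemma~\ref{lem:D-diagrammatic}, carry out the same accounting of non-top labels on hanging half-edges and exposed sides to obtain the $b_{\bullet}$ terms, and then invoke the definition of $\sdefect(Y)$. The only cosmetic difference is that the paper states $D_{\tp}=|\mathcal{P}_1|$ directly rather than counting switch endpoints, but these are the same observation.
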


\begin{proof}
We define a collection of pieces $\mathcal{P}$ of $\partial Y_{+}$
according to the `top' labels: pieces are contiguous segments of $\partial Y_{+}$
(hanging half-edges or exposed sides of full edges) which are \textbf{not
}labeled `top'. This collection is non-empty if and only if $b_{\lambda/\nu}>0$,
which holds by assumption. Let $\mathcal{P}_{0}$ denote the collection
of such pieces that are circles, $\mathcal{P}_{1}$ denote the collection
of such pieces that are paths, and $\mathcal{P}=\mathcal{P}_{0}\sqcup\mathcal{P}_{1}$.
It follows from Lemma \ref{lem:D-diagrammatic} that
\[
D_{\mathrm{top}}\eqdf D_{\tp}\left(\left\{ \sigma_{f}^{\pm},\tau_{f}^{\pm},r_{f}^{\pm},s_{f},t_{f}\right\} \right)=\left|\mathcal{P}_{1}\right|.
\]
Now $b_{\lambda/\nu}=\frac{1}{8}\sum_{P\in\mathcal{P}}\left[\e\left(P\right)+\he\left(P\right)\right]$
because for every $f\in\left\{ a,b,c,d\right\} $, every square outside
the top row of $\lambda/\nu$ corresponds either to two hanging half-edges
(in case this square belongs to $\mu_{f}/\nu$), or to two exposed-sides-of-full-edges
(in case this square belongs to $\lambda/\mu_{f}$). Similarly, $b_{\mu_{a}/\nu}+b_{\mu_{b}/\nu}+b_{\mu_{c}/\nu}+b_{\mu_{d}/\nu}=\frac{1}{2}\sum_{P\in\mathcal{P}}\he\left(P\right)$.
Thus,

\begin{align*}
b_{\lambda/\nu}-b_{\mu_{a}/\nu}-b_{\mu_{b}/\nu}-b_{\mu_{c}/\nu}-b_{\mu_{d}/\nu}-D_{\mathrm{top}} & =\sum_{P\in\mathcal{\mathcal{P}}}\left(\frac{1}{8}\left(\e(P)+\he(P)\right)-\frac{1}{2}\he(P)\right)-\sum_{P\in\mathcal{P}_{1}}1\\
 & =\sum_{P\in\mathcal{P}_{0}}\frac{1}{8}\defect(P)+\sum_{P\in\mathcal{P}_{1}}\frac{1}{8}\left(\defect(P)-8\right)\\
 & =\sum_{P\in\mathcal{P}}\frac{1}{8}\left(\defect(P)-8\chi(P)\right)\leq\frac{1}{8}\sdefect(Y).
\end{align*}
\end{proof}
\begin{prop}
\label{prop:geometric-D-bounds}Suppose we are given $\nu,\mu_{f},\lambda,r_{f}^{-},r_{f}^{+},s_{f},t_{f}$
as in (\ref{eq:data}). 
\begin{enumerate}
\item If $Y$ is boundary reduced, 
\begin{equation}
D_{\tp}\left(\left\{ \sigma_{f}^{\pm},\tau_{f}^{\pm},r_{f}^{\pm},s_{f},t_{f}\right\} \right)\geq b_{\lambda/\nu}-b_{\mu_{a}/\nu}-b_{\mu_{b}/\nu}-b_{\mu_{c}/\nu}-b_{\mu_{d}/\nu}.\label{eq:D-bound-simple}
\end{equation}
\item If $Y$ is strongly boundary reduced, then (\ref{eq:D-bound-simple})
becomes an equality if and only if 
\begin{equation}
D_{\tp}\left(\left\{ \sigma_{f}^{\pm},\tau_{f}^{\pm},r_{f}^{\pm},s_{f},t_{f}\right\} \right)=b_{\lambda/\nu}=b_{\mu_{a}/\nu}=b_{\mu_{b}/\nu}=b_{\mu_{c}/\nu}=b_{\mu_{d}/\nu}=0.\label{eq:D-strong-equality}
\end{equation}
Otherwise, 
\begin{equation}
D_{\tp}\left(\left\{ \sigma_{f}^{\pm},\tau_{f}^{\pm},r_{f}^{\pm},s_{f},t_{f}\right\} \right)\geq b_{\lambda/\nu}-b_{\mu_{a}/\nu}-b_{\mu_{b}/\nu}-b_{\mu_{c}/\nu}-b_{\mu_{d}/\nu}+1.\label{eq:D-bound-plus-one}
\end{equation}
\end{enumerate}
\end{prop}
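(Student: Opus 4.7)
The proposition is essentially a corollary of Proposition~\ref{prop:b-D-defect-ineq} combined with Lemma~\ref{lem:max-defect of BR and SBR}, with one subtle integrality argument to upgrade the SBR bound from a non-integer to $+1$.

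My plan is as follows. First, I would handle the degenerate case $b_{\lambda/\nu}=0$ separately. Since $\nu \subset \mu_f \subset \lambda$ for every $f$, the inclusions $\mu_f/\nu \subset \lambda/\nu$ of skew shapes give $0 \leq b_{\mu_f/\nu} \leq b_{\lambda/\nu}$, so $b_{\lambda/\nu}=0$ forces $b_{\mu_f/\nu}=0$ for every $f$. In this case the inequality in part~(1) reduces to $D_{\tp} \geq 0$, which is automatic, and moreover equality in part~(2) is equivalent to $D_{\tp}=0$, matching (\ref{eq:D-strong-equality}).

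For the main case $b_{\lambda/\nu}>0$, Proposition~\ref{prop:b-D-defect-ineq} applies directly and gives
\[
b_{\lambda/\nu}-b_{\mu_{a}/\nu}-b_{\mu_{b}/\nu}-b_{\mu_{c}/\nu}-b_{\mu_{d}/\nu}-D_{\tp}\left(\left\{ \sigma_{f}^{\pm},\tau_{f}^{\pm},r_{f}^{\pm},s_{f},t_{f}\right\} \right)\leq\tfrac{1}{8}\sdefect(Y).
\]
If $Y$ is boundary reduced, Lemma~\ref{lem:max-defect of BR and SBR} says $\sdefect(Y)\leq 0$, which immediately yields (\ref{eq:D-bound-simple}) and proves part~(1). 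If $Y$ is strongly boundary reduced, the same lemma gives $\sdefect(Y)\leq -2$, so
\[
D_{\tp}\left(\left\{ \sigma_{f}^{\pm},\tau_{f}^{\pm},r_{f}^{\pm},s_{f},t_{f}\right\} \right)\;\geq\;b_{\lambda/\nu}-\sum_{f}b_{\mu_{f}/\nu}+\tfrac{1}{4}.
\]
Now the key integrality observation: both sides of this inequality are integers, since $D_{\tp}$ and each $b_{\bullet}$ count boxes. Therefore the bound can be improved to $D_{\tp}\geq b_{\lambda/\nu}-\sum_{f}b_{\mu_{f}/\nu}+1$, which is (\ref{eq:D-bound-plus-one}). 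This rules out equality in (\ref{eq:D-bound-simple}) when $b_{\lambda/\nu}>0$, so in the SBR case equality forces $b_{\lambda/\nu}=0$, reducing to the degenerate situation already handled.

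Finally, for part~(3), Proposition~\ref{prop:b-D-defect-ineq} is stated together with its dual involving $D_{\lf}$ and $\check{b}_{\bullet}$ (the same inequality with the ``left'' labeling replacing the ``top'' labeling), and the argument above is symmetric with respect to this switch, so the identical chain of reasoning yields the analogous bounds for $D_{\lf}$. The only point that needs verification is that Lemma~\ref{lem:max-defect of BR and SBR} is a purely geometric statement about $\partial Y$ and does not depend on whether we are analyzing top rows or left columns, so the same defect bounds apply in the dual setting. I do not foresee any serious obstacle here; the whole argument is a packaging of the earlier results, with the one nontrivial ingredient being the integrality upgrade from $+\tfrac{1}{4}$ to $+1$ in the SBR case.
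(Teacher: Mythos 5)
Your proof is correct and follows essentially the same route as the paper: separate out the degenerate case $b_{\lambda/\nu}=0$, invoke Proposition \ref{prop:b-D-defect-ineq} together with the bounds (\ref{eq:sup-defect-br}) and (\ref{eq:sup-defect-sbr}) from Lemma \ref{lem:max-defect of BR and SBR} when $b_{\lambda/\nu}>0$, and use integrality of $D_{\tp}$ and the $b_{\bullet}$'s to promote the $\tfrac{1}{8}\cdot(-2)=-\tfrac14$ slack to a full $+1$. The only cosmetic difference is that the paper notes directly that $b_{\lambda/\nu}=0$ forces $D_{\tp}=0$ (not just $D_{\tp}\geq 0$), whereas you recover the "iff" in Part (2) without this observation by letting equality itself force $D_{\tp}=0$; both are valid.
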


\begin{proof}
Note that for any $Y$, if $b_{\lambda/\nu}=0$ then $D_{\tp}(\{\sigma_{f}^{\pm},\tau_{f}^{\pm},r_{f}^{\pm},s_{f},t_{f}\})=b_{\mu_{a}/\nu}=b_{\mu_{b}/\nu}=b_{\mu_{c}/\nu}=b_{\mu_{d}/\nu}=0$.
Otherwise, Proposition \ref{prop:b-D-defect-ineq} applies, and we
obtain the statement of the proposition by combining Proposition \ref{prop:b-D-defect-ineq}
with the inequalities (\ref{eq:sup-defect-br}) and (\ref{eq:sup-defect-sbr}).
To obtain (\ref{eq:D-bound-plus-one}) from (\ref{eq:sup-defect-sbr})
(instead of a bound featuring $\frac{1}{4}$), one uses that all $b_{\bullet}$
quantities and $D_{\tp}(\{\sigma_{f}^{\pm},\tau_{f}^{\pm},r_{f}^{\pm},s_{f},t_{f}\})$
are integer valued.
\end{proof}
Proposition \ref{prop:geometric-D-bounds} together with Lemma \ref{lem:D-diagrammatic}
have the following important consequence for the quantities $\Upsilon_{n}(\{\sigma_{f}^{\pm},\tau_{f}^{\pm}\},\nu,\{\mu_{f}\},\lambda)$.
\begin{prop}
\label{prop:Upsilon-non-trivial-bound}Suppose that $\nu,\{\mu_{f}\},\lambda$
are as in (\ref{eq:data}), and that $\lambda_{1}+\nu_{1}>n-\f+(\v-\f)^{2}$. 
\begin{enumerate}
\item If $Y$ is boundary reduced, then
\begin{align}
\left|\Upsilon_{n}\left(\left\{ \sigma_{f}^{\pm},\tau_{f}^{\pm}\right\} ,\nu,\{\mu_{f}\},\lambda\right)\right| & \leq\left((\v-\f)!\right)^{8}\left(\frac{(\v-\f)^{2}}{\nu_{1}+\lambda_{1}-(n-\f)}\right)^{b_{\lambda/\nu}-b_{\mu_{a}/\nu}-b_{\mu_{b}/\nu}-b_{\mu_{c}/\nu}-b_{\mu_{d}/\nu}}.\label{eq:Upsilon-bound-a}
\end{align}
\item If $Y$ is strongly boundary reduced and $b_{\text{\ensuremath{\lambda/}}\nu}>0$,
then
\begin{equation}
\left|\Upsilon_{n}\left(\left\{ \sigma_{f}^{\pm},\tau_{f}^{\pm}\right\} ,\nu,\{\mu_{f}\},\lambda\right)\right|\leq\left((\v-\f)!\right)^{8}\left(\frac{(\v-\f)^{2}}{\nu_{1}+\lambda_{1}-(n-\f)}\right)^{1+b_{\lambda/\nu}-b_{\mu_{a}/\nu}-b_{\mu_{b}/\nu}-b_{\mu_{c}/\nu}-b_{\mu_{d}/\nu}}.\label{eq:Upsilon-bound-b}
\end{equation}
\item For any $Y$, if $b_{\lambda/\nu}=0$, then
\[
\Upsilon_{n}\left(\left\{ \sigma_{f}^{\pm},\tau_{f}^{\pm}\right\} ,\nu,\{\mu_{f}\},\lambda\right)=1.
\]
\end{enumerate}
\end{prop}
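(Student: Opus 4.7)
My plan is to combine three earlier results: the enumeration of summands as in Lemma~\ref{lem:coarse-bound-on-Upsilon}, the matrix-coefficient bound of Lemma~\ref{lem:product-of-matrix-coefs-bounded-using-D}, and the geometric lower bound on $D_{\tp}$ (resp.~$D_{\lf}$) in Proposition~\ref{prop:geometric-D-bounds}.

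I would begin by disposing of part~(3), which is the degenerate case. If $b_{\lambda/\nu}=0$, then $\lambda/\nu$ lies entirely in the first row, and because $\nu\subset\mu_{f}\subset\lambda$, the same is true of every $\mu_{f}/\nu$ and $\lambda/\mu_{f}$. Hence each of the relevant skew modules is one-dimensional and, by the degenerate form (\ref{eq:cox-gen-degen}) of Young's orthogonal formula, every Coxeter generator of $S'_{n-\v}$ acts as $+1$ on them. Consequently each of the eight matrix coefficients in (\ref{eq:M-def}) equals $\langle w,w\rangle=1$, there is a unique term in (\ref{eq:Upsilon-def}), and $\Upsilon_{n}=1$. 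The dual case $\check{b}_{\lambda/\nu}=0$ is completely symmetric, with columns in place of rows.

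For parts~(1) and~(2), note first that the hypothesis $\lambda_{1}+\nu_{1}>n-\f+(\v-\f)^{2}$ makes the base
\[
\beta\eqdf\frac{(\v-\f)^{2}}{\nu_{1}+\lambda_{1}-(n-\f)}
\]
strictly less than $1$, so enlarging the exponent can only decrease $\beta^{(\cdot)}$. Applying Lemma~\ref{lem:product-of-matrix-coefs-bounded-using-D}(1) to each summand of (\ref{eq:Upsilon-def}) gives $|\M|\le\beta^{D_{\tp}}$. Plug in Proposition~\ref{prop:geometric-D-bounds}: when $Y$ is boundary reduced, (\ref{eq:D-bound-simple}) lets us replace $D_{\tp}$ in the exponent by the smaller quantity $b_{\lambda/\nu}-\sum_{f}b_{\mu_{f}/\nu}$; when $Y$ is strongly boundary reduced and $b_{\lambda/\nu}>0$, the equality condition (\ref{eq:D-strong-equality}) fails (its first requirement is $b_{\lambda/\nu}=0$), so (\ref{eq:D-bound-plus-one}) applies and we gain an extra $+1$ in the exponent. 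Summing over the index set of (\ref{eq:Upsilon-def}) and bounding the number of summands by $(d_{\lambda/\nu})^{8}\le((\v-\f)!)^{8}$ as in the proof of Lemma~\ref{lem:coarse-bound-on-Upsilon} yields (\ref{eq:Upsilon-bound-a}) and (\ref{eq:Upsilon-bound-b}).

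The dual bounds on $|\Upsilon_{n}(\ldots,\check{\nu},\{\check{\mu_{f}}\},\check{\lambda})|$ follow by the same argument after exchanging rows with columns: Lemma~\ref{lem:product-of-matrix-coefs-bounded-using-D}(2) provides a matrix-coefficient bound using $D_{\lf}$ and the first-column lengths, and the dual half of Proposition~\ref{prop:geometric-D-bounds} lower-bounds $D_{\lf}$ by $\check{b}_{\lambda/\nu}-\sum_{f}\check{b}_{\mu_{f}/\nu}$ (with the usual $+1$ improvement in the strongly boundary reduced case when $\check{b}_{\lambda/\nu}>0$). I do not anticipate any serious obstacle: the only care needed is to confirm that $\beta<1$ under the stated hypothesis (so that passing to a lower bound for the exponent is legitimate), that the equality case of Proposition~\ref{prop:geometric-D-bounds}(2) is correctly excluded in the SBR case, and that the bound on each $|\M|$ is uniform over the index set so that a crude count by $((\v-\f)!)^{8}$ suffices.
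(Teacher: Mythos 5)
Your handling of Parts~1 and~2, including the dual cases, follows the paper's argument: bound each summand via Lemma~\ref{lem:product-of-matrix-coefs-bounded-using-D}, lower-bound the exponent via Proposition~\ref{prop:geometric-D-bounds}, note that the base is $<1$ so larger exponents give smaller upper bounds, and count summands by $(d_{\lambda/\nu})^{8}\le((\v-\f)!)^{8}$. There is a small notational slip in the dual case: when you apply the dual half of Proposition~\ref{prop:geometric-D-bounds} to the conjugated data $(\check{\nu},\{\check{\mu_{f}}\},\check{\lambda})$, the lower bound you obtain on $D_{\lf}$ is in terms of $\check{b}_{\check{\lambda}/\check{\nu}}=b_{\lambda/\nu}$ (and the SBR hypothesis becomes $\check{b}_{\check{\lambda}/\check{\nu}}=b_{\lambda/\nu}>0$), so the exponent agrees with the one in~(\ref{eq:Upsilon-bound-a}) and~(\ref{eq:Upsilon-bound-b}); writing $\check{b}_{\lambda/\nu}$ reads instead as a condition on the original diagram, which is not what is needed. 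The underlying idea is correct, but the bookkeeping between the conjugated and unconjugated data needs to be made explicit.

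The genuine gap is in Part~3. The claim is that, under the single hypothesis $b_{\lambda/\nu}=0$, \emph{both} $\Upsilon_{n}(\{\sigma_{f}^{\pm},\tau_{f}^{\pm}\},\nu,\{\mu_{f}\},\lambda)=1$ and $\Upsilon_{n}(\{\sigma_{f}^{\pm},\tau_{f}^{\pm}\},\check{\nu},\{\check{\mu_{f}}\},\check{\lambda})=1$. You prove the first correctly, but the dual claim is not ``completely symmetric.'' When $b_{\lambda/\nu}=0$, the conjugate skew diagram $\check{\lambda}/\check{\nu}$ lies in a single \emph{column}, so $V^{\check{\lambda}/\check{\nu}}$ is the \emph{sign} module of the relevant copy of $S_{\v-\f}$ rather than the trivial one: by the second line of (\ref{eq:cox-gen-degen}), Coxeter generators act by $-1$. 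The eight matrix coefficients in (\ref{eq:M-def}) are therefore not each equal to $1$; each equals the sign of the corresponding permutation $\sigma_{b}^{-}(\sigma_{a}^{+})^{-1}$, $\tau_{a}^{+}(\sigma_{b}^{+})^{-1}$, etc. One must still check that the product of these eight signs is $+1$. The paper does this by grouping the signs by letter and invoking property \textbf{P2}, namely $(\sigma_{f}^{+})^{-1}\sigma_{f}^{-}=(\tau_{f}^{+})^{-1}\tau_{f}^{-}=g_{f}^{0}$, which forces the signs to cancel in pairs. This verification does not follow from a row/column symmetry and cannot be omitted.
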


\begin{proof}
\emph{Part 1. }Suppose that $Y$ is boundary reduced. As argued in
the proof of \ref{lem:coarse-bound-on-Upsilon}, there are at most
$(d_{\lambda/\nu})^{8}\leq((\v-\f)!)^{8}$ summands in the definition
(\ref{eq:Upsilon-def}) of $\Upsilon_{n}(\{\sigma_{f}^{\pm},\tau_{f}^{\pm}\},\nu,\{\mu_{f}\},\lambda)$.
Each summand is some $\M(\{\sigma_{f}^{\pm},\tau_{f}^{\pm},r_{f}^{\pm},s_{f},t_{f}\})$,
so by Lemma \ref{lem:product-of-matrix-coefs-bounded-using-D} and
Proposition \ref{prop:geometric-D-bounds} Part 1, we get that each
summand of (\ref{eq:Upsilon-def}) has absolute value
\[
\leq\left(\frac{(\v-\f)^{2}}{\nu_{1}+\lambda_{1}-(n-\f)}\right)^{b_{\lambda/\nu}-b_{\mu_{a}/\nu}-b_{\mu_{b}/\nu}-b_{\mu_{c}/\nu}-b_{\mu_{d}/\nu}}.
\]
This proves (\ref{eq:Upsilon-bound-a}). 

\emph{Part 2. }Suppose now that $Y$ is strongly boundary reduced
and that $b_{\lambda/\nu}>0$. This time, Lemma \ref{lem:product-of-matrix-coefs-bounded-using-D}
and Proposition \ref{prop:geometric-D-bounds} give that each summand
of $\Upsilon_{n}(\{\sigma_{f}^{\pm},\tau_{f}^{\pm}\},\nu,\{\mu_{f}\},\lambda)$
in (\ref{eq:Upsilon-def}) has absolute value
\[
\leq\left(\frac{(\v-\f)^{2}}{\nu_{1}+\lambda_{1}-(n-\f)}\right)^{1+b_{\lambda/\nu}-b_{\mu_{a}/\nu}-b_{\mu_{b}/\nu}-b_{\mu_{c}/\nu}-b_{\mu_{d}/\nu}}.
\]
As in Part 1, there are $\leq((\v-\f)!)^{8}$ summands of (\ref{eq:Upsilon-def}),
so this proves (\ref{eq:Upsilon-bound-b}). 

\emph{Part 3. }Suppose that $b_{\lambda/\nu}=0$. Then there is only
one possible choice for the tableaux $r_{f}^{-},r_{f}^{+},s_{f},t_{f}$
in (\ref{eq:Upsilon-def}). Moreover, $V^{\lambda/\nu}$ is the trivial
module for the relevant copy of $S_{\v-\f}$, hence the product of
matrix coefficients appearing in (\ref{eq:M-def}) is equal to $1$,
and $\Upsilon_{n}(\{\sigma_{f}^{\pm},\tau_{f}^{\pm}\},\nu,\{\mu_{f}\},\lambda)=1$. 
\end{proof}

\subsection{Stronger bounds for matrix coefficients\label{subsec:Stronger-bounds-for-matrix-coefficients}}

In this section we give a strengthening of Proposition \ref{prop:b-D-defect-ineq}
that is used in a sequel to this paper \cite{magee2020random}. A
reader who is only interested in the current paper may skip this short
$\S\S$\ref{subsec:Stronger-bounds-for-matrix-coefficients}.

\begin{prop}
\label{prop:b-D-defect-ineq-stronger}Suppose we are given Young diagrams
$\nu\vdash n-\v$, $\lambda\vdash n-\f$, and $\mu_{f}$ such that
\[
\nu\subset\mu_{f}\subset_{\e_{f}-\f}\lambda,\quad\forall f\in\{a,b,c,d\},
\]
and tableaux $r_{f}^{+},r_{f}^{-}\in\Tab(\mu_{f}/\nu)$ and $s_{f},t_{f}\in\Tab(\lambda/\mu_{f})$.
Fix $\varepsilon\ge0$ and suppose in addition that for every piece
$P$ of $\partial Y$ we have 
\[
\defect(P)-4\chi(P)\leq-\varepsilon\left(\e(P)+\he(P)\right).
\]
Then
\[
b_{\lambda/\nu}-b_{\mu_{a}/\nu}-b_{\mu_{b}/\nu}-b_{\mu_{c}/\nu}-b_{\mu_{d}/\nu}-D_{\tp}\left(\left\{ \sigma_{f}^{\pm},\tau_{f}^{\pm},r_{f}^{\pm},s_{f},t_{f}\right\} \right)\leq-\varepsilon b_{\lambda/\nu}.
\]
\end{prop}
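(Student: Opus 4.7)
The plan is to mimic the proof of Proposition \ref{prop:b-D-defect-ineq} almost verbatim, but instead of summing into the global invariant $\sdefect(Y)$ at the end, I would distribute the bound across individual pieces and apply the local hypothesis $\defect(P)-4\chi(P)\le-\varepsilon(\e(P)+\he(P))$ to each piece.

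First, assume $b_{\lambda/\nu}>0$ (otherwise both sides equal $0$ and nothing to prove, since $b_{\lambda/\nu}=0$ forces all the $b_{\mu_f/\nu}$ and $D_{\tp}$ to vanish as in Proposition \ref{prop:Upsilon-non-trivial-bound} Part 3). Construct the same piece collection $\mathcal{P}$ of $\partial Y_+$ induced by the `top' labeling as in the proof of Proposition \ref{prop:b-D-defect-ineq}: maximal contiguous segments of exposed full-edge sides and hanging half-edges that are \emph{not} labeled `top'. Split $\mathcal{P}=\mathcal{P}_0\sqcup\mathcal{P}_1$ into circular and path-like pieces. By Lemma \ref{lem:D-diagrammatic}, $D_{\tp}\left(\{\sigma_f^\pm,\tau_f^\pm,r_f^\pm,s_f,t_f\}\right)=\lvert\mathcal{P}_1\rvert=\sum_{P\in\mathcal{P}}\chi(P)$, and the same counting as in Proposition \ref{prop:b-D-defect-ineq} yields
\[
b_{\lambda/\nu}=\tfrac{1}{8}\sum_{P\in\mathcal{P}}\bigl(\e(P)+\he(P)\bigr),\qquad \sum_{f}b_{\mu_f/\nu}=\tfrac{1}{2}\sum_{P\in\mathcal{P}}\he(P).
\]

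Combining these identities exactly as before gives
\[
b_{\lambda/\nu}-\sum_{f}b_{\mu_f/\nu}-D_{\tp}\left(\{\sigma_f^\pm,\tau_f^\pm,r_f^\pm,s_f,t_f\}\right)=\tfrac{1}{8}\sum_{P\in\mathcal{P}}\bigl(\defect(P)-8\chi(P)\bigr).
\]
Since every piece $P$ has $\chi(P)\in\{0,1\}\ge0$, one has $-8\chi(P)\le-4\chi(P)$, so the hypothesis of the proposition gives, piece by piece,
\[
\defect(P)-8\chi(P)\le\defect(P)-4\chi(P)\le-\varepsilon\bigl(\e(P)+\he(P)\bigr).
\]
Summing over $P\in\mathcal{P}$ and dividing by $8$ yields
\[
b_{\lambda/\nu}-\sum_{f}b_{\mu_f/\nu}-D_{\tp}\left(\{\sigma_f^\pm,\tau_f^\pm,r_f^\pm,s_f,t_f\}\right)\le-\tfrac{\varepsilon}{8}\sum_{P\in\mathcal{P}}\bigl(\e(P)+\he(P)\bigr)=-\varepsilon\,b_{\lambda/\nu},
\]
as required. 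The dual statement for $D_{\lf}$ and $\check b_{\bullet}$ follows from the same argument, replacing the `top' labeling by the `left' labeling throughout and using the second half of Lemma \ref{lem:D-diagrammatic}.

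There is no serious obstacle: this is a direct refinement of Proposition \ref{prop:b-D-defect-ineq} in which the only modification is to avoid bounding the sum by $\sdefect(Y)$ and instead retain the per-piece inequality, which by hypothesis dominates a multiple of $\e(P)+\he(P)$. The only mild technical point is the comparison $-8\chi(P)\le-4\chi(P)$, but this is immediate from $\chi(P)\ge0$.
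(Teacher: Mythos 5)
Your proof is correct and follows essentially the same argument as the paper: same piece collection $\mathcal{P}$ from Proposition \ref{prop:b-D-defect-ineq}, same identity $b_{\lambda/\nu}-\sum_f b_{\mu_f/\nu}-D_{\tp}=\tfrac{1}{8}\sum_{P\in\mathcal{P}}(\defect(P)-8\chi(P))$, same use of $-8\chi(P)\le -4\chi(P)$ and of the per-piece hypothesis. Your explicit handling of the $b_{\lambda/\nu}=0$ case is a small refinement the paper's proof leaves implicit (since it cites the construction from Proposition \ref{prop:b-D-defect-ineq}, which assumes $b_{\lambda/\nu}>0$), but otherwise the two proofs coincide.
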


\begin{proof}
We follow the same construction of a piece collection $\mathcal{P}$
as in the proof of Proposition \ref{prop:b-D-defect-ineq}. This leads
to
\begin{eqnarray*}
b_{\lambda/\nu}-b_{\mu_{a}/\nu}-b_{\mu_{b}/\nu}-b_{\mu_{c}/\nu}-b_{\mu_{d}/\nu}-D_{\mathrm{top}} & = & \frac{1}{8}\sum_{P\in\mathcal{P}}\defect(P)-8\chi(P)\le\frac{1}{8}\sum_{P\in\mathcal{P}}\defect(P)-4\chi(P)\\
 & \le & -\frac{\varepsilon}{8}\sum_{P\in\mathcal{P}}\left(\e(P)+\he(P)\right)=-\varepsilon b_{\lambda/\nu},
\end{eqnarray*}
as required. 
\end{proof}

\subsection{Approximating $\Xi_{n}(Y)$ by Laurent polynomials\label{subsec:Approximating Xi by Laurent}}

In this section we keep all the notations and assumptions of $\S\S$\ref{subsec:Integrating-over-double-cosets}.
We want to show that we can replace the summation over $\nu,\mu_{f}$,
and $\lambda$ given the definition of $\Xi_{n}\left(Y\right)$ in
(\ref{eq:Xi-def}) by a sum of finite size, independent of $n$, at
the cost of a controllable error term. To state this precisely, recalling
the definition of $\Lambda(n,b)$ from $\S\S$\ref{subsec:Families-of-YD-and-zeta},
and letting $b\in\N$, we introduce the quantity
\begin{equation}
\Xi_{n}^{(b)}\left(Y\right)\eqdf\sum_{\substack{\nu\subset_{\v-\f}\lambda\vdash\,n-\f\\
\nu\notin\Lambda(n-\v,b)
}
}d_{\lambda}d_{\nu}\sum_{\nu\subset\mu_{f}\subset_{\e_{f}-\f}\lambda}\frac{1}{d_{\mu_{a}}d_{\mu_{b}}d_{\mu_{c}}d_{\mu_{d}}}\Upsilon_{n}\left(\left\{ \sigma_{f}^{\pm},\tau_{f}^{\pm}\right\} ,\nu,\{\mu_{f}\},\lambda\right).\label{eq:xi-b-def}
\end{equation}
In this summation, we restrict to $\nu$ that have less than $b$
boxes either outside the first row or outside the first column. Note
that whereas $\Xi_{n}\left(Y\right)$ does not depend on any of the
choices of $g^{0}$ and numberings we made in Sections \ref{subsec:Tiled-surfaces-and-double-cosets}
and \ref{subsec:Construction-of-auxiliary-numberings} (see Remark
\ref{rem:Xi depends only on Y}), $\Xi_{n}^{\left(b\right)}\left(Y\right)$
may depend on these choices.
\begin{lem}
\label{lem:truncated Xi}For a fixed tiled surface $Y$ and $b\in\N$,
for any vertex-ordering $\J$ of $Y$ as above, we have
\[
\Xi_{n}(Y)=\Xi_{n}^{(b)}(Y)+O\left(n^{\v-\f-2b}\right)
\]
as $n\to\infty$. The implied constant depends on $b$, $\v$, and
$\f$.
\end{lem}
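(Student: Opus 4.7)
The plan is to bound the difference $\Xi_n(Y) - \Xi_n^{(b)}(Y)$, which from the definitions (\ref{eq:Xi-def}) and (\ref{eq:xi-b-def}) is precisely the part of the sum coming from $\nu \in \Lambda(n-\v, b)$. I plan to combine four ingredients: the trivial bound $|\Upsilon_n(\{\sigma_f^{\pm}, \tau_f^{\pm}\}, \nu, \{\mu_f\}, \lambda)| \le ((\v-\f)!)^8$ from Lemma~\ref{lem:coarse-bound-on-Upsilon}; a branching-rule inequality $d_{\mu_f} \ge d_\nu$ valid for any $\nu \subset \mu_f$; Lemma~\ref{lem:induced-rep-dimension} to carry out the sum over $\lambda$; and Proposition~\ref{prop:Liebeck-Shalev-non-effective} to carry out the sum over $\nu \in \Lambda(n-\v, b)$.

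The branching input is routine: from the standard decomposition $V^{\mu_f}\vert_{S_{|\nu|}} \cong \bigoplus_{\nu' \vdash |\nu|,\, \nu' \subset \mu_f} (V^{\nu'})^{\oplus d_{\mu_f/\nu'}}$ one reads off $d_{\mu_f} \ge d_\nu \cdot d_{\mu_f/\nu} \ge d_\nu$, so that $\prod_f d_{\mu_f} \ge d_\nu^{\,4}$. I will also use the trivial observation that, for fixed $\nu \subset_{\v-\f} \lambda$, the number of $4$-tuples $(\mu_a, \mu_b, \mu_c, \mu_d)$ with $\nu \subset \mu_f \subset \lambda$ is at most $2^{4(\v-\f)}$, since each $\mu_f$ is specified by a subset of the $\v-\f$ cells of the skew diagram $\lambda/\nu$.

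Putting these bounds together reduces the problem to estimating
\[
\sum_{\nu \in \Lambda(n-\v, b)} \frac{1}{d_\nu^{\,3}} \sum_{\substack{\lambda \vdash n-\f \\ \nu \subset \lambda}} d_\lambda.
\]
For the inner sum, since $d_{\lambda/\nu} \ge 1$ whenever $\nu \subset \lambda$, Lemma~\ref{lem:induced-rep-dimension} yields $\sum_\lambda d_\lambda \le \sum_\lambda d_{\lambda/\nu} d_\lambda = \frac{(n-\f)!}{(n-\v)!} d_\nu \ll_{\v,\f} n^{\v-\f} d_\nu$. Substituting back leaves $n^{\v-\f} \sum_{\nu \in \Lambda(n-\v, b)} d_\nu^{-2}$, and Proposition~\ref{prop:Liebeck-Shalev-non-effective} with $s = 2$ finishes with a bound $O_b(n^{-2b})$, giving the desired $O(n^{\v-\f-2b})$. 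I do not anticipate a serious obstacle; the main point is the numerology, namely that the branching inequality absorbs three of the four factors $d_{\mu_f}^{-1}$ into $d_\nu^{-3}$, and the extra $d_\nu$ produced by the $\lambda$-telescoping cancels one more, leaving exactly $d_\nu^{-2}$---precisely the exponent that Liebeck--Shalev converts into the sought $n^{-2b}$ saving.
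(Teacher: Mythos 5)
Your proposal is correct and follows essentially the same route as the paper: the trivial bound on $\Upsilon_n$ from Lemma \ref{lem:coarse-bound-on-Upsilon}, the branching inequality $d_{\mu_f}\ge d_\nu$, Lemma \ref{lem:induced-rep-dimension} to collapse the sum over $\lambda$, and Proposition \ref{prop:Liebeck-Shalev-non-effective} to finish. The only cosmetic difference is that you bound the number of $(\mu_a,\mu_b,\mu_c,\mu_d)$-tuples by $2^{4(\v-\f)}$, whereas the paper uses the injection into $\Tab(\lambda/\nu)$ to get $\bigl((\v-\f)!\bigr)^4$; both are constants depending only on $\v-\f$, so the conclusion is the same.
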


\begin{proof}
Note that $d_{\mu_{f}}\geq d_{\nu}$, and for fixed $\nu$ and $\lambda$
the number of $\mu_{f}$ with $\nu\subset\mu_{f}\subset_{\e_{f}-\f}\lambda$
is $\leq(\v-\f)!$ (there is an injection from the collection of such
$\mu_{f}$ to $\Tab(\lambda/\nu)$ by filling the boxes of $\mu_{f}/\nu$
with $[n-\v+1,n-\e_{f}]$ and the other boxes of $\lambda/\nu$ arbitrarily).
Using these observations together with Lemma \ref{lem:coarse-bound-on-Upsilon}
we obtain
\begin{align*}
\left|\Xi_{n}(Y)-\Xi_{n}^{(b)}(Y)\right| & =\left|\sum_{\substack{\nu\subset_{\v-\f}\lambda\vdash\,n-\f\\
\nu\in\Lambda(n-\v,b)
}
}d_{\lambda}d_{\nu}\sum_{\nu\subset\mu_{f}\subset_{\e_{f}-\f}\lambda}\frac{1}{d_{\mu_{a}}d_{\mu_{b}}d_{\mu_{c}}d_{\mu_{d}}}\Upsilon\left(\left\{ \sigma_{f}^{\pm},\tau_{f}^{\pm}\right\} ,\nu,\{\mu_{f}\},\lambda\right)\right|\\
 & \leq\left((\v-\f)!\right)^{12}\sum_{\substack{\nu\subset_{\v-\f}\lambda\vdash\,n-\f\\
\nu\in\Lambda(n-\v,b)
}
}\frac{d_{\lambda}}{d_{\nu}^{3}}.
\end{align*}
By Lemma \ref{lem:induced-rep-dimension}, for a fixed $\nu\vdash n-\v$,
we have
\[
\sum_{\lambda:\,\nu\subset_{\v-\f}\lambda}d_{\lambda}\leq\sum_{\lambda:\nu\subset_{\v-\f}\lambda}d_{\lambda}d_{\lambda/\nu}=\frac{(n-\f)!}{(n-\v)!}d_{\nu}.
\]
Therefore, by Proposition \ref{prop:Liebeck-Shalev-non-effective},
\begin{align*}
\left|\Xi_{n}(Y)-\Xi_{n}^{(b)}(Y)\right| & \leq\left((\v-\f)!\right)^{12}\frac{(n-\f)!}{(n-\v)!}\sum_{\nu\in\Lambda(n-\v,b)}\frac{1}{d_{\nu}^{2}}\\
 & =O_{b,\v,\f}\left(n^{\v-\f-2b}\right).
\end{align*}
\end{proof}
\begin{prop}
\label{prop:Xi-Laurent}For any $M\in\N$, there is a Laurent polynomial
$\Xi_{M}^{*}(Y)\in\mathbf{Q}\left[t,t^{-1}\right]$ such that as $n\to\infty$
\[
\Xi_{n}(Y)=\Xi_{M}^{*}(Y)[n]+O\left(n^{-M}\right).
\]
\end{prop}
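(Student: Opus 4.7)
The plan is to combine three ingredients already in place: the truncation Lemma \ref{lem:truncated Xi} (which reduces $\Xi_n(Y)$ to the truncated sum $\Xi_n^{(b)}(Y)$ up to $O(n^{\v-\f-2b})$), the polynomiality of dimensions along families (Lemma \ref{lem:dim-polynomial}), and the holomorphy of matrix coefficient sums (Lemma \ref{lem:Upsilon-analytic}). Given $M$, I first choose $b \in \N$ large enough that $\v - \f - 2b < -M$, so by Lemma \ref{lem:truncated Xi},
\[
\Xi_n(Y) = \Xi_n^{(b)}(Y) + O(n^{-M}).
\]
Thus it suffices to approximate $\Xi_n^{(b)}(Y)$ by a Laurent polynomial in $n^{-1}$ up to $O(n^{-M})$.

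Next I split the sum defining $\Xi_n^{(b)}(Y)$ (see (\ref{eq:xi-b-def})) according to whether $\nu$ has fewer than $b$ boxes outside the first row, or fewer than $b$ boxes outside the first column (for $n > 2b$ these cases are disjoint, exactly as in the proof of Proposition \ref{prop:zeta-polynomial}). The two cases are symmetric under transposition, so I focus on the first. If $\nu$ has $<b$ boxes outside its first row, then $\nu_1 \geq n - \v - b + 1$, and the constraints $\nu \subset \mu_f \subset \lambda$ with $|\mu_f| = n - \e_f$ and $|\lambda| = n - \f$ force $\mu_f$ to have at most $\v - \e_f + b - 1$ boxes outside the first row, and $\lambda$ at most $\v - \f + b - 1$ such boxes. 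Consequently, each such tuple $(\nu, \{\mu_f\}, \lambda)$ is of the form $(\nu_0(n-\v), \{\mu_{f,0}(n-\e_f)\}, \lambda_0(n-\f))$ for $(\nu_0, \{\mu_{f,0}\}, \lambda_0)$ drawn from a \emph{finite} set $\mathcal{F}_b$ of tuples of small YDs (independent of $n$), in the family notation of $\S\S$\ref{subsec:Families-of-YD-and-zeta}.

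For each fixed tuple in $\mathcal{F}_b$, I analyze the corresponding summand of $\Xi_n^{(b)}(Y)$ as a function of $n$. Lemma \ref{lem:dim-polynomial} says that $d_{\nu_0(n-\v)}$, $d_{\mu_{f,0}(n-\e_f)}$, and $d_{\lambda_0(n-\f)}$ are polynomials in $n$ with rational coefficients and with zeros confined to a bounded interval of integers. Hence, for all sufficiently large $n$, the ratio
\[
\frac{d_{\lambda_0(n-\f)}\, d_{\nu_0(n-\v)}}{\prod_f d_{\mu_{f,0}(n-\e_f)}}
\]
admits a convergent Laurent expansion in $n^{-1}$ with rational coefficients (finitely many positive powers, followed by a genuine power series). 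By Lemma \ref{lem:Upsilon-analytic}, the factor $\Upsilon_n(\{\sigma_f^\pm, \tau_f^\pm\}, \nu_0(n-\v), \{\mu_{f,0}(n-\e_f)\}, \lambda_0(n-\f))$ equals $\Upsilon^*(\nu_0, \{\mu_{f,0}\}, \lambda_0, n^{-1})$ for a function holomorphic in a disc around $0$ with rational Taylor coefficients. Multiplying, each summand is given, for large $n$, by a convergent Laurent series in $n^{-1}$ with rational coefficients. Truncating at order $n^{-M}$ yields a Laurent polynomial $P_{(\nu_0,\{\mu_{f,0}\},\lambda_0)}(n^{-1}) \in \mathbb{Q}[n, n^{-1}]$ such that the summand equals $P_{(\nu_0,\{\mu_{f,0}\},\lambda_0)}(n^{-1}) + O(n^{-M})$, with the implied constant depending only on the (fixed) tuple.

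Finally, summing these Laurent polynomials over the finite set $\mathcal{F}_b$, together with the dual contribution (using the second part of Lemma \ref{lem:Upsilon-analytic} and replacing YDs by their transposes), produces the desired Laurent polynomial $\Xi_M^*(Y) \in \mathbb{Q}[t, t^{-1}]$ with $\Xi_n(Y) = \Xi_M^*(Y)[n] + O(n^{-M})$. No single step is particularly subtle in isolation; the only mild care needed is in verifying that the constraints on the sizes of $\nu, \mu_f, \lambda$ in the truncated sum propagate to bounded skew-shape data, so that the set $\mathcal{F}_b$ is indeed finite — this is the bookkeeping heart of the argument, and follows from the elementary inclusion-size computation carried out above.
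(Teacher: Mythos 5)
Your proposal is correct and follows essentially the same route as the paper's proof: truncate via Lemma \ref{lem:truncated Xi} with $b$ chosen to make the error $O(n^{-M})$, split the remaining $\nu$ into those close to a single row and their transposes, observe that the bounded-difference constraints confine $(\nu,\{\mu_f\},\lambda)$ to finitely many families, and then invoke Lemmas \ref{lem:dim-polynomial} and \ref{lem:Upsilon-analytic} to express each family's contribution as a meromorphic function of $n^{-1}$ with rational Laurent coefficients. The only cosmetic difference is that you spell out the box-count bookkeeping that the paper summarizes in one sentence; this is a fine elaboration, not a different argument.
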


\begin{proof}
Let $b=\left\lceil \frac{\v-\f+M}{2}\right\rceil $. Then Lemma \ref{lem:truncated Xi}
yields that as $n\to\infty$, 
\begin{equation}
\Xi_{n}\left(Y\right)=\Xi_{n}^{(b)}(Y)+O\left(n^{-M}\right).\label{eq:xi-approx}
\end{equation}
Similarly to the proof of Proposition \ref{prop:zeta-polynomial},
we note that for $n-\v>2b$, the collection of $\nu\vdash n-\v$ such
that $\nu\notin\Lambda(n-\v,b)$ is the disjoint union of $\Lambda_{b_{\lambda}<b}(n-\v)=\left\{ \nu\vdash n-\v\,\middle|\,\nu_{1}>n-\v-b\right\} $
and the dual partitions $\{\check{\nu}\,|\,\nu\in\Lambda_{b_{\lambda}<b}\left(n-\v\right)\}$.
Because each $\mu_{f}$ and $\lambda$ in (\ref{eq:xi-b-def}) extend
$\nu$ by a fixed number of boxes, there is a finite number $\ell$
of tuples of YDs
\[
\left(\nu^{i},\mu_{a}^{i},\mu_{b}^{i},\mu_{c}^{i},\mu_{d}^{i},\lambda{}^{i}\right),\quad1\leq i\leq\ell
\]
with $\nu^{i}\in\Lambda_{b_{\lambda}<b}(2b)$ such that for each $1\leq i\leq\ell$
and $f\in\{a,b,c,d\}$
\[
\nu^{i}\subset_{\v-\e_{f}}\mu_{f}^{i}\subset_{\e_{f}-\f}\lambda^{i}.
\]
Thus (\ref{eq:xi-b-def}) can be rewritten as
\begin{align*}
\Xi_{n}^{(b)}(Y) & =\sum_{i=1}^{\ell}\frac{d_{\lambda^{i}(n-\f)}d_{\nu^{i}(n-\v)}}{d_{\mu_{a}^{i}(n-\e_{a})}d_{\mu_{b}^{i}(n-\e_{b})}d_{\mu_{c}^{i}(n-\e_{c})}d_{\mu_{d}^{i}(n-\e_{d})}}\cdot\left[\Upsilon\left(\left\{ \sigma_{f}^{\pm},\tau_{f}^{\pm}\right\} ,\nu^{i}(n-\v),\left\{ \mu_{f}^{i}(n-\e_{f})\right\} ,\lambda^{i}(n-\f)\right)\right.\\
 & ~~~~~~~~~~~~~~~~~~~~~~~~~~~~~~~~~~~~~~~~~~~~~~~~~~~~~\left.+\Upsilon\left(\left\{ \sigma_{f}^{\pm},\tau_{f}^{\pm}\right\} ,\check{\nu^{i}(n-\v)},\left\{ \check{\mu_{f}^{i}(n-\e_{f})}\right\} ,\check{\lambda^{i}(n-\f)}\right)\right]\\
 & =2\sum_{i=1}^{\ell}\frac{d_{\lambda^{i}(n-\f)}d_{\nu^{i}(n-\v)}}{d_{\mu_{a}^{i}(n-\e_{a})}d_{\mu_{b}^{i}(n-\e_{b})}d_{\mu_{c}^{i}(n-\e_{c})}d_{\mu_{d}^{i}(n-\e_{d})}}\cdot\Upsilon\left(\left\{ \sigma_{f}^{\pm},\tau_{f}^{\pm}\right\} ,\nu^{i}(n-\v),\left\{ \mu_{f}^{i}(n-\e_{f})\right\} ,\lambda^{i}(n-\f)\right)
\end{align*}
where the last line used Lemma \ref{lem:upsilon-sym}. For each $i$,
the ratio of dimensions above agrees with a rational function $\mathcal{Q}_{i}(n)\in\mathbf{Q}(n)$
of $n$ (at least when $n-\v\geq2b$) by Lemma \ref{lem:dim-polynomial}.
Combining this with Lemma \ref{lem:Upsilon-analytic} gives that 
\[
\Xi_{n}^{(b)}(Y)=2\sum_{i=1}^{\ell}\mathcal{Q}_{i}(n)\Upsilon^{*}\left(\nu^{i},\left\{ \mu_{f}^{i}\right\} ,\lambda^{i},n^{-1}\right)
\]
agrees with $F(n^{-1})$, where $F$ is a function of a complex variable
$z$ that is meromorphic in an open disc with center $0$, and with
coefficients of its Laurent series in $\mathbf{Q}$. Hence $F(n^{-1})$
itself can be approximated to order $O(n^{-M})$ as $n\to\infty$
by a Laurent polynomial in $n$ with coefficients in $\mathbf{Q}$.
\end{proof}

\subsection{Estimating $\Xi_{n}(Y)$\label{subsec:Estimating-Xi}}

In this $\S\S$\ref{subsec:Estimating-Xi} we give estimates for $\Xi_{n}(Y)$
for fixed $Y$ which is boundary reduced or strongly boundary reduced.
\begin{prop}
\label{prop:xi-bound-for-boundary reduced}If $Y$ is a boundary reduced
tiled surface then as $n\to\infty$,
\[
\Xi_{n}(Y)=O_{Y}(1).
\]
\end{prop}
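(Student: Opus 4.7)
The plan is to combine the truncation of Lemma~\ref{lem:truncated Xi}, the matrix-coefficient bound of Proposition~\ref{prop:Upsilon-non-trivial-bound}, and the elementary dimension estimates of $\S$\ref{subsec:Bounds-for-the-dimensions-of-irreps}. First I would fix any $b\in\N$ with $2b\geq\v-\f$ (e.g.\ $b=\v-\f$) and apply Lemma~\ref{lem:truncated Xi} to reduce to showing $\Xi_n^{(b)}(Y)=O_Y(1)$. The sum defining $\Xi_n^{(b)}(Y)$ ranges over $\nu\vdash n-\v$ with $b_\nu<b$ or $\check b_\nu<b$; by the change of variables $(\nu,\{\mu_f\},\lambda)\mapsto(\check\nu,\{\check\mu_f\},\check\lambda)$, which preserves dimensions (as $d_\bullet=d_{\check\bullet}$) and the inclusions $\nu\subset\mu_f\subset\lambda$, and is tailored to the dual half of Proposition~\ref{prop:Upsilon-non-trivial-bound} Part~1, the case $\check b_\nu<b$ reduces to the case $b_\nu<b$, so I will only treat the latter.

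Fix $\nu$ with $b_\nu<b$. Then $\nu_1>n-\v-b$, and since any admissible $\lambda\supset\nu$ satisfies $\lambda_1\geq\nu_1$, the hypothesis $\nu_1+\lambda_1>(n-\f)+(\v-\f)^2$ of Proposition~\ref{prop:Upsilon-non-trivial-bound} Part~1 holds for $n$ large enough in terms of $b,\v,\f$. Set $E\eqdf b_{\lambda/\nu}-\sum_f b_{\mu_f/\nu}$. Proposition~\ref{prop:Upsilon-non-trivial-bound} Part~1 (combined with the trivial bound of Lemma~\ref{lem:coarse-bound-on-Upsilon} whenever $E<0$) yields
\[
\left|\Upsilon_n\bigl(\{\sigma_f^\pm,\tau_f^\pm\},\nu,\{\mu_f\},\lambda\bigr)\right|\leq C_Y\,n^{-\max(0,E)}.
\]
At the same time, writing $b_\lambda=b_{\lambda/\nu}+b_\nu$ and $b_{\mu_f}=b_{\mu_f/\nu}+b_\nu$ (valid because $\nu\subset\mu_f\subset\lambda$), Lemma~\ref{lem:simple-lemma-on-dimensions} (upper bound on $d_\lambda,d_\nu$, lower bound on $d_{\mu_f}$) supplies
\[
\frac{d_\lambda d_\nu}{\prod_f d_{\mu_f}}\leq C'_Y\,n^{b_\lambda+b_\nu-\sum_f b_{\mu_f}}=C'_Y\,n^{E-2b_\nu}.
\]
The two exponents combine to give each summand size $C''_Y\,n^{-2b_\nu}$, uniformly in the sign of $E$.

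Finally I would count terms. For each fixed value of $b_\nu\in\{0,1,\dots,b-1\}$ the number of $\nu$ is at most $p(b_\nu)$, and for each such $\nu$ the number of admissible tuples $(\lambda,\{\mu_f\})$ is bounded uniformly in $n$ by a constant depending only on $b,\v,\f$ (since the portions of these diagrams outside the first row are partitions of total size at most $b+\v-\f$). Therefore
\[
\sum_{\nu:\,b_\nu<b}\sum_{\lambda,\{\mu_f\}}|\cdots|\leq C'''_Y\sum_{b_\nu=0}^{b-1}n^{-2b_\nu}=O_Y(1),
\]
and together with the identical contribution from $\check b_\nu<b$ and the $O(1)$ truncation error from Lemma~\ref{lem:truncated Xi} this yields $\Xi_n(Y)=O_Y(1)$.

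The main obstacle I anticipate is the bookkeeping in the middle step: one must verify that the exponent $E=b_{\lambda/\nu}-\sum_f b_{\mu_f/\nu}$ arising from the $\Upsilon$-estimate is matched, up to the shift $-2b_\nu$, by the corresponding exponent of the dimension ratio, so that the dominant $b_\nu=0$ stratum contributes $O(1)$ rather than a divergent power of $n$. This precise cancellation is not accidental: it is the algebraic shadow of the Euler-characteristic accounting recorded in Lemma~\ref{lem:max-defect of BR and SBR} (which is the point where the boundary-reducedness of $Y$ really enters, through $\sdefect(Y)\leq0$), propagated via the combinatorics of the `top' labeling of $\partial Y_+$ from $\S\S$\ref{subsec:Construction-of-labelings}.
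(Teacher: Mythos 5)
Your proof is correct and follows essentially the same route as the paper's: truncate via Lemma~\ref{lem:truncated Xi}, then cancel the exponent $E=b_{\lambda/\nu}-\sum_f b_{\mu_f/\nu}$ coming from Proposition~\ref{prop:Upsilon-non-trivial-bound} Part~1 (with the trivial bound of Lemma~\ref{lem:coarse-bound-on-Upsilon} covering $E<0$) against the dimension ratio from $\S$\ref{subsec:Bounds-for-the-dimensions-of-irreps}, leaving $O(n^{-2b_\nu})$ per summand. The only cosmetic divergence is that the paper records the per-$\nu$ weight as $1/d_\nu^{\,2}$ and closes the sum via $\zeta^{S_{n-\v}}(2)$ using Proposition~\ref{prop:Liebeck-Shalev-non-effective}, whereas you write it as $n^{-2b_\nu}$ and count the boundedly many partitions at each fixed $b_\nu$; these are interchangeable since $d_\nu\asymp n^{b_\nu}$.
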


\begin{proof}
By Proposition \ref{lem:truncated Xi} there is some $b=b(Y)$ such
that 
\[
\Xi_{n}(Y)=\Xi_{n}^{(b)}(Y)+O_{Y}\left(n^{-1}\right).
\]
As before, let $b_{\lambda}\eqdf\left|\lambda\right|-\lambda_{1}$.
As in the proof of Proposition \ref{prop:Xi-Laurent}, for $n-\v>2b$
we can write 
\begin{align}
\Xi_{n}^{(b)}(Y) & =2\sum_{\begin{gathered}\nu\vdash n-\v~:~b_{\nu}<b\\
\nu\subset_{\v-\e_{f}}\mu_{f}\subset_{\e_{f}-\f}\lambda
\end{gathered}
}\frac{d_{\lambda}d_{\nu}}{d_{\mu_{a}}d_{\mu_{b}}d_{\mu_{c}}d_{\mu_{d}}}\Upsilon\left(\left\{ \sigma_{f}^{\pm},\tau_{f}^{\pm}\right\} ,\nu,\{\mu_{f}\},\lambda\right).\label{eq:Xib-symmetrized formula}
\end{align}
Note that in (\ref{eq:Xib-symmetrized formula}), each of 
\[
\lambda/\nu,\mu_{a}/\nu,\mu_{b}/\nu,\mu_{c}/\nu,\mu_{d}/\nu
\]
has $\leq\v-\f$ boxes outside their first row, so Lemma \ref{lem:dim-ratio-bound}
implies that 
\begin{equation}
\frac{d_{\lambda}d_{\nu}}{d_{\mu_{a}}d_{\mu_{b}}d_{\mu_{c}}d_{\mu_{d}}}\ll_{Y}\frac{1}{d_{\nu}^{~2}}n^{b_{\lambda}-b_{\mu_{a}}-b_{\mu_{b}}-b_{\mu_{c}}-b_{\mu_{d}}+3b_{\nu}}=\frac{1}{d_{\nu}^{~2}}n^{b_{\lambda/\nu}-b_{\mu_{a}/\nu}-b_{\mu_{b}/\nu}-b_{\mu_{c}/\nu}-b_{\mu_{d}/\nu}},\label{eq:dim-ratio-asymptotic}
\end{equation}
(recall the notation $\ll$ from $\S\S$\ref{subsec:Notation}). Since
all $\lambda,\nu$ in the sum (\ref{eq:Xib-symmetrized formula})
have a bounded number of boxes outside their first row, depending
on $Y$, for sufficiently large $n$, the condition $\lambda_{1}+\nu_{1}>n-\f+(\v-\f)^{2}$
of Proposition \ref{prop:Upsilon-non-trivial-bound} is met for large
$n$. Hence by Proposition \ref{prop:Upsilon-non-trivial-bound} Part
1, 

\begin{align*}
\left|\Xi_{n}^{(b)}\left(Y\right)\right| & \ll_{Y}\sum_{\nu\vdash n-\v~:~b_{\nu}<b}\frac{1}{d_{\nu}^{2}}\left[\sum_{\nu\subset\mu_{f}\subset_{\e_{f}-\f}\lambda}\left(n\cdot\frac{(\v-\f)^{2}}{\lambda_{1}+\nu_{1}-(n-\f)}\right)^{b_{\lambda/\nu}-b_{\mu_{a}/\nu}-b_{\mu_{b}/\nu}-b_{\mu_{c}/\nu}-b_{\mu_{d}/\nu}}\right]\\
 & \ll_{Y}\sum_{\nu\vdash n-\v~:~b_{\nu}<b}\frac{1}{d_{\nu}^{2}}\left[\sum_{\nu\subset\mu_{f}\subset_{\e_{f}-\f}\lambda}1\right]\ll_{Y}\sum_{\nu\vdash n-\v~:~b_{\nu}<b}\frac{1}{d_{\nu}^{2}}\le\zeta^{S_{n-\v}}\left(2\right)=2+O\left(\frac{1}{n^{2}}\right),
\end{align*}
where the second asymptotic inequality follows as $n\cdot\frac{(\v-\f)^{2}}{\lambda_{1}+\nu_{1}-(n-\f)}\ll1$
uniformly over all $\nu,\lambda$ in play, and $b_{\lambda/\nu}-b_{\mu_{a}/\nu}-b_{\mu_{b}/\nu}-b_{\mu_{c}/\nu}-b_{\mu_{d}/\nu}$
is bounded from above by a constant, the third asymptotic inequality
follows as the number of YDs that extend a given $\nu$ by at most
$\v$ boxes is bounded independently of $n$, and the last asymptotic
inequality follows by Proposition \ref{prop:Liebeck-Shalev-non-effective}
with $b=1$.
\end{proof}
If $Y$ is strongly boundary reduced then we get a finer estimate.
\begin{prop}
\label{prop:xi-bound-for-strongly-boundary reduced}If $Y$ is a strongly
boundary reduced tiled surface then as $n\to\infty$,
\begin{equation}
\Xi_{n}(Y)=2+O_{Y}\left(n^{-1}\right).\label{eq:Xi-asymptotic-fine}
\end{equation}
\end{prop}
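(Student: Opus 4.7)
The plan is to refine the proof of Proposition \ref{prop:xi-bound-for-boundary reduced}, exploiting the sharper bounds that Parts 2 and 3 of Proposition \ref{prop:Upsilon-non-trivial-bound} provide in the strongly boundary reduced case. First I apply Lemma \ref{lem:truncated Xi} to write $\Xi_n(Y) = \Xi_n^{(b)}(Y) + O(n^{\v-\f-2b})$, and fix any $b$ with $2b \geq \v-\f+1$ so that the truncation error is absorbed into $O(n^{-1})$. Following the symmetrization performed in the proof of Proposition \ref{prop:xi-bound-for-boundary reduced}, I rewrite
\[
\Xi_n^{(b)}(Y) = \sum_{\substack{\nu\,\vdash\, n-\v\\ b_\nu<b}} \sum_{\nu\subset\mu_f\subset\lambda} \frac{d_\lambda d_\nu}{d_{\mu_a}d_{\mu_b}d_{\mu_c}d_{\mu_d}}\bigl[\Upsilon_n(\nu,\{\mu_f\},\lambda) + \Upsilon_n(\check\nu,\{\check\mu_f\},\check\lambda)\bigr]
\]
and split the inner sum according to whether $b_{\lambda/\nu}=0$ or $b_{\lambda/\nu}>0$.

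When $b_{\lambda/\nu}=0$, necessarily $\lambda=\nu(n-\f)$ and $\mu_f=\nu(n-\e_f)$ (all the added boxes lie in the first row), and Part 3 of Proposition \ref{prop:Upsilon-non-trivial-bound} makes both bracketed terms equal to $1$ exactly. The collected contribution is
\[
2\sum_{\substack{\nu\,\vdash\, n-\v\\ b_\nu<b}} \frac{d_{\nu(n-\f)}\,d_\nu}{\prod_f d_{\nu(n-\e_f)}}.
\]
I will evaluate this by a seed parameterization: every $\nu$ appearing in the sum has the form $\mu(n-\v)$ for a unique seed YD $\mu$ with $\mu_1=\mu_2$ and $b_\mu<b$, and the set of such seeds is finite. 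By Lemma \ref{lem:dim-polynomial}, each dimension is the value $G_\mu(\cdot)$ of a polynomial of degree $b_\mu$, so for $\mu=\emptyset$ we have $G_\emptyset\equiv 1$ giving contribution exactly $1$, while for any seed with $b_\mu\geq 1$ a comparison of $2b_\mu$ polynomial factors in the numerator against $4b_\mu$ in the denominator gives contribution $O(n^{-2b_\mu})\leq O(n^{-2})$. Summing over the finitely many seeds produces a main term of $2+O(n^{-2})$.

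For the error part ($b_{\lambda/\nu}>0$), Part 2 of Proposition \ref{prop:Upsilon-non-trivial-bound} together with its dual half bounds both $|\Upsilon_n(\nu,\ldots)|$ and $|\Upsilon_n(\check\nu,\ldots)|$ by $C_Y\,y^{\,1+b_{\lambda/\nu}-\sum_f b_{\mu_f/\nu}}$, where $y=(\v-\f)^2/(\nu_1+\lambda_1-(n-\f))=O(1/n)$ uniformly in our range of $\nu,\lambda$. Combining with the dimension-ratio estimate $d_\lambda d_\nu/\prod_f d_{\mu_f}\ll_Y n^{b_{\lambda/\nu}-\sum b_{\mu_f/\nu}}/d_\nu^2$ established in the proof of Proposition \ref{prop:xi-bound-for-boundary reduced}, a short case split—using Part 2 directly when the exponent $1+b_{\lambda/\nu}-\sum b_{\mu_f/\nu}$ is non-negative, and the coarse bound of Lemma \ref{lem:coarse-bound-on-Upsilon} together with the dimension-ratio factor $n^{b_{\lambda/\nu}-\sum b_{\mu_f/\nu}}\leq 1/n$ when it is negative—shows that each individual term is $\ll_Y 1/(n\,d_\nu^2)$. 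Summing over the $O_Y(1)$ choices of $(\mu_f,\lambda)$ per $\nu$ and then over $\nu$, and using $\sum_{\nu:b_\nu<b}1/d_\nu^2\leq\zeta^{S_{n-\v}}(2)=O(1)$ from Proposition \ref{prop:Liebeck-Shalev-non-effective}, yields a total error of $O(1/n)$.

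Combining the main term $2+O(n^{-2})$ with the error $O(n^{-1})$ and the truncation error $O(n^{-1})$ proves the proposition. The main subtlety is the evaluation of the main term: showing that only the trivial seed $\mu=\emptyset$ contributes to leading order—and contributes exactly $1$—is what pins the asymptotic constant at precisely $2$, independently of $Y$, mirroring the behavior $\zeta^{S_n}(2)=2+O(n^{-2})$ of Proposition \ref{prop:zeta-polynomial} (which corresponds to the empty tiled surface). A secondary technical point is the sign ambiguity of the exponent $1+b_{\lambda/\nu}-\sum b_{\mu_f/\nu}$ in Part 2, which necessitates the two-case split in the error estimate.
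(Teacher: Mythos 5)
Your proof is correct and takes essentially the same route as the paper: truncate via Lemma \ref{lem:truncated Xi}, symmetrize over $\nu\leftrightarrow\check\nu$, split on whether $b_{\lambda/\nu}=0$, extract the main term $2$ from the $b_{\lambda/\nu}=0$ summands via Proposition \ref{prop:Upsilon-non-trivial-bound} Part~3, and bound the remainder by Part~2. The only differences are presentational: you evaluate the main term explicitly via a seed parameterization (pinning it at $2+O(n^{-2})$ rather than the paper's $2+O(n^{-1})$, a slight but immaterial sharpening), and you do a case split on the sign of the exponent $1+b_{\lambda/\nu}-\sum_f b_{\mu_f/\nu}$, whereas the paper avoids this by (implicitly) using that $n\cdot\frac{(\v-\f)^2}{\nu_1+\lambda_1-(n-\f)}$ is bounded both above and below (so the factor $(ny)^{E}$ with $E$ bounded in both directions is $\asymp 1$). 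Your case split is sound and arguably safer to state, but not strictly necessary.
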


\begin{proof}
We begin as in the proof of Proposition \ref{prop:xi-bound-for-boundary reduced}
by choosing $b(Y)$ such that $\Xi_{n}(Y)=\Xi_{n}^{(b)}(Y)+O_{Y}(n^{-1})$
and (\ref{eq:Xib-symmetrized formula}) holds. It now suffices to
prove the proposition with $\Xi_{n}(Y)$ replaced with $\Xi_{n}^{(b)}(Y)$.

There are summands of (\ref{eq:Xib-symmetrized formula}) corresponding
to $\lambda/\nu$ having all boxes in the first row. By Proposition
\ref{prop:Upsilon-non-trivial-bound} Part 3, each of these summands
contributes $2\cdot\frac{d_{\lambda}d_{\nu}}{d_{\mu_{a}}d_{\mu_{b}}d_{\mu_{c}}d_{\mu_{d}}}$
to $\Xi_{n}^{(b)}$, but in this case $\nu,\mu_{f},\lambda$ all belong
to the same family of YDs, so this contribution is $\frac{2}{d_{\nu}^{2}}\left(1+O\left(n^{-1}\right)\right)$.
As there is one of these summands for each $\nu\vdash n-\v$ with
$b_{\nu}<b$, together these contribute 
\begin{equation}
2\left(1+O\left(\frac{1}{n}\right)\right)\sum_{\nu\vdash n-\v~:~b_{\nu}<b}\frac{1}{d_{\nu}^{2}}=2+O\left(\frac{1}{n}\right)\label{eq:main-term-appearance}
\end{equation}
by Lemma \ref{lem:dim-polynomial}. The constant term $2$ appearing
in (\ref{eq:main-term-appearance}) is the main term of (\ref{eq:Xi-asymptotic-fine}).

For any other summand of (\ref{eq:Xib-symmetrized formula}) $b_{\lambda/\nu}>0$,
and so by Proposition \ref{prop:Upsilon-non-trivial-bound} Part 2
combined with (\ref{eq:dim-ratio-asymptotic}),
\[
\frac{d_{\lambda}d_{\nu}}{d_{\mu_{a}}d_{\mu_{b}}d_{\mu_{c}}d_{\mu_{d}}}\Upsilon\left(\left\{ \sigma_{f}^{\pm},\tau_{f}^{\pm}\right\} ,\nu,\{\mu_{f}\},\lambda\right)\ll_{Y}\frac{1}{d_{\nu}^{2}}\cdot\frac{1}{n},
\]
so as argued in the proof of Proposition \ref{prop:xi-bound-for-boundary reduced},
the total contribution of these summands is $O_{Y}\left(n^{-1}\right)$.
Hence
\[
\Xi_{n}^{(b)}(Y)=2+O_{Y}\left(\frac{1}{n}\right).
\]
\end{proof}

\section{Proofs of main theorems\label{sec:Proofs-of-main-theorems}}

\subsection{Proof of Theorem \ref{thm:rational-approx} and its extension to
finitely generated subgroups\label{subsec:Proof-of-Theorem-analytic}}

We give the proof when $g=2$; the extension to other $g\geq2$ is
clear. We are given a finitely generated subgroup $J\leq\Gamma=\Gamma_{2}$
and $M\in\mathbf{N}$ and we wish to show that $\E_{n}[\fix_{J}]\eqdf\E_{2,n}[\fix_{J}]$
can be approximated to order $O(n^{-M})$ by a Laurent polynomial
of $n$ with rational  coefficients. Given this, the trivial bound
$\E_{n}[\fix_{J}]\leq n$ implies that the Laurent polynomial takes
the form (\ref{eq:Laurent-polynomial}). The fact that the $a_{i}(J)$
do not depend on $M$ is clear. By setting $J=\langle\gamma\rangle$
we obtain Theorem \ref{thm:rational-approx} from this. 

By Lemma \ref{lem:E(fix) as expected number of lifts}, we have 
\begin{equation}
\E_{n}\left[\fix_{J}\right]=\E_{n}\left(\core(J)\right).\label{eq:epxected-number-of-fix-to-prob-for-core}
\end{equation}
Now let $\mathcal{R}$ be \uline{any} finite resolution of $\core(J)$;
by Theorem \ref{thm:existence-of-resolution} at least one exists.

Each element of this resolution is a morphism $h:\core(J)\to W_{h}$
of tiled surfaces. By Lemma \ref{lem:resolution-sum-of-probabilities}
\[
\E_{n}(\core(J))=\sum_{h\in\mathcal{R}}\mathbb{\E}_{n}^{\emb}\left(W_{h}\right).
\]
Now using Theorem \ref{thm:E_n-emb-exact-expression} for each of
the terms $\E_{n}^{\emb}(W_{h})$ gives 
\[
\E_{n}\left[\fix_{J}\right]=\frac{\left(n!\right)^{3}}{\left|\X_{n}\right|}\sum_{h\in\mathcal{R}}\frac{\left(n\right)_{\v\left(W_{h}\right)}\left(n\right)_{\f\left(W_{h}\right)}}{\prod_{f\in\{a,b,c,d\}}\left(n\right)_{\e_{f}\left(W_{h}\right)}}\Xi_{n}\left(W_{h}\right)
\]
where $\v(W_{h})$, $\e_{f}(W_{h})$, $\f(W_{h})$ are the number
of vertices, $f$-labeled edges ($f\in\{a,b,c,d\}$), and octagons,
respectively, of $W_{h}$. Also recall the definition of $\Xi_{n}$
from Theorem \ref{thm:E_n-emb-exact-expression}. By (\ref{eq:Hurwitz}),
$\left|\X_{n}\right|=\left(n!\right)^{3}\cdot\zeta^{S_{n}}\left(2\right)$,
and so
\begin{equation}
\E_{n}\left[\fix_{J}\right]=\frac{1}{\zeta^{S_{n}}(2)}\sum_{h\in\mathcal{R}}\frac{(n)_{\v(W_{h})}(n)_{\f(W_{h})}}{\prod_{f\in\{a,b,c,d\}}(n)_{\e_{f}(W_{h})}}\Xi_{n}\left(W_{h}\right).\label{eq:expectation-expression}
\end{equation}
Now we note:
\begin{itemize}
\item By Corollary \ref{cor:zeta-inv-poly}, there is a polynomial $Q_{2,M}\in\mathbf{\Z}[t]$
with $\frac{1}{\zeta^{S_{n}}(2)}=\frac{1}{2}Q_{2,M}\left(n^{-1}\right)+O\left(n^{-M}\right)$.
\item For any fixed $\ell\geq0$, both $(n)_{\ell}$ and $(n)_{\ell}^{-1}$
agree with Laurent polynomials of $n$ with $\mathbf{Q}$-coefficients
up to order $O(n^{-M})$ as $n\to\infty$.
\item By Proposition \ref{prop:Xi-Laurent}, there is a Laurent polynomial
$\Xi_{M}^{*}(W_{h})\in\mathbf{Q}[t,t^{-1}]$ such that $\Xi_{n}(W_{h})=\Xi_{M}^{*}(W_{h})[n]+O(n^{-M})$
as $n\to\infty$.
\end{itemize}
Hence all terms in (\ref{eq:expectation-expression}) can be approximated
by Laurent polynomials of $n$ with rational coefficients to order
$O(n^{-M})$ as $n\to\infty$. This proves Theorem \ref{thm:rational-approx}.
$\square$

\subsection{Proof of Theorems \ref{thm:expected-fixed-points-bounded} and \ref{thm:subgroups}}

Again, we give the proofs of Theorems \ref{thm:expected-fixed-points-bounded}
and \ref{thm:subgroups} when $g=2$. Given a finitely generated subgroup
$J\leq\Gamma=\Gamma_{2}$ let $\chi_{\max}(J)$ be as in (\ref{eq:x_max}).
We can assume $J$ is non-trivial since Theorem \ref{thm:subgroups}
is obvious in this case (as is Theorem \ref{thm:expected-fixed-points-bounded}
when $\gamma$ is the identity). 

Let $\mathcal{R=\mathcal{R}}(\core(J),\chi_{\max}(J))$ be the resolution
of $\core(J)$ defined in Definition \ref{def:finite resolution for compact surfaces},
certified to be a resolution by Theorem \ref{thm:existence-of-resolution}.
Let $\mathcal{R}_{\max}(\gamma)$ (resp.~$\mathcal{R}_{<\max}(\gamma)$)
be the morphisms $h:$ $\core(J)\to W_{h}$ of $\mathcal{R}$ with
$\chi(W_{h})=\chi_{\max}(J)$ (resp.~$\chi(W_{h})<\chi_{\max}(J)$),
so 
\[
\mathcal{R}=\mathcal{R}_{\max}\sqcup\mathcal{R}_{<\max}.
\]
By Theorem \ref{thm:existence-of-resolution} all elements of $\mathcal{R}_{\max}$
are strongly boundary reduced and all elements of $\mathcal{R}$ are
boundary reduced. Repeating the argument of $\S\S$\ref{subsec:Proof-of-Theorem-analytic}
we obtain (\ref{eq:expectation-expression}) again.

Now we observe:
\begin{itemize}
\item By Proposition \ref{prop:Liebeck-Shalev-non-effective} with $b=1$,
$\zeta^{S_{n}}(2)=2+O(n^{-2})$ as $n\to\infty$. 
\item For each $h:\core(J)\to W_{h}\in\mathcal{R}$, the ratio of Pochhammer
symbols satisfies as $n\to\infty$
\begin{equation}
\frac{(n)_{\v(W_{h})}(n)_{\f(W_{h})}}{\prod_{f\in\{a,b,c,d\}}(n)_{\e_{f}(W_{h})}}=n^{\chi\left(W_{h}\right)}+O\left(n^{\chi(W_{h})-1}\right).\label{eq:pochhammer-ratio-decay}
\end{equation}
\item By Proposition \ref{prop:xi-bound-for-strongly-boundary reduced},
for each $h:\core(J)\to W_{h}\in\mathcal{R}_{\max}$ we have $\Xi_{n}\left(W_{h}\right)=2+O_{W_{h}}\left(n^{-1}\right)$.
\item By Proposition \ref{prop:xi-bound-for-boundary reduced}, for each
$h:\core(J)\to W_{h}\in\mathcal{R}_{<\max}$ we have $\Xi_{n}\left(W_{h}\right)=O_{W_{h}}(1)$.
\end{itemize}
Hence from (\ref{eq:expectation-expression}), \vspace{-10bp}

\begin{align*}
\E_{n}[\fix_{J}] & =\frac{1}{\zeta^{S_{n}}(2)}\left[\sum_{h\in\mathcal{R}_{\max}}n^{\chi_{\max}(J)}\left(1+O\left(n^{-1}\right)\right)\cdot\left(2+O\left(n^{-1}\right)\right)+\sum_{h\in\mathcal{R}_{<\max}}O\left(n^{\chi_{\max}(J)-1}\right)\cdot O(1)\right]\\
 & =\frac{1}{2+O\left(n^{-2}\right)}\left[2\cdot\left|\mathcal{R}_{\max}\right|\cdot n^{\chi_{\max}(J)}+O\left(n^{\chi_{\max}(J)-1}\right)\right]\\
 & =\left|\mathcal{R}_{\max}\right|\cdot n^{\chi_{\max}(J)}+O\left(n^{\chi_{\max}(J)-1}\right),
\end{align*}
where all implied constants depend on $J$. By Proposition \ref{prop:resolutions of core surfaces},
$|\mathcal{R}_{\max}|=|\mog(J)|$ which proves Theorem \ref{thm:subgroups}.

Finally, if $J=\langle\gamma\rangle$, and $q$ is maximal such that
$\gamma=\gamma_{0}^{~q}$ for some $\gamma_{0}\in\Gamma$, then Corollary
\ref{cor:resolution of a cyclic subgroup} tells us that $|\mathcal{R}_{\max}|=d(q)$.
This proves Theorem \ref{thm:expected-fixed-points-bounded}. $\square$

\bibliographystyle{alpha}
\bibliography{surface}
Michael Magee, Department of Mathematical Sciences, Durham University,
Lower Mountjoy, DH1 3LE Durham, United Kingdom

\noindent \texttt{michael.r.magee@durham.ac.uk}\\

\noindent Doron Puder, School of Mathematical Sciences, Tel Aviv University,
Tel Aviv, 6997801, Israel\\
\texttt{doronpuder@gmail.com}
\end{document}